\documentclass[12pt]{amsart}
\usepackage{geometry}
\usepackage[english]{babel}
\usepackage{amsmath, amssymb,mathrsfs}
\usepackage{enumitem}
\usepackage{amsmath}
\usepackage{graphicx}
\usepackage{amsthm}
\usepackage{verbatim}
\usepackage[colorlinks=true]{hyperref}
\usepackage{bm}
\usepackage{color}
\usepackage{autobreak}
\usepackage[all]{xy}
\usepackage{tikz-cd}
\usepackage{graphicx}
\numberwithin{equation}{section}
\newtheorem{theorem}{Theorem}[section]

\newtheorem{lemma}[theorem]{Lemma}
\newtheorem{proposition}[theorem]{Proposition}
\newtheorem{definition}[theorem]{Definition}
\newtheorem{remark}[theorem]{Remark}

\newtheorem{condition}[theorem]{Condition}
\newcommand{\St}{\mathrm{St}}
\newcommand{\Rep}{\mathrm{Rep}}
\newcommand{\IM}{\mathrm{IM}}
\newcommand{\GL}{\mathrm{GL}}
\newcommand{\Qp}{\mathbb{Q}_p}
\newcommand{\triv}{\mathrm{triv}}
\newcommand{\Mod}{\mathrm{Mod}}
\newcommand{\Ind}{\mathrm{Ind}}
\newcommand{\Exp}{\mathrm{Exp}}
\newcommand{\op}{\mathrm{op}}
\newcommand{\Res}{\mathrm{Res}}
\newcommand{\Hom}{\mathrm{Hom}}
\newcommand{\IC}{\mathrm{IC}}
\newcommand{\Ad}{\mathrm{Ad}}
\newcommand{\MJ}{\mathcal{J}}
\newcommand{\C}{\mathbb{C}}

\begin{document}

\title{Twisted Kazhdan-Lusztig conjecture for $p$-adic general linear group}
\author{CHAI Yuan}
\maketitle
\begin{abstract}
We use enhanced Langlands parameters to obtain a classification for irreducible representations of twisted $p$-adic general linear groups in unramified principal series.  We give the definition of standard representations and prove the twisted Kazhdan-Lusztig conjecture for the multiplicities in the Grothendieck group. We mainly follow Lusztig's work in the connected case using graded Hecke algebra. We show that the parametrization is compatible with the Whittaker-normalized one.
\end{abstract}
\tableofcontents
\section{Introduction}
Let $F=\Qp$, $\mathcal{G}=\GL_n(F)$, $\mathscr{O}$ be the ring of integers of $F$ and $\mathscr{P}$ be the maximal ideal in $\mathscr{O}$. Let $\mathcal{B}$ be the Borel subgroup of upper triangular matrices of $\mathcal{G}$ and $\mathcal{T}$ be the Levi of $\mathcal{B}$ which is a maximal torus of $\mathcal{G}$.

Let $\mathrm{Rep}(\mathcal{G})$ denote the category of smooth $\mathcal{G}$-representations. 
Let $X_{nr}(\mathcal{T})$ be the group of unramified characters $\mathcal{T}\to \mathbb{C}^\times$, i.e., characters of the form $\rho=\prod_{i=1}^n|\cdot|^{s_i}_F$ with $s_i\in \mathbb{C}$. Let $\mathrm{Rep}(\mathcal{G})^{[\mathcal{T},1_{\triv}]}$ be the full subcategory of $\mathrm{Rep}(\mathcal{G})$ consisting of representations  whose irreducible subquotients are subquotients of the normalized parabolic induction $i_{\mathcal{B}}^{\mathcal{G}}(\rho)$ for some  $\rho \in X_{nr}(\mathcal{T})$ \cite{zbMATH03687666}.

Denote by $G=\GL_n(\mathbb{C})$ the complex dual group of $\mathcal{G}$.  The $L$-group of  $\mathcal{G}$ is $^L\mathcal{G}=G\times W_F$, where $W_F$ is the Weil group of $F$. 

A  Langlands parameter for $\mathcal{G}$ is a continuous group homomorphism
\begin{equation*}
	\phi: W_F\times \mathrm{SL}_2(\mathbb{C}) \to {^L\mathcal{G}} 
\end{equation*}
such that it respects the projections to $W_F$ for both  $W_F\times \mathrm{SL}_2(\mathbb{C})$ and ${^L\mathcal{G}}$, its restriction to $\mathrm{SL}_2(\mathbb{C})$ is algebraic and  the image of $W_F$ consists of semisimple elements in ${^L\mathcal{G}}$. Let $P({^L\mathcal{G}})$ denote the set of Langlands parameters of $\mathcal{G}$.  Two Langlands parameters are equivalent if they are conjugate under $G$, and we denote the set of equivalence classes by $\Phi(\mathcal{G})$.

For $\phi\in P({^L\mathcal{G}})$, we associate an infinitesimal parameter $\lambda_\phi$ given by 
\begin{equation*}
	\lambda_\phi(\omega)=\phi(\omega,\begin{pmatrix}
		|\omega|^{1/2} & \\ &|\omega|^{-1/2}
	\end{pmatrix}), \quad \omega\in W_F
\end{equation*}
Fixing an infinitesimal parameter $\lambda$, we define
\begin{equation*}
	P(\lambda,{^L\mathcal{G}}):=\{\phi\in P({^L\mathcal{G}})|\lambda_\phi=\lambda\}
\end{equation*}

and \begin{equation*}
	\Xi(\lambda,{^L\mathcal{G}}):=\{(\phi,\rho)|\phi\in P(\lambda,{^L\mathcal{G}})/Z_{{G}}(\lambda),\rho\in \mathrm{Irr}(\pi_0(Z_{{G}}(\phi)))\}
\end{equation*}

For $G=\GL_n(\mathbb{C})$, we have $\pi_0(Z_{{G}}(\phi))=1$, so $\rho=1$. 

Varying over all conjugacy classes of infinitesimal parameters of $\mathcal{G}$, we obtain 
\begin{equation*}
	\Xi({^L\mathcal{G}}):=\{(\phi,\rho)|\phi\in \Phi(\mathcal{G}), \rho\in \mathrm{Irr}(\pi_0(Z_{{G}}(\phi)))\}
\end{equation*}

By the local Langlands correspondence, we can parametrize the irreducible representations $M(\xi)$ in $\mathrm{Rep}(\mathcal{G})^{[\mathcal{T},1_{\triv}]}$ of $\mathcal{G}$ by $\xi\in \Xi({^L\mathcal{G}})$ whose Langlands parameter $\phi(\xi)\in \Phi(\mathcal{G})$  is trivial on the inertial group $I_F$.

For $\xi \in \Xi(\lambda,{^L\mathcal{G}})$, we have an irreducible representation $M(\xi)$ and a standard representation $E(\xi)$, which contains $M(\xi)$ as the Langlands quotient.

Let $\hat{\mathfrak{g}}$ denote the Lie algebra of $G$. Define  the Vogan moduli space \begin{equation*}
	V(\lambda,{^L\mathcal{G}}):=\{x\in \hat{\mathfrak{g}}|\mathrm{Ad}(\lambda(\mathrm{Fr}))x=px\}
\end{equation*}
where $\mathrm{Fr}$ is the Frobenius element.

Denote by $\mathrm{Per}(\lambda,{^L\mathcal{G}})$ the category of $Z_{{G}}(\lambda)$-equivariant perverse sheaves on $V(\lambda,{^L\mathcal{G}})$. The simple objects  $\mathrm{IC}(\gamma)$ in $\mathrm{Per}(\lambda,{^L\mathcal{G}})$ are parametrized by $\gamma\in \Xi(\lambda,{^L\mathcal{G}})$. The Kazhdan-Lusztig conjecture, proved in \cite{KazhLus}\cite{Ginz}, states that:
\begin{align}
	\mathrm{mult}(M(\xi),E(\zeta))=\sum_i\mathrm{dim}H^i(i^!_{y(\zeta)}\mathrm{IC}(\xi))
\end{align}
where $y=d_{\phi(\zeta)}\begin{pmatrix}
	0 &1\\
	0& 0
\end{pmatrix}$.

We denote by $\gamma$ the automorphism of $\mathcal{G}$ defined for each $g$ by $\gamma(g)=J(^tg^{-1})J^{-1}$ where $J$ is the antidiagonal matrix with entries $(-1, 1,\cdots,(-1)^n)$.  We define the disconnected group $\mathcal{G}^+:=\mathcal{G}\rtimes\Gamma$, where $\Gamma$ is the group of two elements generated by $\gamma$.
Define the category $\mathrm{Rep}(\mathcal{G}^+)^{[\mathcal{T},1_{\triv}]}$
of $\mathcal{G}^+$ representations whose restriction to $\mathcal{G}$ belongs to $\mathrm{Rep}(\mathcal{G})^{[\mathcal{T},1_{\triv}]}$. For $\pi\in\mathrm{Rep}(\mathcal{G})$, define $\gamma^*\pi$ by $(\gamma^*{\pi}) (g):= \pi(\gamma(g))$. For an irreducible  representation $\pi\in \mathrm{Rep}(\mathcal{G})$, if $\pi\simeq\gamma^*\pi$, there are two extensions; otherwise, there is only one extension. 

Define the complex dual group $G^+:=G\rtimes \hat{\Gamma}$, where $\hat{\Gamma} = \langle\hat{\gamma}\rangle$  for $\hat{\gamma}(g)=J(^tg^{-1})J^{-1}$.  Define \begin{equation*}
	\Xi(\lambda,{^L\mathcal{G}^+}):=\{(\phi,\rho)|\phi\in P(\lambda,{^L\mathcal{G}})/Z_{{G^+}}(\lambda),\rho\in \mathrm{Irr}(\pi_0(Z_{{G^+}}(\phi)))\}
\end{equation*}

By considering  all conjugacy classes of infinitesimal parameters of $\mathcal{G}$, we obtain $\Xi({^L\mathcal{G}^+})$.

From \cite[Theorem 7.3]{RepBernZel}, if $\pi\in \mathrm{Rep}(\mathcal{G})$ is irreducible, then ${\gamma^*(\pi)}$ is  the contragredient representation of $\pi$. Taking $\pi= M(\phi,1)$, we have $\gamma^* M(\phi,1)\simeq M(\hat{\gamma}(\phi),1)$ \cite{zbMATH03687666}. There is a bijection between irreducible representations in $\mathrm{Rep}(\mathcal{G}^+)^{[\mathcal{T},1_{\triv}]}$ and a subset of $\Xi({^L\mathcal{G}^+})$ whose Langlands parameters are trivial on $I_F$. Using the  geometry of graded Hecke algebras, we obtain the Langlands classification $M^+(\phi,\rho)\in \mathrm{Irr}(\mathrm{Rep}(\mathcal{G}^+)^{[\mathcal{T},1_{\triv}]})$ where  $(\phi,\rho)\in  \Xi({^L\mathcal{G}^+})$, and $\phi(I_F)=1$.

We define the standard representation in $\mathrm{Rep}(\mathcal{G^+})^{[\mathcal{T},1_{\triv}]}$ as follows:

If $M(\phi,1)$ is not $\gamma$-invariant, we define  \begin{equation*}
	E^+(\phi,1):=\mathrm{ind}_{\mathcal{G}}^{\mathcal{G}^+} E(\phi,1)
\end{equation*}

If $M(\phi,1)$ is  $\gamma$-invariant, there exists a $\gamma$-stable parabolic subgroup $\mathcal{P}=\mathcal{Q}\mathcal{U}$ and a $\gamma$-invariant irreducible representation \eqref{InvRep} \begin{align}
	M(\phi_{\mathcal{Q}},1)\simeq \pi_1\otimes\cdots\otimes\pi^0\otimes\cdots\otimes\pi_k
\end{align} 
where
\begin{equation*}
	\begin{aligned}
		\gamma^*(\pi_i)&\simeq \pi_{k+1-i}\\
		\gamma^*(\pi^0)&\simeq {\pi}^0
	\end{aligned}
\end{equation*}

and $\pi^0$ is tempered. 

Since $\gamma$ normalizes $\mathcal{Q}$,  we define $\mathcal{Q}^+=\mathcal{Q}\rtimes \Gamma$, $\mathcal{P}^+=\mathcal{P}\rtimes\Gamma$. Hence we use the same method of Langlands classification of $\mathrm{Rep}(\mathcal{G}^+)^{[\mathcal{T},1_{\triv}]}$ to get the irreducible representation  $M^+(\phi_{\mathcal{Q}},\rho)$ for $\mathcal{Q}^+$.
We define the  standard representation:  
\begin{equation*}
	E^+(\phi,\rho):= i_{\mathcal{P}^+} M^+(\phi_{\mathcal{Q}},\rho)
\end{equation*}
where $i_{\mathcal{P}^+}$ is the normalized parabolic induction \cite[1.8]{zbMATH03639915}.

Let $\mathrm{Per}(\lambda,{^L\mathcal{G}}^+)$ be the category of $Z_{{G}^+}(\lambda)$-equivariant perverse sheaves on $V(\lambda,{^L\mathcal{G}})$. The simple objects $\mathrm{IC}(\zeta^+)$  are  parametrized by $\zeta^+=(\phi(\zeta^+),\rho(\zeta^+))\in \Xi(\lambda,{^L\mathcal{G}^+})$.

Our first main result is the twisted Kazhdan-Lusztig conjecture:
\begin{theorem}[Theorem \ref{MainTh2}]
	Taking $\xi^+,\zeta^+\in \Xi(\lambda,{^L\mathcal{G}^+})$, we have irreducible representation $M^+(\phi(\xi^+),\rho(\xi^+))$ and standard representation $E^+(\phi(\zeta^+),\rho(\zeta^+))$. Then the multiplicity in the Grothendieck group is given by  
	\begin{equation*}
		m(M^+(\phi(\xi^+),\rho(\xi^+)),E^+(\phi(\zeta^+),\rho(\zeta^+)))=\sum_k\mathrm{dim}H^k(i_{y(\zeta^+)}^!\mathrm{IC}(\xi^+)))^{\rho(\zeta^+)}
	\end{equation*}
	where $y(\zeta^+)=d_{\phi(\zeta^+)}\begin{pmatrix}
		0 &1\\
		0& 0
	\end{pmatrix}$.
\end{theorem}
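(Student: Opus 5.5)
The plan is to reduce to the graded Hecke algebra setting, as in Lusztig's treatment of the connected case, and then transport the whole geometric machinery through the finite group $\Gamma$. First, via the Bernstein decomposition and the Borel/Iwahori equivalence, $\mathrm{Rep}(\mathcal{G})^{[\mathcal{T},1_{\triv}]}$ is identified with modules over the affine Hecke algebra $\mathcal{H}$ of type $\widetilde{A}_{n-1}$. Because $\gamma$ preserves the standard Iwahori up to conjugation by an element of $\mathcal{T}(\mathscr{O})$, this identification extends to $\mathcal{G}^+$ and modules over $\mathcal{H}\rtimes\Gamma$. Fixing the infinitesimal parameter $\lambda$, Lusztig's reduction theorems (localization at the central character, followed by passage to the graded algebra) produce the twisted graded Hecke algebra $\mathbb{H}\rtimes\Gamma_\lambda$. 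Here $\Gamma_\lambda$ is the image of $\pi_0(Z_{G^+}(\lambda))$, which is trivial or $\Gamma$. This step is compatible with parabolic induction, so the standard modules $E^+$ correspond to induced modules on the Hecke side.

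Second, I would realize $\mathbb{H}\rtimes\Gamma_\lambda$ geometrically. It is the $Z_{G^+}(\lambda)$-equivariant $\mathrm{Ext}$-algebra of the Springer-type sheaf $\pi_*\mathbb{C}$ on $V(\lambda,{^L\mathcal{G}})$, following Lusztig's "Cuspidal local systems and graded Hecke algebras" with the disconnected group $G^+$ in place of $G$. Since $\hat\gamma$ preserves the Borel and torus data, the $\Gamma$-action is geometric. Standard modules are then the costalks $H^\bullet(i^!_{y}\pi_*\mathbb{C})$, i.e.\ the equivariant Borel–Moore homology of Springer fibers, with their $\pi_0(Z_{G^+}(\phi))$-isotypic parts. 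Simple modules are the tops of these, indexed by pairs $(\phi,\rho)$, which gives $M^+(\phi,\rho)$.

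The key check is that these geometric standard modules coincide with the $E^+(\phi,\rho)$ defined above, and I would split it into two cases. If $M(\phi,1)$ is not $\gamma$-invariant, the $G^+$-orbit splits into two $G$-orbits. The geometric standard module is then induced from $\mathbb{H}$ to $\mathbb{H}\rtimes\Gamma$, which matches $\mathrm{ind}_{\mathcal{G}}^{\mathcal{G}^+}E(\phi,1)$. If $M(\phi,1)$ is $\gamma$-invariant, I would use the $\gamma$-stable Levi $\mathcal{Q}$ of \eqref{InvRep}. Lusztig's induction theorem expresses the standard module as induction from the tempered geometric module of $\mathcal{Q}^+$. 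In that module the positive real part of $\lambda$ is placed in the correct cone, and $\rho$ labels the $\Gamma$-action on the tempered part. This matches $i_{\mathcal{P}^+}M^+(\phi_{\mathcal{Q}},\rho)$. I expect this case to be the main obstacle. It requires fixing the normalization of the $\gamma$-action on both sides, meaning which of the two extensions is assigned to which $\rho$ (presumably via the Whittaker normalization mentioned in the abstract). It also requires showing that induction commutes with the $\Gamma$-equivariant structure.

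Finally, the multiplicity formula follows from the decomposition theorem for $Z_{G^+}(\lambda)$-equivariant sheaves. Write $\pi_*\mathbb{C}=\bigoplus_{\xi^+}\mathrm{IC}(\xi^+)\otimes L_{\xi^+}$, where $L_{\xi^+}$ is the simple module $M^+(\xi^+)$. Taking $i^!_{y(\zeta^+)}$ and the $\rho(\zeta^+)$-isotypic part gives
\[ E^+(\zeta^+)=\bigoplus_{\xi^+}\Big(\textstyle\bigoplus_k H^k(i^!_{y(\zeta^+)}\mathrm{IC}(\xi^+))\Big)^{\rho(\zeta^+)}\otimes M^+(\xi^+) \]
in the Grothendieck group. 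This is the standard argument of Ginzburg and Chriss–Ginzburg (Theorem 8.6.23), adapted to the disconnected group. The only extra input needed is that $\pi_0(Z_{G^+}(\phi))$ acts on the costalks compatibly with the $\mathrm{Ext}$-algebra action, which holds by equivariance. This would prove the theorem.
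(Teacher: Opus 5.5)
Your proposal follows essentially the same route as the paper: transfer to $\mathcal{H}(\mathcal{J}\backslash\mathcal{G}^+/\mathcal{J})\simeq\mathcal{H}(G^+,p^{1/2})$, then reduction to $\mathbb{H}(Z_{G^+}(t_c))$ compatibly with parabolic induction, then realization of the specialization $\mathbb{C}_a\otimes_{Z(\mathbb{H}(G^+))}\mathbb{H}(G^+)$ as the $\tau$-invariants of the non-equivariant $\mathrm{Ext}$-algebra of $\widetilde{K}^+$ (not literally a $Z_{G^+}(\lambda)$-equivariant $\mathrm{Ext}$-algebra), then identification of $E^+(\phi,\rho)$ with the costalk standard modules (plain induction in the non-invariant case, Theorem~\ref{Ind31} applied to the tempered module of $\mathcal{Q}^+$ in the invariant case), and finally Proposition~\ref{GeoMul}. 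Your assessment of the main obstacle is off: no Whittaker normalization is needed, since $M^+(\phi_{\mathcal{Q}},\rho)$ is itself defined by transporting the geometric module $M^{Q,+}_{y,\bar{t},r_0,\rho}$, so the $\rho$-labels match by construction (the Whittaker comparison is the separate Theorem~\ref{MainTh}); the disconnected-group inputs that genuinely need proof are instead the $\tau$-equivariance of Lusztig's localization isomorphisms (Lemma~\ref{Inv}, with the odd vanishing of Lemma~\ref{OddV}) and the identity $\widetilde{K}^+=\mathrm{Ind}_Z^{Z^+}\widetilde{K}$, which guarantees that every $\mathrm{IC}(\xi^+)$, including those with the sign character, actually occurs in the Springer sheaf.
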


In fact,  if $M(\phi,1)$ is $\gamma$-invariant, we can also use the Whittaker normalization introduced by Arthur to obtain another canonical extension  $(M(\phi,1))^+_W$ of $\mathcal{G}^+=\mathcal{G}\rtimes\Gamma$.

Recall that we have a $\gamma$-invariant irreducible representation $M(\phi_{\mathcal{Q}},1)$. Then we have the standard representation \begin{align}
E(\phi,1)=i_{\mathcal{P}}M(\phi_{\mathcal{Q}},1)
\end{align}
which contains $M(\phi,1)$ as its unique quotient.

Let $\Pi$ be the set of simple roots determined by the upper triangular Borel subgroup. The abelian group $\prod_{\alpha\in\Pi}\mathcal{U}_\alpha$ is a quotient of $\mathcal{U}$, where $\mathcal{U}_\alpha$ is the root subgroup for $\alpha$. Given an additive character $\varphi$ of $F$, we define a character $\Psi$ on $\prod_{\alpha\in\Pi}\mathcal{U}_\alpha$  by $\Psi(\prod_{\alpha\in\Pi}u_\alpha)=\prod_{\alpha\in\Pi}\varphi(u_\alpha)$ $(u_\alpha\in\mathcal{U}_\alpha)$. Thus $\Psi$ can be viewed as a character on $\mathcal{U}$, and it is invariant under the action of $\gamma$. 

Then, $E(\phi,1)$ has a Whittaker module associated with $\Psi$, and we denote the Whittaker functional by $\mathcal{W}$.
Since $M(\phi_{\mathcal{Q}},1)$ is $\gamma$-invariant, the standard representation $E(\phi,1)$ is $\gamma$-invariant (cf. section 6.1).  We take  a nontrivial intertwining operator $I$ from $E(\phi,1)$ to $\gamma^*E(\phi,1)$, then $\mathcal{W}\circ I$ is also a Whittaker functional. By \cite{rodier1973whittaker}, the space of Whittaker functionals is one‑dimensional, so there exists $c \in \mathbb{C}^*$ such that $\mathcal{W} \circ I = c\mathcal{W}$.
We set \begin{equation*}
	I_W:=c^{-1}I
\end{equation*}
Then $I_W$ is the unique intertwining operator from $E(\phi,1)$ to $\gamma^*E(\phi,1)$ satisfying \begin{equation*}
	\mathcal{W}=\mathcal{W}\circ I_W
\end{equation*}

Because $M(\phi,1)$ is the unique quotient of $E(\phi,1)$, the operator $I_W$ descends to an operator on $M(\phi,1)$, which we also denote by $I_W$.

Hence we can define a representation $(M(\phi,1))^+_W$ of $\mathcal{G}^+$  by 
\begin{equation*}
	\begin{aligned}
		(g,1)\cdot v&=g\cdot v\\
		(g,\gamma)\cdot v&=g\cdot I_W(v) 
	\end{aligned}
\end{equation*}

for $g\in\mathcal{G}$ and $v\in M(\phi,1)$.

Our second main result is that these two classifications coincide:
\begin{theorem}[Theorem \ref{MainTh}]
	There is an isomorphism of $\mathcal{G}^+$ representations \begin{equation*}
		(M(\phi,1))^+_W\simeq M^+(\phi,1)
	\end{equation*}    
\end{theorem}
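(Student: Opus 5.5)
The plan is to compare the two extensions on a single vector on which both actions can be computed explicitly. Both $(M(\phi,1))^+_W$ and $M^+(\phi,1)$ extend the same $\gamma$-invariant irreducible $\mathcal{G}$-representation $M(\phi,1)$. By Schur's lemma, any two extensions differ by the sign character of $\Gamma$: the two intertwiners $M(\phi,1)\to\gamma^*M(\phi,1)$ that square to the identity are $\pm I_W$. So it suffices to find one nonzero vector, or one functional, on which the action of $\gamma$ in $M^+(\phi,1)$ agrees with $I_W$. The natural choice is the Whittaker functional. Since $M(\phi,1)$ need not be generic, we work upstairs on $E(\phi,1)$ and its $\mathcal{G}^+$-version. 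Concretely, $M^+(\phi,1)$ is by construction (Langlands classification over $\mathcal{Q}^+$) the unique irreducible quotient of $E^+(\phi,1)=i_{\mathcal{P}^+}M^+(\phi_{\mathcal{Q}},1)$. The extension $(M(\phi,1))^+_W$ is in turn the quotient of the extension $E(\phi,1)^+_W$ of $E(\phi,1)$ defined by $I_W$. Hence it is enough to prove $E^+(\phi,1)\simeq E(\phi,1)^+_W$, i.e. that the action of $\gamma$ on $i_{\mathcal{P}^+}M^+(\phi_{\mathcal{Q}},1)$ preserves the Whittaker functional $\mathcal{W}$.

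The key steps, in order, are as follows.

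\textbf{Step 1: reduce to the Levi.} Write $E(\phi,1)=i_{\mathcal{P}}M(\phi_{\mathcal{Q}},1)$. The Whittaker functional on the induced representation is given by the Jacquet integral $\mathcal{W}(f)=\lambda_{\mathcal{Q}}\bigl(\int f(w_0 u)\Psi^{-1}(u)\,du\bigr)$, where $\lambda_{\mathcal{Q}}$ is a Whittaker functional on $M(\phi_{\mathcal{Q}},1)$ for the restriction of $\Psi$. Because $\mathcal{P}$, $\Psi$ and (after choosing it $\gamma$-fixed) $w_0$ are $\gamma$-stable, the operator $f\mapsto \gamma$-action in $i_{\mathcal{P}^+}$ commutes with the Jacquet integral up to the action of $\gamma$ on $M^+(\phi_{\mathcal{Q}},1)$. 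This reduces the claim to showing that $\gamma$ acting on $M^+(\phi_{\mathcal{Q}},1)$ fixes $\lambda_{\mathcal{Q}}$.

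\textbf{Step 2: compute on the Levi.} Here $M(\phi_{\mathcal{Q}},1)\simeq\pi_1\otimes\cdots\otimes\pi^0\otimes\cdots\otimes\pi_k$, with $\gamma$ swapping $\pi_i$ and $\pi_{k+1-i}$. On the swapped pairs $\pi_i\otimes\pi_{k+1-i}$, the $\gamma$-action is the canonical flip. Its effect on the product Whittaker functional is computed directly, and one must check that the signs coming from $J$ (entries $(-1)^i$) cancel. On the tempered $\gamma$-invariant factor $\pi^0$, which is generic in the unramified principal series, one must show that the extension chosen by the graded Hecke algebra construction fixes the Whittaker functional.

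\textbf{Step 3: the tempered factor.} To do this, realize $\pi^0$ as a full unitary induction $i_{\mathcal{B}}(\chi)$ and compare two things: the normalized intertwining operator $\gamma$ (built from $w_0$ and the standard intertwining operators $A(w,\chi)$ normalized by Shahidi's Whittaker normalization), and the action of $\hat{\gamma}$ on the Iwahori-fixed vectors / graded Hecke algebra module. The Hecke side acts on the Iwahori-spherical vector by the trivial character of $\pi_0(Z_{G^+}(\phi))$ (the label $\rho=1$). By the Casselman–Shalika formula, the Whittaker functional is nonzero on the spherical vector of the relevant generic piece, and $\gamma$ fixes the spherical vector since $\gamma$ preserves $\GL_n(\mathscr{O})$. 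Therefore $\gamma$ fixes the Whittaker functional as well.

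\textbf{Main obstacle.} I expect the main obstacle to be Step 3, together with the bookkeeping in Step 1. The difficulty is pinning down that the Hecke-algebra extension labelled $\rho=1$ acts trivially on a $\GL_n(\mathscr{O})$-fixed (or Iwahori-fixed) vector that is detected by $\mathcal{W}$. The first approach I would try is the spherical vector argument above, using that the trivial enhancement corresponds to the constituent containing the $\gamma$-fixed spherical line. If $\pi^0$ is not spherical, I would instead use its Iwahori-fixed generic vector and the compatibility of the Iwahori–Matsumoto presentation with $\gamma$.
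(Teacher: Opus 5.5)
Your outer reduction is the paper's. The two extensions differ by the sign character, so it suffices to compare them on $E(\phi,1)=i_{\mathcal{P}}M(\phi_{\mathcal{Q}},1)$. Your Jacquet-integral step plays the role of Proposition \ref{WhiInd}, and on the swapped pairs the operator $I_i^{-1}\otimes I_i$ composed with the swap is independent of choices. Everything therefore comes down to the tempered factor $\pi^0$.

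The gap is in Step 3, which carries the whole content of the theorem.

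\emph{Steinberg factors.} $\pi^0=\St(z_{i_1},n_{i_1})\times\cdots\times\St(z_{i_l},n_{i_l})$ is in general not a full induction $i_{\mathcal{B}}(\chi)$. As soon as some $n_{i_j}\geq 2$ it has no $\GL_n(\mathscr{O})$-fixed vector (e.g. $\pi^0=\St(0,2)$). Your fallback, ``use its Iwahori-fixed generic vector'', is not an argument.

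\emph{Spherical case.} Even when $\pi^0$ is spherical, the fact that $\gamma$ preserves $\GL_n(\mathscr{O})$ only shows that \emph{each} of the two extensions preserves the spherical line, acting on it by $\pm1$. Together with Casselman--Shalika this correctly characterizes the Whittaker-normalized extension as the one acting by $+1$. But the claim that the geometric extension $M^+(\phi,1)$ also acts by $+1$ is simply asserted. The label $\rho=1$ does not give it: $\rho$ only labels a $\pi_0(Z^+_y)$-isotypic component. By Propositions \ref{GeoActM} and \ref{StandMod2}, $N_\tau$ acts on $M^+_{y,\bar t,r_0,1}$ by the geometric normalization $J^\tau$, which is a nontrivial operator. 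Its effect on a particular $p$-adic vector is visible only after transporting it through three functors:
\begin{itemize}
\item $\IM^*$;
\item $\Theta^+$, where after completion $\tau\mapsto e_\varpi(\iota^0_{w_M},\hat\gamma)$, so the spherical vector of $\mathcal{H}(G,v)$ is not simply a $W$-type of $\mathbb{H}(Z_{G^+}(t_c))$;
\item $\Lambda^+_{\mathcal{J}}$.
\end{itemize}

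What is missing is an explicit computation of $J^\tau$, followed by an explicit Whittaker computation. The paper does this in two halves.

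\emph{Geometric half (Proposition \ref{ProThWh1}).} $M^+_{y,\bar t,r_0,1}$ is the unique submodule of $E^+_{y^*,\bar t,r_0,1}$, where $y^*$ kills the middle block; the multiplicity is one because the orbit closure is smooth. By Theorem \ref{Ind31}, this standard module is induced from $\bar K^+$. There the geometric module is the homology of a point, and $\tau$ acts by the flip \cite[Lemma 3.14]{Aubert2016GradedHA}. Hence on the middle factor $J^\tau$ becomes $h\otimes a\mapsto\tau(h)\otimes a$ on $\mathcal{H}(\bar Q^0,v)\otimes_{\mathcal{H}(\bar T,v)}(\chi_1'\otimes\chi_2')$, i.e. the action that is trivial on $\mathbb{C}_{(t,p^{1/2})}$.

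\emph{$p$-adic half (Theorem \ref{MainWhTem}).} The corresponding action $\hat\gamma(g\otimes a)=\hat\gamma(g)\iota_{w_M^{-1}}\otimes n_{w_M^{-1}}(t)^{-1}a$ on the unramified principal series $\sigma\supset\pi^0$ preserves $\mathcal{W}$. This is shown on the universal principal series using $\mathcal{W}(I_wf)=\mathcal{W}(f)^w n_w$, $\mathcal{W}(\gamma f)=\mathcal{W}(f)^\gamma$ and $\hat\gamma_M(t)=t$.

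Your spherical-vector observation is a legitimate substitute for the second half when $\pi^0$ is spherical. You would evaluate the above $\hat\gamma$ on the spherical vector instead of computing $\mathcal{W}$ universally. For non-spherical $\pi^0$ you would additionally need a $\gamma$-stable line in $(\pi^0)^{\mathcal{J}}$ on which $\mathcal{W}$ does not vanish. Either way, it does not remove the need for the geometric half, which your proposal does not supply.
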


In section 2, we use type theory to get an equivalence of categories between $\mathrm{Rep}(\mathcal{G}^+)^{[\mathcal{T},1_{\triv}]}$ and 
the category of modules over the Iwahori–Hecke algebra $\mathcal{H}(\mathcal{J}\backslash\mathcal{G}^+/\mathcal{J})$, and we discuss compatibility with induction.

In section 3, we define the twisted affine Hecke algebra $\mathcal{H}(G^+,v)$ and the twisted graded Hecke algebra, and we describe the relation between their module categories. 

In section 4, we use the equivariant derived category to geometrize the graded Hecke algebra, obtaining irreducible modules and standard modules. We also prove a version of the Kazhdan-Lusztig conjecture for twisted graded Hecke algebras and compare the irreducible modules and standard modules arising from \cite{Aubert2016GradedHA}.

In section 5, we define the standard representations in $\mathrm{Rep}(\mathcal{G^+})^{[\mathcal{T},1_{\triv}]}$ and formulate our twisted Kazhdan–Lusztig conjecture.

In section 6, we prove that the Langlands classification coming from geometry is compatible with the Whittaker-normalized one. 

The paper by \cite{SolCon} has some intersection with our section 4. Solleveld uses a corrected version of Lusztig \cite{lusztig2} and proves a version of the Kazhdan-Lusztig conjecture for (twisted) graded Hecke algebras. When we only consider the constant sheaf, we can get the odd vanishing condition for cohomology with compact support (Lemma \ref{OddV}) in the original version of Lusztig   \cite{lusztig2}. We mainly use the descent datum (Lemma \ref{Inv}) to relate the connected case and the disconnected case and some techniques in equivariant derived category in section 4.

The author would like to thank XU Bin for his support and advice. This work would not be possible without his guidance. The author would also like to thank DENG Taiwang for discussions.

\section{Representations of $p$-adic group}
\subsection{Twisted Iwahori Hecke algebra}
Let $\mathbb{B}$ be the Borel subgroup of upper triangular matrices of $\mathrm{GL}_n$ and let $\mathbb{T}$ be the Levi subgroup of $\mathbb{B}$ which is a maximal torus of $\GL_n$.   Let $X=\operatorname{Hom}(\mathbb{T},\mathbb{G}_m)$, $Y=\operatorname{Hom}(\mathbb{G}_m,\mathbb{T})$, and $\langle,\rangle:X\times Y\to \mathbb{Z}$ be the usual perfect pairing defined by $x \circ y(t) = t^{\langle x, y \rangle}$ for $t \in \mathbb{G}_m$. Let $R$ denote the set of roots of $\mathrm{GL}_n$ with respect to $\mathbb{T}$, $R^+ \subset R$ the positive roots determined by $\mathbb{B}$, and $\Pi \subset R^+$ the simple roots. We fix a pinning of $\GL_n$. In particular, for every $\alpha\in R$, there is a homomorphism $i_\alpha: \mathrm{SL}_2\to \GL_n$ such that $i_\alpha\begin{pmatrix}
	1 & u\\
	0 &1
\end{pmatrix}$ is a one-parameter root group.

Define $F=\Qp$, $\mathcal{G}=\GL_n(F)$, and $\mathcal{T}=\mathbb{T}(F)$. The associated root datum is $\mathcal{R}(\mathcal{G},\mathcal{T}) = (X, R, Y, R^\vee, \Pi)$, where $R^\vee$ is the set of coroots.
Let $\mathscr{O}$ be the ring of integers of $F$, let $\varpi$ be a uniformizer of $F$, and let $\mathscr{P}$ be the maximal ideal of $\mathscr{O}$.

Let $\mathcal{G}^+=\mathcal{G}\rtimes \Gamma$, where $\Gamma = \langle\gamma\rangle$ with   $\gamma(g) = J(^{t}g^{-1})J^{-1}$ and
\begin{equation*}
	J=\begin{pmatrix}
		0 & 0 &\cdots &0 &-1 \\
		0 & 0 &\cdots &(-1)^{2} &0 \\
		\vdots &\vdots &\ddots & \vdots &\vdots\\
		(-1)^n & 0 & \cdots &0 &0 
	\end{pmatrix}.
\end{equation*}

Then $\gamma^2=1$ and the conjugation action of $(1,\gamma)$ on $\mathcal{G}$ satisfies $(1,\gamma)(g,1)(1,\gamma)^{-1} = (\gamma(g),1)$. Let $W^+=N_{\mathcal{G}^+}(\mathcal{T})/\mathcal{T}$. Hence $W^+=W\rtimes \Gamma$, where $W \simeq S_n$ is the Weyl group of $\mathcal{G}$. We choose the standard simple reflections $s_{\alpha_i} = s_{e_i - e_{i+1}}$ for $i = 1,\dots,n-1$; then $\gamma(s_{\alpha_i}) = s_{\alpha_{n-i}}$.

The  $\Gamma$ action also induces an automorphism of $\mathbb{T}$, hence we have a $\Gamma$ action on $X$ and $Y$.

Let $\mathcal{J}$ be the Iwahori subgroup of $\mathcal{G}$, defined as the inverse image of $\mathbb{B}(\mathscr{O}/\mathscr{P})$ under the map $\GL_n(\mathscr{O})\to \GL_n(\mathscr{O}/\mathscr{P})$. Explicitly,
\begin{equation}
	\mathcal{J}=\begin{pmatrix}
		\mathscr{O}^\times & \mathscr{O} & \cdots & \mathscr{O} \\
		\mathscr{P} & \ddots & \ddots & \vdots \\
		\vdots & \ddots & & \mathscr{O} \\
		\mathscr{P} & \cdots & \mathscr{P} & \mathscr{O}^\times
	\end{pmatrix}
\end{equation}

Denote by $\mathcal{H}(\mathcal{G}^+)$ the space of locally constant complex‑valued functions on $\mathcal{G}^+$ with compact support.  Then we have their convolution with respect to the left Haar measure:
\begin{equation}\label{HeMul}
	f_1\cdot f_2(g)=\int_{\mathcal{G}^+}f_1(h)f_2(h^{-1}g)dh
\end{equation}

We normalize the left Haar measure such that $\mathcal{J}$ has volume 1.

For subgroup $(\mathcal{J},1)$ of $\mathcal{G}^+$, we will simply write $\mathcal{J}$. We define $\mathcal{H}(\mathcal{J}\backslash\mathcal{G}^+/\mathcal{J})$ as the subalgebra of  $\mathcal{H}(\mathcal{G}^+)$ consisting of  $\mathcal{J}$ bi-invariant functions: 
$$\mathcal{H}(\mathcal{J}\backslash\mathcal{G}^+/\mathcal{J})=\{f\in \mathcal{H}(\mathcal{G}^+) \mid f(k_1gk_2)=f(g) \quad \forall g\in \mathcal{G}^+,k_1,k_2\in \mathcal{J}\}$$

There is an isomorphism $Y \simeq \mathcal{T}/\mathbb{T}(\mathscr{O})$ given by $y \mapsto \bar{y} := y(\varpi^{-1})$, and 
$W\simeq N_{\mathcal{G}}(\mathcal{T})/\mathcal{T}$ given by $s_{\alpha_i}\mapsto \bar{s}_{\alpha_i}:=i_{\alpha_i}\begin{pmatrix}
	0 & 1\\
	-1 &0
\end{pmatrix}=\begin{pmatrix}
    I_{i-1} & & & \\
     &0 & 1 & \\
      & -1 &0 \\
      & & &I_{n-1-i}
\end{pmatrix}$. Then $N_{\mathcal{G}}(\mathcal{T})/\mathbb{T}(\mathcal{O})\simeq Y\rtimes W$.  
Hence we have \begin{equation}\label{GAct1}
\gamma(\bar{s}_{\alpha_i})=\bar{s}_{\alpha_{n-i}} \end{equation}
which shows that the $\gamma$ action is compatible with the action on $W$, 
and \begin{equation}\label{GAct2}
	\gamma(y((\varpi^{-1})))=\gamma(y)(\varpi^{-1})
\end{equation} 
which shows that the $\gamma$ action is compatible with the action on $Y$.

The Iwahori decomposition of $\mathcal{G}$ is
\begin{equation}
	\mathcal{G}=\sqcup_{{y\in Y, w\in W}}\mathcal{J}\bar{y}\bar{w}\mathcal{J}
\end{equation}

Since $\gamma$ normalizes $\mathcal{J}$, we obtain a decomposition for $\mathcal{G}^+$:
\begin{equation}\label{HeVe}
\begin{aligned}
	\mathcal{G}^+ &=\mathcal{G}\rtimes \Gamma \\
	&= (\sqcup_{{y\in Y, w\in W}}\mathcal{J}\bar{y}\bar{w}\mathcal{J},1)\sqcup (\sqcup_{{y\in Y, w\in W}}\mathcal{J}\bar{y}\bar{w}\mathcal{J},\gamma)\\
	&=\sqcup_{y\in Y,w\in W,\gamma\in \Gamma}\mathcal{J}(\bar{y}\bar{w},\gamma)\mathcal{J}
\end{aligned} 
\end{equation}

Let $T_{(w,1)}$ be the characteristic function of the double coset $\mathcal{J}(\bar{w},1)\mathcal{J}$ where $w\in Y\rtimes W$. 
For $w\in Y\rtimes W$, define the length function $l$ such that
\begin{equation}
	p^{l(w)}=\operatorname{vol}(\mathcal{J}\bar{w}\mathcal{J})=[\mathcal{J}\bar{w}\mathcal{J}:\mathcal{J}]=[\mathcal{J}:\mathcal{J}\cap \bar{w}\mathcal{J}\bar{w}^{-1}]
\end{equation}

Denote $T_{s_\alpha}=T_{(s_\alpha,1)}$, then  $(T_{s_\alpha}+1)(T_{s_\alpha}-p)=0$ for a simple reflection $s_\alpha\in W$, $\alpha\in \Pi$.

Let
\begin{equation}
Y^+=\{y\in Y \mid \langle \alpha,y\rangle \geq 0 ,\alpha\in R^+\}
\end{equation}

Denote $T_y=T_{(y(\varpi^{-1}),1)}$. Any element $y\in Y$ can be written as a linear combination $y=y_1-y_2$ for $y_1,y_2\in Y^+$. Let $\theta_y=p^{-(l(y_1)-l(y_2))/2}T_{y_1}T_{y_2}^{-1}$. If $u,v\in Y^+$, then $l(u+v)=l(u)+l(v)$. Hence $T_{u+v}=T_{u}T_{v}$, and $\theta_{u+v}=\theta_{u}\theta_{v}$. Then we have Bernstein-Zelevinsky presentation 
\begin{equation} \label{mul-hecke}
	\theta_yT_{s_\alpha}-T_{s_\alpha}\theta_{s_\alpha(y)}=(p-1)\frac{\theta_y-\theta_{s_\alpha(y)}}{1-\theta_{-\alpha}}
\end{equation} 
where $y\in Y$ and $\alpha\in  \Pi$.

Then $T_{s_\alpha}$, $\theta_y$ generate the Iwahori Hecke algebra $\mathcal{H}(\mathcal{J}\backslash\mathcal{G}/\mathcal{J})$ which we can view as the affine Hecke algebra $\mathcal{H}(G,p^{1/2})$ of the complex dual group $G=\GL_n(\mathbb{C})$ with the root datum $\mathcal{R}(G,T)=\mathcal{R}^\vee (\mathcal{G},\mathcal{T})=(Y,R^\vee,X,R,\Pi^\vee)$, where the algebra $\mathcal{H}(\mathcal{J}\backslash\mathcal{G}/\mathcal{J})$ is $\mathcal{J}$ bi-invariant compactly supported functions on $\mathcal{G}$, $T=\mathbb{T}(\mathbb{C})$.

\begin{proposition}\label{IwaHec}
	The twisted affine Hecke algebra $\mathcal{H}(\mathcal{J}\backslash\mathcal{G}^+/\mathcal{J})$ is the vector space $\mathcal{H}(\mathcal{J}\backslash\mathcal{G}/\mathcal{J})\otimes\mathbb{C}[\Gamma]$   with the following multiplication rules: 
	
	$\bullet$ $(T_{s_\alpha}+1)(T_{s_\alpha}-p)=0$, $\alpha\in 
	\Pi$ 
	
	$\bullet$ $T_{w_1}T_{w_2}=T_{w_1w_2}$, if $l(w_1w_2)=l(w_1)+l(w_2)$, $w_1,w_2\in W$
	
	$\bullet$ The group algebra $\mathbb{C}[Y]$ is embedded as subalgebra
	
	$\bullet$ For $y\in Y$ and $\alpha\in  \Pi$:
	\begin{equation}
		\theta_yT_{s_\alpha}-T_{s_\alpha}\theta_{s_\alpha(y)}=(p-1)\frac{\theta_y-\theta_{s_\alpha(y)}}{1-\theta_{-\alpha}}
	\end{equation}

	$\bullet$ For $T_\gamma\in \Gamma$, $w\in W$, and $y\in Y$:
	\begin{equation}
		\begin{aligned}
		T_\gamma^2&=1\\
		T_\gamma \theta_yT_w T_\gamma&=\theta_{\gamma(y)}T_{\gamma(w)}
		\end{aligned}
	\end{equation}
\end{proposition}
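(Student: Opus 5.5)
Let $T_\gamma$ denote the characteristic function of $\mathcal{J}(1,\gamma)\mathcal{J}=(\mathcal{J},\gamma)$; the equality holds because $\gamma$ normalizes $\mathcal{J}$. The plan has four steps: identify the vector space, get $T_\gamma^2=1$, compute the conjugation action of $T_\gamma$ on the $T_{(w,1)}$, and then pass to the Bernstein generators $\theta_y$.

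First, the vector space. By the decomposition \eqref{HeVe}, the characteristic functions of the double cosets $\mathcal{J}(\bar y\bar w,\gamma^i)\mathcal{J}$ form a basis of $\mathcal{H}(\mathcal{J}\backslash\mathcal{G}^+/\mathcal{J})$. The functions supported on $\mathcal{G}\times\{1\}$ give a subalgebra identified with $\mathcal{H}(\mathcal{J}\backslash\mathcal{G}/\mathcal{J})$. For these we may quote the first four bullet points, since this is the Bernstein--Zelevinsky presentation recalled just before the proposition. Next I would check directly from \eqref{HeMul} that
\[
T_{(w,1)}\cdot T_\gamma=\text{char. function of } \mathcal{J}(\bar w,1)\mathcal{J}(1,\gamma)=\mathcal{J}(\bar w,\gamma)\mathcal{J}.
\]
To see this, note that the integral is supported on $h\in\mathcal{J}(\bar w,1)\mathcal{J}$, the only contributing $h$ form a single coset of volume-one $\mathcal{J}$ (left-translated appropriately), and $\mathcal{J}(1,\gamma)=(1,\gamma)\mathcal{J}$. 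Hence the map $f\otimes \gamma^i\mapsto f\cdot T_\gamma^i$ is a linear isomorphism $\mathcal{H}(\mathcal{J}\backslash\mathcal{G}/\mathcal{J})\otimes\mathbb{C}[\Gamma]\to\mathcal{H}(\mathcal{J}\backslash\mathcal{G}^+/\mathcal{J})$. The same computation, using $(1,\gamma)^2=1$ and $\mathrm{vol}(\mathcal{J})=1$, gives $T_\gamma^2=1$.

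Second, conjugation by $T_\gamma$. Convolving a bi-$\mathcal{J}$-invariant $f$ on both sides by $T_\gamma$ (itself supported on a single $\mathcal{J}$-coset) gives $(T_\gamma f T_\gamma)(g)=f((1,\gamma)g(1,\gamma)^{-1})$. Therefore $T_\gamma T_{(x,1)}T_\gamma$ is the characteristic function of $\mathcal{J}\gamma(\bar x)\mathcal{J}$. By \eqref{GAct1} and \eqref{GAct2}, $\gamma(\bar y\bar w)=\overline{\gamma(y)}\,\overline{\gamma(w)}$ modulo $\mathbb{T}(\mathscr{O})\subset\mathcal{J}$. This yields $T_\gamma T_{(y w,1)}T_\gamma=T_{(\gamma(y)\gamma(w),1)}$. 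In particular $T_\gamma T_{s_{\alpha}}T_\gamma=T_{s_{\gamma(\alpha)}}$ and $T_\gamma T_yT_\gamma=T_{\gamma(y)}$, and conjugation by $T_\gamma$ is an algebra automorphism.

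Finally, the $\theta_y$. The step needing care is transferring the relation from the $T_y$ to the normalized Bernstein elements $\theta_y$. For this I need that $\gamma$ preserves $Y^+$ and the length function. Indeed $\gamma$ acts on $Y$ by $y\mapsto -w_0(y)$ (transpose-inverse followed by the antidiagonal $J$), which sends dominant to dominant. Moreover $l(\gamma(x))=l(x)$, since $\gamma$ preserves $\mathcal{J}$ and hence volumes of double cosets. Writing $y=y_1-y_2$ with $y_i\in Y^+$, we get
\[
T_\gamma\theta_yT_\gamma=p^{-(l(\gamma y_1)-l(\gamma y_2))/2}\,T_{\gamma(y_1)}T_{\gamma(y_2)}^{-1}=\theta_{\gamma(y)}.
\]
Combining this with the result for $T_w$ and multiplicativity of conjugation gives $T_\gamma\theta_yT_wT_\gamma=\theta_{\gamma(y)}T_{\gamma(w)}$. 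The main obstacle is verifying that $\gamma$ stabilizes $Y^+$ and that the chosen representatives $\bar s_{\alpha_i}$ are sent exactly to $\bar s_{\alpha_{n-i}}$, with no sign discrepancy modulo $\mathcal{J}$. This is precisely where the sign pattern of $J$ is used, and it is recorded in \eqref{GAct1}.
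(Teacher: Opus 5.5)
Your proposal is correct and follows essentially the same route as the paper: read off the basis from the double-coset decomposition, compute $T_\gamma^2=1$ and $T_\gamma T_{(w,1)}T_\gamma=T_{(\gamma(w),1)}$ directly from the convolution formula, then use that $\gamma$ preserves $Y^+$ and the length function to pass to $\theta_y$ via $y=y_1-y_2$. The only differences are cosmetic. You obtain the conjugation in one stroke as $f\mapsto f\bigl((1,\gamma)\,g\,(1,\gamma)\bigr)$, whereas the paper computes $T_\gamma T_{(w,1)}$ and $T_{(w,1)}T_\gamma$ separately. You also justify $l(\gamma(y))=l(y)$ by $\gamma$ preserving $\mathcal{J}$ and hence double-coset volumes, where the paper instead pairs $y$ with the sum of positive roots.
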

\begin{proof}
Let $T_{\gamma}$ be the characteristic function of the double coset $\mathcal{J}(1,\gamma)\mathcal{J}$.
Using \eqref{HeVe}, we have the isomorphism of vector spaces.
    We have an algebra embedding \begin{align*}
        \mathcal{H}(\mathcal{J}\backslash\mathcal{G}/\mathcal{J})\hookrightarrow \mathcal{H}(\mathcal{J}\backslash\mathcal{G}^+/\mathcal{J})
    \end{align*}
    So we get the first four multiplications.

    The action $\gamma$ normalizes $\mathcal{J}$. Hence from \eqref{HeMul} we have \begin{align*}
        T_\gamma\cdot T_\gamma(g)= \int_{\mathcal{G}^+}T_\gamma(h)T_\gamma(h^{-1}g)dh
    \end{align*}
    The integral vanishes unless $h^{-1}g\in \mathcal{J}(1,\gamma)\mathcal{J}$ and $h\in \mathcal{J}(1,\gamma)\mathcal{J}$, which implies that $g$ is in $\mathcal{J}$. Thus the convolution is supported on a single double coset and so $T_\gamma\cdot T_\gamma=c$ for some constant $c$.   Since \begin{align*}
        T_\gamma\cdot T_\gamma(1)= \int_{\mathcal{G}^+}T_\gamma(h)T_\gamma(h^{-1})dh=\int_{\mathcal{J}(1,\gamma)\mathcal{J}}1dh=1
    \end{align*}
    We have $T_\gamma\cdot T_\gamma=1$.

For $w\in Y\rtimes W$, we have \begin{align*}
    T_{\gamma}\cdot T_{(w,1)}(g)= \int_{\mathcal{G}^+}T_\gamma(h)T_{(w,1)}(h^{-1}g)dh
\end{align*}

The integral vanishes unless $h^{-1}g\in \mathcal{J}(\bar{w},1)\mathcal{J}$ and $h\in \mathcal{J}(1,\gamma)\mathcal{J}$, which implies that $g$ is in $\mathcal{J}(1,\gamma)(\bar{w},1)\mathcal{J}=\mathcal{J}(\gamma(\bar{w}),\gamma)\mathcal{J}$. Since \begin{align*}
    T_{\gamma}\cdot T_{(w,1)}((\gamma(\bar{w}),\gamma))= \int_{\mathcal{G}^+}T_\gamma(h)T_{(w,1)}(h^{-1}(\gamma(\bar{w}),\gamma))dh=\int_{\mathcal{J}(1,\gamma)\mathcal{J}}1dh=1
\end{align*}
Using \eqref{GAct1} and \eqref{GAct2}, we have $(\gamma(\bar{w}),\gamma)=(\overline{\gamma(w)},1)$, thus $T_{\gamma}\cdot T_{(w,1)}=T_{(\gamma(w),\gamma)}$. Using the same method, we also have \begin{align*}
T_{(w,1)}T_{\gamma}=T_{(w,\gamma)}
\end{align*} and \begin{align*}
T_{\gamma}T_{(w,1)}T_{\gamma}=T_{(\gamma(w),1)}
\end{align*}

Recall $T_{s_\alpha}=T_{(s_\alpha,1)}$ and $T_y=T_{(y(\varpi^{-1}),1)}$, then $T_\gamma T_{s_\alpha}T_\gamma=T_{\gamma(s_\alpha)}$ and $T_\gamma T_yT_\gamma^{-1}=T_{\gamma(y)}$.

For $y\in Y^+$, we have $l(y)=\langle \rho,y\rangle$, where $\rho=\sum_{\alpha\in R^+}\alpha$. Recall we have $\Gamma$ action on $X$ and $Y$. In fact, it preserves the pairing $\langle,\rangle$, and permutes all positive roots. For every $\alpha\in R^+$, \begin{align*}
    \langle \alpha,\gamma(y)\rangle =\langle \gamma(\alpha),y\rangle \geq 0
\end{align*}
which means that $\gamma(y)\in Y^+$. Hence we have \begin{align*}
    l(\gamma(y))=\langle \rho,\gamma(y)\rangle=\langle \gamma(\rho),y\rangle=\langle \rho,y\rangle=l(y)
\end{align*}

Recall $\theta_y=p^{-l(y)/2}T_{y}$. We have \begin{align*}
    T_\gamma \theta_yT_\gamma^{-1}=\theta_{\gamma(y)}
\end{align*}
Hence  $\theta_y^{-1}T_\gamma\theta_{\gamma(y)}T_\gamma=1$, which means $T_\gamma\theta_y^{-1}T_\gamma\theta_{\gamma(y)}=1$  i.e.  $T_\gamma\theta_{-y}T_\gamma=\theta_{-\gamma(y)}$. 

So for any element $y\in Y$, $y=y_1-y_2$, where $y_1,y_2\in Y^+$, \begin{align*}
T_\gamma\theta_yT_\gamma=T_\gamma\theta_{y_1}\theta_{-y_2}T_\gamma=\theta_{\gamma(y)}
\end{align*}

\end{proof}

\begin{remark}
    We define the $\gamma$ action on $\mathcal{H}(\mathcal{J}\backslash\mathcal{G}/\mathcal{J})$ by $\gamma(\theta_yT_w):=\theta_{\gamma(y)}T_{\gamma(w)}$.
Recall the crossed product algebra $\mathcal{H}(\mathcal{J}\backslash\mathcal{G}/\mathcal{J})\rtimes \Gamma$ is the vector space $\mathcal{H}(\mathcal{J}\backslash\mathcal{G}/\mathcal{J})\otimes\mathbb{C}[\Gamma]$ with the multiplication defined by
\begin{equation}
\gamma\cdot \theta_yT_w \cdot \gamma^{-1} = \gamma(\theta_yT_w)
\end{equation} 
There is an isomorphism \begin{align*}
    \mathcal{H}(\mathcal{J}\backslash\mathcal{G}/\mathcal{J})\rtimes \Gamma\simeq \mathcal{H}(\mathcal{J}\backslash\mathcal{G}^+/\mathcal{J})
\end{align*}
\end{remark}

\subsection{Type theory}
Denote by $\mathrm{Rep}(\mathcal{G})$ the category of smooth $\mathcal{G}$-representation. By \cite{zbMATH03687666}, all irreducible representations in $\mathrm{Rep}(\mathcal{G})$ are classified by multisegments. In particular, they are admissible.

Let $X_{nr}(\mathcal{T})$ be the group of unramified characters $\mathcal{T}\to \mathbb{C}^\times$, i.e.  characters trivial on  $\mathbb{T}(\mathscr{O})$. Any $\rho\in X_{nr}(\mathcal{T})$  can be written as $\rho=\prod\limits_{i=1}^n|\cdot|_F^{s_i}$ with $s_i\in \mathbb{C}$. Let $\mathrm{Rep}(\mathcal{G})^{[\mathcal{T},1_{\triv}]}$ be the full subcategory of $\mathrm{Rep}(\mathcal{G})$ consisting of representations  whose irreducible subquotients are subquotients of the normalized parabolic induction $i_{\mathcal{B}}^{\mathcal{G}}(\rho)$ for some  $\rho \in X_{nr}(\mathcal{T})$   \cite{zbMATH03687666}.

For a representation $(\sigma,W)\in \mathrm{Rep}(\mathcal{G})$, write $W^{\mathcal{J}}$ for  the subspace of $\mathcal{J}$-fixed vectors in $W$. Let  $\mathrm{Rep}(\mathcal{G})^{(\mathcal{J},1_{\triv})}$ be the full subcategory of $\mathrm{Rep}(\mathcal{G})$ whose objects are those $(\sigma,W)$ such that $W$ is generated over $\mathcal{G}$ by $W^{\mathcal{J}}$. From Borel, we have equivalences of categories $\mathrm{Rep}(\mathcal{G})^{(\mathcal{J},1_{\triv})}\simeq \mathrm{Rep}(\mathcal{G})^{[\mathcal{T},1_{\triv}]}\simeq \mathcal{H}(\mathcal{J}\backslash\mathcal{G}/\mathcal{J})-\mathrm{Mod}$.

The disconnected group $\mathcal{G}^+$  has the topological group structure which comes from $\mathcal{G}$. We denote by $\mathrm{Rep}(\mathcal{G}^+)$ the category of smooth $\mathcal{G}^+$-representation. For  $(\pi,V)\in \mathrm{Rep}(\mathcal{G}^+)$ and $f\in \mathcal{H}(\mathcal{G}^+)$, we define:
\begin{equation}
\pi(f)v=\int_{\mathcal{G}^+}f(g)\pi(g)vdg
\end{equation}

Then $\pi(f_1\cdot f_2)=\pi(f_1)\circ \pi(f_2)$.

Let $e_{\mathcal{J}}$ be the characteristic function of $\mathcal{J}$. Then there is an algebra isomorphism $\mathcal{H}(\mathcal{J}\backslash\mathcal{G}^+/\mathcal{J})\simeq e_{\mathcal{J}}\cdot \mathcal{H}(\mathcal{G}^+) \cdot e_{\mathcal{J}}$ and we have $V^{\mathcal{J}}=\pi(e_{\mathcal{J}})V$ where $V^{\mathcal{J}}$ denotes the subspace of $\mathcal{J}$-fixed vectors in $V$.

\begin{proposition}\label{Type}
	A representation $(\pi, V)\in \mathrm{Rep}(\mathcal{G}^+)$ is generated over $\mathcal{G}^+$ by $V^{\mathcal{J}}$ if and only if $\mathrm{Res}^{\mathcal{G}^+}_{\mathcal{G}} V \in \mathrm{Rep}(\mathcal{G})^{(\mathcal{J},1_{\triv})}$.
\end{proposition}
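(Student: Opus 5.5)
The statement reduces to a comparison of two subspaces of $V$: the $\mathcal{G}^+$-span and the $\mathcal{G}$-span of $V^{\mathcal{J}}$. Here $V^{\mathcal{J}}$ means the vectors fixed by $(\mathcal{J},1)$, which is the same subspace whether $V$ is viewed as a $\mathcal{G}^+$-representation or, via $\mathrm{Res}^{\mathcal{G}^+}_{\mathcal{G}}$, as a $\mathcal{G}$-representation. So the plan is to show these two spans coincide; then "generated over $\mathcal{G}^+$ by $V^{\mathcal{J}}$" and "generated over $\mathcal{G}$ by $V^{\mathcal{J}}$" (i.e. $\mathrm{Res}^{\mathcal{G}^+}_{\mathcal{G}}V\in \mathrm{Rep}(\mathcal{G})^{(\mathcal{J},1_{\triv})}$) are literally the same condition.

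\textbf{Key step.} Since $\gamma$ normalizes $\mathcal{J}$ (used already in the proof of Proposition \ref{IwaHec}), the element $(1,\gamma)$ preserves $V^{\mathcal{J}}$. Indeed, for $v\in V^{\mathcal{J}}$ and $j\in\mathcal{J}$,
\begin{equation*}
\pi(j,1)\pi(1,\gamma)v=\pi(1,\gamma)\pi(\gamma(j),1)v=\pi(1,\gamma)v,
\end{equation*}
because $\gamma(j)\in\mathcal{J}$ (as $\gamma^{-1}=\gamma$). Equivalently, $\pi(T_\gamma)$ maps $V^{\mathcal{J}}$ to itself.

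\textbf{Comparing the spans.} Let $V_0$ be the $\mathcal{G}$-span of $V^{\mathcal{J}}$. Every element of $\mathcal{G}^+$ has the form $(g,1)$ or $(g,1)(1,\gamma)$. Hence the $\mathcal{G}^+$-span of $V^{\mathcal{J}}$ is
\begin{equation*}
V_0+\mathcal{G}\cdot\pi(1,\gamma)V^{\mathcal{J}}=V_0,
\end{equation*}
where the equality uses the key step, $\pi(1,\gamma)V^{\mathcal{J}}\subseteq V^{\mathcal{J}}$. Thus the two spans are equal, and one equals $V$ exactly when the other does, giving both implications at once.

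\textbf{Remaining checks.} Both directions are then immediate. The only point needing care is smoothness: the restriction of a smooth $\mathcal{G}^+$-representation to the open subgroup $\mathcal{G}$ is smooth, so $\mathrm{Res}^{\mathcal{G}^+}_{\mathcal{G}}V$ is a legitimate object of $\mathrm{Rep}(\mathcal{G})$ and membership in $\mathrm{Rep}(\mathcal{G})^{(\mathcal{J},1_{\triv})}$ makes sense. I expect no real obstacle; the substantive input is only that $\gamma(\mathcal{J})=\mathcal{J}$, which follows from the explicit form of $\mathcal{J}$ and $\gamma(g)=J({}^tg^{-1})J^{-1}$ (transpose-inverse swaps the upper and lower triangular congruence conditions, and conjugation by the antidiagonal $J$ swaps them back).
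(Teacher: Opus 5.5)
Your proof is correct and is essentially the paper's argument: the paper also writes $V=\mathcal{G}\cdot V^{\mathcal{J}}+\mathcal{G}\cdot(1,\gamma)V^{\mathcal{J}}$ and uses $(g,1)(1,\gamma)=(1,\gamma)(\gamma(g),1)$ with $\gamma(\mathcal{J})=\mathcal{J}$ to get $(1,\gamma)V^{\mathcal{J}}=V^{\mathcal{J}}$, so the $\mathcal{G}$- and $\mathcal{G}^+$-spans of $V^{\mathcal{J}}$ coincide. The rest of your write-up only makes explicit what the paper leaves implicit: treating both implications at once as this equality of spans, checking directly that $\gamma$ normalizes $\mathcal{J}$, and remarking on smoothness.
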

\begin{proof}
	Suppose $V$ is generated over $\mathcal{G}^+$ by $V^{\mathcal{J}}$. Denote $V_1=\mathcal{G}\cdot  V^{\mathcal{J}}$ and $V_2=\mathcal{G}\cdot (1,\gamma)\cdot  V^{\mathcal{J}}$, then we have $V=V_1+V_2$. For $v\in  V^{\mathcal{J}}$, we have $(g,1)(1,\gamma)\cdot v=(1,\gamma)(\gamma(g),1)\cdot v$. So if $g\in \mathcal{J}$, we have $\gamma(g)\in \mathcal{J}$ and $(\gamma(g),1)\cdot v=v$, which means $(g,1)(1,\gamma)\cdot v=(1,\gamma)\cdot v$, i.e. $(1,\gamma)\cdot v\in V^{\mathcal{J}}$. As a result, $(1,\gamma)\cdot V^{\mathcal{J}}=V^{\mathcal{J}}$, hence $V=\mathcal{G}\cdot V^{\mathcal{J}}$ and $\mathrm{Res}^{\mathcal{G}^+}_{\mathcal{G}} V \in \mathrm{Rep}(\mathcal{G})^{(\mathcal{J},1_{\triv})}$.

	Conversely, if $\mathrm{Res}^{\mathcal{G}^+}_{\mathcal{G}} V \in Rep(\mathcal{G})^{(\mathcal{J},1_{\triv})}$, we have $V=\mathcal{G}\cdot V^{\mathcal{J}}= \mathcal{G}^+\cdot V^{\mathcal{J}}$.
\end{proof}

If $V'$ is a subquotient of $V$ as $\mathcal{G}^+$-representation, $V'$ is also a subquotient of $V$ as $\mathcal{G}$-representation. If $\mathrm{Res}^{\mathcal{G}^+}_{\mathcal{G}} V \in \mathrm{Rep}(\mathcal{G})^{(\mathcal{J},1_{\triv})}$, then $\mathrm{Res}^{\mathcal{G}^+}_{\mathcal{G}} V' \in \mathrm{Rep}(\mathcal{G})^{(\mathcal{J},1_{\triv})}$. We denote by $\mathrm{Rep}(\mathcal{G}^+)^{(\mathcal{J},1_{\triv})}$ the full subcategory of $\mathrm{Rep}(\mathcal{G}^+)$ consisting of those representations satisfying the condition in Proposition \ref{Type}. Then $\mathrm{Rep}(\mathcal{G}^+)^{(\mathcal{J},1_{\triv})}$ is closed under taking subquotient. From \cite{zbMATH01234305} we have the following.

\begin{proposition}\label{Type2}
	The functor
	\begin{equation}
		\begin{aligned}
		\Lambda_{\mathcal{J}}^+:\mathrm{Rep}(\mathcal{G}^+)^{(\mathcal{J},1_{\triv})}&\to \mathcal{H}(\mathcal{J}\backslash\mathcal{G}^+/\mathcal{J})-\mathrm{Mod}\\
		V &\mapsto V^{\mathcal{J}}
	\end{aligned}	
	\end{equation}

	is an equivalence of categories. Its inverse is given by \begin{align}
		(\Lambda_{\mathcal{J}}^+)^{-1}:V\mapsto\mathcal{H}(\mathcal{G}^+)\otimes_{\mathcal{H}(\mathcal{J}\backslash\mathcal{G}^+/\mathcal{J})} V
	\end{align}
\end{proposition}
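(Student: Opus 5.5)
We prove Proposition \ref{Type2} by treating $(\mathcal{J},1_{\triv})$ as a type for $\mathcal{G}^+$ and reducing to the connected statement $\mathrm{Rep}(\mathcal{G})^{(\mathcal{J},1_{\triv})}\simeq \mathcal{H}(\mathcal{J}\backslash\mathcal{G}/\mathcal{J})-\mathrm{Mod}$ recalled above. The functor $V\mapsto V^{\mathcal{J}}=\pi(e_{\mathcal{J}})V$ is exact, because $\mathcal{J}$ is compact open and $e_{\mathcal{J}}$ is an idempotent. The module structure comes from $\mathcal{H}(\mathcal{J}\backslash\mathcal{G}^+/\mathcal{J})\simeq e_{\mathcal{J}}\mathcal{H}(\mathcal{G}^+)e_{\mathcal{J}}$. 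The candidate inverse is $M\mapsto \mathcal{H}(\mathcal{G}^+)e_{\mathcal{J}}\otimes_{e_{\mathcal{J}}\mathcal{H}(\mathcal{G}^+)e_{\mathcal{J}}}M$, which equals $\mathcal{H}(\mathcal{G}^+)\otimes_{\mathcal{H}(\mathcal{J}\backslash\mathcal{G}^+/\mathcal{J})}M$. This is a smooth $\mathcal{G}^+$-representation generated by its $\mathcal{J}$-fixed vectors $e_{\mathcal{J}}\otimes M\simeq M$.

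The standard Bushnell--Kutzko criterion \cite{zbMATH01234305} says that $V\mapsto V^{\mathcal{J}}$ is an equivalence from $\mathrm{Rep}(\mathcal{G}^+)^{(\mathcal{J},1_{\triv})}$ onto $e\mathcal{H}e$-modules exactly when this subcategory is closed under subquotients. Equivalently, for every $V$ in it, every nonzero subrepresentation $W\subset V$ has $W^{\mathcal{J}}\neq 0$. So the plan is as follows.

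\emph{Step 1.} Check that the unit $M\to(\mathcal{H}(\mathcal{G}^+)e_{\mathcal{J}}\otimes M)^{\mathcal{J}}$ is an isomorphism. This is formal: $e_{\mathcal{J}}\mathcal{H}(\mathcal{G}^+)e_{\mathcal{J}}\otimes_{e\mathcal{H}e}M=M$.

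\emph{Step 2.} Check that the counit $\mathcal{H}(\mathcal{G}^+)e_{\mathcal{J}}\otimes V^{\mathcal{J}}\to V$ is surjective. This holds because $V$ is generated by $V^{\mathcal{J}}$, by Proposition \ref{Type}.

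\emph{Step 3.} Show the counit is injective. Its kernel $K$ satisfies $K^{\mathcal{J}}=0$ by Step 1 and exactness of taking $\mathcal{J}$-invariants. It therefore suffices to show that $\mathcal{H}(\mathcal{G}^+)e_{\mathcal{J}}\otimes M$ has no nonzero subrepresentation without $\mathcal{J}$-fixed vectors.

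Step 3 is the main obstacle, and here we use the connected case. Restricted to $\mathcal{G}$, the two-sided decomposition $\mathcal{H}(\mathcal{G}^+)=\mathcal{H}(\mathcal{G})\oplus\mathcal{H}(\mathcal{G})\ast\delta_{(1,\gamma)}$ gives
\begin{equation*}
\mathcal{H}(\mathcal{G}^+)e_{\mathcal{J}}\otimes_{\mathcal{H}(\mathcal{J}\backslash\mathcal{G}^+/\mathcal{J})}M\simeq \mathcal{H}(\mathcal{G})e_{\mathcal{J}}\otimes_{\mathcal{H}(\mathcal{J}\backslash\mathcal{G}/\mathcal{J})}M|_{\mathcal{H}(\mathcal{J}\backslash\mathcal{G}/\mathcal{J})} .
\end{equation*}
This uses the description $\mathcal{H}(\mathcal{J}\backslash\mathcal{G}^+/\mathcal{J})\simeq\mathcal{H}(\mathcal{J}\backslash\mathcal{G}/\mathcal{J})\rtimes\Gamma$ from Proposition \ref{IwaHec}, together with the fact that $\gamma$ normalizes $\mathcal{J}$. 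Concretely, $\mathcal{H}(\mathcal{G}^+)e_{\mathcal{J}}$ is free of rank one over $\mathcal{H}(\mathcal{G})e_{\mathcal{J}}\otimes\mathbb{C}[\Gamma]$ in a compatible way.

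The right-hand side lies in $\mathrm{Rep}(\mathcal{G})^{(\mathcal{J},1_{\triv})}$. By Borel's theorem, this category is closed under subquotients, since it coincides with the Bernstein block $\mathrm{Rep}(\mathcal{G})^{[\mathcal{T},1_{\triv}]}$. Hence any nonzero $\mathcal{G}^+$-subrepresentation $K$ is in particular a nonzero $\mathcal{G}$-subrepresentation of an object of this block, so $K^{\mathcal{J}}\neq0$. This proves injectivity.

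Finally, we assemble the argument. The unit and counit are isomorphisms, so $\Lambda^+_{\mathcal{J}}$ is an equivalence with the stated inverse. The only delicate point is verifying the displayed isomorphism, i.e. that tensoring over the crossed product with $\Gamma$ recovers the connected induction. If we want to avoid it, we can instead use Proposition \ref{Type} together with the remark preceding the statement, which shows $\mathrm{Rep}(\mathcal{G}^+)^{(\mathcal{J},1_{\triv})}$ is closed under subquotients, and then invoke the abstract criterion of \cite{zbMATH01234305} directly.
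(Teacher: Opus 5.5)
Your proposal is correct and is essentially the paper's argument. The paper likewise shows that $\mathrm{Rep}(\mathcal{G}^+)^{(\mathcal{J},1_{\triv})}$ is closed under subquotients by restricting to $\mathcal{G}$ (using Proposition \ref{Type} and Borel's result in the connected case), then cites the Bushnell--Kutzko criterion; your unit/counit Steps 1--3 just unpack that criterion. The displayed isomorphism in Step 3 is not needed: $\mathcal{H}(\mathcal{G}^+)e_{\mathcal{J}}\otimes M$ is generated over $\mathcal{G}^+$ by its $\mathcal{J}$-fixed vectors, so Proposition \ref{Type} already places its restriction in $\mathrm{Rep}(\mathcal{G})^{(\mathcal{J},1_{\triv})}$.
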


We write $\mathrm{Rep}(\mathcal{G}^+)^{[\mathcal{T},1_{\triv}]}$ for the full subcategory of $\mathrm{Rep}(\mathcal{G}^+)$ whose restriction to $\mathrm{Rep}(\mathcal{G})$ lies in  $\mathrm{Rep}(\mathcal{G})^{[\mathcal{T},1_{\triv}]}$. Using $\mathrm{Rep}(\mathcal{G})^{(\mathcal{J},1_{\triv})}\simeq \mathrm{Rep}(\mathcal{G})^{[\mathcal{T},1_{\triv}]}$ together with Proposition \ref{Type}, we obtain \begin{align*}
	\mathrm{Rep}(\mathcal{G}^+)^{(\mathcal{J},1_{\triv})}\simeq \mathrm{Rep}(\mathcal{G}^+)^{[\mathcal{T},1_{\triv}]}
\end{align*}

We have the commutative diagram 
\begin{equation*}
\begin{tikzcd}
	\mathrm{Rep}(\mathcal{G}^+)^{(\mathcal{J},1_{\triv})}
	\arrow[r, "\simeq", "\Lambda_{\mathcal{J}}^+"']
	\arrow[d, "\mathrm{Res}^{\mathcal{G}^+}_{\mathcal{G}}"']
	&
	\mathcal{H}(\mathcal{J}\backslash\mathcal{G}^+/\mathcal{J})\text{-}\mathrm{Mod}
	\arrow[d, "\mathrm{Res}^{\mathcal{H}(\mathcal{J}\backslash\mathcal{G}^+/\mathcal{J})}_{\mathcal{H}(\mathcal{J}\backslash\mathcal{G}/\mathcal{J})}"]
	\\
	\mathrm{Rep}(\mathcal{G})^{(\mathcal{J},1_{\triv})}
	\arrow[r, "\simeq", "\Lambda_{\mathcal{J}}"']
	&
	\mathcal{H}(\mathcal{J}\backslash\mathcal{G}/\mathcal{J})\text{-}\mathrm{Mod}
\end{tikzcd}
\end{equation*}
where the right vertical restriction is induced by the embedding $\mathcal{H}(\mathcal{J}\backslash\mathcal{G}/\mathcal{J})\hookrightarrow \mathcal{H}(\mathcal{J}\backslash\mathcal{G}^+/\mathcal{J})\simeq \mathcal{H}(\mathcal{J}\backslash\mathcal{G}/\mathcal{J})\rtimes \Gamma$ which is identity on $\mathcal{H}(\mathcal{J}\backslash\mathcal{G}/\mathcal{J})$.

For $(\sigma_1,W_1)\in \mathrm{Rep}(\mathcal{G})$, define $\mathrm{ind}^{\mathcal{G}^+}_{\mathcal{G}}(W_1)=\{f:\mathcal{G}^+\to W_1| f(gg')=\sigma_1(g)f(g'),g\in \mathcal{G},g'\in \mathcal{G}^+\}$ is the representation of $\mathcal{G}^+$ given by right translation on the space. Taking $(\sigma_2,W_2)\in \mathcal{H}(\mathcal{J}\backslash\mathcal{G}/\mathcal{J})-\mathrm{Mod}$, $\mathrm{ind}^{\mathcal{H}(\mathcal{J}\backslash\mathcal{G}^+/\mathcal{J})}_{\mathcal{H}(\mathcal{J}\backslash\mathcal{G}/\mathcal{J})} (W_2)=\{f:{\mathcal{H}(\mathcal{J}\backslash\mathcal{G}^+/\mathcal{J})} \to W_2| f(gg')=\sigma_2(g)f(g'),g\in {\mathcal{H}(\mathcal{J}\backslash\mathcal{G}/\mathcal{J})}_,g'\in {\mathcal{H}(\mathcal{J}\backslash\mathcal{G}^+/\mathcal{J})}\}$  is regarded as a left $\mathcal{H}(\mathcal{J}\backslash\mathcal{G}^+/\mathcal{J})$-module via right translation. By the uniqueness of adjoint functors, we have the commutative diagram
\begin{equation}\label{IndComPG}
	\begin{tikzcd}
		\mathrm{Rep}(\mathcal{G}^+)^{(\mathcal{J},1_{\triv})}
		\arrow[r, "\simeq"', "\Lambda_{\mathcal{J}}^+"]
		&
		\mathcal{H}(\mathcal{J}\backslash\mathcal{G}^+/\mathcal{J})\text{-}\mathrm{Mod} \\
		\mathrm{Rep}(\mathcal{G})^{(\mathcal{J},1_{\triv})}
		\arrow[r, "\simeq"', "\Lambda_{\mathcal{J}}"]
		\arrow[u, "\mathrm{ind}^{\mathcal{G}^+}_{\mathcal{G}}"]
		&
		\mathcal{H}(\mathcal{J}\backslash\mathcal{G}/\mathcal{J})\text{-}\mathrm{Mod}
		\arrow[u, "\mathrm{ind}^{\mathcal{H}(\mathcal{J}\backslash\mathcal{G}^+/\mathcal{J})}_{\mathcal{H}(\mathcal{J}\backslash\mathcal{G}/\mathcal{J})}"']
	\end{tikzcd}
\end{equation}

\begin{remark}
	Note that $\mathcal{H}(\mathcal{J}\backslash\mathcal{G}/\mathcal{J})$ is a subalgebra of $\mathcal{H}(\mathcal{J}\backslash\mathcal{G}^+/\mathcal{J})$, so we can also define an induction $\mathrm{Ind}^{\mathcal{H}(\mathcal{J}\backslash\mathcal{G}^+/\mathcal{J})}_{\mathcal{H}(\mathcal{J}\backslash\mathcal{G}/\mathcal{J})} (V):= \mathcal{H}(\mathcal{J}\backslash\mathcal{G}^+/\mathcal{J})\otimes_{\mathcal{H}(\mathcal{J}\backslash\mathcal{G}/\mathcal{J})}V$. The two induction functors are naturally isomorphic given by \begin{align*}
	    \mathrm{ind}^{\mathcal{H}(\mathcal{J}\backslash\mathcal{G}^+/\mathcal{J})}_{\mathcal{H}(\mathcal{J}\backslash\mathcal{G}/\mathcal{J})} (V) &\to \mathrm{Ind}^{\mathcal{H}(\mathcal{J}\backslash\mathcal{G}^+/\mathcal{J})}_{\mathcal{H}(\mathcal{J}\backslash\mathcal{G}/\mathcal{J})} (V)\\
        f&\mapsto \sum_{a\in \Gamma}a\otimes f(a)
	\end{align*}
\end{remark}

For $(\sigma,W)\in \mathrm{Rep}(\mathcal{G})$, we define $(\gamma^*{\sigma}) (g):= \sigma(\gamma(g))$

From \cite{zbMATH01014663},  for an irreducible  representation $(\pi,V)\in \mathrm{Rep}(\mathcal{G}^+)$ either

$\bullet$ If $\mathrm{Res}^{\mathcal{G}^+}_{\mathcal{G}}(\pi)$ is irreducible, i.e. $\mathrm{Res}^{\mathcal{G}^+}_{\mathcal{G}}(\pi)=\sigma$,  then $\sigma \simeq \gamma^* \sigma$ and there are two extensions of $\sigma$ to $\mathcal{G}^+$.

or

$\bullet$ If $\mathrm{Res}^{\mathcal{G}^+}_{\mathcal{G}}(\pi)$ is reducible, i.e. $\mathrm{Res}^{\mathcal{G}^+}_{\mathcal{G}}(\pi)=\sigma_1\oplus\sigma_2$, then $\sigma_2\simeq  {\gamma^* \sigma_1}$ and there is only one extension of $\sigma_1$ 
(or of $\sigma_2$) to $\mathcal{G}^+$, and that extension satisfies $\pi\simeq i^{\mathcal{G}^+}_{\mathcal{G}}(\sigma_1)\simeq i^{\mathcal{G}^+}_{\mathcal{G}}(\sigma_2)$

\begin{remark}
    Recall $ \mathcal{H}(\mathcal{J}\backslash\mathcal{G}/\mathcal{J})\rtimes \Gamma\simeq \mathcal{H}(\mathcal{J}\backslash\mathcal{G}^+/\mathcal{J})$.
Denote by ${{\gamma}^*}V$ the $\mathcal{H}(\mathcal{J}\backslash\mathcal{G}/\mathcal{J})$ module with the same underlying vector space but with the action twisted by ${\gamma}$. From \eqref{GAct1} and \eqref{GAct2}, we have \begin{align}\label{TwiE}
    (\gamma^*\pi)^{\mathcal{J}}\simeq {\gamma}^*(\pi^{\mathcal{J}})
\end{align}
Hence from \cite{zbMATH02096630}, we get the same result as above.
\end{remark}

\subsection{Jacquet and induction functors}
Recall $\mathbb{B}$ denotes the Borel subgroup of upper triangular matrices. Denote by $\bar{\mathbb{B}}$ the opposite Borel subgroup, i.e., the lower triangular matrices. 
Let $\mathcal{P}'$ be a parabolic subgroup of $\mathcal{G}$ containing $\bar{\mathbb{B}}(\mathbb{Q}_p)$ and normarlized by $\gamma$. Write $\mathcal{P}'=\mathcal{M}\mathcal{U}'$ with Levi subgroup $\mathcal{M}$ and unipotent radical $\mathcal{U}'$. Then $\gamma$ normalizes $\mathcal{M}$ and $\mathcal{U}'$. Define $\mathcal{M}^+=\mathcal{M}\rtimes \Gamma$, $\mathcal{P}'^+=\mathcal{M}^+\mathcal{U}'$. Since $\mathcal{M}^+$ normalizes $\mathcal{U}'$, we can define functors $i_{\mathcal{P}'^+}$ and $r_{\mathcal{P}'^+}$ as in \cite[1.8]{zbMATH03639915}.

Recall that for any element $y\in Y^+$, we have $y\mapsto y(\varpi^{-1})$. Let $T_y^{\mathcal{G}}$ be the characteristic function of the double coset $\mathcal{J}y(\varpi^{-1})\mathcal{J}$, and  $\theta_y^{\mathcal{G}}=p^{-l(y)/2}T_{y}^{\mathcal{G}}=\operatorname{vol}(\mathcal{J}y(\varpi^{-1})\mathcal{J})^{-1/2}T_{y}^{\mathcal{G}}$ in $\mathcal{H}(\mathcal{J}\backslash\mathcal{G}/\mathcal{J})$. Then $y(\varpi^{-1})$ is positive relative to ($\mathbb{P}'(F)$, $\mathcal{J}$), and $T_y^{\mathcal{G}}$ is an invertible element of $\mathcal{H}(\mathcal{J}\backslash\mathcal{G}/\mathcal{J})$.

Let $\mathcal{J}_{\mathcal{M}}=\mathcal{J}\cap \mathcal{M}$. Then we have the affine Hecke algebra $\mathcal{H}(\mathcal{J}_{\mathcal{M}}\backslash\mathcal{M}/\mathcal{J}_{\mathcal{M}})$ of $\mathcal{M}$ with respect to $\mathcal{J}_{\mathcal{M}}$. By \cite[7.9]{zbMATH01234305}, there is an injective homomorphism
\begin{equation}
	t_{\mathcal{P}'}:\mathcal{H}(\mathcal{J}_{\mathcal{M}}\backslash\mathcal{M}/\mathcal{J}_{\mathcal{M}})  \hookrightarrow \mathcal{H}(\mathcal{J}\backslash\mathcal{G}/\mathcal{J})
\end{equation}
such that for a simple reflection $s_\alpha\in W^{\mathcal{M}}\subset W^{\mathcal{G}}$:
\begin{equation}\label{embedhecke1}
	t_{\mathcal{P}'}(T_{s_\alpha}^{\mathcal{M}})=T_{s_\alpha}^{\mathcal{G}}
\end{equation} 
and for $y\in Y^+$:
\begin{equation}
	t_{\mathcal{P}'}(T_y^{\mathcal{M}})=\frac{\operatorname{vol}(\mathcal{J}_{\mathcal{M}}y(\varpi^{-1})\mathcal{J}_{\mathcal{M}})^{1/2}}{\operatorname{vol}(\mathcal{J}y(\varpi^{-1})\mathcal{J})^{1/2}}T_y^{\mathcal{G}}
\end{equation}

i.e. 
\begin{equation}\label{embedhecke2}
	t_{\mathcal{P}'}(\theta_y^{\mathcal{M}})=\theta_y^{\mathcal{G}}
\end{equation}

Then we obtain an injective homomorphism:
\begin{equation}
	t_{\mathcal{P}'}^0:\mathcal{H}(\mathcal{J}_{\mathcal{M}}\backslash\mathcal{M}/\mathcal{J}_{\mathcal{M}}) \hookrightarrow \mathcal{H}(\mathcal{J}\backslash\mathcal{G}^+/\mathcal{J})
\end{equation}

via the embedding $\mathcal{H}(\mathcal{J}\backslash\mathcal{G}/\mathcal{J})\hookrightarrow \mathcal{H}(\mathcal{J}\backslash\mathcal{G}^+/\mathcal{J})\simeq \mathcal{H}(\mathcal{J}\backslash\mathcal{G}/\mathcal{J})\rtimes \Gamma$. Consequently, there is a  $\Gamma$-action on $\mathcal{H}(\mathcal{J}_{\mathcal{M}}\backslash\mathcal{M}/\mathcal{J}_{\mathcal{M}})$. Using the same method as before, we also have an embedding  $\mathcal{H}(\mathcal{J}_{\mathcal{M}}\backslash\mathcal{M}/\mathcal{J}_{\mathcal{M}})\hookrightarrow \mathcal{H}(\mathcal{J}_{\mathcal{M}}\backslash\mathcal{M}^+/\mathcal{J}_{\mathcal{M}})$, 
which induces another $\Gamma$-action on the same algebra. One easily checks that these two $\Gamma$-actions coincide. Hence we obtain an injective homomorphism:
\begin{equation}\label{embedhecke}
	t_{\mathcal{P}'^+}:\mathcal{H}(\mathcal{J}_{\mathcal{M}}\backslash\mathcal{M}^+/\mathcal{J}_{\mathcal{M}})\hookrightarrow \mathcal{H}(\mathcal{J}\backslash\mathcal{G}^+/\mathcal{J})
\end{equation}

satisfying the same relations \eqref{embedhecke1} and \eqref{embedhecke2} as above, together with
\begin{equation}
	t_{\mathcal{P}'^+}(T_{\gamma}^{\mathcal{M}^+})=T_{\gamma}^{\mathcal{G}^+}
\end{equation}

where $T_{\gamma}^{\mathcal{M}^+}$ is the characteristic function of the double coset $\mathcal{J}_{\mathcal{M}}(1,\gamma)\mathcal{J}_{\mathcal{M}}$ and $T_{\gamma}^{\mathcal{G}^+}$ is the characteristic function of the double coset $\mathcal{J}(1,\gamma)\mathcal{J}$.

For $\mathcal{M}^+$, we denote by $\mathrm{Rep}(\mathcal{M}^+)^{(\mathcal{J},1_{\triv})}$  the full subcategory of $\mathrm{Rep}(\mathcal{M}^+)$ whose restriction to $\mathrm{Rep}(\mathcal{M})$ lies in $\mathrm{Rep}(\mathcal{M})^{(\mathcal{J},1_{\triv})}$. 
By the same arguments as in Propositions \ref{Type} and Propositions \ref{Type2}, we obtain an equivalence of categories
\begin{equation}\label{MCatE}
	\begin{aligned}
		{\Lambda^+_{\mathcal{J}_{\mathcal{M}}}}:\mathrm{Rep}(\mathcal{M}^+)^{(\mathcal{J}_{\mathcal{M}},1_{\triv})}&\to \mathcal{H}(\mathcal{J}_{\mathcal{M}}\backslash\mathcal{M}^+/\mathcal{J}_{\mathcal{M}})-\mathrm{Mod}\\
		V &\mapsto V^{\mathcal{J}_{\mathcal{M}}}
	\end{aligned}
\end{equation}

\begin{proposition}
    There is a commutative diagram:
\begin{equation}
	\xymatrix{
		\mathrm{Rep}(\mathcal{G}^+)^{(\mathcal{J},1_{\triv})} \ar[r]^{\simeq}_{\Lambda_{\mathcal{J}}^+} \ar[d]_{r_{\mathcal{P}'^+}} &\mathcal{H}(\mathcal{J}\backslash\mathcal{G}^+/\mathcal{J})-\Mod \ar[d]^{t_{\mathcal{P}'^+}^*} \\
		\mathrm{Rep}(\mathcal{M}^+)^{(\mathcal{J}_{\mathcal{M}},1_{\triv})} \ar[r]^{\simeq}_{\Lambda^+_{\mathcal{J}_{\mathcal{M}}}} & \mathcal{H}(\mathcal{J}_{\mathcal{M}}\backslash\mathcal{M}^+/\mathcal{J}_{\mathcal{M}})-\Mod }
\end{equation}

where the right vertical restriction is via (\ref{embedhecke}) $t_{\mathcal{P}'^+}:\mathcal{H}(\mathcal{J}_{\mathcal{M}}\backslash\mathcal{M}^+/\mathcal{J}_{\mathcal{M}}) \to \mathcal{H}(\mathcal{J}\backslash\mathcal{G}^+/\mathcal{J})$
\end{proposition}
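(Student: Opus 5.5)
We reduce the statement to the connected case, which is \cite[7.9--7.12]{zbMATH01234305} (Bushnell--Kutzko): for the non-twisted groups there is a natural isomorphism $r_{\mathcal{P}'}(V)^{\mathcal{J}_{\mathcal{M}}}\simeq t_{\mathcal{P}'}^*(V^{\mathcal{J}})$ for $V\in\mathrm{Rep}(\mathcal{G})^{(\mathcal{J},1_{\triv})}$. Concretely, the projection $q:V\to V_{\mathcal{U}'}$ restricts to a linear isomorphism $V^{\mathcal{J}}\to (V_{\mathcal{U}'})^{\mathcal{J}_{\mathcal{M}}}$, because $\mathcal{J}$ has an Iwahori factorization with respect to $\mathcal{P}'$ and $y(\varpi^{-1})$ is positive for $y\in Y^+$. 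This map intertwines the action of $t_{\mathcal{P}'}(h)$ with that of $h$, after the normalization $\delta_{\mathcal{P}'}^{-1/2}$; that normalization is exactly what converts $T_y$ into $\theta_y$ in \eqref{embedhecke2}. We will use this map $q$ as the candidate natural transformation in the twisted setting.

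First, $r_{\mathcal{P}'^+}$ lands in the right category. As an $\mathcal{M}$-representation, $\mathrm{Res}^{\mathcal{M}^+}_{\mathcal{M}}r_{\mathcal{P}'^+}(V)=r_{\mathcal{P}'}(\mathrm{Res}^{\mathcal{G}^+}_{\mathcal{G}}V)$. This holds because $\mathcal{U}'$ is the same group in both settings and the modulus character of $\mathcal{P}'^+$ restricts to that of $\mathcal{P}'$; moreover $\delta$ is trivial on $\gamma$, since $\gamma$ has finite order. Hence, by the connected case together with the $\mathcal{M}$-analogue of Proposition \ref{Type}, $r_{\mathcal{P}'^+}(V)$ lies in $\mathrm{Rep}(\mathcal{M}^+)^{(\mathcal{J}_{\mathcal{M}},1_{\triv})}$. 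Second, the same map $q:V^{\mathcal{J}}\to r_{\mathcal{P}'^+}(V)^{\mathcal{J}_{\mathcal{M}}}$ is a linear isomorphism, natural in $V$, and compatible with the action of $\mathcal{H}(\mathcal{J}_{\mathcal{M}}\backslash\mathcal{M}/\mathcal{J}_{\mathcal{M}})$ via $t_{\mathcal{P}'}$. This already takes care of the relations \eqref{embedhecke1} and \eqref{embedhecke2} for $t_{\mathcal{P}'^+}$.

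By Proposition \ref{IwaHec}, applied to $\mathcal{M}^+$, it remains only to check compatibility with the generator $T_\gamma$. That is, we must show $q(\pi(T_\gamma^{\mathcal{G}^+})v)=\pi_{\mathcal{M}^+}(T_\gamma^{\mathcal{M}^+})q(v)$. Since $(1,\gamma)$ normalizes $\mathcal{J}$ and the measure is normalized with $\mathrm{vol}(\mathcal{J})=1$, we have $\pi(T_\gamma^{\mathcal{G}^+})v=(1,\gamma)\cdot v$ for $v\in V^{\mathcal{J}}$, as in the computation of $T_\gamma^2=1$. Likewise $\pi_{\mathcal{M}^+}(T_\gamma^{\mathcal{M}^+})$ acts on $\mathcal{J}_{\mathcal{M}}$-invariants of $V_{\mathcal{U}'}$ by $(1,\gamma)$, since $\gamma$ normalizes $\mathcal{J}_{\mathcal{M}}=\mathcal{J}\cap\mathcal{M}$ and $\mathrm{vol}(\mathcal{J}_{\mathcal{M}})=1$. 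The quotient map $q$ is $\mathcal{M}^+$-equivariant, because $(1,\gamma)$ normalizes $\mathcal{U}'$ and the normalizing character is trivial on $(1,\gamma)$. Hence $q((1,\gamma)v)=(1,\gamma)q(v)$, which is the required compatibility. It follows that $\Lambda^+_{\mathcal{J}_{\mathcal{M}}}\circ r_{\mathcal{P}'^+}\simeq t_{\mathcal{P}'^+}^*\circ\Lambda^+_{\mathcal{J}}$, naturally in $V$.

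The main obstacle is bookkeeping rather than new ideas. One must check that the normalization of Haar measures on $\mathcal{M}^+$ matches (volume of $\mathcal{J}_{\mathcal{M}}$ equal to $1$), and that the $\Gamma$-action used to define $t_{\mathcal{P}'^+}$ agrees with conjugation by $(1,\gamma)$ on $\mathcal{M}$; the latter is the "two $\Gamma$-actions coincide" remark preceding \eqref{embedhecke}. The essential input, bijectivity of $q$ on Iwahori invariants, is inherited from the connected case.
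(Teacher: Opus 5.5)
Your proposal is correct and follows essentially the same route as the paper: you transfer the connected case from Bushnell--Kutzko through the isomorphism $q\colon V^{\mathcal{J}}\to (V_{\mathcal{U}'})^{\mathcal{J}_{\mathcal{M}}}$, and then you only need to check compatibility with the extra generator $T_\gamma$. The one difference is in that last step. The paper computes the relevant integral by splitting $\mathcal{J}(1,\gamma)\mathcal{J}$ via the Iwahori factorization. You instead note that $\mathcal{J}(1,\gamma)\mathcal{J}=(1,\gamma)\mathcal{J}$, so $T_\gamma$ acts on invariants simply by $(1,\gamma)$, and then use the $\mathcal{M}^+$-equivariance of $q$; this is a cleaner version of the same check.
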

\begin{proof}
Let $(\pi,V)\in \Rep(\mathcal{G}^+)^{(\mathcal{J},1_{\triv})} $, and write $(\pi_u,V_u)$ for the Jacquet module of $(\pi,V)$ relative to the Jacquet functor $r_{\mathcal{P}'^+}$.
From the definition of the Jacquet functor, $r_{\mathcal{P}'} V$ and $r_{\mathcal{P}'^+}V$ have the same underlying space. Let $q$ denote the quotient map $V\to V_u$.

From the proof in \cite[7.9]{zbMATH01234305}. We have $q(V^{\mathcal{J}})=V_u^{\mathcal{J}_{\mathcal{M}}}$.
    We define a map:
\begin{equation}\label{JIso1}
    \begin{aligned}
        q: V^\mathcal{J} & \to V_u^{\mathcal{J}_{\mathcal{M}}}\\
        v&\mapsto q(v)
    \end{aligned}
\end{equation}

    Hence $q$ is an isomorphism, and for $v\in V^\mathcal{J}$, $f\in \mathcal{H}(\mathcal{J}_{\mathcal{M}}\backslash\mathcal{M}/\mathcal{J}_{\mathcal{M}})$, we have
    \begin{align}\label{ComP1}
        q(\pi(t_{\mathcal{P}'}f)v)=\pi_u(f)q(v)
    \end{align}

Now let $f=T_{\gamma}^{\mathcal{M}^+}$. We have \begin{align*}
    \pi_u(T_{\gamma}^{\mathcal{M}^+})\cdot q(v)&=\int_{\mathcal{M}^+}T_{\gamma}^{\mathcal{M}^+}(h)\pi_u(h)\cdot q(v) dh\\
&=\int_{\mathcal{M}^+}T_{\gamma}^{\mathcal{M}^+}(h)q(\pi (h)v) dh\\
&=q\circ \int_{\mathcal{M}^+}T_{\gamma}^{\mathcal{M}^+}(h)\pi(h)\cdot v dh\\
&= q\circ \int_{\MJ_{\mathcal{M}}(1,\gamma)\MJ_{\mathcal{M}}}\pi(h)\cdot v dh
\end{align*}
Denote $\MJ_u=\MJ\cap \mathcal{U}'$ and $\MJ_u=\MJ\cap \mathcal{U}'_{\op}$ where $\mathcal{U}'_{\op}$ is opposite of $\mathcal{U}'$. We have
\begin{align*}
\pi(t_{\mathcal{P}'^+}T_{\gamma}^{\mathcal{M}^+})v&=\int_{\mathcal{G}^+}T_{\gamma}^{\mathcal{G}^+}(h)\pi(h)\cdot v dh\\
&=\int_{\mathcal{J}(1,\gamma)\mathcal{J}}T_{\gamma}^{\mathcal{G}^+}(h)\pi(h)\cdot v dh\\
&=\int_{\MJ_u\mathcal{J}_{\mathcal{M}}(1,\gamma)\mathcal{J}_{\mathcal{M}}\MJ_l}T_{\gamma}^{\mathcal{G}^+}(h_uh_{\mathcal{M}}h_l)\pi(h_uh_{\mathcal{M}}h_l)\cdot v dh_ldh_{\mathcal{M}}dh_u\\
&= \int_{\MJ_u}\int_{\MJ_{\mathcal{M}}(1,\gamma)\MJ_{\mathcal{M}}}\pi(h)\cdot v dhdh_u
\end{align*}
We apply $q$ to this equation. Combining \eqref{ComP1}, for $f\in \mathcal{H}(\mathcal{J}_{\mathcal{M}}\backslash\mathcal{M}^+/\mathcal{J}_{\mathcal{M}})$, we obtain \begin{equation*}
q(\pi(t_{\mathcal{P}'^+}f)v)=\pi_u (f)q(v)
\end{equation*}
\end{proof}

Let $\mathcal{P}$ denote the opposite of $\mathcal{P}'$, which contains ${\mathbb{B}}(\mathbb{Q}_p)$(the upper triangular Borel subgroup) and ${\mathcal{M}}$. Define $\mathcal{P}^+=\mathcal{P}\rtimes \Gamma$. Then from \cite[5.5]{zbMATH02164641}:
\begin{proposition}[\cite{zbMATH02164641}]
	If $(\pi,V)\in 	\mathrm{Rep}(\mathcal{G}^+)$ is admissible, the contragredient of $r_{\mathcal{P}'^+}(\pi)$ is isomorphic to $r_{\mathcal{P}^+}(\tilde{\pi})$ as $\mathcal{M}^+$-representations, i.e. 
	$\widetilde{r_{\mathcal{P}'^+}(\pi)}\simeq r_{\mathcal{P}^+}(\tilde{\pi})$, where $\widetilde{(\cdot)}$ denotes the contragredient representation.
\end{proposition}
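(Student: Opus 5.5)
The plan is to reduce to the connected case, \cite[5.5]{zbMATH02164641}, which gives an $\mathcal{M}$-isomorphism $\widetilde{r_{\mathcal{P}'}(\pi)}\simeq r_{\mathcal{P}}(\tilde{\pi})$, and then check that this isomorphism also intertwines the action of $(1,\gamma)$. Restricted to $\mathcal{G}$, an admissible $\mathcal{G}^+$-representation is admissible, since $\mathcal{G}$ has index $2$ and the same compact open subgroups. The contragredient $\tilde{\pi}$ of the $\mathcal{G}^+$-representation, computed on smooth vectors, agrees with the contragredient of $\mathrm{Res}^{\mathcal{G}^+}_{\mathcal{G}}\pi$, because smoothness only involves open subgroups of $\mathcal{G}$. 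Likewise, $r_{\mathcal{P}'^+}$ and $r_{\mathcal{P}'}$ have the same underlying space $V_{\mathcal{U}'}$, and $(1,\gamma)$ acts through its action on $V$, which preserves $V(\mathcal{U}')$ because $\gamma$ normalizes $\mathcal{U}'$. The normalizing modulus character $\delta_{\mathcal{P}'}^{-1/2}$ extends to $\mathcal{M}^+$ by $1$ on $\gamma$, since $\gamma$ preserves $\delta_{\mathcal{P}'}$. Therefore only $\gamma$-equivariance remains to be proved.

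I will use the explicit form of Casselman's pairing. For $v\in V$ and $\tilde v\in \tilde V$, pick a compact open $\mathcal{J}_0$ with Iwahori factorization and a sufficiently $\mathcal{P}$-dominant central element $a$ of $\mathcal{M}$. The canonical pairing $\langle\,,\rangle_{\mathcal{U}}$ between $V_{\mathcal{U}'}$ and $\tilde V_{\mathcal{U}}$ is characterized by
\begin{equation*}
\langle q'(v),\tilde q(\tilde v)\rangle_{\mathcal{U}} = \delta(a)^{\pm 1/2}\,\langle \pi(a^n)v,\tilde v\rangle \quad (n\gg 0),
\end{equation*}
with the appropriate sign, or equivalently by the asymptotics of matrix coefficients $\langle\pi(a)v,\tilde v\rangle$ for $a$ in the positive cone $A^-$ relative to $\mathcal{P}'$. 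Because this characterization is canonical, in the sense that it depends only on the pair $(\mathcal{P},\mathcal{P}')$ and on the asymptotic behaviour of matrix coefficients, it should be transported by $\gamma$ to itself.

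The key step is the identity $\langle \gamma q'(v),\gamma\tilde q(\tilde v)\rangle_{\mathcal{U}}=\langle q'(v),\tilde q(\tilde v)\rangle_{\mathcal{U}}$. Since $(1,\gamma)$ acts on $\tilde{V}$ contragrediently, $\langle \pi(\gamma)\pi(a^n)v,\tilde\pi(\gamma)\tilde v\rangle=\langle\pi(a^n)v,\tilde v\rangle$. Moreover, $\pi(\gamma)\pi(a^n)=\pi(\gamma(a)^n)\pi(\gamma)$, so this equals $\langle \pi(\gamma(a)^n)\pi(\gamma)v,\tilde\pi(\gamma)\tilde v\rangle$.

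The main obstacle is that $\gamma(a)$ must again be dominant in the same sense for the characterization to apply. This holds because $\gamma$ normalizes $\mathcal{P}$ and $\mathcal{P}'$ (and hence $\mathcal{U},\mathcal{U}'$), so it preserves the cone $A^-$ of elements contracting $\mathcal{U}'$. We also need $\gamma(\mathcal{J}_0)$ to have an Iwahori factorization, for which it suffices to choose $\mathcal{J}_0$ $\gamma$-stable, for instance a principal congruence subgroup, which $\gamma$ preserves. The modulus factor is unchanged because $\delta(\gamma(a))=\delta(a)$.

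Combining these facts, the map $\widetilde{r_{\mathcal{P}'}(\pi)}\to r_{\mathcal{P}}(\tilde{\pi})$ induced by the pairing commutes with $(1,\gamma)$. Hence it is an isomorphism of $\mathcal{M}^+$-representations, which proves the proposition.
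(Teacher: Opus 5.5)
Your argument is correct and is essentially what the paper has in mind. The paper quotes the connected-case isomorphism and then remarks that the Casselman--Bernstein pairing, formed from the $\mathcal{G}^+$-invariant pairing $\tilde V\times V\to\mathbb{C}$ using a $\gamma$-stable compact open subgroup, is automatically $\mathcal{M}^+$-equivariant; your checks on dominance, on the modulus character and on $\gamma$-stability are exactly what verify this.

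One small correction to your displayed characterization. Its left side must depend on $n$, so it should read $\langle \pi_{\mathcal{U}'}(a^n)q'(v),\tilde q(\tilde v)\rangle_{\mathcal{U}}=\delta(a^n)^{\pm1/2}\langle\pi(a^n)v,\tilde v\rangle$ for $n\gg0$, where the modulus factor depends on the normalization. Consequently, $\langle\gamma\,\cdot,\gamma\,\cdot\rangle_{\mathcal{U}}=\langle\cdot,\cdot\rangle_{\mathcal{U}}$ does not follow by direct evaluation. Instead, show that the $\gamma$-transported pairing has the same asymptotic property and then invoke uniqueness of the canonical pairing.
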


\begin{remark}
	In fact, the above isomorphism holds more generally, not only for admissible representations. Using the $\mathcal{M}$-equivariant non‑degenerate pairing introduced in \cite{Bern} (see also \cite{zbMATH05665785} or \cite{zbMATH01818673} for further details), one notes that the original pairing is $\mathcal{G}^+$-equivariant. Choosing a $\gamma$-invariant open compact subgroup, the pairing becomes $\mathcal{M}^+$-equivariant. In our case  $(\pi,V)\in 	\mathrm{Rep}(\mathcal{G}^+)^{(\mathcal{J},1_{\triv})} $, it can be described as $\mathcal{H}(\mathcal{J}_{\mathcal{M}}\backslash\mathcal{M}^+/\mathcal{J}_{\mathcal{M}})$-equivariant via ${\Lambda^+_{\mathcal{J}_{\mathcal{M}}}}$ for simplicity.
\end{remark}

\begin{proposition}
	If $(\pi,V)\in 	\mathrm{Rep}(\mathcal{G}^+)^{(\mathcal{J},1_{\triv})}$, the contragredient of $r_{\mathcal{P}'^+}(\pi)$ is isomorphic to $r_{\mathcal{P}^+}(\tilde{\pi})$ as $\mathcal{M}^+$-representations, i.e. 
	$\widetilde{r_{\mathcal{P}'^+}(\pi)}\simeq r_{\mathcal{P}^+}(\tilde{\pi})$, where $\widetilde{(\cdot)}$ denotes the contragredient representation.
\end{proposition}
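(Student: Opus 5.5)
The plan is to reduce the statement to the admissible case already covered by the preceding proposition of \cite{zbMATH02164641}, and to handle the general (possibly non-admissible) $V\in\mathrm{Rep}(\mathcal{G}^+)^{(\mathcal{J},1_{\triv})}$ through the Hecke-module description in the remark. Concretely, I will use Bernstein's $\mathcal{M}$-equivariant non-degenerate pairing
\begin{equation*}
\langle\ ,\ \rangle_u : r_{\mathcal{P}'}(V)\times r_{\mathcal{P}}(\tilde V)\to \mathbb{C},
\end{equation*}
characterized by $\langle q(v),q'(\tilde v)\rangle_u=\langle v,\tilde v\rangle$ for $v\in V^{K}$, $\tilde v\in \tilde V^{K}$, where $K$ is a sufficiently small open compact subgroup with Iwahori factorization. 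Here $q,q'$ are the Jacquet quotient maps.

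\textbf{Step 1: equivariance of the pairing.} I take $K=\mathcal{J}$, which is $\gamma$-stable and has an Iwahori factorization with respect to both $\mathcal{P}$ and $\mathcal{P}'$. The original pairing $\langle v,\tilde v\rangle$ is $\mathcal{G}^+$-invariant, and $(1,\gamma)$ preserves $V^{\mathcal{J}}$ and $\tilde V^{\mathcal{J}}$ (see the proof of Proposition \ref{Type}). It also commutes with $q$ and $q'$, because $\gamma$ normalizes $\mathcal{U}'$ and $\mathcal{U}$. Hence
\begin{equation*}
\langle \pi_u(1,\gamma)q(v),\tilde\pi_u(1,\gamma)q'(\tilde v)\rangle_u=\langle q(v),q'(\tilde v)\rangle_u .
\end{equation*}
Combined with the known $\mathcal{M}$-equivariance, this gives $\mathcal{M}^+$-equivariance on the spaces $q(V^{\mathcal{J}})$ and $q'(\tilde V^{\mathcal{J}})$. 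These spaces generate the two Jacquet modules by the previous proposition, which gives $q(V^{\mathcal{J}})=V_u^{\mathcal{J}_{\mathcal{M}}}$ and similarly for $\tilde V$ (where $\tilde V$ is still in the Iwahori block). Since $\mathcal{M}$-translates of these spaces exhaust the modules, the identity extends to all vectors, and $\langle\ ,\ \rangle_u$ is $\mathcal{M}^+$-invariant.

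\textbf{Step 2: from the pairing to an isomorphism.} Non-degeneracy gives an injective $\mathcal{M}^+$-map $r_{\mathcal{P}}(\tilde V)\to \widetilde{r_{\mathcal{P}'}(V)}$. Surjectivity onto smooth vectors is the content of Bernstein's theorem for $\mathcal{M}$: the map is already an isomorphism of $\mathcal{M}$-representations, and $\mathcal{M}^+$-equivariance is automatic once the map is $\mathcal{M}$-bijective. Alternatively, via the equivalence $\Lambda^+_{\mathcal{J}_{\mathcal{M}}}$ of \eqref{MCatE}, it suffices to check the statement on $\mathcal{J}_{\mathcal{M}}$-fixed vectors as $\mathcal{H}(\mathcal{J}_{\mathcal{M}}\backslash\mathcal{M}^+/\mathcal{J}_{\mathcal{M}})$-modules. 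There the pairing identifies $q'(\tilde V^{\mathcal{J}})$ with the dual of $q(V^{\mathcal{J}})$, because $\tilde V^{\mathcal{J}}=(V^{\mathcal{J}})^*$. The extra generator $T^{\mathcal{M}^+}_\gamma$ is handled by the relation $t_{\mathcal{P}'^+}(T_\gamma^{\mathcal{M}^+})=T_\gamma^{\mathcal{G}^+}$ together with the commutative diagram of the previous proposition.

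\textbf{Main obstacle.} The delicate point is that the dual of a non-finite-dimensional $V^{\mathcal{J}}$ is not the $\mathcal{J}$-fixed part of the smooth contragredient in general, so the identification of $\tilde V^{\mathcal{J}}$ with $(V^{\mathcal{J}})^*$ must be used exactly as Bernstein does. I would therefore rely on Bernstein's $\mathcal{M}$-isomorphism as a black box and prove only the $\gamma$-compatibility, which is the computation in Step 1. The remaining checks are routine: the $\mathcal{J}$-invariance of $\gamma$ and the normalization of the Haar measures ($\gamma$ preserves the measure since it fixes $\mathcal{J}$).
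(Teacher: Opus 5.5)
Your proposal is correct and is essentially the paper's proof. The paper transports the canonical pairing $\tilde V^{\mathcal{J}}\times V^{\mathcal{J}}\to\mathbb{C}$ through $q$ to a nondegenerate $\mathcal{H}(\mathcal{J}_{\mathcal{M}}\backslash\mathcal{M}/\mathcal{J}_{\mathcal{M}})$-invariant form on the $\mathcal{J}_{\mathcal{M}}$-fixed vectors of the two Jacquet modules, checks invariance under $T_\gamma^{\mathcal{M}^+}$ (which is your Step 1, since $T_\gamma^{\mathcal{G}^+}$ acts on $\mathcal{J}$-fixed vectors as $(1,\gamma)$), and concludes with \eqref{MCatE}; this is exactly the Hecke-module alternative in your Step 2.

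Two corrections, both of which only simplify your argument:
\begin{itemize}
\item The identity $\langle q(v),q'(\tilde v)\rangle_u=\langle v,\tilde v\rangle$ is valid for $K=\mathcal{J}$, where $q$ is injective on $V^{\mathcal{J}}$. It does not hold for arbitrarily small $K$, because $q$ need not be injective on $V^{K}$.
\item Your ``main obstacle'' is not an obstacle. The equality $\tilde V^{K}=(V^{K})^*$ holds for every smooth $V$ and every compact open $K$, since a $K$-invariant functional factors through $v\mapsto\pi(e_K)v$. So the pairing identifies $r_{\mathcal{P}^+}(\tilde\pi)^{\mathcal{J}_{\mathcal{M}}}$ with $(r_{\mathcal{P}'^+}(\pi)^{\mathcal{J}_{\mathcal{M}}})^*=\widetilde{r_{\mathcal{P}'^+}(\pi)}^{\mathcal{J}_{\mathcal{M}}}$, and \eqref{MCatE} finishes the proof without invoking Bernstein's theorem as a black box.
\end{itemize}
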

\begin{proof}
    If $(\pi,V)\in 	\mathrm{Rep}(\mathcal{G}^+)^{(\mathcal{J},1_{\triv})}$, we have $(\widetilde{\pi},\widetilde{V})\in 	\mathrm{Rep}(\mathcal{G}^+)^{(\mathcal{J},1_{\triv})}$ and a canonical pairing:\begin{equation}\label{BiL2}
        \begin{aligned}
        \widetilde{V}\times V &\to \mathbb{C}\\
        (\tilde{v},v)& \mapsto \langle\tilde{v},v\rangle
    \end{aligned}
    \end{equation}

    From \cite{zbMATH01818673}, we have \begin{align*}
\langle\tilde{v},\pi(f)v\rangle=\langle\widetilde{\pi}(\tilde{f})\tilde{v},v\rangle
    \end{align*}
    where $\tilde{f}:g\mapsto f(g^{-1})$, $g\in \mathcal{G}^+$, and $\mathcal{H}(\mathcal{J}_{\mathcal{M}}\backslash\mathcal{M}/\mathcal{J}_{\mathcal{M}})$-invariant nondegenerate bilinear form
    \begin{align}\label{BiLi1}
\tilde{V}_l^{\mathcal{J}_{\mathcal{M}}}\times V_u^{\mathcal{J}_{\mathcal{M}}}\to \mathbb{C}
    \end{align}
    where $(\tilde{\pi}_l,\tilde{V}_l)$ is the Jacquet module of $(\tilde{\pi},\tilde{V})$ relative to the Jacquet functor $r_{\mathcal{P}^+}$. The nondegenerate bilinear form \eqref{BiLi1} comes from \eqref{BiL2} and \eqref{JIso1}. Since we have \begin{align*}
\langle\tilde{v},\pi(t_{\mathcal{P}'^+}T_{\gamma}^{\mathcal{M}^+})v\rangle=\langle\widetilde{\pi}(t_{\mathcal{P}^+}T_{\gamma}^{\mathcal{M}^+})\tilde{v},v\rangle
    \end{align*} 
    and \eqref{ComP1}, the nondegenerate bilinear form \eqref{BiLi1} is $\mathcal{H}(\mathcal{J}_{\mathcal{M}}\backslash\mathcal{M}^+/\mathcal{J}_{\mathcal{M}})$-invariant. Using the equivalence of categories \eqref{MCatE}, we obtain the proposition.
\end{proof}

Then we have Bernstein's second adjointness theorem.
\begin{theorem}
	The functor $r_{\mathcal{P}'^+}$ is right adjoint to $i_{\mathcal{P}^+}$, i.e.  $\Hom_{\mathcal{M}^+}(\pi_1,r_{\mathcal{P}'^+}(\pi_2))\simeq \Hom_{\mathcal{G}^+}(i_{\mathcal{P}^+}(\pi_1),\pi_2)$
\end{theorem}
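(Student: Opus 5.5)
The plan is to deduce the second adjointness for $\mathcal{G}^+$ from the preceding proposition, $\widetilde{r_{\mathcal{P}'^+}(\pi)}\simeq r_{\mathcal{P}^+}(\tilde{\pi})$, together with Frobenius reciprocity and the compatibility of induction with contragredients. This mirrors Bernstein's classical argument.

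The first input is Frobenius reciprocity for $\mathcal{P}^+=\mathcal{M}^+\mathcal{U}$:
\begin{equation*}
\Hom_{\mathcal{G}^+}(\sigma,i_{\mathcal{P}^+}\tau)\simeq \Hom_{\mathcal{M}^+}(r_{\mathcal{P}^+}\sigma,\tau).
\end{equation*}
This is formal from the definitions in \cite[1.8]{zbMATH03639915}, because $\mathcal{M}^+$ normalizes $\mathcal{U}$. The second input is the identity $\widetilde{i_{\mathcal{P}^+}(\pi_1)}\simeq i_{\mathcal{P}^+}(\tilde{\pi}_1)$. This holds because $\mathcal{G}^+/\mathcal{P}^+=\mathcal{G}/\mathcal{P}$ is compact, so the usual integration pairing over $\mathcal{G}/\mathcal{P}$ (with the normalization character) is $\mathcal{G}^+$-invariant. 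This uses that $\gamma$ preserves the modulus character of $\mathcal{P}$, which is true since $\gamma$ normalizes $\mathcal{P}$.

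The chain I would aim for, first for admissible (or at least $\mathcal{J}$-generated, finitely generated) objects, is
\begin{align*}
\Hom_{\mathcal{G}^+}(i_{\mathcal{P}^+}\pi_1,\pi_2)&\simeq \Hom_{\mathcal{G}^+}(\tilde{\pi}_2,\widetilde{i_{\mathcal{P}^+}\pi_1})\simeq \Hom_{\mathcal{G}^+}(\tilde{\pi}_2,i_{\mathcal{P}^+}\tilde{\pi}_1)\\
&\simeq \Hom_{\mathcal{M}^+}(r_{\mathcal{P}^+}\tilde{\pi}_2,\tilde{\pi}_1)\simeq \Hom_{\mathcal{M}^+}(\widetilde{r_{\mathcal{P}'^+}\pi_2},\tilde{\pi}_1)\simeq \Hom_{\mathcal{M}^+}(\pi_1,r_{\mathcal{P}'^+}\pi_2).
\end{align*}
The first and last steps use that taking the contragredient is an anti-equivalence on admissible representations. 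The fourth step uses the previous proposition.

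To remove admissibility, there are two options, and I would pursue the second. The first option is to prove the statement on the Hecke side. By Proposition \ref{Type2}, the compatible commutative square for $r_{\mathcal{P}'^+}$ and $t_{\mathcal{P}'^+}^*$, and the analogous description of $i_{\mathcal{P}^+}$, the claim would become a statement about modules over $\mathcal{H}(\mathcal{J}\backslash\mathcal{G}^+/\mathcal{J})$. The second option, which avoids computing $i_{\mathcal{P}^+}$ on the Hecke side, is to reduce to the connected case. Restriction to $\mathcal{G}$ commutes with $i_{\mathcal{P}^+}$ and with $r_{\mathcal{P}'^+}$, since they have the same underlying spaces. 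This gives
\begin{equation*}
\Hom_{\mathcal{G}^+}(i_{\mathcal{P}^+}\pi_1,\pi_2)=\Hom_{\mathcal{G}}(i_{\mathcal{P}}\pi_1,\pi_2)^{\Gamma}
\end{equation*}
and similarly on the $\mathcal{M}^+$-side. Bernstein's connected second adjointness, with the pairing of \cite{Bern} as in the Remark above, gives a natural isomorphism
\begin{equation*}
\Hom_{\mathcal{G}}(i_{\mathcal{P}}\pi_1,\pi_2)\simeq \Hom_{\mathcal{M}}(\pi_1,r_{\mathcal{P}'}\pi_2).
\end{equation*}
It then suffices to check that this isomorphism intertwines the $\Gamma$-actions $f\mapsto \pi_2(\gamma)\circ f\circ i(\pi_1)(\gamma)^{-1}$ on both sides.

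I expect the main obstacle to be this $\Gamma$-equivariance, i.e.\ verifying that the unit or counit of Bernstein's adjunction commutes with $\gamma$. The unit $\pi_1\to r_{\mathcal{P}'}i_{\mathcal{P}}\pi_1$ is built from the open cell $\mathcal{P}\mathcal{P}'$ by integrating over $\mathcal{U}'$. Since $\gamma$ stabilizes $\mathcal{P}$, $\mathcal{P}'$, $\mathcal{U}'$ and (being of finite order) preserves Haar measure on $\mathcal{U}'$, this map should be $\gamma$-equivariant. I would verify this directly. Alternatively, I would verify it on $\mathcal{J}$-fixed vectors using the relation $t_{\mathcal{P}'^+}(T_\gamma^{\mathcal{M}^+})=T_\gamma^{\mathcal{G}^+}$ and the compatibility $q(\pi(t_{\mathcal{P}'^+}f)v)=\pi_u(f)q(v)$ established in the proof of the preceding proposition.
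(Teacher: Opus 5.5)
Your proposal is correct. It agrees with the paper on the admissible case and takes a genuinely different route for general smooth representations.

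\textbf{Admissible case.} Your contragredient chain is the paper's argument. The paper writes it with $\mathcal{P}$ and $\mathcal{P}'$ interchanged, so it literally proves $i_{\mathcal{P}'^+}\dashv r_{\mathcal{P}^+}$. Your orientation matches the statement and uses the preceding proposition $\widetilde{r_{\mathcal{P}'^+}(\pi)}\simeq r_{\mathcal{P}^+}(\tilde\pi)$ exactly as stated.

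\textbf{General case.} The paper only asserts that the result follows ``by a diagram chase as in \cite{Bern}'', i.e.\ it re-runs Bernstein's extension argument inside $\mathcal{G}^+$. You instead bypass both admissibility and the contragredient proposition.
\begin{itemize}
\item You write $\Hom_{\mathcal{G}^+}(i_{\mathcal{P}^+}\pi_1,\pi_2)=\Hom_{\mathcal{G}}(i_{\mathcal{P}}\pi_1,\pi_2)^{\Gamma}$, and likewise on the $\mathcal{M}^+$ side. This uses that restriction of functions gives $\mathrm{Res}\, i_{\mathcal{P}^+}\pi_1\simeq i_{\mathcal{P}}\pi_1$, because $\mathcal{G}^+=\mathcal{P}^+\mathcal{G}$, as in Proposition \ref{WhiInd}.
\item You then invoke the connected theorem for all smooth representations.
\item You check that Bernstein's unit $\eta_{\pi_1}:\pi_1\to r_{\mathcal{P}'}i_{\mathcal{P}}\pi_1$, the open-cell map, is $\gamma$-equivariant. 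This holds because $\gamma$ preserves $\mathcal{P}\mathcal{U}'$ and the modulus characters, and, having finite order, preserves Haar measure on $\mathcal{U}'$.
\item Given that equivariance, $r_{\mathcal{P}'}(\gamma\cdot f)\circ\eta=\gamma\cdot\bigl(r_{\mathcal{P}'}(f)\circ\eta\bigr)$. So the adjunction bijection $f\mapsto r_{\mathcal{P}'}(f)\circ\eta_{\pi_1}$ is $\Gamma$-equivariant and restricts to the invariants.
\end{itemize}

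\textbf{Comparison.} Your route yields a complete proof for all smooth representations directly from the connected theorem. Its one essential input is the explicit form of the unit: you must know that the unit of Bernstein's adjunction is the open-cell map, not merely that some natural isomorphism exists. The paper's route stays inside $\mathcal{G}^+$ and uses the Jacquet--contragredient compatibility it has just established. However, its non-admissible case is only sketched.

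\textbf{Minor correction.} Your parenthetical ``or at least $\mathcal{J}$-generated, finitely generated'' is wrong. The contragredient chain really needs admissibility: for example, $c\text{-}\mathrm{ind}_{\mathcal{J}}^{\mathcal{G}}1$ is finitely generated but not admissible. This is harmless, since your $\Gamma$-invariants argument covers everything.
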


\begin{proof}
	Using the same argument as in  Bernstein and \cite[1.9]{zbMATH03639915}, we have  $i_{\mathcal{P}'^+}(\tilde{\pi_1})=\widetilde{i_{\mathcal{P}'^+}({\pi_1})}$, and $r_{\mathcal{P}'^+}$ is left adjoint to $i_{\mathcal{P}'^+}$. Then 
	\begin{equation}
			\begin{aligned}
			\mathrm{Hom}_{\mathcal{G}^+}(i_{\mathcal{P}'^+}(\pi_1),\tilde{\pi_2}) &\simeq \mathrm{Hom}_{\mathcal{G}^+}(\pi_2,\widetilde{i_{\mathcal{P}'^+}(\pi_1)})\\
			&\simeq \mathrm{Hom}_{\mathcal{G}^+}(\pi_2,{i_{\mathcal{P}'^+}(\tilde{\pi_1}})\\
			&\simeq \mathrm{Hom}_{\mathcal{M}^+}(r_{\mathcal{P}'^+}(\pi_2),{\tilde{\pi_1}})\\
			&\simeq \mathrm{Hom}_{\mathcal{M}^+}({{\pi_1}},\widetilde{r_{\mathcal{P}'^+}(\pi_2)})\\
			&\simeq \mathrm{Hom}_{\mathcal{M}^+}({{\pi_1}},{r_{\mathcal{P}^+}(\tilde{\pi_2})})
		\end{aligned}
	\end{equation}

	If $\pi$ is admissible, then $\pi\simeq \tilde{\tilde{\pi}}$ and we get the theorem. For the general case, the same result follows by a diagram chase as in \cite{Bern}.
\end{proof}

Hence, by uniqueness of adjoint functors, the following diagram commutes:
\begin{equation}\label{commutediagram2}
	\xymatrix{
		\mathrm{Rep}(\mathcal{G}^+)^{(\mathcal{J},1_{\triv})}
		\ar[r]^{\simeq}_{\Lambda_{\mathcal{J}}^+} &\mathcal{H}(\mathcal{J}\backslash\mathcal{G}^+/\mathcal{J})-\mathrm{Mod}  \\
		\mathrm{Rep}(\mathcal{M}^+)^{(\mathcal{J}_{\mathcal{M}},1_{\triv})}\ar[r]^{\simeq}_{\Lambda^+_{\mathcal{J}_{\mathcal{M}}}} \ar[u]^{i_{\mathcal{P}^+}} & \mathcal{H}(\mathcal{J}_{\mathcal{M}}\backslash\mathcal{M}^+/\mathcal{J}_{\mathcal{M}})-\mathrm{Mod} \ar[u]_{\mathcal{H}(\mathcal{J}\backslash\mathcal{G}^+/\mathcal{J})\otimes_{\mathcal{H}(\mathcal{J}_{\mathcal{M}}\backslash\mathcal{M}^+/\mathcal{J}_{\mathcal{M}})}(-)}
	}
\end{equation}
\begin{remark}
	In the connected case, ${\mathcal{P}}$ contains ${\mathbb{B}}(\mathbb{Q}_p)$ (the upper triangular Borel subgroup), we can use diagram \ref{commutediagram2}  to translate Bernstein-Zelevinsky classification into modules of the affine Hecke algebra.
\end{remark}

\section{Representations of twisted affine Hecke algebra}
Now we translate everything to the complex dual group and use the geometry of the twisted affine Hecke algebra and the twisted graded Hecke algebra to classify their irreducible modules and compute multiplicities.
\subsection{Twisted affine Hecke algebra}

Let $G=\GL_n(\mathbb{C})$ be the dual group of $\mathcal{G}$. Define the complex dual group $G^+:=G\rtimes \hat{\Gamma}$, where $\hat{\Gamma} = \langle\hat{\gamma}\rangle$  for $\hat{\gamma}(g)=J(^tg^{-1})J^{-1}$. Let $W^+:=N_{G^+}(T)$. Then $W^+=W\rtimes \hat{\Gamma}$. Denote the root datum of $G=\GL_n(\mathbb{C})$ by $\mathcal{R}(G,T)=(\widetilde{X},\widetilde{R},\widetilde{Y},\widetilde{R}^\vee,\widetilde{\Pi})$. Then we have  $\hat{\Gamma}$ action on $W$ and  $\widetilde{X}$.

\begin{definition}
	The twisted affine Hecke algebra $\mathcal{H}(G^+,v)$ is the vector space $\mathbb{C}[\widetilde{X}]\otimes\mathbb{C}[v,v^{-1}]\otimes\mathbb{C}[W]\otimes\mathbb{C}[\hat{\Gamma}]$ with the following multiplication rules:
	
	$\bullet$ $\mathbb{C}[\widetilde{X}]$, $\mathbb{C}[v,v^{-1}]$, $\mathbb{C}[\hat{\Gamma}]$ are embedded as subalgebras, where $v$ is an indeterminate.
	
	$\bullet$ $(T_{s_\alpha}+1)(T_{s_\alpha}-v^2)=0$, $\alpha\in 	\widetilde{\Pi}$ 
	
	$\bullet$ $T_{w_1}T_{w_2}=T_{w_1w_2}$, if $l(w_1w_2)=l(w_1)+l(w_2)$, $w_1,w_2\in W$

	$\bullet$ For $x\in \widetilde{X}$ and $\alpha\in \widetilde{\Pi}$:
	\begin{equation}
		\theta_xT_{s_\alpha}-T_{s_\alpha}\theta_{s_\alpha(x)}=(v^2-1)\frac{\theta_x-\theta_{s_\alpha(x)}}{1-\theta_{-\alpha}}
	\end{equation}

	$\bullet$ For $T_\gamma\in \hat{\Gamma}$, $w\in W$, and $x\in \widetilde{X}$:
	\begin{equation}
		\begin{aligned}
			T_\gamma^2&=1\\
			T_\gamma \theta_xT_w T_\gamma&=\theta_{\gamma(x)}T_{\gamma(w)}
		\end{aligned}
	\end{equation}

\end{definition}

From the definition, we have $\mathcal{H}(G^+,v) \simeq \mathcal{H}(G,v)\rtimes \hat{\Gamma}$.

Since $\mathcal{R}(G,T)=\mathcal{R}^\vee (\mathcal{G},\mathcal{T})=(Y,R^\vee,X,R,\Pi^\vee)=(\widetilde{X},\widetilde{R},\widetilde{Y},\widetilde{R}^\vee,\widetilde{\Pi})$, setting $v=p^{1/2}$, from Proposition \ref{IwaHec} there is  an isomorphism \begin{equation}
    \mathcal{H}(G^+,p^{1/2})\simeq \mathcal{H}(\mathcal{J}\backslash\mathcal{G}^+/\mathcal{J})
\end{equation}. 
Define $\mathcal{H}(G,v)=\mathbb{C}[\widetilde{X}]\otimes\mathbb{C}[v,v^{-1}]\otimes\mathbb{C}[W]$ as a subalgebra of $\mathcal{H}(G^+,v)$. Then $\mathcal{H}(G^+,v)=\mathcal{H}(G,v)\rtimes\hat{\Gamma}$ and $\mathcal{H}(G,p^{1/2})\simeq \mathcal{H}(\mathcal{J}\backslash\mathcal{G}/\mathcal{J})$. 

For a Levi subgroup $\mathcal{M}$ normalized by $\gamma$, let $M$ be a Levi subgroup of ${G}$ that is the complex dual of $\mathcal{M}$. Then $\hat{\gamma}$ normalizes $M$. Define ${M}^+={M}\rtimes \hat{\Gamma}$. Denote by $W_M$ the subgroup of $W$ corresponding to $M$. Define $\mathcal{H}(M^+,v)=\mathbb{C}[\widetilde{X}]\otimes\mathbb{C}[v,v^{-1}]\otimes\mathbb{C}[W_{M}]\otimes\mathbb{C}[\hat{\Gamma}]$, a subalgebra of $\mathcal{H}(G^+,v)$. Specializing $v = p^{1/2}$ gives $\mathcal{H}(M^+,p^{1/2})\simeq\mathcal{H}(\mathcal{J}_{\mathcal{M}}\backslash\mathcal{M}^+/\mathcal{J}_{\mathcal{M}})$ and the injective homomorphism (\ref{embedhecke}) $t_{\mathcal{P}'^+}$ is obtained by specializing the variable $v$ to $p^{1/2}$.

\begin{lemma}
	The algebra $\mathbb{C}[\widetilde{X}]^{W^+}\otimes\mathbb{C}[v,v^{-1}]$ is the center of $\mathcal{H}(G^+,v)$.
\end{lemma}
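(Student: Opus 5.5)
We will reduce to Bernstein's description of the centre of the non-twisted algebra: $Z(\mathcal{H}(G,v))=\mathbb{C}[\widetilde{X}]^{W}\otimes\mathbb{C}[v,v^{-1}]$. We will also use that $\mathcal{H}(G,v)$ is free over $\mathbb{C}[\widetilde{X}]\otimes\mathbb{C}[v,v^{-1}]$ with basis $\{T_w\}_{w\in W}$ (Bernstein–Lusztig basis $\theta_xT_w$). Since $\mathcal{H}(G^+,v)=\mathcal{H}(G,v)\rtimes\hat{\Gamma}$, every element can be written uniquely as $h=h_1+h_\gamma T_\gamma$ with $h_1,h_\gamma\in\mathcal{H}(G,v)$.

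\emph{Inclusion} $\mathbb{C}[\widetilde{X}]^{W^+}\otimes\mathbb{C}[v,v^{-1}]\subset Z$. Let $z\in\mathbb{C}[\widetilde{X}]^{W^+}$. Since $W\subset W^+$, we have $z\in\mathbb{C}[\widetilde{X}]^W$, so $z$ commutes with $\mathcal{H}(G,v)$ by Bernstein's theorem. The relation $T_\gamma\theta_xT_\gamma=\theta_{\gamma(x)}$ (take $w=1$ in the defining relations) gives $T_\gamma zT_\gamma=\gamma(z)=z$. Hence $z$ also commutes with $T_\gamma$, and these elements generate the algebra.

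\emph{Reverse inclusion.} Let $h=h_1+h_\gamma T_\gamma$ be central. First we commute $h$ with $\theta_x$ for all $x\in\widetilde{X}$. Using $T_\gamma\theta_x=\theta_{\gamma(x)}T_\gamma$, the $\hat\Gamma$-components give
\[h_1\theta_x=\theta_xh_1,\qquad h_\gamma\theta_{\gamma(x)}=\theta_xh_\gamma .\]
The first equation shows $h_1$ commutes with $\mathbb{C}[\widetilde{X}]$, so $h_1\in\mathbb{C}[\widetilde{X}]\otimes\mathbb{C}[v,v^{-1}]$. This is the standard fact that $\mathbb{C}[\widetilde{X}]$ is maximal commutative: expand in the basis $\sum_w a_wT_w$ and look at the leading term in the Bruhat order, noting that $W$ acts faithfully on $\widetilde{X}$.

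For the second equation, write $h_\gamma=\sum_w b_wT_w$ with $b_w\in\mathbb{C}[\widetilde{X}]\otimes\mathbb{C}[v,v^{-1}]$. Take $w_0$ maximal in the Bruhat order with $b_{w_0}\ne0$. Modulo lower terms, $T_{w_0}\theta_{\gamma(x)}\equiv\theta_{w_0\gamma(x)}T_{w_0}$, so comparing top terms gives $b_{w_0}\theta_{w_0\gamma(x)}=\theta_xb_{w_0}$ for all $x$. Since the Laurent ring is a domain, this forces $w_0\gamma(x)=x$ for all $x$, i.e.\ $w_0\gamma$ acts trivially on $\widetilde{X}$. But $\hat\gamma$ acts on $\widetilde{X}=\mathbb{Z}^n$ by $x\mapsto -w_{\mathrm{long}}x$, so $w_0\gamma$ acts by $-w_0w_{\mathrm{long}}$. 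This is never the identity, since $-1\notin W=S_n$ (it is not a permutation matrix). Hence $h_\gamma=0$.

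So $h=h_1\in\mathbb{C}[\widetilde{X}]\otimes\mathbb{C}[v,v^{-1}]$ commutes with all of $\mathcal{H}(G,v)$, and Bernstein's theorem gives $h_1\in\mathbb{C}[\widetilde{X}]^W\otimes\mathbb{C}[v,v^{-1}]$. Commuting $h$ with $T_\gamma$ then gives $\gamma(h_1)=h_1$, so $h_1$ is $W^+$-invariant.

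The main obstacle is the leading-term argument showing $h_\gamma=0$. It requires care with the Bernstein relation: one must check that moving $\theta_{\gamma(x)}$ past $T_{w_0}$ only introduces terms $T_{w'}$ with $w'<w_0$. This follows from the displayed relation by induction on $l(w_0)$. It also requires the concrete check that no element of $W\hat\gamma$ acts trivially on $\widetilde{X}$.
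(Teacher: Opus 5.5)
Your proof is correct and follows the same reduction as the paper, which only cites Lusztig's (Bernstein's) description $Z(\mathcal{H}(G,v))=\mathbb{C}[\widetilde{X}]^{W}\otimes\mathbb{C}[v,v^{-1}]$ and then says one should ``take into account the action of $\hat{\Gamma}$.'' What you add is the step the paper leaves implicit: a central element has zero $T_\gamma$-component. This step genuinely needs an input like your observation that no element of $W\hat{\gamma}$ acts trivially on $\widetilde{X}$; a quicker way to see this is that $\hat{\gamma}$ negates the $W$-invariant character $\det$. Without such an input the lemma fails, since if $\hat{\Gamma}$ acted trivially, $T_\gamma$ itself would be central.
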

\begin{proof}
	From \cite{Lusztig1989AffineHA}, the central subalgebra of $\mathcal{H}(G,v)$ is $\mathbb{C}[\widetilde{X}]^{W}\otimes\mathbb{C}[v,v^{-1}]$. Taking into account the action of $\hat{\Gamma}$ on $\mathcal{H}(G, v)$, we find that $\mathbb{C}[\widetilde{X}]^{W^+} \otimes \mathbb{C}[v, v^{-1}]$ is the center of $\mathcal{H}(G^+, v)$.
\end{proof}

We  view $\mathscr{A}=\mathbb{C}[\widetilde{X}]\otimes\mathbb{C}[v,v^{-1}]$ as the ring of regular functions on $T\times \mathbb{C}^*$, where $T\simeq \widetilde{Y}\otimes \mathbb{C}^*$ is the algebraic torus with a natural action of $W^+$.

Let $\chi$ be  the central character of the center $\mathscr{A}^{W^+}$ for $\mathcal{H}(G^+,v)$ given by the $W^+$-orbit $(W^+\cdot t,v_0)$, where $(t,v_0)\in T\times \mathbb{C}^*$ is semisimple.
By Schur's lemma, we obtain a canonical partition $\mathrm{Irr}\mathcal{H}(G^+,v)=\sqcup \mathrm{Irr}_{W^+\cdot t,v_0} \mathcal{H}(G^+,v)$ obtained by specifying the action of the centre of $\mathcal{H}(G^+,v)$ on a simple $\mathcal{H}(G^+,v)$-module.

Let $L$ be a finite dimensional  left $\mathcal{H}(G,v)$ module. 
Define $\hat{\gamma} ^* L$ to be the same vector space $L$ with the twisted action $h \cdot l = \hat{\gamma}(h) l$ for $h \in \mathcal{H}(G, v)$ and $l \in L$.

We define
\begin{equation}
	\begin{aligned}
		&\mathscr{I}_{(t,v_0)}^+= \{f\in  \mathscr{A}^{W^+}|f(t,v_0)=0\}\\
		& \mathscr{I}_{(t,v_0)}= \{f\in  \mathscr{A}^{W}|f(t,v_0)=0\}
	\end{aligned}
\end{equation}

Denote by  $\mathcal{H}(G^+,v)-\mathrm{Mod}_{(t,v_0)}$ (resp. $\mathcal{H}(G,v)-\mathrm{Mod}_{(t,v_0)}$) the category of finite dimensional modules of $\mathcal{H}(G^+,v)$ (resp. $\mathcal{H}(G,v)$) annihilated by some power of $\mathscr{I}_{(t,v_0)}^+$ (resp. $\mathscr{I}_{(t,v_0)}$).

We denote by $\widehat{\mathscr{A}^{W^+}_{({t},v_0)}}$ the $\mathscr{I}_{({t},v_0)}^+$-adic completion of $\mathscr{A}^{W^+}$ and $\widehat{\mathscr{A}^{W}_{({t},v_0)}}$ the $\mathscr{I}_{({t},v_0)}$-adic completion of $\mathscr{A}^{W}$.

 We define 
\begin{equation}
	\begin{aligned}
		\widehat{\mathcal{H}}(G^+,v)_{(t,v_0)}&=\widehat{\mathscr{A}^{W^+}_{({t},v_0)}}\otimes_{\mathscr{A}^{W^+}}\mathcal{H}(G^+,v)\\
        \widehat{\mathcal{H}}(G,v)_{(t,v_0)}&=\widehat{\mathscr{A}^{W}_{({t},v_0)}}\otimes_{\mathscr{A}^{W}}\mathcal{H}(G,v)
	\end{aligned}
\end{equation}

If $\hat{\Gamma}\cdot t\not\subseteq G\cdot t$, take $t'=\hat{\gamma}\cdot t$.

\begin{lemma}\label{HeReL}
	The functor $L\mapsto \hat{\gamma}^* L$ is an equivalence between the categories $\mathcal{H}(G,v)-\mathrm{Mod}_{(t,v_0)}$ and $\mathcal{H}(G,v)-\mathrm{Mod}_{(t',v_0)}$.
\end{lemma}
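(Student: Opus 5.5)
The plan is to check directly that twisting by $\hat{\gamma}$ moves the central support from $(t,v_0)$ to $(t',v_0)$, and that this twisting is an involution up to natural isomorphism.

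First, note that $\hat{\gamma}$ acts on $\mathcal{H}(G,v)$ by the algebra automorphism $\theta_xT_w\mapsto\theta_{\hat{\gamma}(x)}T_{\hat{\gamma}(w)}$, fixing $v$. This is conjugation by $T_\gamma$ inside $\mathcal{H}(G^+,v)=\mathcal{H}(G,v)\rtimes\hat{\Gamma}$, so it really is an automorphism. Hence $\hat{\gamma}^*L$ is again a finite dimensional $\mathcal{H}(G,v)$-module. On morphisms, $\hat{\gamma}^*$ is the identity on underlying linear maps: a map commuting with the $\mathcal{H}(G,v)$-action also commutes with the twisted action. Since $\hat{\gamma}^2=1$, we have $\hat{\gamma}^*\hat{\gamma}^*L=L$ on the nose. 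So $\hat{\gamma}^*$ is an involutive autoequivalence of the whole category of finite dimensional $\mathcal{H}(G,v)$-modules. It then suffices to show that it sends $\mathcal{H}(G,v)-\mathrm{Mod}_{(t,v_0)}$ into $\mathcal{H}(G,v)-\mathrm{Mod}_{(t',v_0)}$. The reverse inclusion follows by symmetry, because $\hat{\gamma}\cdot t'=t$.

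Second, I would check how $\hat{\gamma}$ acts on the center $\mathscr{A}^W$. Since $\hat{\gamma}$ normalizes $W$ inside $W^+$, it preserves $\mathscr{A}^W$. For $f\in\mathscr{A}^W$, viewed as a regular function on $T\times\mathbb{C}^*$, we have $\hat{\gamma}(f)(s,v_0)=f(\hat{\gamma}^{-1}(s),v_0)$, up to the chosen convention for the dual action of $\hat{\Gamma}$ on $T$ versus $\widetilde{X}$. It follows that $f\in\mathscr{I}_{(t',v_0)}$ if and only if $\hat{\gamma}(f)\in\mathscr{I}_{(t,v_0)}$. This uses $t'=\hat{\gamma}\cdot t$ and $\hat{\gamma}^2=1$, so that direction conventions do not matter.

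Third, let $L$ be annihilated by $\mathscr{I}_{(t,v_0)}^N$. For $f_1,\dots,f_N\in\mathscr{I}_{(t',v_0)}$, the product $f_1\cdots f_N$ acts on $\hat{\gamma}^*L$ as $\hat{\gamma}(f_1)\cdots\hat{\gamma}(f_N)$ acts on $L$. This product lies in $\mathscr{I}_{(t,v_0)}^N$, so it acts by zero. Hence $\hat{\gamma}^*L\in\mathcal{H}(G,v)-\mathrm{Mod}_{(t',v_0)}$, and the lemma follows.

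The only delicate point is the bookkeeping in the second step: showing that $\hat{\gamma}$ preserves $\mathscr{A}^W$, and matching the action on $\widetilde{X}$ with the action on $T=\widetilde{Y}\otimes\mathbb{C}^*$ so that the maximal ideal at $W\cdot t$ goes to the one at $W\cdot t'$. Everything else is formal. The hypothesis $\hat{\Gamma}\cdot t\not\subseteq G\cdot t$ is not needed for the equivalence itself. It only ensures that the two categories are different, which is presumably used later.
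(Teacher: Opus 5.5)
Your proof is correct and follows the same route as the paper: the centre $\mathscr{A}^W$ acts on $\hat{\gamma}^*L$ through $\hat{\gamma}$, which moves the central support from $(t,v_0)$ to $(t',v_0)$, and then $\hat{\gamma}^*\hat{\gamma}^*L\simeq L$ gives the equivalence. Your version is slightly more careful than the paper's. You handle modules annihilated only by a power of $\mathscr{I}_{(t,v_0)}$, whereas the paper writes $f\cdot l=f(t',v_0)\,l$ as if the centre acted by a character.
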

\begin{proof}
	For $f\in \mathscr{A}^{W}$, $l\in \hat{\gamma}^* L$, we have action $f\cdot l=\hat{\gamma}(f)l=f(\hat{\gamma}((t,v_0)))l=f((t',v_0))l$. Moreover,  there is an isomorphism of $\mathcal{H}(G,v)$-module $\hat{\gamma}^* (\hat{\gamma}^* L) \simeq L$, and we obtain the lemma.
\end{proof}

\begin{theorem}
	For a simple module $N \in \mathcal{H}(G^+,v)-\mathrm{Mod}_{(t,v_0)}$, the restriction $\mathrm{Res}^{\mathcal{H}(G^+,v)}_{\mathcal{H}(G,v)} (N) = L_1 \oplus L_2$, where $L_1$, $L_2$ are simple modules of $\mathcal{H}(G,v)$, $L_1 \in \mathcal{H}(G,v)-\mathrm{Mod}_{(t,v_0)}$, $L_2 \in \mathcal{H}(G,v)-\mathrm{Mod}_{(t',v_0)}$, and $L_2\simeq \hat{\gamma}^* L_1$. Hence $N\simeq \mathrm{Ind}^{\mathcal{H}(G^+,v)}_{\mathcal{H}(G,v)} (L_1)\simeq \mathrm{Ind}^{\mathcal{H}(G^+,v)}_{\mathcal{H}(G,v)} (L_2)$.
\end{theorem}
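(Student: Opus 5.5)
The statement is Clifford theory for the crossed product $\mathcal{H}(G^+,v)=\mathcal{H}(G,v)\rtimes\hat{\Gamma}$. The decisive input is the standing assumption $\hat{\Gamma}\cdot t\not\subseteq G\cdot t$, i.e. $t'=\hat{\gamma}\cdot t$ is not $W$-conjugate to $t$. This separates the two central characters of $\mathscr{A}^{W}$ lying over the single central character of $\mathscr{A}^{W^+}$.

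First I decompose $N$ over the centre of the smaller algebra. $N$ is finite dimensional and annihilated by a power of $\mathscr{I}^+_{(t,v_0)}$. The ring $\mathscr{A}^{W}$ is finite over $\mathscr{A}^{W^+}$, and the maximal ideals of $\mathscr{A}^W$ over $\mathscr{I}^+_{(t,v_0)}$ are exactly those of $(t,v_0)$ and $(t',v_0)$. These two ideals are distinct by the assumption. So as an $\mathcal{H}(G,v)$-module, $N$ splits into generalized eigenspaces for $\mathscr{A}^W$:
\[
N=N_{(t,v_0)}\oplus N_{(t',v_0)},
\]
with $N_{(t,v_0)}\in \mathcal{H}(G,v)-\mathrm{Mod}_{(t,v_0)}$ and $N_{(t',v_0)}\in\mathcal{H}(G,v)-\mathrm{Mod}_{(t',v_0)}$. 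Each summand is an $\mathcal{H}(G,v)$-submodule because $\mathscr{A}^W$ is central in $\mathcal{H}(G,v)$. Next, for $f\in\mathscr{A}^W$ we have $T_\gamma f=\hat{\gamma}(f)T_\gamma$. By the computation in Lemma \ref{HeReL}, $T_\gamma$ therefore maps $N_{(t,v_0)}$ into $N_{(t',v_0)}$ and back, and it intertwines $\hat{\gamma}^*N_{(t,v_0)}$ with $N_{(t',v_0)}$. Since $T_\gamma^2=1$, this map is an isomorphism.

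Put $L_1=N_{(t,v_0)}$. I claim $L_1\neq 0$ and $L_1$ is simple. Both summands are nonzero, since if one vanished the other would too by the $T_\gamma$-isomorphism. For simplicity, let $0\neq L'\subseteq L_1$ be an $\mathcal{H}(G,v)$-submodule. Then $L'+T_\gamma L'$ is stable under $\mathcal{H}(G,v)$ (because $\hat{\gamma}$ preserves $\mathcal{H}(G,v)$) and under $T_\gamma$, so it is an $\mathcal{H}(G^+,v)$-submodule of $N$. Simplicity of $N$ forces $L'+T_\gamma L'=N$. Intersecting with the $(t,v_0)$-generalized eigenspace gives $L'=L_1$. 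Hence $L_1$ is simple, and so is $L_2:=N_{(t',v_0)}\simeq\hat{\gamma}^*L_1$.

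Finally, Frobenius reciprocity applied to the inclusion $L_1\hookrightarrow N$ gives a nonzero map $\mathrm{Ind}^{\mathcal{H}(G^+,v)}_{\mathcal{H}(G,v)}(L_1)=\mathcal{H}(G^+,v)\otimes_{\mathcal{H}(G,v)}L_1\to N$. It is surjective because $N$ is simple. Both sides have dimension $2\dim L_1$, since $\mathcal{H}(G^+,v)$ is free of rank $2$ over $\mathcal{H}(G,v)$ with basis $1,T_\gamma$. So the map is an isomorphism, and the same argument applies to $L_2$.

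The only delicate point is the first step, the claim that the fibre of $\mathrm{Spec}\,\mathscr{A}^W\to\mathrm{Spec}\,\mathscr{A}^{W^+}$ over $(W^+t,v_0)$ consists exactly of the two points $Wt$ and $Wt'$. It reduces to the facts that $W^+$-orbits are unions of $W$-orbits and that $W^+t=Wt\cup W\hat{\gamma}t$. The rest is routine Clifford theory.
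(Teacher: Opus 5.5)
Your proof is correct. It rests on the same mechanism as the paper's: under the hypothesis $\hat{\Gamma}\cdot t\not\subseteq G\cdot t$, the $W^+$-orbit of $t$ splits into the two distinct $W$-orbits $Wt$ and $Wt'$, and $T_\gamma$ exchanges the corresponding blocks (Lemma \ref{HeReL}).

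The paper argues more briefly. It takes a simple $\mathcal{H}(G,v)$-submodule $L\subseteq N$ and notes that its $\mathscr{A}^W$-character is $Wt$ or $Wt'$. Then $\hat{\gamma}^*L$ lies in the other block, so $L\not\simeq\hat{\gamma}^*L$. It then quotes the general Clifford-theory result \cite[Thm A.6]{zbMATH02096630} for crossed products to obtain $\mathrm{Res}\,N=L\oplus\hat{\gamma}^*L$ and $N\simeq\mathrm{Ind}\,L$.

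You instead split $N$ itself into its two generalized $\mathscr{A}^W$-eigenspaces. You then prove by hand that the pieces are simple and that $\mathrm{Ind}(L_1)\to N$ is an isomorphism, using Frobenius reciprocity and the fact that $\mathcal{H}(G^+,v)$ is free of rank two over $\mathcal{H}(G,v)$.

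What your route gains:
\begin{itemize}
\item It is self-contained. Because the two central characters are separated from the start, the branch of Clifford theory in which the restriction stays irreducible never arises, so the general theorem is not needed.
\item It shows exactly where $t'\notin Wt$ is used, which the paper uses only tacitly.
\item Your eigenspace splitting is the module-level form of the decomposition $\widehat{\mathscr{A}^+_{(t,v_0)}}=\widehat{\mathscr{A}_{(t,v_0)}}\oplus\widehat{\mathscr{A}_{(t',v_0)}}$ in Lemma \ref{morita}. Your argument therefore fits directly with the Morita equivalence the paper proves next.
\end{itemize}

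The paper's version is shorter, at the cost of depending entirely on the citation.
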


\begin{proof}
For a simple module $N \in \mathcal{H}(G^+,v)-\mathrm{Mod}_{(t,v_0)}$, let $L$ be a simple $\mathcal{H}(G,v)$ submodule of $N$. Let $\chi$ denote the character of the center $\mathscr{A}^{W}$ on the simple module $L$. Hence for $f\in \mathscr{A}^{W^+}$, we have $\chi(f)=f((t,v_0))$. As a result, the central character $\chi$ is given by the $W$-orbit  $(W\cdot t,v_0)$ or  the $W$-orbit  $(W\cdot t',v_0)$. The simple $\mathcal{H}(G,v)$-submodule $L$ lies either in $\mathcal{H}(G,v)-\mathrm{Mod}_{(t,v_0)}$ or in $\mathcal{H}(G,v)-\mathrm{Mod}_{(t',v_0)}$. From \cite[ThmA.6]{zbMATH02096630} and Lemma \ref{HeReL}, we obtain the theorem.
\end{proof}

In fact, we can obtain a finer description of $\widehat{\mathcal{H}}(G^+,v)_{(t,v_0)}$ and its module category.

\begin{lemma}\label{morita}
	The algebra $\widehat{\mathcal{H}}(G^+,v)_{(t,v_0)}$ is isomorphic to $M_2(\widehat{\mathcal{H}}(G,v)_{(t,v_0)})$, the $2\times 2$ matrix algebra with coefficients in $\widehat{\mathcal{H}}(G,v)_{(t,v_0)}$.
\end{lemma}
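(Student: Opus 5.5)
We work in the situation $\hat{\Gamma}\cdot t\not\subseteq G\cdot t$ fixed just before Lemma \ref{HeReL}, with $t'=\hat{\gamma}\cdot t$. The plan is to split the completed algebra $\widehat{\mathcal{H}}(G^+,v)_{(t,v_0)}$ using the two central idempotents coming from the two $W$-orbits $W\cdot t$ and $W\cdot t'$. Then we check that the crossed-product structure by $\hat{\Gamma}$ realizes it as a $2\times 2$ matrix algebra.

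\emph{Step 1.} The extension $\mathscr{A}^{W^+}\subset\mathscr{A}^{W}$ is finite. The maximal ideal $\mathscr{I}^+_{(t,v_0)}$ has exactly two maximal ideals of $\mathscr{A}^W$ above it, namely $\mathscr{I}_{(t,v_0)}$ and $\mathscr{I}_{(t',v_0)}$; these are distinct because $W\cdot t\neq W\cdot t'$. Hence
\begin{equation*}
\widehat{\mathscr{A}^{W^+}_{(t,v_0)}}\otimes_{\mathscr{A}^{W^+}}\mathscr{A}^W\simeq \widehat{\mathscr{A}^{W}_{(t,v_0)}}\times\widehat{\mathscr{A}^{W}_{(t',v_0)}}.
\end{equation*}
This gives orthogonal idempotents $e,e'$ with $e+e'=1$. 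They are central in $\widehat{\mathcal{H}}(G,v):=\widehat{\mathscr{A}^{W^+}_{(t,v_0)}}\otimes_{\mathscr{A}^{W^+}}\mathcal{H}(G,v)$, because $\mathscr{A}^W$ is the center of $\mathcal{H}(G,v)$. It follows that $\widehat{\mathcal{H}}(G,v)\simeq \widehat{\mathcal{H}}(G,v)_{(t,v_0)}\times\widehat{\mathcal{H}}(G,v)_{(t',v_0)}$. Since $\mathcal{H}(G,v)$ is finite over its center, the completion commutes with the tensor product, so this is a formal base change.

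\emph{Step 2.} Since $\mathcal{H}(G^+,v)=\mathcal{H}(G,v)\rtimes\hat{\Gamma}$ and the completion is taken over the $\hat{\Gamma}$-invariant central subalgebra $\mathscr{A}^{W^+}$, we get $\widehat{\mathcal{H}}(G^+,v)_{(t,v_0)}=\widehat{\mathcal{H}}(G,v)\rtimes\hat{\Gamma}$. Conjugation by $T_\gamma$ induces $\hat{\gamma}$ on $\mathscr{A}^W$, which exchanges the two maximal ideals. Therefore $T_\gamma e T_\gamma=e'$, and $\hat{\gamma}$ gives an algebra isomorphism $\widehat{\mathcal{H}}(G,v)_{(t,v_0)}\simeq\widehat{\mathcal{H}}(G,v)_{(t',v_0)}$; this is the completed version of Lemma \ref{HeReL}. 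Write $B=e\,\widehat{\mathcal{H}}(G^+,v)_{(t,v_0)}\,e$ and define the matrix units
\begin{equation*}
E_{11}=e,\quad E_{22}=e',\quad E_{12}=eT_\gamma=T_\gamma e',\quad E_{21}=T_\gamma e=e'T_\gamma.
\end{equation*}
Using $T_\gamma^2=1$, these satisfy $E_{ij}E_{kl}=\delta_{jk}E_{il}$ and $E_{11}+E_{22}=1$. By the standard matrix-unit lemma, the whole algebra is then isomorphic to $M_2(B)$.

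\emph{Step 3.} It remains to identify $B$ with $\widehat{\mathcal{H}}(G,v)_{(t,v_0)}$. An element of the crossed product has the form $h_1+h_2T_\gamma$ with $h_i\in\widehat{\mathcal{H}}(G,v)$. Then
\begin{equation*}
e(h_1+h_2T_\gamma)e=eh_1e+eh_2T_\gamma e=eh_1+h_2\,ee'T_\gamma=eh_1,
\end{equation*}
since $T_\gamma e=e'T_\gamma$ and $ee'=0$. Hence $B=e\widehat{\mathcal{H}}(G,v)=\widehat{\mathcal{H}}(G,v)_{(t,v_0)}$.

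The main obstacle is Step 1: one must justify rigorously that completing at $\mathscr{I}^+_{(t,v_0)}$ splits into the two completions at $\mathscr{I}_{(t,v_0)}$ and $\mathscr{I}_{(t',v_0)}$. This requires finiteness of $\mathscr{A}^W$ over $\mathscr{A}^{W^+}$ and the hypothesis that $t'$ is not $W$-conjugate to $t$. The remaining steps are formal consequences of the matrix-unit structure.
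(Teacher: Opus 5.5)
Your proof is correct and follows the same route as the paper. The paper also splits the completion over $\mathscr{A}^{W^+}$ into the pieces at $(t,v_0)$ and $(t',v_0)$, and then writes down the map $(f_{ij})\mapsto f_{11}+f_{12}(1,\hat{\gamma})+(1,\hat{\gamma})f_{21}+(1,\hat{\gamma})f_{22}(1,\hat{\gamma})$, which is exactly $\sum_{i,j}E_{i1}f_{ij}E_{1j}$ for your matrix units — so your formulation supplies the check, which the paper only asserts, that this map is multiplicative and bijective.
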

\begin{proof}
We define \begin{align*}
    &\mathscr{Z}_{(t,v_0)}= \{f\in  \mathscr{A}|f(t,v_0)=0\} \quad \text{a maximal ideal of $\mathscr{A}$} \\
&\widehat{\mathscr{A}}_{\mathscr{Z}_{(t,v_0)}}= \text{$\mathscr{Z}_{(t,v_0)}$-adic completion of $\mathscr{A}$}\\
    &\widehat{\mathscr{A}^+_{({t},v_0)}}= \widehat{\mathscr{A}^{W^+}_{({t},v_0)}}\otimes_{\mathscr{A}^{W^+}}\mathscr{A}\\
    &\widehat{\mathscr{A}_{({t},v_0)}}= \widehat{\mathscr{A}^{W}_{({t},v_0)}}\otimes_{\mathscr{A}^{W}}\mathscr{A}
\end{align*}

We obtain an isomorphism of algebra\begin{align*}
\widehat{\mathscr{A}^+_{({t},v_0)}}&=\bigoplus_{s\in W^+\cdot t} \widehat{\mathscr{A}}_{\mathscr{Z}_{(s,v_0)}}\\
&=\bigoplus_{s\in W\cdot t} \widehat{\mathscr{A}}_{\mathscr{Z}_{(s,v_0)}}\bigoplus_{s\in W\cdot t'}\widehat{\mathscr{A}}_{\mathscr{Z}_{(s,v_0)}}\\
&=\widehat{\mathscr{A}_{({t},v_0)}}\bigoplus \widehat{\mathscr{A}_{({t'},v_0)}}
\end{align*}

    and
\begin{equation}
		\begin{aligned}
		\widehat{\mathcal{H}}(G^+,v)_{(t,v_0)}&=\widehat{\mathscr{A}^{W^+}_{({t},v_0)}}\otimes_{\mathscr{A}^{W^+}}\mathcal{H}(G^+,v)\\
		&= \widehat{\mathscr{A}^+_{({t},v_0)}}\otimes  \mathbb{C}[W]\otimes\mathbb{C}[\hat{\Gamma}]\\
        &=(\widehat{\mathscr{A}_{({t},v_0)}}\otimes  \mathbb{C}[W]\otimes\mathbb{C}[\hat{\Gamma}]) \bigoplus (\widehat{\mathscr{A}_{({t'},v_0)}}\otimes  \mathbb{C}[W]\otimes\mathbb{C}[\hat{\Gamma}]) \\
		&= \widehat{\mathcal{H}} (G,v)_{(t,v_0)}\otimes\mathbb{C}[\hat{\Gamma}]\bigoplus \widehat{\mathcal{H}}(G,v)_{(t',v_0)}\otimes\mathbb{C}[\hat{\Gamma}]
	\end{aligned}
\end{equation}

We define a homomorphism \begin{align*}
    M_2(\widehat{\mathcal{H}}(G,v)_{(t,v_0)}) &\to  \widehat{\mathcal{H}}(G^+,v)_{(t,v_0)}\\
   \begin{pmatrix}
        f_{11} &f_{12}\\
        f_{21} & f_{22}
    \end{pmatrix} &\mapsto f_{11}+f_{12}\cdot (1,\hat{\gamma})+(1,\hat{\gamma})\cdot f_{21}+(1,\hat{\gamma})\cdot f_{22} \cdot (1,\hat{\gamma})
\end{align*}

Since for $f\in \widehat{\mathcal{H}}(G,v)_{(t,v_0)}$, we have $\hat{\gamma}(f)\in \widehat{\mathcal{H}}(G,v)_{(t',v_0)}$, $(1,\hat{\gamma})\cdot f=\hat{\gamma}(f)\cdot (1,\hat{\gamma})$ and $(1,\hat{\gamma})\cdot f\cdot (1,\hat{\gamma})= \hat{\gamma}(f)$. Hence there is an isomorphism $M_2(\widehat{\mathcal{H}}(G,v)_{(t,v_0)})\simeq \widehat{\mathcal{H}}(G^+,v)_{(t,v_0)}$ and this is compatible with the ring structures, and we obtain the lemma.
\end{proof}

\begin{theorem}
	There is an equivalence of categories:\begin{align*}
	    \mathcal{H}(G,v)-\mathrm{Mod}_{(t,v_0)}& \xrightarrow[]{\simeq}\mathcal{H}(G^+,v)-\mathrm{Mod}_{(t,v_0)}\\
        L &\mapsto \mathcal{H}(G^+,v)\otimes_{\mathcal{H}(G,v)}L
	\end{align*}
\end{theorem}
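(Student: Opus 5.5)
The plan is to deduce the equivalence from Lemma \ref{morita} by Morita theory, under the standing assumption $\hat{\Gamma}\cdot t\not\subseteq G\cdot t$, i.e. $t'=\hat{\gamma}\cdot t$ is not $W$-conjugate to $t$. First I would identify the module categories with modules over the completed algebras. A finite dimensional $\mathcal{H}(G,v)$-module killed by a power of $\mathscr{I}_{(t,v_0)}$ extends uniquely to a module over $\widehat{\mathcal{H}}(G,v)_{(t,v_0)}$. Conversely, a finite dimensional $\widehat{\mathcal{H}}(G,v)_{(t,v_0)}$-module on which the completed ideal acts nilpotently restricts to an object of $\mathcal{H}(G,v)-\mathrm{Mod}_{(t,v_0)}$. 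The same holds for $\mathcal{H}(G^+,v)$ and $\mathscr{I}^+_{(t,v_0)}$. Hence $\mathcal{H}(G,v)-\mathrm{Mod}_{(t,v_0)}$ and $\mathcal{H}(G^+,v)-\mathrm{Mod}_{(t,v_0)}$ are the categories of finite dimensional, continuous (discrete) modules over $\widehat{\mathcal{H}}(G,v)_{(t,v_0)}$ and $\widehat{\mathcal{H}}(G^+,v)_{(t,v_0)}$ respectively.

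Next I would apply the standard Morita equivalence $A-\mathrm{Mod}\simeq M_2(A)-\mathrm{Mod}$, $L\mapsto A^2\otimes_A L=L\oplus L$, with inverse $N\mapsto e_{11}N$. Via the isomorphism of Lemma \ref{morita}, $e_{11}$ corresponds to the idempotent $1_{(t,v_0)}\in\widehat{\mathscr{A}_{(t,v_0)}}\subset \widehat{\mathscr{A}^+_{(t,v_0)}}$ which cuts out the $W\cdot t$ part. The equivalence preserves finite dimensionality and nilpotence of the ideal action, because $\mathscr{I}_{(t,v_0)}$ and $\mathscr{I}^+_{(t,v_0)}$ generate comparable ideals in the completions; this comparability follows from the decomposition $\widehat{\mathscr{A}^+_{(t,v_0)}}=\widehat{\mathscr{A}_{(t,v_0)}}\oplus\widehat{\mathscr{A}_{(t',v_0)}}$.

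It then remains to check that the Morita functor agrees with $L\mapsto\mathcal{H}(G^+,v)\otimes_{\mathcal{H}(G,v)}L$. As a right $\mathcal{H}(G,v)$-module, $\mathcal{H}(G^+,v)=\mathcal{H}(G,v)\oplus(1,\hat{\gamma})\mathcal{H}(G,v)$. So for $L\in\mathcal{H}(G,v)-\mathrm{Mod}_{(t,v_0)}$ the induced module is $L\oplus (1,\hat{\gamma})\otimes L$. The first summand is $L$, and the second is $\hat{\gamma}^*L$, which by Lemma \ref{HeReL} lies in $\mathcal{H}(G,v)-\mathrm{Mod}_{(t',v_0)}$. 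Under the matrix description this is exactly $\widehat{\mathcal{H}}(G^+,v)_{(t,v_0)}e_{11}\otimes_{e_{11}\widehat{\mathcal{H}}e_{11}}L$, with $e_{11}\widehat{\mathcal{H}}(G^+,v)_{(t,v_0)}e_{11}=\widehat{\mathcal{H}}(G,v)_{(t,v_0)}$. For this one checks that $1_{(t,v_0)}(1,\hat{\gamma})1_{(t,v_0)}=0$, which holds precisely because $t'\notin W\cdot t$.

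For the quasi-inverse I would take $N\mapsto 1_{(t,v_0)}N$, the generalized $(W\cdot t)$-eigenspace of $\mathscr{A}^W$ in $\mathrm{Res}\,N$. I expect the main obstacle to be exactly this bookkeeping: one must verify that $1_{(t,v_0)}$ behaves like $e_{11}$ and that the natural maps $\mathcal{H}(G^+,v)\otimes 1_{(t,v_0)}N\to N$ and $L\to 1_{(t,v_0)}(\mathcal{H}(G^+,v)\otimes L)$ are isomorphisms. I would check both by decomposing $\mathrm{Res}\,N=1_{(t,v_0)}N\oplus 1_{(t',v_0)}N$ into its two generalized eigenspaces, and then using that $(1,\hat{\gamma})$ swaps these two summands.
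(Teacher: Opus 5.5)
Your proposal is correct and follows the same route as the paper. The paper simply invokes the Morita equivalence coming from $\widehat{\mathcal{H}}(G^+,v)_{(t,v_0)}\simeq M_2(\widehat{\mathcal{H}}(G,v)_{(t,v_0)})$ (Lemma \ref{morita}) and asserts that it is realized by $L\mapsto \mathcal{H}(G^+,v)\otimes_{\mathcal{H}(G,v)}L$. You supply what the paper leaves implicit: the standing hypothesis $\hat{\Gamma}\cdot t\not\subseteq G\cdot t$, the passage to modules over the completions, the identification of $e_{11}$ with the idempotent $1_{(t,v_0)}$ (via $1_{(t,v_0)}(1,\hat{\gamma})1_{(t,v_0)}=0$), and the check that the Morita functor coincides with induction.
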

\begin{proof}
	From Lemma \ref{morita}, $\widehat{\mathcal{H}}(G^+,v)_{(t,v_0)}$ and $\widehat{\mathcal{H}}(G,v)_{(t,v_0)}$ are Morita equivalent. In fact, it is given by $L\mapsto \mathcal{H}(G^+,v)\otimes_{\mathcal{H}(G,v)}L$ from $\mathcal{H}(G,v)-\mathrm{Mod}_{(t,v_0)}$ to $\mathcal{H}(G^+,v)-\mathrm{Mod}_{(t,v_0)}$.
\end{proof}

\subsection{Twisted graded Hecke algebra}\label{DefGra}
Let $G^+$ be a complex reductive algebraic group with the identity component $G$. Let $B=TU$ be a Borel subgroup of $G$ with Levi factor $T$ and unipotent radical $U$, and $\mathfrak{t}=\mathrm{Lie}(T)$. We denote the root datum by $\mathcal{R}(G,T)$. Let  $W=N_{G}(T)/T$, $W^+=N_{G^+}(T)/T$, $\mathfrak{R}=N_{G^+}(B,T)/T$ where $N_{G^+}(B,T)=N_{G^+}(B)\cap N_{G^+}(T)$, hence $W^+=W\rtimes \mathfrak{R}$, $W$ is Weyl group of $\mathcal{R}(G,T)$.
Let $\mathfrak{r}$ be an indeterminate, identified with the coordinate function on $\mathbb{C}$.
Let $\mathfrak{t}^*$ be the dual space of the Lie algebra $\mathfrak{t}=\mathrm{Lie}(T)$ and $S(\mathfrak{t}^*)$ be the symmetric algebra of $\mathfrak{t}^*$.

\begin{proposition}\label{GradedHecke}\cite[2.2]{Aubert2016GradedHA}
	There exists a unique associative algebra structure on $ S(\mathfrak{t}^*)\otimes\mathbb{C}[W^+]\otimes \mathbb{C}[\mathfrak{r}] $ such that 
	
	$\bullet$ the twisted group algebra $\mathbb{C}[W^+]$ is embedded as subalgebra
	
	$\bullet$ the algebra $S(\mathfrak{t}^*)\otimes \mathbb{C}[\mathfrak{r}]$ of polynomial functions on $\mathfrak{t}\oplus \mathbb{C} $ is embedded as a subalgebra.
	
	$\bullet$ $\mathbb{C}[\mathfrak{r}] $ is central
	
	$\bullet$ $N_{\alpha}\xi-s_\alpha(\xi)N_{\alpha}=2\mathfrak{r}(\xi-s_\alpha(\xi))/\alpha$ for $\xi\in S(\mathfrak{t}^*)$ and simple roots $\alpha$.
	
	$\bullet$ $N_\tau \xi N_\tau^{-1}=\tau(\xi)$ for $\xi\in S(\mathfrak{t}^*) $ and $\tau\in \mathfrak{R}$.
	
\end{proposition}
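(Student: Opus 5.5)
The plan is to reduce to the known connected case (Lusztig's graded Hecke algebra, \cite{lusztig2}) and then check the extra relations coming from $\mathfrak{R}$. Write $\mathbb{H}=S(\mathfrak{t}^*)\otimes\mathbb{C}[W]\otimes\mathbb{C}[\mathfrak{r}]$ for the graded Hecke algebra of $\mathcal{R}(G,T)$ with parameter $\mathfrak{r}$. This is the unique associative structure satisfying the first four bullets with $W$ in place of $W^+$ (Lusztig's PBW theorem). Uniqueness for $W^+$ is then immediate: the relations let one rewrite any product of generators $\xi$, $N_w$, $N_\tau$, $\mathfrak{r}$ in the normal form $\xi\, N_w N_\tau\,\mathfrak{r}^k$. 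Hence the multiplication on the given vector space is determined by the relations.

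For existence, I would let $\mathfrak{R}$ act on $\mathbb{H}$ by algebra automorphisms
\[
\tau(\xi N_w\mathfrak{r}^k)=\tau(\xi)N_{\tau w\tau^{-1}}\mathfrak{r}^k .
\]
The point is to check that this respects Lusztig's defining relations.
\begin{itemize}
\item The Coxeter relations in $\mathbb{C}[W]$ are preserved, since $\tau\in N_{G^+}(B,T)$ normalizes $W$ and preserves the set $\Pi$ of simple roots (it fixes $B$). Hence conjugation by $\tau$ permutes the simple reflections, $\tau s_\alpha\tau^{-1}=s_{\tau(\alpha)}$.
\item $S(\mathfrak{t}^*)$ and $\mathbb{C}[\mathfrak{r}]$ are clearly preserved.
\item For the cross relation, apply $\tau$ to $N_\alpha\xi-s_\alpha(\xi)N_\alpha=2\mathfrak{r}(\xi-s_\alpha(\xi))/\alpha$. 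The result is
\[
N_{\tau\alpha}\tau(\xi)-s_{\tau\alpha}(\tau\xi)N_{\tau\alpha}=2\mathfrak{r}\bigl(\tau\xi-s_{\tau\alpha}(\tau\xi)\bigr)/\tau(\alpha).
\]
This uses $\tau(s_\alpha\xi)=s_{\tau\alpha}\tau(\xi)$, and it is exactly the relation for the simple root $\tau(\alpha)$. The equal-parameter normalization makes this work with no condition on labels; in general one would need the parameter function to be $\mathfrak{R}$-invariant.
\end{itemize}
Since $\mathbb{H}$ is defined by generators and relations (equivalently by PBW), $\tau$ extends to an automorphism, and the resulting map $\mathfrak{R}\to\mathrm{Aut}(\mathbb{H})$ is a homomorphism.

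Next I would form the crossed product $\mathbb{H}\rtimes\mathfrak{R}$, whose underlying space is $\mathbb{H}\otimes\mathbb{C}[\mathfrak{R}]=S(\mathfrak{t}^*)\otimes\mathbb{C}[W]\otimes\mathbb{C}[\mathfrak{R}]\otimes\mathbb{C}[\mathfrak{r}]$. Via $W^+=W\rtimes\mathfrak{R}$ this is $S(\mathfrak{t}^*)\otimes\mathbb{C}[W^+]\otimes\mathbb{C}[\mathfrak{r}]$, so the space has the required form. In $\mathbb{H}\rtimes\mathfrak{R}$ the element $N_wN_\tau$ multiplies as in $\mathbb{C}[W^+]$, because $N_\tau N_wN_\tau^{-1}=N_{\tau w\tau^{-1}}$. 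Hence $\mathbb{C}[W^+]$ embeds as a subalgebra. Here, as in \cite{Aubert2016GradedHA}, one may allow a twisted group algebra by a 2-cocycle of $\mathfrak{R}$; for our $\hat\Gamma$ of order two this cocycle is trivial. The polynomial algebra embeds, $\mathfrak{r}$ is central because $\tau$ fixes $\mathfrak{r}$, and $N_\tau\xi N_\tau^{-1}=\tau(\xi)$ holds by construction. Associativity of the crossed product is standard.

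The main obstacle is the input that $\mathbb{H}$ itself exists with the PBW basis. I would take this from Lusztig \cite{Lusztig1989AffineHA}, possibly realizing $\mathbb{H}$ as the associated graded of the completed affine Hecke algebra, or by an explicit Dunkl-type faithful representation on $S(\mathfrak{t}^*)\otimes\mathbb{C}[\mathfrak{r}]$. Given that input, the twisted statement is formal. One might also define the $\mathfrak{R}$-action through such a faithful representation, twisting it by $\tau$, to avoid worrying about the presentation.
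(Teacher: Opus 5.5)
Your proposal is correct and is essentially the argument behind the cited \cite[2.2]{Aubert2016GradedHA}; the paper gives no proof of its own. That argument takes Lusztig's connected-case algebra with its PBW basis, lets $\mathfrak{R}$ act by the diagram automorphisms it induces (which preserve the equal parameters), and forms the crossed product; this is the identification $\mathbb{H}(G^+)=\mathbb{H}(G)\rtimes\mathfrak{R}$ the paper records right after the statement, and your remark that no 2-cocycle twist is needed is also right here, since the local system is constant and in any case $H^2(\mathbb{Z}/2,\mathbb{C}^\times)=0$.
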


We denote the algebra of Proposition \ref{GradedHecke} by $\mathbb{H}(G^+)$. Define $\mathbb{H}(G) = S(\mathfrak{t}^*) \otimes \mathbb{C}[W] \otimes \mathbb{C}[\mathfrak{r}]$ as a subalgebra of $\mathbb{H}(G^+)$; then $\mathbb{H}(G^+) = \mathbb{H}(G) \rtimes \mathfrak{R}$.

\begin{lemma}
	The algebra $S(\mathfrak{t}^*)^{W^+}\otimes\mathbb{C}[\mathfrak{r}] $ is the center of $\mathbb{H}(G^+)$.
\end{lemma}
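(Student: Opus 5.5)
The plan is to reduce to the known connected case, exactly as in the proof of the analogous lemma for $\mathcal{H}(G^+,v)$. By Lusztig, the center of $\mathbb{H}(G)$ is $S(\mathfrak{t}^*)^{W}\otimes\mathbb{C}[\mathfrak{r}]$. We then determine which elements of $\mathbb{H}(G^+)=\mathbb{H}(G)\rtimes\mathfrak{R}$ commute with everything.

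\textbf{Inclusion $\supseteq$.} Take $z\in S(\mathfrak{t}^*)^{W^+}\otimes\mathbb{C}[\mathfrak{r}]$. Since $z$ is in particular $W$-invariant, it is central in $\mathbb{H}(G)$. It also satisfies $N_\tau z N_\tau^{-1}=\tau(z)=z$ for every $\tau\in\mathfrak{R}$, by the last relation in Proposition \ref{GradedHecke}. Since $\mathbb{H}(G^+)$ is generated by $\mathbb{H}(G)$ and the $N_\tau$, the element $z$ is central.

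\textbf{Inclusion $\subseteq$.} Write a central element as $z=\sum_{\tau\in\mathfrak{R}} h_\tau N_\tau$ with $h_\tau\in\mathbb{H}(G)$; this expansion is unique because of the tensor decomposition. Commuting with $\xi\in S(\mathfrak{t}^*)$ gives
\[
\sum_\tau (\xi h_\tau - h_\tau\tau(\xi))N_\tau=0,
\]
so $\xi h_\tau=h_\tau\,\tau(\xi)$ for every $\tau$ and every $\xi$.

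The key step, and the main obstacle, is to show that this forces $h_\tau=0$ for $\tau\neq 1$. I would use the standard filtration of $\mathbb{H}(G)$ by degree in $S(\mathfrak{t}^*)$ (with $W$ and $\mathfrak{r}$ placed in appropriate degrees). Its associated graded algebra is $S(\mathfrak{t}^*)\rtimes W$, with $\mathfrak{r}$ central. Write $h_\tau=\sum_{w} f_w w$ at top degree. The relation becomes $\xi f_w=f_w\, w\tau(\xi)$, so $f_w(\xi-w\tau(\xi))=0$ for all $\xi$. As $S(\mathfrak{t}^*)$ is a domain, either $f_w=0$ or $w\tau$ acts trivially on $\mathfrak{t}^*$. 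Now $w\tau=1$ in $W^+$ forces $\tau\in W\cap\mathfrak{R}=1$, since $W^+=W\rtimes\mathfrak{R}$ acts faithfully on $\mathfrak{t}$ (for $\GL_n$ with $\hat\gamma$, the element $\hat\gamma$ acts by $-w_0$, which is not in $W$ for $n\ge2$). Hence the top-degree part vanishes, and induction on degree gives $h_\tau=0$ for $\tau\neq1$.

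Consequently $z=h_1\in\mathbb{H}(G)$. It commutes with $\mathbb{H}(G)$, so by Lusztig $z\in S(\mathfrak{t}^*)^W\otimes\mathbb{C}[\mathfrak{r}]$. Finally, commuting with $N_\tau$ gives $\tau(z)=z$, so $z$ is $W^+$-invariant.
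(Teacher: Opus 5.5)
Your proof is correct and takes the same route as the paper: Lusztig's description of $Z(\mathbb{H}(G))$, followed by taking $\mathfrak{R}$-invariants. It also makes explicit the step the paper skips, namely that a central element has no $N_\tau$-component for $\tau\neq 1$; this step genuinely requires $W^+$ to act faithfully on $\mathfrak{t}$, as you correctly invoke (without faithfulness the statement fails, e.g.\ for $G^+=G\times\mathbb{Z}/2\mathbb{Z}$). Faithfulness holds both for $\GL_n(\mathbb{C})\rtimes\hat{\Gamma}$ and for the disconnected centralizers such as $Z_{G^+}(t_c)$ to which the lemma is actually applied, since any non-identity component acts on $\mathfrak{t}$ as $-1$ times a permutation; in particular your restriction $n\ge 2$ is unnecessary.
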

\begin{proof}
	By a result of Lusztig, the center of $\mathbb{H}(G)$ is $S(\mathfrak{t}^*)^{W}\otimes\mathbb{C}[\mathfrak{r}] $. Taking into account the action of $\mathfrak{R}$ on $\mathbb{H}(G)$, we obtain that $S(\mathfrak{t}^*)^{W^+}\otimes\mathbb{C}[\mathfrak{r}] $ is the center of $\mathbb{H}(G^+)$.
\end{proof}

Now we return to our specific case $G^+=G\rtimes \hat{\Gamma}$. Let $t$ be a semisimple element in $G$ such that $\hat{\Gamma}\cdot t\subseteq G\cdot t$. Let $t=t_ct_h$ be its polar decomposition,  where $t_c=t|t|^{-1}$ is the compact part and $t_h=|t|$ is the hyperbolic part. Then $Z_{G^+}(t_c)^0$ has root datum $(\widetilde{X},R_{t_c},\widetilde{Y},R_{t_c}^\vee, \Pi_{t_c})$ and we have a graded Hecke algebra $\mathbb{H}(Z_{G^+}(t_c))$.

From now on, assume $v_0>0$. Denote by $\bar{\chi}$ the central character of the center of $\mathbb{H}(Z_{G^+}(t_c))$ given by the $(W^+_{t_c}\cdot \mathrm{log}t_h, \mathrm{log}v_0)$, and denote by $\mathbb{H}(Z_{G^+}(t_c))-\mathrm{Mod}_{\bar{\chi}}$ the category of finite dimensional modules of $\mathbb{H}(Z_{G^+}(t_c))$ on which the center acts via the central character $\bar{\chi}$.

\begin{theorem}\label{Ind22}[\cite{aubert2017affine}]
	There is an equivalence $\Theta$ between the categories  $\mathcal{H}(G^+,v)-\mathrm{Mod}_{\chi}$ and $\mathbb{H}(Z_{G^+}(t_c))-\mathrm{Mod}_{\bar{\chi}}$, where $\chi$ is the central character of $\mathcal{H}(G^+,v)$ given by $(W^+\cdot t,v_0)$ and $\bar{\chi}$ is the central character of $\mathbb{H}(Z_{G^+}(t_c))$ given by the $(W^+_{t_c}\cdot \mathrm{log}t_h, \mathrm{log}v_0)$. 
	
	The functor $\Theta$ is compatible with parabolic induction, in the following sense. let $M$ be a Levi subgroup of ${G}$ which is normalized by $\hat{\gamma}$, and define ${M}^+={M}\rtimes \hat{\Gamma}$. Then $\Theta \circ \mathrm{Ind}_{\mathcal{H}(M^+,v)}^{\mathcal{H}(G^+,v)}= \mathrm{Ind}_{\mathbb{H}(Z_{M^+}(t_c))}^{\mathbb{H}(Z_{G^+}(t_c))} \circ \Theta$, where $\mathrm{Ind}_{\mathcal{H}(M^+,v)}^{\mathcal{H}(G^+,v)}$(resp. $\mathrm{Ind}_{\mathbb{H}(Z_{M^+}(t_c))}^{\mathbb{H}(Z_{G^+}(t_c))}$ ) is given by $\mathcal{H}(G^+,v)\otimes_{\mathcal{H}(M^+,v)}(-)$ (resp. $\mathbb{H}(Z_{G^+}(t_c))\otimes_{\mathbb{H}(Z_{M^+}(t_c))}(-)$).

\end{theorem}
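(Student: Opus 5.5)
The statement is Theorem \ref{Ind22}, attributed to \cite{aubert2017affine}; I would reduce it to the connected case (Lusztig's reduction theorems) together with the \(\hat{\Gamma}\)-equivariance of each step. The plan follows Lusztig's two reductions, tracking the extra element \(T_\gamma\) and checking compatibility with induction at each stage.

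\textbf{Step 1: completion at the central character.} Since we assume \(\hat{\Gamma}\cdot t\subseteq G\cdot t\), we may conjugate \(t\) inside \(W^+\)-orbits so that \(\hat{\gamma}\) composed with some \(w\in W\) fixes \(t\). Finite-dimensional modules with central character \(\chi\) are exactly modules over \(\widehat{\mathcal{H}}(G^+,v)_{(t,v_0)}\) killed by the maximal ideal. Lusztig's first reduction gives an isomorphism of completed algebras
\[
\widehat{\mathcal{H}}(G,v)_{(t,v_0)}\simeq M_{d}\bigl(\widehat{\mathcal{H}}(Z_G(t_c),v)_{(t,v_0)}\bigr),\qquad d=[W:W_{t_c}].
\]
It uses the idempotents \(1_{W\cdot t_c}\) attached to the \(W\)-orbit of \(t_c\) and the normalized intertwiners \(\tau_w\). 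I would redo this with \(W^+\) in place of \(W\) and \(W^+_{t_c}=\mathrm{Stab}_{W^+}(t_c)\). The intertwiners \(\tau_w\) for \(w\in W\), together with \(T_\gamma\) (which acts on \(\mathscr{A}\) through \(\hat{\gamma}\)), satisfy the same cocycle relations. This yields
\[
\widehat{\mathcal{H}}(G^+,v)_{(t,v_0)}\simeq M_{[W^+:W^+_{t_c}]}\bigl(\widehat{\mathcal{H}}(Z_{G^+}(t_c),v)_{(t,v_0)}\bigr),
\]
where the inner algebra is \(\widehat{\mathcal{H}}(Z_G(t_c)^0,v)\rtimes (W^+_{t_c}/W_{t_c}^0)\).

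\textbf{Step 2: exponential/logarithm passage to the graded algebra.} Near \(t_c\), the map \(\xi\mapsto t_c\exp(\xi)\), with \(v=\exp(\mathfrak{r})\), identifies the completions of \(\mathscr{A}\) at \((t,v_0)\) and of \(S(\mathfrak{t}^*)\otimes\mathbb{C}[\mathfrak{r}]\) at \((\log t_h,\log v_0)\). Lusztig's second reduction gives an isomorphism \(\widehat{\mathcal{H}}(Z_G(t_c)^0,v)\simeq \widehat{\mathbb{H}}(Z_G(t_c)^0)\) which sends \(\tau_{s_\alpha}\) to the corresponding normalized graded intertwiners. Because \(\log\) is equivariant for automorphisms of \(T\) fixing \(t_c\), the isomorphism is \(W^+_{t_c}\)-equivariant, so it extends to the crossed products, i.e. to \(\widehat{\mathbb{H}}(Z_{G^+}(t_c))\). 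Combining the two steps and passing to modules on which the ideal acts nilpotently (here semisimply on the center) gives \(\Theta\).

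\textbf{Step 3: compatibility with induction.} For a \(\hat{\gamma}\)-stable Levi \(M\), the same constructions applied to \(M^+\) use idempotents for \(W_M^+\)-orbits, which refine the \(W^+\)-orbits. Writing \(\mathcal{H}(G^+,v)\) as a free right \(\mathcal{H}(M^+,v)\)-module on \(\{T_w\}_{w\in W^{M}}\), and likewise for \(\mathbb{H}\), the isomorphisms of Steps 1–2 intertwine these decompositions after completion. Transitivity of \(\otimes\) then yields \(\Theta\circ\mathrm{Ind}=\mathrm{Ind}\circ\Theta\).

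\textbf{Main obstacle.} The hardest step will be this last compatibility: the matrix-size (Morita) identifications for \(M^+\) and \(G^+\) differ, and one must match the cosets \(W^+/W^+_{t_c}\) with \(W_M^+/W^+_{M,t_c}\) via double cosets \(W_M^+\backslash W^+/W^+_{t_c}\). My first attempt would be to follow Lusztig's proof of 8.6 in \cite{Lusztig1989AffineHA} verbatim, checking only that \(T_\gamma\) preserves the chosen coset representatives.
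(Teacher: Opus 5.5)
Your Steps 1 and 2 are the right argument, and they are what the paper relies on. The paper gives no proof of its own and quotes \cite{aubert2017affine}. In Section 6 it uses exactly your decomposition $\Theta^+_Q=\mathrm{Ind}\circ\mathrm{Exp}$, with the twisted first reduction realized as in Proposition~\ref{ComCatEq1}, where the $\mathfrak{R}$-generator goes to $e_\varpi(\iota^0_{w},\hat\gamma)$. Since $\hat\gamma t_c\in W t_c$, your matrix size $[W^+:W^+_{t_c}]$ equals $[W:W_{t_c}]$.

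\textbf{The gap is Step 3}, which carries the whole second half of the statement.

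\emph{The rank comparison fails.} Your claim that the Step 1--2 equivalences ``intertwine'' $\mathcal{H}(G^+,v)=\bigoplus_{w\in W^M}T_w\mathcal{H}(M^+,v)$ with its graded analogue cannot hold, because the two ranks are $[W:W_M]$ and $[W^+_{t_c}:W^+_{M,t_c}]$. Following Lusztig's 8.6 verbatim only reproduces the algebra isomorphisms, not the statement about induction.

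\emph{What is needed is a comparison through idempotents.} Let $e_{t_c}$ and $e^M_{t_c}$ be the idempotents of the completed $\mathscr{A}$ attached to $W^+_{t_c}t$ and $W^+_{M,t_c}t$, so that $e^M_{t_c}\le e_{t_c}$. The $M^+$ Morita equivalence gives $V\cong\widehat{\mathcal{H}}(M^+,v)e^M_{t_c}\otimes e^M_{t_c}V$. Hence
$e_{t_c}\mathrm{Ind}(V)\cong e_{t_c}\widehat{\mathcal{H}}(G^+,v)e^M_{t_c}\otimes_{e^M_{t_c}\widehat{\mathcal{H}}(M^+,v)e^M_{t_c}}e^M_{t_c}V$.
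This becomes $\widehat{\mathcal{H}}(Z_{G^+}(t_c),v)\otimes_{\widehat{\mathcal{H}}(Z_{M^+}(t_c),v)}e^M_{t_c}V$ once you know that the $M^+$ reduction isomorphism is the restriction of the $G^+$ one. Step 2 is then compatible for the same reason.

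\emph{The $W$-part of that restriction property} holds because the simple roots of $R^+\cap R_M\cap R_{t_c}$ lie among those of $R^+_{t_c}=R^+\cap R_{t_c}$. The normalized intertwiners for reflections in $W_M$ are the same whether computed in $W_M$ or in $W$.

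\emph{The genuinely twisted check} is not that $T_\gamma$ preserves coset representatives. Let $u_M\hat\gamma$ (resp.\ $u_G\hat\gamma$) be the element of $W^+_{M,t_c}\setminus W$ (resp.\ $W^+_{t_c}\setminus W$) preserving $R^+_{M,t_c}$ (resp.\ $R^+_{t_c}$). When $Z_{M^+}(t_c)$ is disconnected, you need the $\mathfrak{R}$-generators to agree:
$e^M_{t_c}(\iota^0_{u_M},\hat\gamma)=e^M_{t_c}\,e_{t_c}(\iota^0_{u_G},\hat\gamma)$, that is, $u_M\hat\gamma=u_G\hat\gamma$.
This is also what makes $\mathbb{H}(Z_{M^+}(t_c))$ a subalgebra of $\mathbb{H}(Z_{G^+}(t_c))$ at all, so that the right-hand induction in the statement is defined.

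The identity does hold, by the following argument:
\begin{itemize}
\item $\hat\gamma$ preserves $R^+$ and $R_M$.
\item $u_M\in W_M$ permutes $R^+\setminus R_M$.
\item $u_M\hat\gamma$ preserves $R^+_{M,t_c}$ by construction.
\end{itemize}
Therefore $u_M\hat\gamma$ preserves $R^+_{t_c}$, and that property characterizes $u_G\hat\gamma$ within $W^+_{t_c}\setminus W_{t_c}$.
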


\begin{remark}\label{SplitRed}
	For a semisimple element $t\in \GL_n(\mathbb{C})$ satisfying $\hat{\Gamma}\cdot t\subseteq G\cdot t$, the short exact sequence \begin{equation}\label{DisSplitEqu}
		1\to Z_G(t_c) \to Z_{G^+}(t_c) \to \hat{\Gamma}_{t_c} \to 1
	\end{equation}
	oes not split in some cases. However, there is always a homomorphism  $\hat{\Gamma}_{t_c}\to \mathrm{Out}(Z_G(t_c))\simeq\mathrm{Aut}(\mathcal{B}(Z_G(t_c)))$ \cite{gaetz2024disconnectedreductivegroups}, which induces an isomorphism $\hat{\Gamma}_{t_c}\simeq \mathfrak{R}$. Consequently, we obtain  $\mathbb{H}(Z_{G^+}(t_c))=\mathbb{H}(Z_{G}(t_c))\rtimes\mathfrak{R}$. We will discuss this in more detail in the section of Whittaker normalization.
\end{remark}

\section{Representations of twisted graded Hecke algebra}
We use a superscript $*$ to denote the dual of a vector space or of a local system.   If a group $G$ acts on a vector space $V$, we denote the space of $G$-invariant vectors by $V^G$. We write $S(V)=\bigoplus_{j \ge 0}S^j(V)$ for the symmetric algebra of $V$.
\subsection{Equivariant derived category}

Let $A$ be a complex linear algebraic group and let $X$ be an $A$-variety, that is a quasiprojective complex algebraic variety with an algebraic $A$-action. We consider  sheaves of $\mathbb{C}$-vector spaces on X with respect to the analytic topology. We denote by $D(X)$ the bounded derived category of constructible sheaves on $X$ and $D_A(X)$ the $A$-equivariant bounded derived category as defined in \cite{EquBeL}. This is a triangulated category with a $t$-structure and there is a forgetful functor $\operatorname{For}: D_A(X) \to D(X)$.

We now recall the construction $D_A(X)$. Let $I\subset \mathbb{Z}$ be a finite interval, and let $k\geq|I|$. Let $Sf_k(A)$ be the category  whose objects are smooth free $A$-varieties $Z_k$ that are $k$-acyclic
and whose morphisms are  smooth free $A$-morphisms. For a chosen $Z_k\in Sf_k(A)$, we have the diagram $X \stackrel{p_k}{\longleftarrow} X\times Z_k \stackrel{q_k}{\longrightarrow} A\backslash (X\times Z_k)$. An object $\mathcal{I}$ in $D_A^I(X,Z_k)$ consists of a triple $(\mathcal{I}_0,\mathcal{I}_k,\beta)$ with  $\mathcal{I}_0\in D^I(X)$, $\mathcal{I}_k\in D^I(A\backslash (X\times Z_k))$, and an isomorphism  $\beta:p_k^*(\mathcal{I}_0)\simeq q_k^*(\mathcal{I}_k)$. 

A morphism $\alpha:(\mathcal{I}_0,\mathcal{I}_k,\beta_I)\mapsto (\mathcal{H}_0,\mathcal{H}_k,\beta_H) $ in $D^I_{A}(X,Z_k)$ is a pair $\alpha=(\alpha_X,\bar{\alpha})$ where $\alpha_X:\mathcal{I}_0 \mapsto \mathcal{H}_0$ and $\bar{\alpha}:  \mathcal{I}_k\mapsto \mathcal{H}_k$ satisfy $\beta_H\cdot p_k^*(\alpha_X)=q_k^*(\bar{\alpha})\cdot \beta_I$

For another object $Z_{k'} \in Sf_{k'}(A)$ with $k'\geq|I|$, there is a canonical equivalence of categories $D^I_{A}(X,Z_k)\simeq D^I_{A}(X,Z_{k'})$. This yields a well defined category $D^I_{A}(X)$.   Finally we define $D_{A}(X)=\lim_I D^I_{A}(X) $.

\begin{remark}\label{ExsitEqu}
	Let $A\subseteq \GL_n(\mathbb{C})$ be a closed subgroup, and let $V_k$ be the Stiefel variety of $n$-tuples of linearly independent vectors in $\mathbb{C}^{n+k+1}$. Then $V_k\in Sf_k(A)$ . 
\end{remark}

\begin{remark}\label{Recover}
	In fact, as explained in \cite[2.3.2]{EquBeL}, for a given  $Z_k\in Sf_k(A)$ with $k\geq |I|$, we can recover $D^I_{A}(X)$ from a full subcategory of $D^I(A\backslash (X\times Z_k))$. 
\end{remark}
Let $X$, $Y$ be $A$-varieties and let $f:X\to  Y$ be an $A$-morphism. Then we have functors:
\begin{equation*}
	f^*,f^! : D_A(Y)\to D_A(X) \quad f_*,f_! : D_A(X)\to D_A(Y)
\end{equation*}

If $A' \hookrightarrow A$ is a subgroup, then an $A$-variety $X$ can also be regarded as an $A'$-variety, and there is a restriction functor $\operatorname{Res}^A_{A'} : D_A(X) \to D_{A'}(X)$.  The four functors above commute with the restriction functor $\Res^A_{A'}$ in the sense that there are canonical isomorphisms $\Res^A_{A'}\cdot f^*\simeq f^*\cdot \Res^A_{A'}$, and similarly for $f^!$, $f_*$, $f_!$.

We have the functor $\otimes$ and $R\mathscr{Hom}$ in the equivariant case. 
We also have the equivariant dualizing complex $\omega_X=a_X^!\mathbb{C}_{\mathrm{pt}}$ $(a_X:X\to \mathrm{pt})$ and Verdier duality $D(\mathcal{I}):=R\mathscr{H}om(\mathcal{I},\omega_X)$. The following natural functorial isomorphisms hold: \begin{align}
    R\mathscr{H}om(\mathcal{I}_1\otimes \mathcal{I}_2,\mathcal{I}_3)&\simeq R\mathscr{H}om(\mathcal{I}_1,R\mathscr{H}om(\mathcal{I}_2,\mathcal{I}_3))\label{DefHomo}\\
    f^*(\mathcal{I}_1\otimes \mathcal{I}_2)&\simeq f^*\mathcal{I}_1\otimes f^*\mathcal{I}_2\nonumber\\
    D\cdot f^*&\simeq f^!\cdot D\nonumber\\
    D\cdot f_*&\simeq f_!\cdot D \nonumber
\end{align}

For any $j\in\mathbb{Z}$, define a functor $H^j_A(X,-)$ from $D_A(X)$ to the category of finite dimensional $\mathbb{C}$-vector spaces. Let $\mathcal{I}=(\mathcal{I}_0,\mathcal{I}_k,\beta_I)\in D_A(X)$. Choose $k\geq j$, and set \begin{align*}
    H^j_A(X,\mathcal{I}):=H^j(A\backslash (X\times Z_k),\mathcal{I}_k)
\end{align*} The definition is independent of the choices. Moreover, if $f:X\to  Y$ is a $A$-morphism, then $H^j_A(X,\mathcal{I})=H^j_A(Y,f_*\mathcal{I})$.

 We define the equivariant homology by \begin{align*}
	H_j^A(X,\mathcal{I})=H_A^{j-2\mathrm{dim}X}(X,\mathcal{I}\otimes\omega_X))
\end{align*}

Let $\mathcal{L}$ be an $A$-equivariant local system on $X$. We have $\mathcal{L}^*=R\mathscr{H}om(\mathcal{L},\mathbb{C}_X)$. From \eqref{DefHomo}, we have $R\mathscr{H}om(\mathcal{L\otimes\omega}_X,\omega_X)\simeq  R\mathscr{H}om(\mathcal{L},\mathbb{C}_X)$. We obtain \begin{align*}
\mathcal{L\otimes\omega}_X\simeq D(\mathcal{L}^*)
\end{align*} Hence  \begin{align*}
	H_j^A(X,\mathcal{L})=H_A^{j-2\mathrm{dim}X}(X,D(\mathcal{L}^*))
\end{align*}
    
For $\mathcal{I}$, $\mathcal{I}'\in D_A(X)$, we define 
\begin{equation}\label{DefCup}
	\oplus_{j+j'=i}H^j_A(X,\mathcal{I})\times H^{j'}_A(X,\mathcal{I}')\to H^i_A(X,\mathcal{I}\otimes\mathcal{I}')
\end{equation}

It is given by the cup product $H^j(A\backslash (X\times Z_k),\mathcal{I}_k)\times H^{j'}(A\backslash (X\times Z_k),\mathcal{I}'_k)\to H^i(A\backslash (X\times Z_k),\mathcal{I}_k\otimes\mathcal{I}'_k)$ with large enough $k$.

We have canonically 	
\begin{equation}\label{EquHomDes}
	\mathrm{Hom}_{D_A(X)}^j(\mathcal{I},\mathcal{I}')=H^j_A(X,D(D\mathcal{I}'\otimes\mathcal{I}))
\end{equation}	
and
$H^j_A(X,D(D\mathcal{I}\otimes\mathcal{I}'))=H^j(A\backslash (X\times Z_k),D(D(\mathcal{I}'_k)\otimes \mathcal{I}_k))=\mathrm{Hom}(H_c^{-j}(A\backslash (X\times Z_k),D(\mathcal{I}'_k)\otimes \mathcal{I}_k),\mathbb{C})$.

Using \eqref{DefCup}, we note that $H^*_A(X,D(D\mathcal{I}'\otimes\mathcal{I}))$ is a $H^*_A(X,\mathbb{C})$-module.

Taking $\mathcal{I}=\mathcal{I}'=\mathbb{C}_X$ in \eqref{EquHomDes}, we deduce
\begin{equation}\label{ConstHom}
	\mathrm{Hom}_{D_A(X)}^j(\mathbb{C}_X,\mathbb{C}_X)=H^j_A(X,\mathbb{C})
\end{equation}

Hence 	$\mathrm{Hom}_{D_A(X)}^*(\mathcal{I},\mathcal{I}')$ can be viewed as $\mathrm{Hom}_{D_A(X)}^*(\mathbb{C}_X,\mathbb{C}_X)$-module as follows: the product of $\phi: \mathbb{C}_X\to \mathbb{C}_X[n]$ with $\psi: \mathcal{I}\to \mathcal{I}'[m]$ is $\phi\otimes\psi:\mathcal{I}\to \mathcal{I}'[n+m]$  using $\mathcal{I}=\mathbb{C}_X\otimes\mathcal{I}$, $\mathcal{I}'=\mathbb{C}_X\otimes\mathcal{I}'$. Then the isomorphism \eqref{EquHomDes} is an isomorphism of $H^*_A(X,\mathbb{C})$-modules, where the left-hand side is a module via the identification $H^*_A(X,\mathbb{C})\cong\operatorname{Hom}^*_{D_A(X)}(\mathbb{C}_X,\mathbb{C}_X)$ and the right-hand side via the cup product.

Let $A^+$ be an algebraic group, and let $A$ be a connected normal subgroup with $\mathfrak{R} :=A^+/A= \langle1,\tau\rangle$. For an $A^+$-variety $X$, we have a functor $D_{A^+}(X)\stackrel{\mathrm{Res}^{A^+}_A}{\longrightarrow} D_{A}(X)$.

We define the functor $\tau^*$ on $D_{A}(X)$. Take a representative $\bar{\tau}$ of $\tau$ in $A^+$. Since $X$ is $A^+$-variety, $\bar{\tau} : X\to X $ is twisted $A$-equivariant, i.e., $\bar{\tau}(ax)=\mathrm{Ad}_{\bar{\tau}}(a)\bar{\tau}(x)$ where $a\in A$, $x\in X$. Take $Z_k^+\in Sf_k(A^+)$, hence for $a
\in A$, $z\in Z_k^+$ we have ${\bar{\tau}}(az)=\mathrm{Ad}_{\bar{\tau}}(a)\bar{\tau}(z)$. Then we have $\bar{\tau}$ action on $X\times Z_k$ which is twisted $A$-equivariant. It induces a $\bar{\tau}$ action on $A\backslash (X\times Z_k)$. If we change another representative $\bar{\tau}'$, we have $\bar{\tau}'\cdot\bar{\tau}^{-1}=a\in A$, hence $\bar{\tau}'\cdot\bar{\tau}^{-1}$ induces the identity map on $A\backslash (X\times Z_k)$,  therefore the action $\bar{\tau}:A\backslash (X\times Z_k)\to A\backslash (X\times Z_k)$ does not depend on the choice of representative. Finally this $\bar{\tau}$ defines a functor $\tau^*$ on $D^I(A\backslash (X\times Z_k))$ by pullback the complexes via $\bar{\tau}$ action on the space. From \cite[6.5]{EquBeL}, it means we can always find a $k$-acyclic compatible resolution of $\bar{\tau}$, hence $\tau^*$ is well defined on $D_{A}(X)$.

\begin{lemma}\label{Inv}
There is an algebraic isomorphism\begin{align*}
    \mathrm{Hom}^*_{D_{A^+}(X)}(\mathcal{F},\mathcal{F}')\simeq\mathrm{Hom}_{D_{A}(X)}^*(\mathrm{Res}^{A^+}_A(\mathcal{F}),\mathrm{Res}^{A^+}_A(\mathcal{F}'))^\tau
\end{align*}
\end{lemma}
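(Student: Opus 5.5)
The plan is to reduce everything to ordinary sheaves on quotient spaces by means of a single free resolution that works for both groups, and then use Galois descent along a finite étale double cover. Choose $Z_k^+\in Sf_k(A^+)$ with $k$ large relative to the degrees in play; Remark \ref{ExsitEqu} provides one, since $A^+$ embeds in some $\GL_n(\mathbb{C})$. Restricting the action, $Z_k^+$ also lies in $Sf_k(A)$. Write
\begin{align*}
Y_k=A\backslash (X\times Z_k^+), \qquad Y_k^+=A^+\backslash (X\times Z_k^+).
\end{align*}
The natural map $\pi: Y_k\to Y_k^+$ is the quotient by the free action of $\mathfrak{R}=\{1,\tau\}$. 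This action is given by the involution $\bar{\tau}$ constructed just before the statement, so $\pi$ is a finite étale Galois cover with group $\mathfrak{R}$. By Remark \ref{Recover}, $\mathcal{F}\in D_{A^+}(X)$ corresponds to $\mathcal{F}_k^+\in D(Y_k^+)$, and $\mathrm{Res}^{A^+}_A\mathcal{F}$ corresponds to $\pi^*\mathcal{F}_k^+$. The functor $\tau^*$ on $D_A(X)$ is pullback along $\bar{\tau}$ on $Y_k$. Since $\pi\circ\bar{\tau}=\pi$, the identity $\bar{\tau}^*\pi^*=\pi^*$ gives a canonical isomorphism $\tau^*\mathrm{Res}\,\mathcal{F}\simeq\mathrm{Res}\,\mathcal{F}$, and this is the source of the $\tau$-action on $\mathrm{Hom}^*_{D_A(X)}(\mathrm{Res}\,\mathcal{F},\mathrm{Res}\,\mathcal{F}')$. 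Concretely, $\tau$ sends $f$ to the conjugate of $\bar{\tau}^*f$ by these canonical isomorphisms.

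In each fixed degree $j$ (with $k\ge$ the relevant interval), adjunction for the finite map $\pi$ gives
\begin{align*}
\mathrm{Hom}^j_{D(Y_k)}(\pi^*\mathcal{F}_k^+,\pi^*\mathcal{F}'^+_k)\simeq \mathrm{Hom}^j_{D(Y_k^+)}(\mathcal{F}_k^+,\pi_*\pi^*\mathcal{F}'^+_k).
\end{align*}
Because $\pi$ is a Galois cover with group of order $2$ and we work over $\mathbb{C}$, we have $\pi_*\pi^*\mathcal{F}'^+_k\simeq \mathcal{F}'^+_k\otimes\pi_*\mathbb{C}_{Y_k}$. The sheaf $\pi_*\mathbb{C}_{Y_k}$ splits as $\mathbb{C}\oplus\epsilon$, where $\mathbb{C}$ is the $\tau$-invariant summand and $\epsilon$ is the sign local system. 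The unit $\mathcal{F}'^+_k\to\pi_*\pi^*\mathcal{F}'^+_k$ is the inclusion of the invariant summand, and $\tau$ acts on the $\mathrm{Hom}$ through the deck transformation on $\pi_*\pi^*$. Taking $\tau$-invariants is exact, since averaging by $\tfrac12(1+\tau)$ is an idempotent, so it picks out precisely $\mathrm{Hom}^j_{D(Y_k^+)}(\mathcal{F}_k^+,\mathcal{F}'^+_k)$. By full faithfulness of the recovery in Remark \ref{Recover}, this last group is $\mathrm{Hom}^j_{D_{A^+}(X)}(\mathcal{F},\mathcal{F}')$. An equivalent route is to identify both sides through \eqref{EquHomDes} with $H^j$ of $D(D\mathcal{F}'\otimes\mathcal{F})$ on $Y_k^+$ and on $Y_k$ respectively, and then use the transfer isomorphism $H^j(Y_k^+,\mathcal{K})\simeq H^j(Y_k,\pi^*\mathcal{K})^{\mathfrak{R}}$.

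Finally, I would check that the isomorphism is algebraic, i.e.\ compatible with Yoneda composition and with the $H^*_{A^+}(X)$-module structures. The map from the left side to the right side is just the restriction functor $\mathrm{Res}^{A^+}_A$, so it is multiplicative. It is also compatible with the cup products \eqref{DefCup}, because $\pi^*$ is a tensor functor. The step I expect to be the main obstacle is confirming that the $\tau$-action built from $\bar{\tau}$ on $A\backslash(X\times Z_k)$ is independent of both $k$ and the choice of $Z_k^+$. It must also match the deck action used above. For this I would rely on the compatible-resolution statement of \cite[6.5]{EquBeL}, together with the observation made before the lemma that changing the representative $\bar{\tau}$ induces the identity on the quotient. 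With that in place, the isomorphisms for different $k$ are compatible, and passing to the limit over $I$ gives the lemma in all degrees.
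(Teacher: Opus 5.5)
Your proof is correct. It shares the paper's reduction: a single $Z_k^+\in Sf_k(A^+)$ serves both groups, $\mathrm{Res}^{A^+}_A$ becomes $\pi^*$ along the free $\mathfrak{R}$-quotient $\pi\colon A\backslash(X\times Z_k^+)\to A^+\backslash(X\times Z_k^+)$, and the $\tau$-action comes from $\pi\circ\bar\tau=\pi$.

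The difference is in the key step. The paper appeals to descent: $\pi^*$ identifies $D^I(A^+\backslash(X\times Z_k^+))$ with the category $D^I(A\backslash(X\times Z_k^+))^{\mathfrak{R}}$ of $\mathfrak{R}$-equivariant objects, passing through $D^I_{\mathfrak{R}}$, $D^I(Sh_{\mathfrak{R}})$ and \cite{EquTria}, \cite{EquDes}. It then reads off the lemma by giving $\pi^*\mathcal{F}_k$ and $\pi^*\mathcal{F}'_k[j]$ the structure $\kappa_\tau=\mathrm{id}$, so that equivariant morphisms are exactly the $\tau$-invariant ones.

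You instead prove by hand only the full faithfulness that is actually needed. You use adjunction, the projection formula $\pi_*\pi^*\mathcal{F}'_k\simeq\mathcal{F}'_k\otimes\pi_*\mathbb{C}$, and the splitting $\pi_*\mathbb{C}=\mathbb{C}\oplus\epsilon$ into $(\pm1)$-eigensheaves of the deck involution. This buys you three things:
\begin{itemize}
\item the argument is more elementary and self-contained;
\item it shows exactly where invertibility of $|\mathfrak{R}|=2$ enters;
\item it exhibits the isomorphism as the map induced by $\mathrm{Res}^{A^+}_A=\pi^*$, so multiplicativity is immediate.
\end{itemize}
The one thing you should write out is the routine naturality check that, under adjunction, $f\mapsto\tau\cdot f$ corresponds to post-composition with the deck involution on $\pi_*\pi^*\mathcal{F}'_k$.

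The paper's route proves more than the lemma. Its equivalence of categories, including essential surjectivity on objects, is reused immediately afterwards to show that $\mathrm{Res}^{A^+}_A\colon D_{A^+}(X)\to D_A(X)^{\mathfrak{R}}$ is an equivalence. Your Hom computation alone does not give that. The bookkeeping is the same in both: choosing $k$ for each degree, and independence of $Z_k^+$ via \cite[6.5]{EquBeL}.
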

\begin{proof}
	Take $Z_k^+\in Sf_k(A^+)$, then $Z_k^+\in Sf_k(A)$ and we have the projection $\pi: A\backslash (X\times Z_k^+)\to A^+\backslash (X\times Z_k^+)$. We have the following commutative diagram
	\begin{equation*}
		\xymatrix{
			D^I_{A^+}(X)
			\ar@{^{(}->}[r]
			\ar[d]_{\mathrm{Res}^{A^+}_A} & D^I(A^+\backslash (X\times Z_k^+)) \ar[d]^{\pi ^*} \\
			D^I_{A}(X)  \ar@{^{(}->}[r] & D^I(A\backslash (X\times Z_k^+))
		}
	\end{equation*}
Let $\mathcal{F},\mathcal{F}'\in D_{A^+}(X)$ and consider a morphism 
   $f\in \mathrm{Hom}_{D_{A}(X)}^*(\mathrm{Res}^{A^+}_A(\mathcal{F}),\mathrm{Res}^{A^+}_A(\mathcal{F}'))$.
Via the bottom horizontal embedding, $f$ corresponds to a morphism 
    $f_k\in \mathrm{Hom}^*_{D^I(A\backslash (X\times Z_k^+))}(\pi^*\mathcal{F}_k,\pi^*\mathcal{F}'_k)$ for any $I$. 
	
	Since we have an action $\tau$ on $A\backslash (X\times Z_k^+)$ by left multiplication, we have $\tau^*$ action on $\mathrm{Hom}_{D^I(A\backslash (X\times Z_k^+))}(\pi^*\mathcal{F}_k,\pi^*\mathcal{F}'_k)$: 
	\begin{align}
		\mathrm{Hom}_{D^I(A\backslash (X\times Z_k^+))}(\pi^*\mathcal{F}_k,\pi^*\mathcal{F}'_k)&\to \mathrm{Hom}_{D^I(A\backslash (X\times Z_k^+))}(\tau^*\pi^*\mathcal{F}_k,\tau^*\pi^*\mathcal{F}'_k)\label{DefHomAct}\\
		&\simeq\mathrm{Hom}_{D^I(A\backslash (X\times Z_k^+))}(\pi^*\mathcal{F}_k,\pi^*\mathcal{F}'_k)\nonumber
	\end{align} 
	where the second identity is induced by $\pi\circ\tau=\pi $.
	
	Since $\pi: A\backslash (X\times Z_k^+)\to A^+\backslash (X\times Z_k^+)$ is the quotient by $\mathfrak{R}$ and $\mathfrak{R}$ action on  $A\backslash (X\times Z_k^+)$ is free,  we have a canonical equivalence $D^I(A^+\backslash (X\times Z_k^+))\simeq D^I_{\mathfrak{R}}(A\backslash (X\times Z_k^+))$. Since $\mathfrak{R}$ is discrete group, we have a canonical equivalence  $D^I_{\mathfrak{R}}(A\backslash (X\times Z_k^+))\simeq D^I(Sh_{\mathfrak{R}}(A\backslash (X\times Z_k^+)))$. Therefore $\pi^*$ induces the equivalence of categories: 
	    $D^I(A^+\backslash (X\times Z_k^+))\simeq D^I(Sh_{\mathfrak{R}}(A\backslash (X\times Z_k^+)))$

	We define the category $D^I(A\backslash (X\times Z_k^+))^{\mathfrak{R}}$ of $\mathfrak{R}$-equivariant objects in $D^I(A\backslash (X\times Z_k^+))$ as follows: an object in $D^I(A\backslash (X\times Z_k^+))^{\mathfrak{R}}$ is a pair $(\mathcal{I},\kappa_{\tau})$ where $\mathcal{I}\in D^I(A\backslash (X\times Z_k^+))$ and $\kappa_{\tau}:\mathcal{I}\to \tau^*\mathcal{I}$ is an isomorphism satisfying $\kappa_\tau\circ\tau^*(\kappa_\tau)=1$; a morphism in $D^I(A\backslash (X\times Z_k^+))^{\mathfrak{R}}$ is a morphism of $D^I(A\backslash (X\times Z_k^+))$, $\phi : \mathcal{I}\to \mathcal{I}'$ such that the following diagram commutes:
	\begin{equation*}
		\xymatrix{
			\mathcal{I}
			\ar[r]^{\kappa_\tau}
			\ar[d]_{\phi} & \tau^*\mathcal{I}  \ar[d]^{\tau^*(\phi)} \\
			\mathcal{I}'
			\ar[r]^{\kappa'_\tau} & 
			\tau^*\mathcal{I}'
		}
	\end{equation*}
	
	There is natural functor $D^I(Sh_{\mathfrak{R}}(A\backslash (X\times Z_k^+)))\to D^I(A\backslash (X\times Z_k^+))$. 
     By \cite{EquTria}, this functor is an equivalence of categories  $D^I(Sh_{\mathfrak{R}}(A\backslash (X\times Z_k^+)))\simeq D^I(A\backslash (X\times Z_k^+))^{\mathfrak{R}}$.
 Equivalently, by \cite{EquDes}, the pullback functor $\pi^*$ directly gives an equivalence  $D^I(A^+\backslash (X\times Z_k^+))\simeq D^I(A\backslash (X\times Z_k^+))^{\mathfrak{R}}$.

	Take $\mathcal{I}=\pi^*\mathcal{F}_k$, $\mathcal{I}'=\pi^*\mathcal{F}'_k[j]$, then $\kappa_\tau$ and $\kappa'_\tau$ are identity map. Let $f_k\in \mathrm{Hom}_{D^I(A^+\backslash (X\times Z_k^+))}(\pi^*\mathcal{F}_k,\pi^*\mathcal{F}'_k[j])=\mathrm{Hom}^j_{D^I(A^+\backslash (X\times Z_k^+))}(\pi^*\mathcal{F}_k,\pi^*\mathcal{F}'_k)$, then $\pi^*(f)$ induces the algebraic isomorphism  $\mathrm{Hom}^*_{D_{A^+}(X)}(\mathcal{F},\mathcal{F})\simeq\mathrm{Hom}^*_{D_{A}(X)}(\mathrm{Res}^{A^+}_A(\mathcal{F}),\mathrm{Res}^{A^+}_A(\mathcal{F}))^\tau$.
\end{proof}

In fact, we can define a descent datum to recover $D_{A^+}(X)$ inside $D_{A}(X)$.

From the definition of the functor $\tau^*$, we have $\tau^*\circ\tau^*=1$. We define the category $D_{A}(X)^{\mathfrak{R}}$ as follows: an object in $D_{A}(X)^{\mathfrak{R}}$ is a pair $(\mathcal{I},\kappa_{\tau})$ where $\mathcal{I}\in D_{A}(X)$ and $\kappa_{\tau}:\mathcal{I}\to \tau^*\mathcal{I}$ is an isomorphism satisfying $\kappa_\tau\circ\tau^*(\kappa_\tau)=1$; a morphism in $D_{A}(X)^{\mathfrak{R}}$ is a morphism of $D_{A}(X)$, $\phi : \mathcal{I}\to \mathcal{I}'$ such that the following diagram commutes:
	\begin{equation*}
		\xymatrix{
			\mathcal{I}
			\ar[r]^{\kappa_\tau}
			\ar[d]_{\phi} & \tau^*\mathcal{I}  \ar[d]^{\tau^*(\phi)} \\
			\mathcal{I}'
			\ar[r]^{\kappa'_\tau} & 
			\tau^*\mathcal{I}'
		}
	\end{equation*}
The shift functor and distinguished triangles on $D_A(X)^{\mathfrak{R}}$ are defined as in $D_A(X)$.	

\begin{theorem}
    The functor \begin{align*}
        {\mathrm{Res}^{A^+}_A} : D_{A^+}(X)\to D_{A}(X)^{\mathfrak{R}}
    \end{align*}
    is an equivalence of categories. The functor ${\mathrm{Res}^{A^+}_A}$ is a triangulated functor, and $D_{A}(X)^{\mathfrak{R}}$ is a triangulated category.
\end{theorem}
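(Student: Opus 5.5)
First I would make ${\mathrm{Res}^{A^+}_A}$ into a functor to $D_{A}(X)^{\mathfrak{R}}$. Given $\mathcal{F}\in D_{A^+}(X)$, choose $Z_k^+\in Sf_k(A^+)$ and use the commutative square from the proof of Lemma \ref{Inv}. There $\mathrm{Res}^{A^+}_A\mathcal{F}$ is represented by $\pi^*\mathcal{F}_k$ on $A\backslash(X\times Z_k^+)$. Since $\pi\circ\bar{\tau}=\pi$, there is a canonical isomorphism $\kappa_\tau:\pi^*\mathcal{F}_k\to\tau^*\pi^*\mathcal{F}_k$, and the cocycle identity $\kappa_\tau\circ\tau^*(\kappa_\tau)=1$ follows from $\tau^2=1$ on the quotient. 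The same construction is applied to $\mathcal{F}_0$ on $X$, using $\bar{\tau}$ twisted-equivariantly. Compatibility with the isomorphism $\beta$ then gives an object of $D_A(X)^{\mathfrak{R}}$. Morphisms pulled back from $A^+\backslash(X\times Z_k^+)$ automatically commute with these $\kappa_\tau$, so the functor is well defined. Independence of $Z_k^+$ and of the interval $I$ follows from the canonical equivalences $D^I_A(X,Z_k)\simeq D^I_A(X,Z_{k'})$, because they commute with $\bar{\tau}$ by \cite[6.5]{EquBeL}.

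Full faithfulness comes essentially from Lemma \ref{Inv}. For objects in the image, the $\kappa$'s are the canonical ones. So $\mathrm{Hom}_{D_A(X)^{\mathfrak{R}}}(\mathrm{Res}\,\mathcal{F},\mathrm{Res}\,\mathcal{F}')$ is exactly the subspace of $\tau$-invariants in $\mathrm{Hom}_{D_A(X)}(\mathrm{Res}\,\mathcal{F},\mathrm{Res}\,\mathcal{F}')$, with $\tau$ acting as in \eqref{DefHomAct}. By Lemma \ref{Inv} in degree $0$, this is $\mathrm{Hom}_{D_{A^+}(X)}(\mathcal{F},\mathcal{F}')$.

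The main obstacle is essential surjectivity. Take $(\mathcal{I},\kappa_\tau)\in D_A(X)^{\mathfrak{R}}$ and, using Remark \ref{Recover}, represent it on $A\backslash(X\times Z_k^+)$ as $(\mathcal{I}_k,\kappa_\tau)$. The action of $\mathfrak{R}$ on $A\backslash(X\times Z_k^+)$ is free, and the quotient map is $\pi$. So the descent equivalence $D^I(A^+\backslash(X\times Z_k^+))\simeq D^I(A\backslash(X\times Z_k^+))^{\mathfrak{R}}$ from \cite{EquDes,EquTria}, already used in the proof of Lemma \ref{Inv}, produces $\mathcal{G}_k$ with $\pi^*\mathcal{G}_k\simeq\mathcal{I}_k$ compatibly with $\kappa_\tau$. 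It remains to check that $\mathcal{G}_k$ lies in the full subcategory that models $D^I_{A^+}(X)$, i.e. that its pullback to $X\times Z_k^+$ comes from some $\mathcal{G}_0$ on $X$. This holds because $\mathcal{I}_0$ with its own $\kappa$ descends along the trivially graded situation, and the condition can be checked after the finite étale pullback $\pi$. I expect the delicate point to be lifting data given only up to isomorphism in a derived category to genuine descent data. The finiteness and freeness of $\mathfrak{R}$, together with the cited equivalence of \cite{EquTria} for finite groups of order $2$ in characteristic zero, is what I would rely on to handle this.

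Finally, the triangulated structure. Declare a triangle in $D_A(X)^{\mathfrak{R}}$ distinguished when its underlying triangle in $D_A(X)$ is distinguished and its maps are $\kappa$-compatible. Since $\mathrm{Res}^{A^+}_A$ commutes with shifts and sends distinguished triangles to distinguished triangles (it is $\pi^*$ at each level), transporting the structure along the equivalence shows that $D_A(X)^{\mathfrak{R}}$ is triangulated. It also shows these are exactly the triangles that are isomorphic to images of distinguished triangles, so ${\mathrm{Res}^{A^+}_A}$ is a triangulated functor.
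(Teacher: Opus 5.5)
Your proposal follows the paper's route. The paper simply observes that $D^I_{A^+}(X)$ is the fibre product of the full embedding $D^I_A(X)\hookrightarrow D^I(A\backslash(X\times Z_k^+))$ with the descent equivalence $D^I(A^+\backslash(X\times Z_k^+))\simeq D^I(A\backslash(X\times Z_k^+))^{\mathfrak{R}}$; this is exactly your full faithfulness (Lemma \ref{Inv}) plus essential surjectivity. In the latter, the subcategory condition is immediate without any descent of $\mathcal{I}_0$: the pullback of $\mathcal{G}_k$ to $X\times Z_k^+$ factors through $\pi$, so it is isomorphic to that of $\mathcal{I}_k$, i.e.\ to $p_k^*\mathcal{I}_0$.

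One step is spelled out neither by you nor by the paper: a $\kappa$-compatible triangle whose underlying triangle is distinguished is isomorphic in $D_A(X)^{\mathfrak{R}}$ to the image of a distinguished triangle of $D_{A^+}(X)$. Transport of structure alone does not give this, because the comparison map $c$ supplied by TR3 in $D_A(X)$ need not be $\kappa$-compatible. Replace $c$ by $\frac{1}{2}\bigl(c+(\kappa'_\tau)^{-1}\circ\tau^*(c)\circ\kappa_\tau\bigr)$. This map is $\tau$-invariant and still commutes with the triangle maps, so it is an isomorphism by the triangulated five lemma.
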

\begin{proof}
    We use the notation and the proof in Lemma \ref{Inv}. The category  $D_{A^+}(X)$ is the pullback of the following commutative diagram \begin{equation*}
		\xymatrix{
			D^I_{A^+}(X)
			\ar@{^{(}->}[r]
			\ar[d]_{\mathrm{Res}^{A^+}_A} & D^I(A^+\backslash (X\times Z_k^+))\simeq D^I(A\backslash (X\times Z_k^+))^{\mathfrak{R}} \ar[d]^{\pi ^*} \\
			D^I_{A}(X)  \ar@{^{(}->}[r] & D^I(A\backslash (X\times Z_k^+))
		}
	\end{equation*}

The statement then follows from the definition and Remark \ref{Recover}.
\end{proof}

Let $f_1$, $f_2\in\mathrm{Hom}_{D_{A}(X)}^*(\mathrm{Res}^{A^+}_A(\mathcal{F}),\mathrm{Res}^{A^+}_A(\mathcal{F}'))$. Because $\tau^*$ is a functor,  we have $\tau^*(f_1\circ f_2)=\tau^*(f_1)\circ\tau^*(f_2)$. Hence the  $\tau$-action on $\mathrm{Hom}_{D_{A}(X)}^*(\mathrm{Res}^{A^+}_A(\mathcal{F}),\mathrm{Res}^{A^+}_A(\mathcal{F}'))$ defined in \eqref{DefHomAct} satisfies $\tau(f_1\cdot f_2)=\tau(f_1)\cdot\tau(f_2)$.

Taking $\mathcal{F}=\mathcal{F}'=\mathbb{C}_X$, we obtain a $\tau$-action on $\mathrm{Hom}_{D_A(X)}^*(\mathbb{C}_X,\mathbb{C}_X)$ and $\tau(g_1\cdot g_2)=\tau(g_1)\cdot\tau(g_2)$ for $g_1$, $g_2\in \mathrm{Hom}_{D_A(X)}^*(\mathbb{C}_X,\mathbb{C}_X)$.

For any $g\in \mathrm{Hom}_{D_A(X)}^*(\mathbb{C}_X,\mathbb{C}_X)$, $f\in \mathrm{Hom}_{D_{A}(X)}^*(\mathrm{Res}^{A^+}_A(\mathcal{F}),\mathrm{Res}^{A^+}_A(\mathcal{F}'))$, we have $\tau^*(g\otimes f)=\tau^*(g)\otimes\tau^*(f)$.  Consequently, the $\tau$-action is compatible with $\mathrm{Hom}_{D_A(X)}^*(\mathbb{C}_X,\mathbb{C}_X)$-module structure on $\mathrm{Hom}_{D_{A}(X)}^*(\mathrm{Res}^{A^+}_A(\mathcal{F}),\mathrm{Res}^{A^+}_A(\mathcal{F}'))$ which means that $\tau(g\cdot f)=\tau(g)\cdot\tau(f)$.

For $H^*_A(X,\mathbb{C})$, we can define a $\tau$-action via the homomorphism $\tau^*:H^*_A(X,\mathbb{C})\to H^*_A(X,\tau^*\mathbb{C})$.

Note that $\tau^*$ sends $\phi\in \mathrm{Hom}_{D_A(X)}^*(\mathbb{C}_X,\mathbb{C}_X)$ to $\tau^*(\phi)\in \mathrm{Hom}_{D_A(X)}^*(\tau^*\mathbb{C}_X,\tau^*\mathbb{C}_X)$.Taking $p:X\to \mathrm{pt}$ we have 
\begin{align}
	\mathrm{Hom}_{D_A(X)}^*(\tau^*\mathbb{C}_X,\tau^*\mathbb{C}_X)&\simeq \mathrm{Hom}_{D_A(X)}^*(\mathbb{C}_X,\tau_*\tau^*\mathbb{C}_X)\nonumber\\
	&\simeq \mathrm{Hom}_{D_A(X)}^*(p^*\mathbb{C}_{\mathrm{pt}},\tau_*\tau^*\mathbb{C}_X)\nonumber\\
	&\simeq\mathrm{Hom}_{D_A(\mathrm{pt})}^*(\mathbb{C}_{\mathrm{pt}},p_*\tau_*\tau^*\mathbb{C}_X)\nonumber\\
	&\simeq H^*_A(X,\tau^*\mathbb{C}_X)\label{ActPt}
\end{align} 

Therefore \eqref{ConstHom} $\mathrm{Hom}_{D_A(X)}^*(\mathbb{C}_X,\mathbb{C}_X)=H^*_A(X,\mathbb{C})$ is compatible with the $\tau$-action.

Let $A$ be a reductive group. We write $H_A^*$ instead of $H_A^*(\mathrm{point})$ where the point is regarded as an $A$-variety in the obvious way. From \cite[1.11]{lusztig1}, we have:
\begin{equation*}
	H_A^*= S(\mathfrak{a}^*)^A
\end{equation*}
where $\mathfrak{a}$ is the Lie algebra of $A$, and $S(\mathfrak{a}^*)^A$ denotes the $A$-invariant subalgebra of the symmetric algebra $S(\mathfrak{a}^*)$. For any semisimple element $s\in \mathfrak{a}$, we have the algebra homomorphism $\chi_s:S(\mathfrak{a}^*)\to \mathbb{C}$  given by evaluating a polynomial function at $s$. We denote $\mathbb{C}_s$ as  $H^*_A$-algebra via the algebra homomorphism $\chi_s: H_A^*\to \mathbb{C}$.

Recall that $A$ is a connected normal subgroup of $A^+$ with $\mathfrak{R} :=A^+/A= \langle1,\tau\rangle$. From \eqref{ActPt}, we have the $\tau$-action on $H^*_A\simeq S(\mathfrak{a}^*)^A$. Let $T_A$ be a maximal torus of $A$, with Lie algebra $\mathfrak{t}_A\hookrightarrow\mathfrak{a}$. Then the diagram 

\begin{equation}\label{ActPtDia}
	\begin{tikzcd}
		H^*_A\simeq S(\mathfrak{a}^*)^A \ar[r,"\tau"] \ar[d]& H^*_A\simeq S(\mathfrak{a}^*)^A\ar[d]\\
		S(\mathfrak{t}_A^*) \ar[r,"(d)"] & S(\mathfrak{t}_A^*)
	\end{tikzcd}
\end{equation}

is commutative, where the left vertical map is induced by the natural map $\mathfrak{t}_A\hookrightarrow\mathfrak{a}$ and $(d)$ is induced by $H^*_{T_A}\stackrel{\tau}{\longrightarrow} H^*_{T_A}$ which is $S(\mathfrak{t}_A^*) \to S(\mathfrak{t}_A^*)$ induced by the adjoint action $\mathfrak{R}$ on $\mathfrak{t}_A$.

We have an algebra homomorphism $\mathrm{Hom}_{D_A(\mathrm{pt})}^*(\mathbb{C},\mathbb{C})\to\mathrm{Hom}_{D_A(X)}^*(\mathbb{C}_X,\mathbb{C}_X)$ induced by $X\stackrel{p}{\longrightarrow}\mathrm{pt}$, and since $p^*\circ\tau^*=\tau^*\circ p^*$, this algebra homomorphism is compatible with the $\tau$-action.

Note that $H^*_A\simeq \mathrm{Hom}_{D_A(\mathrm{pt})}^*(\mathbb{C},\mathbb{C})$, hence combining  all above we get a $H^*_A$-module structure on $\mathrm{Hom}_{D_{A}(X)}^*(\mathrm{Res}^{A^+}_A(\mathcal{F}),\mathrm{Res}^{A^+}_A(\mathcal{F}'))$ and 
\begin{proposition}\label{ModAct}
	The $\tau$ action on $H^*_A$, $\mathrm{Hom}_{D_{A}(X)}^*(\mathrm{Res}^{A^+}_A(\mathcal{F}),\mathrm{Res}^{A^+}_A(\mathcal{F}'))$ and the $H^*_A$-module structure of $\mathrm{Hom}_{D_{A}(X)}^*(\mathrm{Res}^{A^+}_A(\mathcal{F}),\mathrm{Res}^{A^+}_A(\mathcal{F}'))$ are compatible.  Explicitly, for all $h_1,h_2,h\in H^*_A$ $f_1,f_2,f\in \mathrm{Hom}_{D_{A}(X)}^*(\mathrm{Res}^{A^+}_A(\mathcal{F}),\mathrm{Res}^{A^+}_A(\mathcal{F}'))$, we have   $\tau(f_1\cdot f_2)=\tau(f_1)\cdot\tau(f_2)$, $\tau(h_1\cdot h_2)=\tau(h_1)\cdot\tau(h_2)$, $\tau(h\cdot f)=\tau(h)\tau(f)$, where $\tau$ action is given by \eqref{ActPtDia} and \eqref{Inv}.
\end{proposition}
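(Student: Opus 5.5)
The plan is to reduce each of the three identities to functoriality of $\tau^*$ on $D_A(X)$, together with its compatibility with tensor product and with pullback along $p:X\to\mathrm{pt}$.

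\textbf{First identity.} We have $\tau(f_1\cdot f_2)=\tau(f_1)\cdot\tau(f_2)$ for composable $f_1,f_2$ in $\mathrm{Hom}^*_{D_A(X)}$ (grading shifts included). By \eqref{DefHomAct}, $\tau(f)$ is $\tau^*(f)$ transported along the canonical isomorphisms $\tau^*\mathrm{Res}^{A^+}_A\mathcal{F}\simeq\mathrm{Res}^{A^+}_A\mathcal{F}$. These isomorphisms come from $\pi\circ\tau=\pi$ on $A\backslash(X\times Z_k^+)$, and by the descent equivalence they are exactly the trivial descent data $\kappa_\tau$. I would work at a fixed level $D^I(A\backslash(X\times Z_k^+))$ with $k\gg0$. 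There $\tau^*$ is an honest pullback functor, so $\tau^*(f_1\circ f_2)=\tau^*f_1\circ\tau^*f_2$. The transport isomorphisms cancel in the middle because they are natural in the object. One must also check that $\tau^*$ commutes with the shift $[j]$, which is clear for a pullback.

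\textbf{Second identity.} Multiplicativity on $H^*_A$ follows by taking $X=\mathrm{pt}$ and $\mathcal{F}=\mathcal{F}'=\mathbb{C}$ in the first identity. This uses $H^*_A\simeq\mathrm{Hom}^*_{D_A(\mathrm{pt})}(\mathbb{C},\mathbb{C})$, which is compatible with $\tau$ by \eqref{ActPt}. I would also note that this $\tau$ agrees with the one in \eqref{ActPtDia}. To see this, restrict to $T_A$ and use that $\tau$ acts on $B(T_A)$-models through $\mathrm{Ad}_{\bar\tau}$. The induced map on $S(\mathfrak{t}_A^*)$ is then the adjoint action, and the restriction map $S(\mathfrak{a}^*)^A\to S(\mathfrak{t}_A^*)$ is injective, so the top map in \eqref{ActPtDia} is determined by the bottom one.

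\textbf{Third identity.} We need $\tau(h\cdot f)=\tau(h)\tau(f)$. The module structure is $h\cdot f=p^*(h)\otimes f$, using $\mathcal{F}=\mathbb{C}_X\otimes\mathcal{F}$. The identity then follows from two facts. The first is $\tau^*(g\otimes f)=\tau^*g\otimes\tau^*f$, which holds because pullback is monoidal and the structural isomorphisms $\tau^*(\mathcal{I}\otimes\mathcal{I}')\simeq\tau^*\mathcal{I}\otimes\tau^*\mathcal{I}'$ are natural. The second is $p^*\circ\tau^*=\tau^*\circ p^*$, where $p^*:H^*_A\to\mathrm{Hom}^*(\mathbb{C}_X,\mathbb{C}_X)$ is induced by $p:X\to\mathrm{pt}$; this holds since $p\circ\bar\tau=\bar\tau\circ p$ on the quotients $A\backslash(X\times Z_k^+)\to A\backslash Z_k^+$.

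\textbf{Main obstacle.} The delicate point is bookkeeping of the canonical isomorphisms. The unit isomorphism $\mathbb{C}_X\otimes\mathcal{F}\simeq\mathcal{F}$ must be compatible with the descent data $\kappa_\tau$, so that transporting $\tau^*(p^*h\otimes f)$ back to $\mathrm{Hom}(\mathcal{F},\mathcal{F}')$ gives $p^*\tau(h)\otimes\tau(f)$ rather than a conjugate of it. I would handle this at the finite level $k$ on the space $A\backslash(X\times Z_k^+)$. There all objects are pulled back along $\pi$, and all the isomorphisms involved (for $\tau^*$, $\otimes$, and the unit) are the standard coherence isomorphisms for pullbacks along the commuting maps $\pi\circ\tau=\pi$. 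Their compatibility therefore follows from the pseudofunctoriality of $(-)^*$. Independence of $k$ then follows from the canonical equivalences between different $Z_k$, as in Lemma \ref{Inv}.
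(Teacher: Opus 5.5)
Your proposal is correct and follows essentially the same route as the paper: multiplicativity comes from functoriality of $\tau^*$, module compatibility comes from $\tau^*(g\otimes f)=\tau^*g\otimes\tau^*f$ together with $p^*\circ\tau^*=\tau^*\circ p^*$, and the statement for $H^*_A$ comes from specializing to constant sheaves. Two points the paper only asserts are filled in by you: the bookkeeping of the coherence isomorphisms at a finite level $k$, and the check that the resulting action agrees with \eqref{ActPtDia} via restriction to $T_A$ and injectivity of $S(\mathfrak{a}^*)^A\to S(\mathfrak{t}_A^*)$.
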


\subsection{Geometry of graded Hecke algebra}

We preserve the setup of section \ref{DefGra}. Let $G^+$ be a complex reductive algebraic group with identity component $G$. Let $B=TU$ be a Borel subgroup of $G$ with Levi factor $T$ and unipotent radical $U$, and $\mathfrak{t}=\mathrm{Lie}(T)$. We denote the root datum by $\mathcal{R}(G,T)$. Let  $W=N_{G}(T)/T$, $W^+=N_{G^+}(T)/T$, $\mathfrak{R}=N_{G^+}(B,T)/T$, hence $W^+=W\rtimes \mathfrak{R}$, and $W$ is the Weyl group of $\mathcal{R}(G,T)$. Finally, we have a twisted graded Hecke algebra $\mathbb{H}(G^+)$.

Consider the variety

$\dot{\mathfrak{g}}^+ = \{(x,gB)\in \mathfrak{g} \times G^+/B :\mathrm{Ad}(g^{-1})x\in \mathfrak{b}\}$

We have a natural $G^+\times \mathbb{C}^*$-action on $\dot{\mathfrak{g}}^+$:
\begin{align}
	(g_1,\lambda):(x,gB)\mapsto (\lambda^{-2}\mathrm{Ad}(g_1)x,g_1gB)
\end{align}

Let $\mathbf{E}$ be the set of all isomorphism classes of $G^+\times \mathbb{C}^*$-equivariant line bundles over $G^+/B$.

\begin{lemma}\label{HeckeMap}
There is a $\mathbb{C}$-linear isomorphism\begin{align*}
    \mathbb{C}\otimes \mathbf{E}\simeq \mathfrak{t}^*\oplus \mathbb{C}
\end{align*}
\end{lemma}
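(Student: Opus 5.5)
The plan is to reduce the classification of $G^+\times\mathbb{C}^*$-equivariant line bundles on $G^+/B$ to characters of the stabilizer of the base point. By the standard induction equivalence, a $G^+\times\mathbb{C}^*$-equivariant line bundle on the homogeneous space $(G^+\times\mathbb{C}^*)/(B\times\mathbb{C}^*)$ is the same as a one-dimensional algebraic representation of $B\times\mathbb{C}^*$; the bundle attached to a character $\chi$ is $(G^+\times\mathbb{C}^*)\times^{B\times\mathbb{C}^*}\mathbb{C}_\chi$. Here $\mathbb{C}^*$ acts trivially on $G^+/B$, so the stabilizer of $B$ in $G^+\times\mathbb{C}^*$ is exactly $B\times\mathbb{C}^*$; the component group of $G^+$ only changes $G^+/B$ into a finite disjoint union of copies of $G/B$ permuted transitively, and does not enter the stabilizer. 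This gives a group isomorphism $\mathbf{E}\simeq \mathrm{Hom}(B\times\mathbb{C}^*,\mathbb{C}^*)$, with tensor product of bundles corresponding to product of characters.

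Next I will compute the character group. Since $U$ is unipotent it has no nontrivial characters, so every character of $B=TU$ factors through $B/U\simeq T$. Hence $\mathrm{Hom}(B\times\mathbb{C}^*,\mathbb{C}^*)\simeq X^*(T)\oplus\mathbb{Z}=\widetilde{X}\oplus\mathbb{Z}$, a free abelian group of rank $\dim T+1$.

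Finally I will tensor with $\mathbb{C}$. The map $X^*(T)\to\mathfrak{t}^*$, $\chi\mapsto d\chi$, induces $\mathbb{C}\otimes X^*(T)\simeq\mathfrak{t}^*$, because $X^*(T)$ is a lattice of full rank in $\mathfrak{t}^*$. Likewise $\mathbb{C}\otimes\mathbb{Z}\simeq\mathbb{C}=\mathrm{Lie}(\mathbb{C}^*)^*$. Combining these gives $\mathbb{C}\otimes\mathbf{E}\simeq\mathfrak{t}^*\oplus\mathbb{C}$.

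The main obstacle is the first step: verifying that the correspondence between equivariant line bundles and stabilizer characters holds for the disconnected group $G^+$. I would handle it directly. The restriction of an equivariant bundle to the fibre over $B$ gives a character of the stabilizer. Conversely, transitivity of $G^+$ on $G^+/B$ lets one reconstruct the bundle as the associated bundle, and one checks that isomorphism classes match. For later use in the graded Hecke algebra it is also worth recording that the natural action of $\mathfrak{R}$ on $\mathbf{E}$ corresponds to the action of $\mathfrak{R}$ on $\mathfrak{t}^*$ on the right-hand side.
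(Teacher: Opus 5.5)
Your proposal is correct and follows essentially the same route as the paper, which, following Lusztig's Section 8, identifies $\mathbf{E}$ with $\mathrm{Hom}(B\times\mathbb{C}^*,\mathbb{C}^*)=\mathrm{Hom}(T\times\mathbb{C}^*,\mathbb{C}^*)$ using the stabilizer of the base point of the homogeneous space $G^+/B$, and then tensors with $\mathbb{C}$. Your extra justifications fill in details the paper leaves implicit: that the correspondence between equivariant line bundles and characters of the stabilizer does not require $G^+$ to be connected, that $\mathbb{C}^*$ acts trivially on $G^+/B$, and that $U$ contributes no characters.
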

\begin{proof}
	We use the same method as in \cite[section8]{lusztig2}.
	Clearly, $G^+/B$ is a homogeneous variety of $G^+\times \mathbb{C}^*$ for the conjugation action. Then the stabilizer in ${G^+\times \mathbb{C}^*}$ of $B\in G^+/B$ is $B\times \mathbb{C}^*$. Hence $\mathbf{E}=\mathrm{Hom}(B\times \mathbb{C}^*,\mathbb{C}^*)=\mathrm{Hom}(T\times \mathbb{C}^*,\mathbb{C}^*)$. Finally, we get $\mathbb{C}\otimes \mathbf{E}\simeq \mathfrak{t}^*\oplus \mathbb{C}$.
\end{proof}

Let $\dot{\mathcal{L}}^+$ be the $G^+\times \mathbb{C}^*$-equivariant constant sheaf on $\dot{\mathfrak{g}}^+$. Let  $f^+: \dot{\mathfrak{g}}^+\to {\mathfrak{g}}$ be the projection on the first coordinate and define
\begin{equation}
K^+:=(f^+)_!(\dot{\mathcal{L}}^+)
\end{equation}

Let $\ddot{\mathfrak{g}}^+=\dot{\mathfrak{g}}^+\times_{\mathfrak{g}}\dot{\mathfrak{g}}^+$, and let $\ddot{\mathcal{L}}^+$ be the restriction of $\dot{\mathcal{L}}^+\boxtimes (\dot{\mathcal{L}}^+)^*$. From \cite[section2]{Aubert2016GradedHA}, $H_*^{G^+\times\mathbb{C}^*}(\ddot{\mathfrak{g}}^+,\ddot{\mathcal{L}^+})$ is naturally a left $\mathbb{H}(G^+)$-module via $\Delta$; moreover, the map $\mathbb{H}(G^+)\to H_*^{G^+\times\mathbb{C}^*}(\ddot{\mathfrak{g}}^+,\ddot{\mathcal{L}^+})$ given by $\zeta \mapsto \Delta(\zeta)\mathbf{1}$ is an isomorphism of vector spaces.

We define a map 
\begin{equation}\label{EquLine}
	\mathbf{E}\to \mathrm{Hom}_{D_{G^+\times \mathbb{C}^*}(\mathfrak{g})}^2(K^+,K^+)
\end{equation}
as follows. Consider a $G^+\times \mathbb{C}^*$-equivariant line bundle $E\in \mathbf{E}$ on $G^+/B$ and let $\dot{E}$ be the inverse image of $E$ under the $G^+\times \mathbb{C}^*$-morphism $\dot{\mathfrak{g}}^+\to G^+/B$ which is projection on the second coordinate. By the construction in \cite[1.19]{lusztig2}, $\dot{E}$ gives rise to a morphism $\kappa_{\dot{E}}:\dot{\mathcal{L}}^+[-2]\to \dot{\mathcal{L}}^+$ in $D_{G^+\times \mathbb{C}^*}\dot{\mathfrak{g}}^+$. Applying the functor $(f^+)_!:D_{G^+\times \mathbb{C}^*}(\dot{\mathfrak{g}}^+)\to D_{G^+\times \mathbb{C}^*}({\mathfrak{g}})$, we obtain a morphism
$$(f^+)_!\kappa_{\dot{E}}: K^+[-2]\to K^+$$
in $D_{G^+\times \mathbb{C}^*}({\mathfrak{g}})$, which yields an element of $\mathrm{Hom}_{{D_{G^+\times \mathbb{C}^*}(\mathfrak{g})}}^2(K^+,K^+)$. By definition, the map \eqref{EquLine} is given by $E\mapsto (f^+)_!\kappa_{\dot{E}}$.

From \cite{Aubert2016GradedHA}, we have $H^*_{G^+\times \mathbb{C}^*}(\dot{\mathfrak{g}}^+)\simeq S(\mathfrak{t}^*)\otimes \mathbb{C}[\mathfrak{r}]$, which means $H^2_{G^+\times \mathbb{C}^*}(\dot{\mathfrak{g}}^+)\simeq   \mathfrak{t}^*\oplus \mathbb{C}$. We have a commutative diagram :

\begin{equation}\label{EquiLineTr}
	\xymatrix{
		\mathbb{C}\otimes \mathbf{E} \ar[d]_{(a)} \ar[rd]^{(b)}	& \\
		H^2_{G^+\times  \mathbb{C}^*}(\dot{\mathfrak{g}}^+) \ar[r]^\simeq & \mathfrak{t}^*\oplus \mathbb{C}
	}
\end{equation}

where $(b)$ comes from \eqref{HeckeMap} and $(a)$ is defined as follows. For $E\in \mathbf{E}$, we have $\dot{E}$ which is  a $G^+\times \mathbb{C}^*$-equivariant line bundle on  $\dot{\mathfrak{g}}^+$. From \cite[1.19]{lusztig2}, we can define the  equivariant Chern class $c(\dot{E})\in H^2_{G^+\times  \mathbb{C}^*}(\dot{\mathfrak{g}}^+)$. Finally, $(a)$ is given by $1\otimes E\mapsto c(\dot{E})$.

Let $\mathbb{H}'$ be the free  associative $\mathbb{C}$-algebra on the generators $s_\alpha$, $\tau$, $(E)$, where $s_\alpha\in W$ ($\alpha$ is simple root), $\tau\in \mathfrak{R}$, and  $E\in \mathbf{E}$. Then there is a unique surjective homomorphism of algebra with $1$
\begin{equation}\label{Thm3}
	\mathbb{H}'\to \mathbb{H}(G^+)
\end{equation}

which carries $s_\alpha$ to $s_\alpha$, $\tau$ to $\tau$, and $(E)$ to $\xi$, where $\xi \in \mathfrak{t}^*\oplus \mathbb{C}$ corresponds to $1\otimes E \in \mathbb{C}\otimes\mathbf{E}$ in (\ref{EquiLineTr}).

\begin{theorem}\label{IsoHecke}
	There is an algebraic isomorphism
	$\mathbb{H}(G^+) \cong End_{D_{G^+\times \mathbb{C}^*}(\mathfrak{g})}^*K^+$
\end{theorem}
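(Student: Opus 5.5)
Our plan is to reduce to Lusztig's connected theorem \cite{lusztig2} through the descent Lemma \ref{Inv}, applied with $A^+=G^+\times\mathbb{C}^*$, $A=G\times\mathbb{C}^*$ and $\mathfrak{R}=\langle 1,\tau\rangle$. The first step is to decompose the Springer sheaf. Since $G^+/B=G/B\sqcup \bar\tau G/B$, the variety $\dot{\mathfrak{g}}^+$ is a disjoint union $\dot{\mathfrak{g}}\sqcup\bar\tau\dot{\mathfrak{g}}$, where $\bar\tau$ is a representative of $\tau$ normalizing $B$ and $T$. The second component is identified with $\dot{\mathfrak{g}}$ via $(x,\bar\tau gB)\mapsto(\mathrm{Ad}(\bar\tau^{-1})x,gB)$, and this identification is twisted-equivariant. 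Consequently
\[
\mathrm{Res}^{A^+}_A K^+\simeq K\oplus \tau^*K ,
\]
and since $\bar\tau$ preserves $\mathfrak{b}$, we have $\tau^*K\simeq K$. This gives
\[
\mathrm{End}^*_{D_A(\mathfrak{g})}(\mathrm{Res}\,K^+)\simeq M_2\bigl(\mathrm{End}^*_{D_A(\mathfrak{g})}K\bigr)\simeq M_2(\mathbb{H}(G)),
\]
using Lusztig's isomorphism $\mathbb{H}(G)\simeq \mathrm{End}^*K$.

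Next we identify the involution on this matrix algebra. Under the descent datum of $K^+$, $\tau$ swaps the two summands, and on each entry it acts by the automorphism of $\mathbb{H}(G)$ induced by $\mathrm{Ad}(\bar\tau)$: it sends $s_\alpha\mapsto s_{\tau(\alpha)}$ and acts on $S(\mathfrak{t}^*)$ via diagram \eqref{ActPtDia}. We must check this compatibility on generators. For $S(\mathfrak{t}^*)\oplus\mathbb{C}$ we use Proposition \ref{ModAct} together with the Chern-class description \eqref{EquiLineTr}. For $s_\alpha$, Lusztig's construction of $s_\alpha$ via the partial flag $G/P_\alpha$ is transported by $\bar\tau$ to that of $s_{\tau\alpha}$. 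Lemma \ref{Inv} then gives
\[
\mathrm{End}^*_{D_{A^+}}(K^+)\simeq M_2(\mathbb{H}(G))^{\tau},
\]
the fixed points of the involution. We then construct an explicit isomorphism $\mathbb{H}(G)\rtimes\mathfrak{R}\to M_2(\mathbb{H}(G))^\tau$ by $h\mapsto \mathrm{diag}(h,\tau(h))$ and $\tau\mapsto\begin{pmatrix}0&1\\1&0\end{pmatrix}$. This mirrors the matrix map in Lemma \ref{morita}. Injectivity is immediate, and surjectivity holds because a $\tau$-fixed matrix has the form $\begin{pmatrix}a&b\\ \tau(b)&\tau(a)\end{pmatrix}$.

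Finally, we check that this isomorphism agrees with the one intended in the statement, namely that it is compatible with the surjection \eqref{Thm3} from $\mathbb{H}'$. The images of $(E)$ under \eqref{EquLine} must correspond to the Chern-class elements $\xi$, which follows from diagram \eqref{EquiLineTr} computed on each component. The image of $\tau$ must be the endomorphism of $K^+=(f^+)_!\dot{\mathcal{L}}^+$ induced by right translation by $\bar\tau$ on $G^+/B$, which commutes with the left $G^+$-action. This image is exactly the swap matrix. The relations of Proposition \ref{GradedHecke} then hold automatically, and a dimension comparison with the vector-space isomorphism $\mathbb{H}(G^+)\simeq H_*^{G^+\times\mathbb{C}^*}(\ddot{\mathfrak{g}}^+,\ddot{\mathcal{L}}^+)$ from \cite{Aubert2016GradedHA} confirms bijectivity.

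The main obstacle is the second step: showing that the $\tau$-action on $\mathrm{End}^*(\mathrm{Res}\,K^+)$ defined by \eqref{DefHomAct} really restricts to the expected automorphism of $\mathbb{H}(G)$ on the Weyl-group generators $s_\alpha$. We would handle this by the naturality of $\tau^*$ with respect to proper pushforward along the $\bar\tau$-twisted-equivariant maps $\dot{\mathfrak{g}}\to\dot{\mathfrak{g}}_{P_\alpha}\to\mathfrak{g}$.
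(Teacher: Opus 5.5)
Your argument is correct, but it takes a different route from the paper.

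\textbf{The paper's route.} The paper never passes through the connected case. It adapts Lusztig's proof of \cite[8.11]{lusztig2} directly to $G^+$, in four moves:
\begin{enumerate}
\item It identifies $\mathrm{Hom}^*_{D_{G^+\times\mathbb{C}^*}(\mathfrak g)}(K^+,K^+)$ with $H_*^{G^+\times\mathbb{C}^*}(\ddot{\mathfrak g}^+,\ddot{\mathcal L}^+)$ as graded vector spaces.
\item It imports two inputs from \cite{Aubert2016GradedHA}: $\mathrm{Hom}^0(K^+,K^+)\simeq\mathbb{C}[W^+]$, and the fact that $h\mapsto\Delta(h)\mathbf 1$ is a linear isomorphism onto that homology.
\item It checks by commutative diagrams that composing with $(s_\alpha,\tau)$ and with $(f^+)_!\kappa_{\dot E}$ matches the $\Delta$-action.
\item It concludes with the free algebra $\mathbb{H}'$, which maps onto both sides.
\end{enumerate}
Lemma \ref{Inv} enters only afterwards, in Theorem \ref{GraHeG}.

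\textbf{Your route.} You use only Lusztig's connected theorem plus descent: $\mathrm{Res}\,K^+\simeq K\oplus K$, the involution on $M_2(\mathbb{H}(G))$ is the swap composed with the automorphism $\sigma$ entrywise, and the fixed points are $\mathbb{H}(G)\rtimes\mathfrak R$ exactly as in Lemma \ref{morita}.

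\textbf{What each buys.} Your argument is more self-contained and proves the second isomorphism of Theorem \ref{GraHeG} at the same time, with an explicit matrix model. The price is the transport-of-structure step you flag: you must show that the induced automorphism of $\mathrm{End}^*K$ is the $\mathfrak R$-action on $\mathbb{H}(G)$, in particular on the $s_\alpha$. The paper avoids this step by quoting \cite{Aubert2016GradedHA}. Conversely, the paper's isomorphism is compatible with the convolution action $\Delta$ by construction. That compatibility is what gets reused in Theorem \ref{StandIso} (via \eqref{ActComDia}) to match standard modules. Your final paragraph, matching generator images with \eqref{Thm3}, is unnecessary for the bare existence statement, but it is exactly what you would need for those later comparisons.

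\textbf{Minor points.}
\begin{itemize}
\item It is cleaner to identify the second component with $\dot{\mathfrak g}$ by right translation $(x,gB)\mapsto(x,g\bar\tau B)$. This map is $G\times\mathbb{C}^*$-equivariant over $\mathfrak g$, so it gives $\mathrm{Res}\,K^+\simeq K\oplus K$ directly, without passing through $\tau^*K$. It is also the same map you later use for the image of $\tau$.
\item Two facts rely on $\bar\tau^2\in N_G(B)\cap N_G(T)=T\subset B$: that the involution twists the diagonal and off-diagonal entries by the same $\sigma$, and that right translation by $\bar\tau$ is exactly the swap matrix.
\item The closing dimension comparison is redundant, since you have already exhibited an isomorphism.
\end{itemize}
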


\begin{proof}
	We  use the same method as in \cite[8.11]{lusztig2} adapted to our setting. From \cite[section1]{lusztig1}, we have a commutative diagram
	\begin{equation}\label{ComDiag}
		\scalebox{0.9}{
		\xymatrix{
			\mathrm{Hom}_{D_A(X)}^j(\mathcal{I},\mathcal{I}')
			\ar[r]
			\ar[d] &H^j_A(X,D(D\mathcal{I}'\otimes\mathcal{I})) \ar[d] \ar[r] &\mathrm{Hom}(H_c^{-j}(A\backslash (X\times Z_k),D(\mathcal{I}'_k)\otimes \mathcal{I}_k),\mathbb{C})\ar[d]\\
			\mathrm{Hom}_{D_A(X)}^j(\mathcal{I},\mathcal{I}'')  \ar[r] & H^j_A(X,D(D\mathcal{I}''\otimes\mathcal{I}))\ar[r] & \mathrm{Hom}(H_c^{-j}(A\backslash (X\times Z_k),D(\mathcal{I}''_k)\otimes \mathcal{I}_k),\mathbb{C})
		}}
	\end{equation}
	where $g\in \mathrm{Hom}_{D_A(X)}^j(\mathcal{I},\mathcal{I}')$, $h\in \mathrm{Hom}_{D_A(X)}^j(\mathcal{I}',\mathcal{I}'')$, the left vertical map sends $g$ to $hg$, the middle vertical map is induced by $D(Dh\otimes1):D(D\mathcal{I}'\otimes\mathcal{I})\to D(D\mathcal{I}''\otimes\mathcal{I})$, the right vertical map is induced by $Dh_k\otimes 1: D(\mathcal{I}''_k)\otimes \mathcal{I}_k\to D(\mathcal{I}'_k)\otimes \mathcal{I}_k$.

	Let $\mathcal{I}=\mathcal{I}'=\mathcal{I}''=K^+$ and $A=G^+\times \mathbb{C}^*$, then $\mathrm{Hom}_{D_A(X)}^j(K^+,K^+)\simeq H^j_A(\ddot{\mathfrak{g}}^+,D(DK^+\otimes K^+))\simeq \mathrm{Hom}(H_c^{-j}(A\backslash (X\times Z_k),D(K^+_k)\otimes K^+_k),\mathbb{C})\simeq H_j^{G^+\times \mathbb{C}^*}(\ddot{\mathfrak{g}}^+,\ddot{\mathcal{L}^+})$. Taking the direct sum over $j$, we obtain a canonical identification:
	\begin{equation}\label{Thm1}
		\mathrm{Hom}_{{D_{G^+\times \mathbb{C}^*}(\mathfrak{g})}}^*(K^+,K^+)\simeq H_*^{G^+\times \mathbb{C}^*}(\ddot{\mathfrak{g}}^+,\ddot{\mathcal{L}^+})
	\end{equation}
	as graded vector spaces. We denote by $\mathbf{1}$ the element of $H_0^{G^+\times \mathbb{C}^*}(\ddot{\mathfrak{g}}^+,\ddot{\mathcal{L}^+})$ that corresponds to the identity element in $\mathrm{Hom}_{{D_{G^+\times \mathbb{C}^*}(\mathfrak{g})}}^0(K^+,K^+)$.
	
	From \cite[(7)]{Aubert2016GradedHA}, we have $\mathrm{Hom}_{{D_{G^+\times \mathbb{C}^*}(\mathfrak{g})}}^0(K^+,K^+)\simeq \mathbb{C}[W^+]$.
	
	Taking $(s_\alpha,\tau)\in W^+$ where $\alpha$ is simple root and $\tau \in \mathfrak{R}$, view it as an element in $\mathrm{Hom}_{{D_{G^+\times \mathbb{C}^*}(\mathfrak{g})}}^0(K^+,K^+)$, then we have a commutative diagram:

\begin{equation}\label{ActComDia}
	\xymatrix@C=1pc{
		\mathrm{Hom}_{{D_{G^+\times \mathbb{C}^*}(\mathfrak{g})}}^j(K^+,K^+)
		\ar[r]
		\ar[d]_{(s_\alpha,\tau)}
		&
		\mathrm{Hom}(H_c^{-j}(A\backslash (X\times Z_k),D(K^+_k)\otimes K^+_k),\mathbb{C})
		\ar[d]
		\ar[r]
		&
		H_j^{G^+\times \mathbb{C}^*}(\ddot{\mathfrak{g}}^+,\ddot{\mathcal{L}^+})
		\ar[d]_{\Delta(s_\alpha)\Delta(\tau)}
		\\
		\mathrm{Hom}_{{D_{G^+\times \mathbb{C}^*}(\mathfrak{g})}}^j(K^+,K^+)
		\ar[r]
		&
		\mathrm{Hom}(H_c^{-j}(A\backslash (X\times Z_k),D(K^+_k)\otimes K^+_k),\mathbb{C})
		\ar[r]
		&
		H_j^{G^+\times \mathbb{C}^*}(\ddot{\mathfrak{g}}^+,\ddot{\mathcal{L}^+})
	}
\end{equation}
	
	Taking $E\in \mathbf{E}$, we get a $G^+\times \mathbb{C}^*$-equivariant line bundle $\dot{E}$ over $\dot{\mathfrak{g}}^+$ and $(f^+)_!\kappa_{\dot{E}}\in \mathrm{Hom}_{D_{G^+\times \mathbb{C}^*}(\mathfrak{g})}^0(K^+[-2],K^+)$. 
	
Let $\pi_{12}:\ddot{\mathfrak{g}}^+\to\dot{\mathfrak{g}}^+$ be the projection $(x,gB,g'B)\mapsto(x,gB)$ and  we have a functor $\pi_{12}^*: H^*_{G^+\times \mathbb{C}^*}(\dot{\mathfrak{g}}^+)\to H^*_{G^+\times \mathbb{C}^*}(\ddot{\mathfrak{g}}^+)$.  Pulling back $\dot{E}$ yields  an equivariant line bundle $E'$ on $\ddot{\mathfrak{g}}^+$.
Let $c(\dot{E})\in  H^*_{G^+\times \mathbb{C}^*}(\dot{\mathfrak{g}}^+)\simeq S(\mathfrak{t}^*)\otimes \mathbb{C}[\mathfrak{r}]$ be the equivariant Chern class of equivariant vector bundle $\dot{E}$ on $\dot{\mathfrak{g}}^+$, hence we have $c(E')=\pi_{12}^*(c(\dot{E}))\in H^*_{G^+\times \mathbb{C}^*}(\ddot{\mathfrak{g}}^+)$.  Cup product with  $c(E')=\pi_{12}^*(c(\dot{E}))$ defines the $\Delta(E)$ on $H_*^{G^+\times \mathbb{C}^*}(\ddot{\mathfrak{g}}^+,\ddot{\mathcal{L}^+})$.

	Let $\mathcal{I}=\mathcal{I}''=K^+$, $\mathcal{I}'=K^+[-2]$ and $A=G^+\times \mathbb{C}^*$. We have a commutative diagram:
	\begin{equation*}
			\scalebox{0.9}{
		\xymatrix@C=1pc{
			\mathrm{Hom}_{{D_{G^+\times \mathbb{C}^*}(\mathfrak{g})}}^j(K^+,K^+[-2])
			\ar[r]
			\ar[d]_{(f^+)_!\kappa_{\dot{E}}} 
			&\mathrm{Hom}(H_c^{-j+2}(A\backslash (X\times Z_k),D(K^+_k)\otimes K^+_k),\mathbb{C})\ar[d]\ar[r]
			&H_{j-2}^{G^+\times \mathbb{C}^*}(\ddot{\mathfrak{g}}^+,\ddot{\mathcal{L}^+}) \ar[d]_{\Delta(E)} \\
			\mathrm{Hom}_{{D_{G^+\times \mathbb{C}^*}(\mathfrak{g})}}^j(K^+,K^+)  \ar[r]  & \mathrm{Hom}(H_c^{-j}(A\backslash (X\times Z_k),D(K^+_k)\otimes K^+_k),\mathbb{C})\ar[r]
			& H_j^{G^+\times \mathbb{C}^*}(\ddot{\mathfrak{g}}^+,\ddot{\mathcal{L}^+})
		}}
	\end{equation*}

	From \cite{Aubert2016GradedHA}, $H_*^{G^+\times \mathbb{C}^*}(\ddot{\mathfrak{g}}^+,\ddot{\mathcal{L}^+})$ is a left $\mathbb{H}(G^+)$-module, moreover, the map
	\begin{equation}\label{Thm2}
		\mathbb{H}(G^+)\to H_*^{G^+\times \mathbb{C}^*}(\ddot{\mathfrak{g}}^+,\ddot{\mathcal{L}^+})
	\end{equation}
	
	given by $h\mapsto \Delta(h)\mathbf{1}$ is an isomorphism of vector spaces. Using the commutative diagrams above we see that the following diagram 
	commutes:
	\[
	\begin{tikzcd}
		\mathbb{H}' \ar[r,"(u)"] \ar[d,"{\eqref{Thm3}}"]& \mathrm{Hom}_{{D_{G^+\times \mathbb{C}^*}(\mathfrak{g})}}^*(K^+,K^+)\ar[d,"{\eqref{Thm1}}"]\\
		\mathbb{H}(G^+) \ar[r,"{\eqref{Thm2}}"] & H_*^{G^+\times \mathbb{C}^*}(\ddot{\mathfrak{g}}^+,\ddot{\mathcal{L}^+})
	\end{tikzcd}
	\]
	where the top horizontal map $(u)$ is the homomorphism of $\mathbb{C}$-algebras with unit which sends $(E)$ to $(f^+)_!\kappa_{\dot{E}}$ and $s_\alpha$, $\tau$ to the image of $\mathbb{C}[W^+]\simeq \mathrm{Hom}_{{D_{G^+\times \mathbb{C}^*}(\mathfrak{g})}}^0(K^+,K^+)$.
	
	Since \eqref{Thm1} and \eqref{Thm2} are isomorphisms of vector spaces, we can define an isomorphism of vector spaces $\mathbb{H}(G^+)\to \mathrm{Hom}_{{D_{G^+\times \mathbb{C}^*}(\mathfrak{g})}}^*(K^+,K^+)$ as the composition
	\begin{equation}
		\mathbb{H}(G^+)\to H_*^{G^+\times \mathbb{C}^*}(\ddot{\mathfrak{g}}^+,\ddot{\mathcal{L}^+})\to \mathrm{Hom}_{{D_{G^+\times \mathbb{C}^*}(\mathfrak{g})}}^*(K^+,K^+)
	\end{equation}
	where the first arrow is given by \eqref{Thm2} and the second arrow is the inverse of the one in \eqref{Thm1}. This is automatically an algebra isomorphism since \eqref{Thm3} and $(u)$ are surjective algebra homomorphisms.
	
\end{proof}

\begin{remark}
	In the proof of the theorem, we use the result of \cite[(7)]{Aubert2016GradedHA}: 
	\begin{equation}
		\mathrm{Hom}_{{D_{G^+\times \mathbb{C}^*}(\mathfrak{g})}}^0(K^+,K^+)\simeq \mathbb{C}[W^+]
	\end{equation}
	 Notice that it depends on the choice of isomorphism $\dot{\mathcal{L}}^+\simeq \tau^*\dot{\mathcal{L}}^+$ ($\tau\in \mathfrak{R}$). But in our case $\dot{\mathcal{L}}^+$ is the constant sheaf, so in fact we have a canonical choice, namely identity.
\end{remark}
Combining Lemma \ref{Inv}, we have
\begin{theorem}\label{GraHeG}
	There is an algebraic isomorphism\begin{align*}
	    \mathbb{H}(G^+) \cong \mathrm{Hom}_{D_{G^+\times \mathbb{C}^*}(\mathfrak{g})}^*(K^+,K^+)\cong\mathrm{Hom}_{D_{G\times \mathbb{C}^*}(\mathfrak{g})}^*(K^+,K^+)^\tau
	\end{align*}
\end{theorem}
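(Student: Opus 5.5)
The first isomorphism is exactly Theorem \ref{IsoHecke}, so it remains to identify $\mathrm{Hom}^*_{D_{G^+\times\mathbb{C}^*}(\mathfrak{g})}(K^+,K^+)$ with the $\tau$-invariants of the endomorphism algebra computed in the connected equivariant category. I will apply Lemma \ref{Inv} with $A^+=G^+\times\mathbb{C}^*$, $A=G\times\mathbb{C}^*$ (connected and normal, with $A^+/A=\mathfrak{R}=\langle 1,\tau\rangle$), $X=\mathfrak{g}$ and $\mathcal{F}=\mathcal{F}'=K^+$. Lemma \ref{Inv} gives
\begin{equation*}
\mathrm{Hom}^*_{D_{A^+}(\mathfrak{g})}(K^+,K^+)\simeq \mathrm{Hom}^*_{D_{A}(\mathfrak{g})}(\mathrm{Res}^{A^+}_A K^+,\mathrm{Res}^{A^+}_A K^+)^\tau .
\end{equation*}
Here $K^+$ on the right means $\mathrm{Res}^{A^+}_A K^+$.

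The key points to check are the following.

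First, the isomorphism of Lemma \ref{Inv} is multiplicative, not merely linear. It is induced by pullback $\pi^*$ along $A\backslash(\mathfrak{g}\times Z_k^+)\to A^+\backslash(\mathfrak{g}\times Z_k^+)$, which is a functor and so respects composition. Also, by the discussion before Proposition \ref{ModAct}, the $\tau$-action satisfies $\tau(f_1 f_2)=\tau(f_1)\tau(f_2)$. Hence the invariants form a subalgebra, and the identification is an algebra isomorphism.

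Second, $\mathrm{Res}^{A^+}_A K^+$ carries the descent datum $\kappa_\tau=\mathrm{id}$. This holds because $\dot{\mathcal{L}}^+$ is the constant sheaf and $\pi^*$ of an $A^+$-equivariant object has canonical trivial $\kappa_\tau$. So the $\tau$-action on Hom is precisely the one defined in \eqref{DefHomAct}, with no extra twist.

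The main obstacle is making precise that Lemma \ref{Inv}, which is stated at each finite level $D^I(A\backslash(X\times Z_k^+))$, passes to all degrees $*$. I would choose $Z_k^+\in Sf_k(A^+)$ via Remark \ref{ExsitEqu}, embedding $G^+\times\mathbb{C}^*$ into some $\GL_N$. For $k\ge|I|$ this $Z_k^+$ also lies in $Sf_k(A)$. For each degree $j$, taking $I$ large enough, both Hom spaces are computed on the same pair of spaces. Compatibility with enlarging $k$ is then the canonical equivalence $D^I_A(X,Z_k)\simeq D^I_A(X,Z_{k'})$.

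It is also worth noting the concrete form of the result. Under Theorem \ref{IsoHecke} applied to $G$, the $\tau$-action on the endomorphism algebra matches conjugation by $N_\tau$ on $\mathbb{H}(G)$ via Proposition \ref{ModAct} and diagram \eqref{ActPtDia}. This is consistent with $\mathbb{H}(G^+)=\mathbb{H}(G)\rtimes\mathfrak{R}$. However, the statement as written only requires the abstract identification above.
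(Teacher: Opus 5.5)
Your argument is the paper's: the first isomorphism is Theorem \ref{IsoHecke}, and the second is Lemma \ref{Inv} applied with $A^+=G^+\times\mathbb{C}^*$, $A=G\times\mathbb{C}^*$, $X=\mathfrak{g}$ and $\mathcal{F}=\mathcal{F}'=K^+$. The descent datum on $\pi^*K^+_k$ is the identity, exactly as at the end of that lemma's proof.

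One correction to your closing aside, which the proof does not need. Since $G^+/B$ has two components, $\mathrm{Res}^{G^+\times\mathbb{C}^*}_{G\times\mathbb{C}^*}K^+\cong K\oplus K$, where $K=f_!\underline{\mathbb{C}}$ for $f:\dot{\mathfrak{g}}\to\mathfrak{g}$. So the connected Hom algebra is $M_2(\mathbb{H}(G))$, with $\tau$ interchanging the two summands. It is not $\mathbb{H}(G)$ with $\tau$ acting by conjugation: that would give invariants $\mathbb{H}(G)^\tau$, which is already too small in degree $0$ compared with $\mathbb{C}[W^+]$.
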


\subsubsection{Localization}

From \cite[1.20]{lusztig2}, $\mathrm{Hom}_{D_{G^+\times \C^*}(\mathfrak{g})}^*(K^+,K^+)$ is naturally a $H_{G^+\times \C^*}^*$-algebra.  Concretely, there is a natural ring homomorphism $H_{G^+\times \C^*}^*\to \mathrm{Hom}_{D_{G^+\times \C^*}(\mathfrak{g})}^*(K^+,K^+)$ whose image is contained in the center of  $\mathrm{Hom}_{D_{G^+\times \C^*}(\mathfrak{g})}^*(K^+,K^+)$.

Using Theorem \ref{IsoHecke}, we know that $Z(\mathrm{Hom}_{D_{G^+\times \C^*}(\mathfrak{g})}^*(K^+,K^+))= Z(\mathbb{H}(G^+))=S(\mathfrak{t}^*)^{W^+}\otimes\mathbb{C}[\mathfrak{r}]$. The ring homomorphism $H_{G^+\times \C^*}^*\to \mathrm{Hom}_{D_{G^+\times \C^*}(\mathfrak{g})}^*(K^+,K^+)$  is obtained by composing the cup product action $\Delta$ with the pullback map
$H_{G^+\times \C^*}^*\to H_{G^+\times \C^*}^*(\dot{\mathfrak{g}}^+)$ induced by the map $\dot{\mathfrak{g}}^+\to \mathrm{point}$. From \cite[Proposition 3.5]{Aubert2016GradedHA}, since $H_{G^+\times \C^*}^*(\dot{\mathfrak{g}}^+)\simeq  H_{T\times \C^*}^*$, the map $H_{G^+\times \C^*}^*\to H_{G^+\times \C^*}^*(\dot{\mathfrak{g}}^+)$ is equivalent to the map $H_{G^+\times \C^*}^* \to H_{T\times \C^*}^*$ induced by the inclusion $T\times \C^*\to G^+\times \C^*$. 

From \cite[1.11]{lusztig1}, $H_{G^+\times \C^*}^*\simeq S(\mathfrak{g}^*)^{G^+}\otimes\mathbb{C}[\mathfrak{r}]$, $H_{T\times \C^*}^*\simeq S(\mathfrak{t}^*)\otimes\mathbb{C}$ and  the homomorphism $H_{G^+\times \C^*}^* \to H_{T\times \C^*}^*$ is induced by the natural map $\mathfrak{t} \to \mathfrak{g}$. Therefore, we have 
\begin{proposition}[{\cite[Proposition 3.5]{Aubert2016GradedHA}}]\label{CenterHecke}
	The image of the ring homomorphism $H_{G^+\times \C^*}^*\to \mathrm{Hom}_{D_{G^+\times \C^*}(\mathfrak{g})}^*(K^+,K^+)$ is precisely the center of $\mathrm{Hom}_{D_{G^+\times \C^*}(\mathfrak{g})}^*(K^+,K^+)$.
\end{proposition}

Take $\bar{t}\in \mathfrak{t}_{\mathbb{R}}$, $r_0\in \mathbb{C}$, $a=(\bar{t},r_0)$, and let $Z = Z_{G\times \mathbb{C}^*}(a)$ and $Z^+ = Z_{G^+\times \mathbb{C}^*}(a)$. 
 Assume that $Z^+ \neq Z$. Hence we have an exact sequence 
\begin{equation*}
	1\to Z\to Z^+ \to G^+/G \to 1
\end{equation*}
Recall that $G$ is connected and $\mathfrak{R}=G^+/G=\langle1,\tau\rangle$. Let $T_a$ be the smallest torus in $G^+\times \mathbb{C}^*$ whose Lie algebra contains $a$. Then we denote by  $\dot{\mathfrak{g}}^{+,a}$ and $\mathfrak{g}^{a}$ the fixed point set of $T_a$ on $\dot{\mathfrak{g}}^{+}$ and $\mathfrak{g}$, respectively. Thus, $\dot{\mathfrak{g}}^{+,a}=\{(x,gB)\in \dot{\mathfrak{g}}^{+}|[\bar{t},x]=2r_0x\}$, ${\mathfrak{g}}^{a}=\{x\in {\mathfrak{g}}|[\bar{t},x]=2r_0x\}$.

Then we have a $Z^+$-equivariant commutative diagram:
\begin{equation*}
	\xymatrix{
		\dot{\mathfrak{g}}^{+,a}
		\ar[r]^{\dot{j}^+}
		\ar[d]_{\tilde{f}^+} &\dot{\mathfrak{g}}^{+} \ar[d]^{f^+} \\
		\mathfrak{g}^{a}  \ar[r]^j & \mathfrak{g}
	}
\end{equation*}
Define 
\begin{equation}
	\widetilde{K}^+:=\tilde{f}^+_!(\dot{j}^+)^*\dot{\mathcal{L}}^+
\end{equation}
i.e. $\widetilde{K}^+\in D_{Z^+}(\mathfrak{g}^a)$ 

Recall  $\ddot{\mathfrak{g}}^+=\dot{\mathfrak{g}}^+\times_{\mathfrak{g}}\dot{\mathfrak{g}}^+$, and let $\ddot{\mathcal{L}}^+$ be the restriction of $\dot{\mathcal{L}}^+\boxtimes (\dot{\mathcal{L}}^+)^*$ via the embedding $i:\ddot{\mathfrak{g}}^+\hookrightarrow\dot{\mathfrak{g}}^+\times\dot{\mathfrak{g}}^+$. Hence  $\ddot{\mathcal{L}}^+$ is the constant sheaf in our case. 
\begin{lemma}\label{OddV}
	For constant sheaf on $\ddot{\mathfrak{g}}^+$, we have $H_c^{\mathrm{odd}}(\ddot{\mathfrak{g}}^+,\underline{\mathbb{C}})=0$.
\end{lemma}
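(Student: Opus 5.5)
The natural approach is to stratify $\ddot{\mathfrak{g}}^+$ by relative position, so that its compactly supported cohomology is assembled from vector bundles over pieces with only even cohomology. The first step is to reduce to the connected group. Since $G^+/B = G/B \sqcup \bar{\tau}G/B$ with $\bar{\tau}$ a representative of $\tau$ normalizing $B$ and $T$, we get $\dot{\mathfrak{g}}^+ = \dot{\mathfrak{g}} \sqcup \dot{\mathfrak{g}}'$. Here $\dot{\mathfrak{g}}'=\{(x,g\bar\tau B)\}$, and the map $(x,gB)\mapsto(x,g\bar{\tau}B)$ is an isomorphism $\dot{\mathfrak{g}}\simeq \dot{\mathfrak{g}}'$ over $\mathfrak{g}$; it uses $\mathrm{Ad}(\bar\tau)\mathfrak{b}=\mathfrak{b}$. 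Consequently $\ddot{\mathfrak{g}}^+$ is a disjoint union of four open and closed pieces, each isomorphic over $\mathfrak{g}$ to $\ddot{\mathfrak{g}}=\dot{\mathfrak{g}}\times_{\mathfrak{g}}\dot{\mathfrak{g}}$. Compactly supported cohomology is additive over disjoint unions, so it suffices to prove $H_c^{\mathrm{odd}}(\ddot{\mathfrak{g}},\underline{\mathbb{C}})=0$ for the connected group $G$.

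For the connected case, consider the projection $\ddot{\mathfrak{g}}\to G/B\times G/B$, $(x,gB,g'B)\mapsto(gB,g'B)$. Its fibre over $(gB,g'B)$ is the linear space $\mathrm{Ad}(g)\mathfrak{b}\cap\mathrm{Ad}(g')\mathfrak{b}$. Stratify $G/B\times G/B=\bigsqcup_{w\in W}\mathcal{O}_w$ by $G$-orbits. Over each $\mathcal{O}_w$ the dimension of this intersection is constant, equal to $\dim(\mathfrak{b}\cap\mathrm{Ad}(\dot w)\mathfrak{b})$. The preimage $\ddot{\mathfrak{g}}_w$ is then a $G$-equivariant vector bundle over $\mathcal{O}_w\simeq G\times^{B}(B\dot wB/B)$. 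Each $\mathcal{O}_w$ is itself an affine-space bundle of rank $l(w)$ over $G/B$, and $G/B$ has only even cohomology with compact support because it is paved by Bruhat cells. By Thom isomorphism (iterated), $H_c^{*}(\ddot{\mathfrak{g}}_w)\simeq H_c^{*-2d_w}(G/B)$ for an integer $d_w$, so $H_c^{\mathrm{odd}}(\ddot{\mathfrak{g}}_w)=0$.

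To assemble the strata, order $W$ compatibly with the Bruhat order so that there is a filtration of $\ddot{\mathfrak{g}}$ by closed unions $\bigsqcup_{w\le_{\text{total}} w_k}\ddot{\mathfrak{g}}_w$, each obtained from the previous one by adding an open stratum. The long exact sequences of compactly supported cohomology for open/closed decompositions, $\cdots\to H^i_c(U)\to H^i_c(X)\to H^i_c(Z)\to\cdots$, together with vanishing in odd degrees for both $U$ and $Z$ (by induction), force $H^{\mathrm{odd}}_c(X)=0$. All the connecting maps go from even to odd degree, so nothing can obstruct this.

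The main technical point I expect is the local triviality of the fibration $\ddot{\mathfrak{g}}_w\to\mathcal{O}_w$ as a vector bundle, together with the affine-bundle structure of $\mathcal{O}_w\to G/B$. Both follow from $G$-homogeneity: $\ddot{\mathfrak{g}}_w\simeq G\times^{B\cap \dot wB\dot w^{-1}}(\mathfrak{b}\cap\mathrm{Ad}(\dot w)\mathfrak{b})$. For Thom isomorphism it suffices to have a Zariski-locally trivial fibration with fibre $\mathbb{C}^m$, which holds since $G\to G/H$ is Zariski-locally trivial for these solvable $H$ containing $T$ (or, more simply, one can use the Bruhat cells of $G/B$ to trivialize). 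I would present this homogeneity argument carefully and cite the corresponding step in \cite{lusztig2} for the connected case.
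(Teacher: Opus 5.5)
Your proposal is correct and follows essentially the same route as the paper: split $\ddot{\mathfrak{g}}^+$ into four copies of $\ddot{\mathfrak{g}}$, stratify by relative position $w$ via $\mathrm{pr}_{23}$, show that each stratum has vanishing odd compactly supported cohomology, and assemble the strata with the open--closed long exact sequence along a filtration by closed unions of strata. The only differences are cosmetic: the paper orders the strata by decreasing orbit dimension rather than by a linear extension of the Bruhat order (both give closed unions), and your description of $\ddot{\mathfrak{g}}_w$ as a vector bundle over $\mathcal{O}_w$, itself an affine-space bundle over $G/B$, is more precise than the paper's statement that $\ddot{\mathfrak{g}}^{\Omega(w)}$ is a vector bundle over $G/B$.
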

\begin{proof}
	Denote $\dot{\mathfrak{g}} = \{(x,gB)\in \mathfrak{g} \times G/B :\mathrm{Ad}(g^{-1})x\in \mathfrak{b}\}$ and   $\ddot{\mathfrak{g}}:=\dot{\mathfrak{g}}\times_{\mathfrak{g}}\dot{\mathfrak{g}}$.
	Since $\ddot{\mathfrak{g}}^+$  is a disjoint union of four copies of $\ddot{\mathfrak{g}}$, we have  $H_c^{\mathrm{odd}}(\ddot{\mathfrak{g}}^+,\mathbb{C})=\oplus_{i=1}^4H_c^{\mathrm{odd}}(\ddot{\mathfrak{g}},\mathbb{C})$, so it suffices to show $H_c^{\mathrm{odd}}(\ddot{\mathfrak{g}},\mathbb{C})=0$.
	
	For $w\in W$, set $\ddot{\mathfrak{g}}^{\Omega(w)}:=\{(x,gB,g'B)\in\ddot{\mathfrak{g}}\mid g^{-1}g'\in BwB\}$. Then $\ddot{\mathfrak{g}}=\sqcup_{w\in W}\ddot{\mathfrak{g}}^{\Omega(w)}$. Consider the projection
    \begin{equation*}
\ddot{\mathfrak{g}}\stackrel{\mathrm{pr}_{23}}{\longrightarrow} G/B\times G/B
	\end{equation*}
	and denote $\mathcal{O}(w)=\mathrm{pr}_{23}\ddot{\mathfrak{g}}^{\Omega(w)}$. Hence $\mathcal{O}(w)$ is a $G$-orbit under the diagonal action and we have a poset of $G$-orbit closures in $G/B\times G/B$. Enumerate $\mathcal{O}(w)$ in such an order: $\mathcal{O}(1)$, $\mathcal{O}(2)$, $\cdots$, $\mathcal{O}(m)$ that  $\mathrm{dim}\mathcal{O}(1)\geq \mathrm{dim}\mathcal{O}(2)\geq \cdots \mathrm{dim}\mathcal{O}(m)$. Let $\ddot{\mathfrak{g}}_j=\sqcup_{i\geq j}\ddot{\mathfrak{g}}^{\Omega(i)}$. Then $\ddot{\mathfrak{g}}_1\supset\ddot{\mathfrak{g}}_2\supset\cdots\supset\ddot{\mathfrak{g}}_m$ is a filtration over \begin{equation*}
		\ddot{\mathfrak{g}} \stackrel{\mathrm{pr}_{3}}{\longrightarrow} G/B
	\end{equation*}
	Since each $\ddot{\mathfrak{g}}^{\Omega(w)}$ is a vector bundle over $G/B$ with restriction of $\mathrm{pr}_3$, we have \begin{equation*}
		H_c^{\mathrm{odd}}(\ddot{\mathfrak{g}}^{\Omega(w)})=0
	\end{equation*}
	Using the open-closed long exact sequence for each $j$:\begin{align*}
		\cdots\to H_c^i(\ddot{\mathfrak{g}}_j-\ddot{\mathfrak{g}}_{j+1})\to H_c^i(\ddot{\mathfrak{g}}_j)\to H_c^i(\ddot{\mathfrak{g}}_{j+1})\to H_c^{i+1}(\ddot{\mathfrak{g}}_j-\ddot{\mathfrak{g}}_{j+1})\to \cdots
	\end{align*}
	We have $H_c^{\mathrm{odd}}(\ddot{\mathfrak{g}}^+,\underline{\mathbb{C}})=0$ by induction.
\end{proof}

 Lemma \ref{OddV} implies that the condition in  \cite[4.6(f)]{lusztig2} is satisfied.  Using \cite{lusztig2} section 4-5, we have  $\mathbb{C}$-algebra isomorphism (sometimes we omit $\mathrm{Res}$ for convenience):
\begin{align}
	\mathbb{C}_{a}\otimes_{H^*_{G\times \mathbb{C}^*}}\mathrm{Hom}^*_{D_{G\times \mathbb{C}^*}(\mathfrak{g})}(K^+,K^+)&\simeq
	\mathbb{C}_{a}\otimes_{H^*_{Z}}\mathrm{Hom}^*_{D_{Z}(\mathfrak{g})}(K^+,K^+)\label{local1}\\&\simeq
	\mathbb{C}_{a}\otimes_{H^*_Z}\mathrm{Hom}^*_{{D_{Z}(\mathfrak{g}^a)}}(\widetilde{K}^+,\widetilde{K}^+)\label{local2}\\&\simeq \mathrm{Hom}^*(\widetilde{K}^+,\widetilde{K}^+)\label{local3}
\end{align}

We have 
\begin{align}
	\mathbb{C}_{a}\otimes_{H^*_{G^+\times \mathbb{C}^*}}\mathrm{Hom}^*_{{D_{G^+\times \mathbb{C}^*}(\mathfrak{g})}}(K^+,K^+) 
	&\simeq \mathbb{C}_{a}\otimes_{H^*_{G\times \mathbb{C}^*}}H^*_{G\times \mathbb{C}^*}\otimes_{H^*_{G^+\times \mathbb{C}^*}}\mathrm{Hom}^*_{{D_{G^+\times \mathbb{C}^*}(\mathfrak{g})}}(K^+,K^+) \nonumber\\
	&\simeq \mathbb{C}_{a}\otimes_{H^*_{G\times \mathbb{C}^*}}\mathrm{Hom}^*_{{D_{G\times \mathbb{C}^*}(\mathfrak{g})}}(\mathrm{Res}^{G^+\times \mathbb{C}^*}_{G\times \mathbb{C}^*} K^+, \mathrm{Res}^{G^+\times \mathbb{C}^*}_{G\times \mathbb{C}^*} K^+)^\tau \label{Inv1}
\end{align}
where $H^*_{G\times \mathbb{C}^*}$ is regarded as an $H^*_{G^+\times \mathbb{C}^*}$-algebra via the map $H^*_{G^+\times \mathbb{C}^*}\to H^*_{G\times \mathbb{C}^*}$ and \eqref{Inv1} follows from Lemma \ref{Inv}.

Recall that for a connected reductive group $A$, we have $H_A^*= S(\mathfrak{a}^*)^A$. We regard $\iota_a:H_A^*\to \mathbb{C}$ as the algebra homomorphism given by evaluating a polynomial function on  $\mathfrak{a}$ at the semisimple element $a$. We denote $\mathbb{C}_a$ as a $H_A^*$-algebra via this algebra homomorphism $\iota_a$.

Let $A$ be a connected normal subgroup of $A^+$ with $\mathfrak{R} :=A^+/A= \langle1,\tau\rangle$. There is a $\tau$-action on  $H_A^*$. Taking a semisimple element $a\in \mathfrak{a}$ satisfying $\mathfrak{R}\cdot a \subset A\cdot a$, we define  a $\tau$-action on $\mathbb{C}_a$ to be the trivial action. Since $\iota_a(\tau\cdot h)=h(\tau(a))=h(a)$ for $h\in H_A^*= S(\mathfrak{a}^*)^A$, the algebra homomorphism $\iota_a:H_A^*\to \mathbb{C}_a$ is compatible with the $\tau$-action.

Therefore we can define a $\tau$-action on
\begin{equation*}
	\mathbb{C}_{a}\otimes_{H_A^*}\mathrm{Hom}_{D_{A}(X)}^*(\mathrm{Res}^{A^+}_A(\mathcal{F}),\mathrm{Res}^{A^+}_A(\mathcal{F}'))
\end{equation*} 
by $\tau(c\otimes f)=c\otimes\tau(f)$. Combining Proposition \ref{ModAct}, it is well defined.

We will check that the isomorphism \eqref{local1}, \eqref{local2}, \eqref{local3}  commute  with respect to the corresponding $\tau$-actions.  In categorical language, we will verify that the functors appearing in \eqref{local1}, \eqref{local2}, \eqref{local3} are $\mathfrak{R}$-equivariant.

Firstly, we consider \eqref{local1} as follows.

Since we have the short exact sequence 	
\begin{equation*}
	1\to Z\to Z^+ \to G^+/G \to 1
\end{equation*}
which means that $Z^+/Z\simeq G^+/G\simeq \mathfrak{R}\simeq \langle1,\tau\rangle$, we can take a representative $\bar{\tau}$ to get a commutative diagram of group homomorphisms

\[
\begin{tikzcd}
	Z\arrow[r] \arrow[d,"{\mathrm{Ad}_{\bar{\tau}}}"]&G\times\mathbb{C}^*\arrow[d,"{\mathrm{Ad}_{\bar{\tau}}}"]\\
	Z\arrow[r]&G\times\mathbb{C}^*
\end{tikzcd}
\]

From \cite[6.6(2)]{EquBeL}, we have $\mathrm{Res}^{G\times \mathbb{C}^*}_Z\circ \tau^*= \tau^*\circ \mathrm{Res}^{G\times \mathbb{C}^*}_Z$.

Therefore, the isomorphism \eqref{local1} commutes with the $\tau$-action and we have 

\begin{equation}\label{prooflocal1}
	\mathbb{C}_{a}\otimes_{H^*_{G\times \mathbb{C}^*}}\mathrm{Hom}^*_{D_{G\times \mathbb{C}^*}(\mathfrak{g})}(K^+,K^+)^\tau\simeq
	\mathbb{C}_{a}\otimes_{H^*_{Z}}\mathrm{Hom}^*_{D_{Z}(\mathfrak{g})}(K^+,K^+)^\tau
\end{equation}

Secondly, we analyze \eqref{local2} using the interpretation in \cite[4.9]{fourier}. 

We denote by
\begin{equation*}
	(\dot{j}^+)_!(\dot{j}^+)^!\dot{\mathcal{L}}^+\stackrel{\mu}{\longrightarrow}\dot{\mathcal{L}}^+\stackrel{\nu}{\longrightarrow}(\dot{j}^+)_*(\dot{j}^+)^*\dot{\mathcal{L}}^+
\end{equation*}

There are canonical morphisms of $H_Z^*$-algebras
\begin{equation}\label{ProofLocal2}
	\begin{tikzcd}
		\mathrm{Hom}^*_{D_{Z}(\mathfrak{g})}\Bigl((f^+)_*\dot{\mathcal{L}}^+,(f^+)_*\dot{\mathcal{L}}^+\Bigr)\\
		\mathrm{Hom}^*_{D_{Z}(\mathfrak{g})}\Bigl((f^+)_*(\dot{j}^+)_*(\dot{j}^+)^*\dot{\mathcal{L}}^+,(f^+)_*(\dot{j}^+)_!(\dot{j}^+)^!\dot{\mathcal{L}}^+\Bigr)
		\arrow[d, "\Psi_2"']\arrow[u, "\Psi_1"] \\
		\mathrm{Hom}^*_{D_{Z}(\mathfrak{g})}((f^+)_*(\dot{j}^+)_*(\dot{j}^+)^*\dot{\mathcal{L}}^+,(f^+)_*(\dot{j}^+)_*(\dot{j}^+)^*\dot{\mathcal{L}}^+)
	\end{tikzcd}	
\end{equation}

defined by $\Psi_1(u)=(f^+)_*(\mu)\circ u \circ (f^+)_*(\nu)$, $\Psi_2(u)=(f^+)_*(\nu\mu)\circ u$.

We have $\tau^*\Psi_1(u)=\tau^*(f^+)_*(\mu)\circ \tau^*(u) \circ \tau^*(f^+)_*(\nu)$, $\tau^*\Psi_2(u)=\tau^*(f^+)_*(\nu\mu)\circ \tau^*(u)$.

In fact, $(f^+)_*(\mu)=(f^+)_*(\mathrm{Res}^{Z^+}_Z\mu)=\mathrm{Res}^{Z^+}_Z(f^+)_*(\mu)$, $(f^+)_*(\nu)=(f^+)_*(\mathrm{Res}^{Z^+}_Z\nu)=\mathrm{Res}^{Z^+}_Z(f^+)_*(\nu)$, $(f^+)_*(\nu\mu)=(f^+)_*(\mathrm{Res}^{Z^+}_Z\nu\mu)=\mathrm{Res}^{Z^+}_Z(f^+)_*(\nu\mu)$, hence from Lemma \ref{Inv}, we have $\tau^*(f^+)_*(\mu)=(f^+)_*(\mu)$, $\tau^*(f^+)_*(\nu)=(f^+)_*(\nu)$, $\tau^*(f^+)_*(\nu\mu)=(f^+)_*(\nu\mu)$, then $\tau^*\Psi_1(u)=\Psi_1(\tau^*u)$, $\tau^*\Psi_2(u)=\Psi_2(\tau^*u)$.

Note that $f^+$, $\tilde{f}^+=f^+\circ \dot{j}^+$  are proper, so $(f^+)_!=(f^+)_*$ and $(\tilde{f}^+)_!=(\tilde{f}^+)_*=(f^+)_*(\dot{j}^+)_*$. Applying $\mathbb{C}_{a}\otimes_{H^*_{Z}}$ to \eqref{ProofLocal2}, we get a $\mathbb{C}$-algebra isomorphism \eqref{local2} that commutes with the $\tau$-action. Therefore we have

\begin{equation}\label{prooflocal2}
	\mathbb{C}_{a}\otimes_{H^*_{Z}}\mathrm{Hom}^*_{D_{Z}(\mathfrak{g})}(K^+,K^+)^\tau\simeq
	\mathbb{C}_{a}\otimes_{H^*_Z}\mathrm{Hom}^*_{{D_{Z}(\mathfrak{g}^a)}}(\widetilde{K}^+,\widetilde{K}^+)^\tau
\end{equation}

\begin{remark}
	Alternatively, we can view  $\mathrm{Hom}^*_{D_{G\times \mathbb{C}^*}(\mathfrak{g})}(K^+,K^+)\simeq H^*_{G\times \mathbb{C}^*}(\ddot{\mathfrak{g}}^+,i^!(\dot{\mathcal{L}}^+\boxtimes D\dot{\mathcal{L}}^+))=H^*_{G\times \mathbb{C}^*}(\ddot{\mathfrak{g}}^+,{\mathbb{C}})$, $\mathrm{Hom}^*_{D_{G^+\times \mathbb{C}^*}(\mathfrak{g})}(K^+,K^+)\simeq H^*_{G^+\times \mathbb{C}^*}(\ddot{\mathfrak{g}}^+,i^!(\dot{\mathcal{L}}^+\boxtimes D\dot{\mathcal{L}}^+))=H^*_{G^+\times \mathbb{C}^*}(\ddot{\mathfrak{g}}^+,{\mathbb{C}})\simeq H^*_{G\times \mathbb{C}^*}(\ddot{\mathfrak{g}}^+,{\mathbb{C}})^\tau$ (\cite[1.9]{lusztig1}). Transferring the above equivariant $\operatorname{Hom}$ to equivariant cohomology and using the commutativity of $\operatorname{Res}$, we can also obtain the isomorphisms \eqref{prooflocal1} and \eqref{prooflocal2}.
\end{remark}

Thirdly, we analyze \eqref{local3}. In fact, we have an isomorphism of $H_{T_a}^*$-algebra \cite[5.1]{lusztig2}:
\begin{equation}\label{local31}
	H_{T_a}^*\otimes_{H^*_Z} \mathrm{Hom}^*_{D_{Z}(\mathfrak{g}^a)}(\widetilde{K}^+,\widetilde{K}^+)\simeq 	H_{T_a}^*\otimes_{\mathbb{C}} \mathrm{Hom}^*(\widetilde{K}^+,\widetilde{K}^+)
\end{equation}

and an isomorphism of $\mathbb{C}$-algebras by applying $\mathbb{C}_{a}\otimes_{H^*_{T_a}}$:
\begin{equation*}
	\mathbb{C}_a\otimes_{H^*_Z} \mathrm{Hom}^*_{D_{Z}(\mathfrak{g}^a)}(\widetilde{K}^+,\widetilde{K}^+)\simeq  \mathrm{Hom}^*(\widetilde{K}^+,\widetilde{K}^+)
\end{equation*}

Note that \eqref{local31} is given by  $H_{T_a}^*$-algebra isomorphisms:
\begin{equation*}
	H_{T_a}^*\otimes_{\mathbb{C}} \mathrm{Hom}^*(\widetilde{K}^+,\widetilde{K}^+)\stackrel{\Upsilon_1}{\longrightarrow}
	\mathrm{Hom}^*_{D_{T_a}(\mathfrak{g}^a)}(\widetilde{K}^+,\widetilde{K}^+)
	\stackrel{\Upsilon_2}{\longleftarrow}	H_{T_a}^*\otimes_{H^*_Z} \mathrm{Hom}^*_{D_{Z}(\mathfrak{g}^a)}(\widetilde{K}^+,\widetilde{K}^+)
\end{equation*}

The group homomorphism $T_a\to \{1\}$ gives rise to a functor $D(\mathfrak{g}^a)\to D_{T_a}(\mathfrak{g}^a)$ since $T_a$ acts trivially on $\mathfrak{g}^a$. This gives rise to a $\mathbb{C}$-algebra homomorphism
\begin{equation}\label{local32}
	\mathrm{Hom}^*(\widetilde{K}^+,\widetilde{K}^+)\stackrel{\Upsilon'_1}{\longrightarrow} \mathrm{Hom}^*_{D_{T_a}(\mathfrak{g})}(\widetilde{K}^+,\widetilde{K}^+)	 
\end{equation}
which extends uniquely to $\Upsilon_1$ by cup product $\Upsilon_1((h,f))=h\cdot \Upsilon'_1(f)$.

Since $Z^+/Z\simeq\mathfrak{R}\simeq \langle1,\tau\rangle$, we can take a representative ${\bar{\tau}}$ to get a commutative diagram of group homomorphisms  that is compatible with their group actions on $\mathfrak{g}^a$:

\begin{equation}\label{TAct}
	\begin{tikzcd}
		1\arrow[d,"{\mathrm{Ad}_{\bar{\tau}}}"]&T_a\arrow[r]\arrow[l] \arrow[d,"{\mathrm{Ad}_{\bar{\tau}}}"]&Z\arrow[d,"{\mathrm{Ad}_{\bar{\tau}}}"]\\
		1&T_a\arrow[l]\arrow[r]&Z
	\end{tikzcd}
\end{equation}

Using the left square and \cite{EquBeL}, we have $\Upsilon'_1\circ \tau^*=\tau^*\circ \Upsilon'_1$, hence  $\Upsilon_1(\tau\cdot(h,f))=\Upsilon_1(\tau(h),\tau(f))=\tau(h)\cdot\Upsilon'_1(\tau(f))=\tau(h)\cdot\tau(\Upsilon'_1(f))$,  which means that $\Upsilon_1$  commutes with $\tau$. Notice that we choose $\bar{\tau}$ such that $\bar{\tau}$ stabilizes $T_a$ and $\mathrm{Ad}_{\bar{\tau}}( a)=a$.   Therefore we have 
\begin{align}
	\mathrm{Hom}^*(\widetilde{K}^+,\widetilde{K}^+)^\tau &\simeq \mathbb{C}_a\otimes_{H^*_{T_a}}	H_{T_a}^*\otimes_{\mathbb{C}} \mathrm{Hom}^*(\widetilde{K}^+,\widetilde{K}^+)^\tau \nonumber\\
	&\simeq\mathbb{C}_a\otimes_{H^*_{T_a}} 	\mathrm{Hom}^*_{D_{T_a}(\mathfrak{g}^a)}(\widetilde{K}^+,\widetilde{K}^+)^\tau \label{prooflocal31}
\end{align}

Using the right square we get $\mathrm{Res}^{Z}_{T_a}\circ \tau^*= \tau^*\circ \mathrm{Res}^{Z}_{T_a}$. Since $\Upsilon_2$ is induced by $\mathrm{Res}^{Z}_{T_a}$, we have
\begin{equation}\label{prooflocal32}
	\mathbb{C}_a\otimes_{H^*_{T_a}}\mathrm{Hom}^*_{D_{T_a}(\mathfrak{g}^a)}(\widetilde{K}^+,\widetilde{K}^+)^\tau\simeq	\mathbb{C}_a\otimes_{H^*_{Z}}\mathrm{Hom}^*_{D_{Z}(\mathfrak{g}^a)}(\widetilde{K}^+,\widetilde{K}^+)^\tau
\end{equation}

Combining \eqref{prooflocal31} and \eqref{prooflocal32}, we obtain an isomorphism of $\mathbb{C}$-algebras
\begin{equation}
	\mathrm{Hom}^*(\widetilde{K}^+,\widetilde{K}^+)^\tau\simeq \mathbb{C}_a\otimes_{H^*_Z} \mathrm{Hom}^*_{D_{Z}(\mathfrak{g}^a)}(\widetilde{K}^+,\widetilde{K}^+)^\tau
\end{equation}

Finally, combining the three steps above, we obtain the isomorphism:

\begin{align}
	\mathbb{C}_{a}\otimes_{H^*_{G^+\times \mathbb{C}^*}}\mathrm{Hom}^*_{{D_{G^+\times \mathbb{C}^*}(\mathfrak{g})}}(K^+,K^+) &\simeq
	\mathbb{C}_{a}\otimes_{H^*_{G\times \mathbb{C}^*}}\mathrm{Hom}^*_{D_{G\times \mathbb{C}^*}(\mathfrak{g})}(K^+,K^+)^\tau\nonumber\\
	&\simeq
	\mathbb{C}_{a}\otimes_{H^*_{Z}}\mathrm{Hom}^*_{D_{Z}(\mathfrak{g})}(K^+,K^+)^\tau \nonumber\\&\simeq
	\mathbb{C}_{a}\otimes_{H^*_Z}\mathrm{Hom}^*_{{D_{Z}(\mathfrak{g}^a)}}(\widetilde{K}^+,\widetilde{K}^+)^\tau\nonumber\\&\simeq \mathrm{Hom}^*(\widetilde{K}^+,\widetilde{K}^+)^\tau\label{HomRed1}
\end{align}

Combining Proposition \ref{CenterHecke}, we have a $\mathbb{C}$-algebra isomorphism:
\begin{equation}\label{HomRedF}
	\mathbb{C}_{a}\otimes_{Z(\mathbb{H}(G^+))}\mathbb{H}(G^+)\simeq \mathrm{Hom}^*(\widetilde{K}^+,\widetilde{K}^+)^\tau 	
\end{equation}

Applying the equivariant decomposition theorem  to the  $Z^+$-equivariant  proper map  $\tilde{f}^+: \dot{\mathfrak{g}}^{+,a}\to \mathfrak{g}^a$, we get 
\begin{equation*}
	\widetilde{K}^+=\bigoplus_{i\in \mathbb{Z},\phi^+=(\mathbb{O},\chi)} L_{\phi^+}(i)\otimes \IC_{\phi^+}[i]
\end{equation*}
where $\phi^+=(\mathbb{O},\chi)$ runs over pairs consisting of a $Z^+$-orbit $\mathbb{O}$ and an irreducible $Z^+$-equivariant local system $\chi$ on $\mathbb{O}$.

Denote by $\phi^+$ and $\psi^+$ the $Z^+$-equivariant local systems on an unspecified $Z^+$-orbit of $\mathfrak{g}^a$.
Recall that an object $\mathcal{F}\in \mathrm{Perv}_{Z^+}(\mathfrak{g}^a)$ if $\mathrm{For}(\mathcal{F})\in \mathrm{Perv}(\mathfrak{g}^a)$, where $\mathrm{For}$ is the forgetful functor $\mathrm{For}=\mathrm{Res}^{Z^+}_{\{\mathbf{1}\}}$. Hence
\begin{equation*}
	\mathrm{Hom}^n(\IC_{\phi^+},\IC_{\psi^+})=0 \quad \mathrm{if} \quad n< 0
\end{equation*}

Let $L_{\phi^+}=\oplus_iL_{\phi^+}(i)$ and 
\begin{equation*}
	\widetilde{K}^{+'}=\bigoplus_{\phi^+=(\mathbb{O},\chi)} L_{\phi^+}\otimes \IC_{\phi^+}
\end{equation*}
Then
\begin{equation}
\begin{aligned}
	\mathrm{Hom}^*(\widetilde{K}^+,\widetilde{K}^+)&\simeq \mathrm{Hom}^*(\widetilde{K}^{+'},\widetilde{K}^{+'})\\
	&=\bigoplus_{k\geq0,\phi^+,\psi^+}\mathrm{Hom}_{\mathbb{C}}(L_{\phi^+},L_{\psi^+})\otimes\mathrm{Hom}^k(\IC_{\phi^+},\IC_{\psi^+})
\end{aligned} 
\end{equation}

Since $Z$ is connected, we have 
\begin{equation}\label{ConIso}
	\mathrm{Hom}^0_{D_{Z}(\mathfrak{g}^a)}(\IC_{\phi^+},\IC_{\psi^+})=\mathrm{Hom}^0(\IC_{\phi^+},\IC_{\psi^+})
\end{equation}
which is given by $\mathrm{For}$. Taking a representative $\bar{\tau}$ such that the diagram \ref{TAct} is commutative and using $\mathrm{For}\circ\tau^*=\tau^*\circ\mathrm{For}$, we have

\begin{equation}\label{ProofSim1}
	\mathrm{Hom}^0(\IC_{\phi^+},\IC_{\psi^+})^\tau=\mathrm{Hom}^0_{D_{Z}(\mathfrak{g}^a)}(\IC_{\phi^+},\IC_{\psi^+})^\tau=\mathrm{Hom}^0_{D_{Z^+}(\mathfrak{g}^a)}(\IC_{\phi^+},\IC_{\psi^+})
\end{equation}

Since $\IC_{\phi^+}$, $\IC_{\psi^+}$ are simple objects in $\mathrm{Perv}_{Z^+}(\mathfrak{g}^a)$, 
\begin{equation}\label{ProofSim2}
	\mathrm{Hom}^0_{D_{Z^+}(\mathfrak{g}^a)}(\IC_{\phi^+},\IC_{\psi^+})=\mathbb{C}\cdot \delta_{\phi^+,\psi^+}
\end{equation}

Combining \eqref{ProofSim1} and \eqref{ProofSim2}, we have
\begin{align}
	\mathrm{Hom}^0(\widetilde{K}^{+'},\widetilde{K}^{+'})^\tau
	&=\bigoplus_{\phi^+,\psi^+}\mathrm{Hom}_{\mathbb{C}}(L_{\phi^+},L_{\psi^+})\otimes\mathrm{Hom}^0(\IC_{\phi^+},\IC_{\psi^+})^\tau\nonumber\\
	&=\bigoplus_{\phi^+,\psi^+}\mathrm{Hom}_{\mathbb{C}}(L_{\phi^+},L_{\psi^+})\otimes\mathrm{Hom}^0_{D_{Z^+}(\mathfrak{g}^a)}(\IC_{\phi^+},\IC_{\psi^+})\nonumber\\
	&=\bigoplus_{\phi^+}\mathrm{End}_{\mathbb{C}}L_{\phi^+}\label{SimMod1}
\end{align}

Therefore, we have 
\begin{align}
	\mathrm{Hom}^*(\widetilde{K}^+,\widetilde{K}^+)^\tau&\simeq \mathrm{Hom}^*(\widetilde{K}^{+'},\widetilde{K}^{+'})^\tau\nonumber\\
	&=\bigoplus_{k\geq0,\phi^+,\psi^+}\mathrm{Hom}_{\mathbb{C}}(L_{\phi^+},L_{\psi^+})\otimes\mathrm{Hom}^k(\IC_{\phi^+},\IC_{\psi^+})^\tau\nonumber\\
	&\to 	\mathrm{Hom}^0(\widetilde{K}^{+'},\widetilde{K}^{+'})^\tau=\bigoplus_{\phi^+}\mathrm{End}_{\mathbb{C}}L_{\phi^+}\label{SimMod2}
\end{align}
where the kernel of this homomorphism is a two-sided nilpotent ideal. This nilpotent ideal is the radical of the finite dimensional algebra $\mathrm{Hom}^*(\widetilde{K}^+,\widetilde{K}^+)^\tau$, since $\mathrm{Hom}^0(\widetilde{K}^{+'},\widetilde{K}^{+'})^\tau$ is a semisimple algebra.

Denote
\begin{equation}
	\mathbf{P}_{\phi^+}:=\mathrm{Hom}^0(\IC_{\phi^+},\widetilde{K}^{+'})^\tau
\end{equation}  Then 
\begin{align}
	\mathbf{P}_{\phi^+}&=\mathrm{Hom}^0(\IC_{\phi^+},\widetilde{K}^{+'})^\tau\nonumber\\
	&=\bigoplus_{\psi^+}L_{\phi^+}\otimes\mathrm{Hom}^0(\IC_{\phi^+},\IC_{\psi^+})^\tau\nonumber\\
	&=\bigoplus_{\psi^+}L_{\phi^+}\otimes\mathrm{Hom}^0_{D_{Z^+}(\mathfrak{g}^a)}(\IC_{\phi^+},\IC_{\psi^+})\nonumber\\
	&=L_{\phi^+}
\end{align}

Using \eqref{SimMod1}, $\mathbf{P}_{\phi^+}=L_{\phi^+}$ is a complete collection of mutually non-isomorphic simple $\mathrm{Hom}^0(\widetilde{K}^{+'},\widetilde{K}^{+'})^\tau$-modules. These modules can be regarded as $\mathrm{Hom}(\widetilde{K}^+,\widetilde{K}^+)^\tau$-module via the algebra homomorphism \eqref{SimMod2} above, and they provide a complete collection of mutually non-isomorphic simple $\mathrm{Hom}(\widetilde{K}^+,\widetilde{K}^+)^\tau$-modules.

Therefore, we need to determine what kind of $\IC$-sheaves do occur in decomposition of $\widetilde{K}^+$.

We have a $Z^+$-equivariant isomorphism $Z^+\times_Z\dot{\mathfrak{g}}^a\stackrel{\simeq}{\longrightarrow} \dot{\mathfrak{g}}^{+,a}$ given by the action map: $\dot{m}:(z,x,(g,1)B)\mapsto (\Ad(z)x,z\cdot(g,1)B)$. Let $m:Z^+\times_Z\mathfrak{g}^a \to \mathfrak{g}^a$ be the action map: $(z,x)\mapsto \Ad(z)x$. Hence we have the $Z^+$-equivariant commutative diagram: 
\begin{equation*}
	\begin{tikzcd}
		Z^+\times_Z\dot{\mathfrak{g}}^a\arrow[r,"\dot{m}"] \arrow[d,"p_1"]&\dot{\mathfrak{g}}^{+,a}\arrow[d,"\tilde{f}^+"]\\
		Z^+\times_Z\mathfrak{g}^a\arrow[r,"m"]&\mathfrak{g}^a
	\end{tikzcd}
\end{equation*}
where $p_1$ is the projection.
Then we have
\begin{align}
	\widetilde{K}^+&= \tilde{f}^+_!\underline{\mathbb{C}}
	=\tilde{f}^+_!\dot{m}_!\underline{\mathbb{C}}\nonumber\\
	&=(\tilde{f}^+\circ\dot{m})_!\underline{\mathbb{C}}
	=(m\circ p_1)_!\underline{\mathbb{C}}\nonumber\\
	&=m_!{p_1}_!\underline{\mathbb{C}}\label{Ind1}
\end{align}
From the natural inclusion $i:\mathfrak{g}^a\to 	Z^+\times_Z\mathfrak{g}^a$ and $\dot{i}:\dot{\mathfrak{g}}^a\to Z^+\times_Z\dot{\mathfrak{g}}^a$, we have the induction equivalences:
\begin{equation*}
	i^*:D_{Z^+}(Z^+\times_Z\mathfrak{g}^a)\stackrel{\simeq}{\longrightarrow} D_Z(\mathfrak{g}^a)
\end{equation*}
\begin{equation*}
	\dot{i}^*:D_{Z^+}(Z^+\times_Z\dot{\mathfrak{g}}^a )\stackrel{\simeq}{\longrightarrow} D_Z(\dot{\mathfrak{g}}^a)
\end{equation*}
Using the commutative diagram
\begin{equation}
	\xymatrix{
		Z^+\times_Z\mathfrak{g}^a
		& Z^+\times_Z\dot{\mathfrak{g}}^a   	\ar[l]_{p_1}\\
		\mathfrak{g}^a	\ar[u]_{i} & \dot{\mathfrak{g}}^a \ar[l]_{\tilde{f}}\ar[u]^{\dot{i}}
	}
\end{equation}
and the fact that the induction equivalence commutes with $\tilde{f}_!$, we have 
\begin{equation*}
	i^*{p_1}_!\underline{\mathbb{C}}=\tilde{f}_!\dot{i}^*\underline{\mathbb{C}}=\tilde{f}_!\underline{\mathbb{C}}
\end{equation*}
which means 
\begin{equation}\label{Ind2}
	{p_1}_!\underline{\mathbb{C}}=	(i^*)^{-1}\tilde{f}_!\underline{\mathbb{C}}
\end{equation}
Denote 
\begin{equation*}
	\widetilde{K}:=\tilde{f}_!\underline{\mathbb{C}}
\end{equation*}
Combining \eqref{Ind1} and \eqref{Ind2}, we have 
\begin{align}\label{GeoInd}
	\widetilde{K}^+&= m_!{p_1}_!\underline{\mathbb{C}}
	=m_!(i^*)^{-1}\tilde{f}_!\underline{\mathbb{C}}\\
	&=m_!(i^*)^{-1}\widetilde{K} = \mathrm{Ind}_Z^{Z^+}\widetilde{K}
\end{align}
where $\mathrm{Ind}_Z^{Z^+}=\mathrm{Ind}_{Z*}^{Z^+}=\mathrm{Ind}_{Z!}^{Z^+}$ because of $m_!=m_*$ and $m^*=m^!$.

Denote the stabilizer of $y\in \mathfrak{g}^a$ in $Z=Z_{G\times \mathbb{C}^*}(a)$  by $Z_y=\{(g,r)\in G\times \mathbb{C}^*|\Ad(g)\bar{t}=\bar{t}, \Ad(g)y=r^2y\}$, where $a=(\bar{t},r_0)$. Now we assume that $Z_y$ is connected for all $y$.

Applying the equivariant decomposition theorem  to the $Z$-equivariant  proper map  $\tilde{f}: \dot{\mathfrak{g}}^{a}\to \mathfrak{g}^a$, we get 
\begin{equation*}
	\widetilde{K}=\bigoplus_{i\in \mathbb{Z},\phi=(\mathbb{O},1)} L_{\phi}(i)\otimes \IC_{\phi}[i]
\end{equation*}
where $\phi=(\mathbb{O},1)$ runs over  $Z$-orbits $\mathbb{O}$.

Using the non-vanishing result of $\widetilde{K}$, we know that all $Z$-orbit $\IC_{\phi}$ does occur in the decomposition of $\widetilde{K}$. Hence we have 
\begin{align}
	\mathrm{Ind}_Z^{Z^+}\widetilde{K} &=\bigoplus_{i\in \mathbb{Z},\phi=(\mathbb{O},1)} L_{\phi}(i)\otimes \mathrm{Ind}_Z^{Z^+}\IC_{\phi}[i]
\end{align}

\begin{lemma}
	Take a $Z$-orbit $\mathbb{O}$ and $y\in \mathbb{O}$. \\If $Z^+\cdot \mathbb{O}=\mathbb{O}$, we have $Z^+_y/Z_y\simeq \mathfrak{R}$ and $\pi_0(Z^+_y)\simeq\mathfrak{R}$, hence $\mathrm{Ind}_Z^{Z^+}\IC_{(\mathbb{O},1)}=\IC_{(\mathbb{O},1)^+}\oplus \IC_{(\mathbb{O},-1)^+}$, where $\IC_{(\mathbb{O},1)^+}$ and $\IC_{(\mathbb{O},-1)^+}$ denote the equivariant intersection cohomology sheaf on the $Z^+$-orbit $\mathbb{O}$ corresponding to the trivial and sign representations of $\pi_0(Z^+_y)\simeq\mathfrak{R}$, respectively.\\
	If $Z^+\cdot \mathbb{O}=\mathbb{O}\sqcup\mathbb{O}'$, we have $Z^+_y=Z_y$ and $\pi_0(Z^+_y)=\pi_0(Z_y)=1$, hence $\mathrm{Ind}_Z^{Z^+}\IC_{(\mathbb{O},1)}=\IC_{(\mathbb{O}\sqcup\mathbb{O}',1)^+}$, where $\IC_{(\mathbb{O}\sqcup\mathbb{O}',1)^+}$ denotes the equivariant intersection cohomology sheaf on the $Z^+$-orbit $\mathbb{O}\sqcup\mathbb{O}'$ corresponding to the trivial representation of $\pi_0(Z^+_y)$.
\end{lemma}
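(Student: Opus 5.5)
The plan is to analyse the $Z^+$-orbit structure of $\mathbb{O}$ via the exact sequence $1\to Z\to Z^+\to\mathfrak{R}\to 1$, and then compute $\mathrm{Ind}_Z^{Z^+}$ of an $\IC$ sheaf by restricting to the open orbit and using that induction commutes with intermediate extension. Since $Z$ has index $2$ in $Z^+$, the set $Z^+\cdot\mathbb{O}$ is either $\mathbb{O}$ or $\mathbb{O}\sqcup\bar\tau\mathbb{O}$ with $\mathbb{O}'=\bar\tau\mathbb{O}\neq\mathbb{O}$. The stabilizers are controlled by the map $Z^+_y\to Z^+/Z\simeq\mathfrak{R}$, whose kernel is $Z_y$.

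\emph{Case $Z^+\cdot\mathbb{O}=\mathbb{O}$.} For $\bar\tau\in Z^+\setminus Z$ we have $\bar\tau y\in\mathbb{O}$, so $\bar\tau y=zy$ for some $z\in Z$. Then $z^{-1}\bar\tau\in Z^+_y\setminus Z_y$, and $Z^+_y\to\mathfrak{R}$ is surjective, giving $Z^+_y/Z_y\simeq\mathfrak{R}$. By the standing assumption that $Z_y$ is connected, $Z_y=(Z^+_y)^0$, hence $\pi_0(Z^+_y)\simeq\mathfrak{R}$.

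\emph{Case $Z^+\cdot\mathbb{O}=\mathbb{O}\sqcup\mathbb{O}'$.} No element of $Z^+\setminus Z$ fixes $y$: if $\bar\tau z y=y$ with $z\in Z$, then $\mathbb{O}=\bar\tau\mathbb{O}$, a contradiction. So $Z^+_y=Z_y$ and $\pi_0(Z^+_y)=1$.

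For the sheaf statement, I would use that $\mathrm{Ind}_Z^{Z^+}=m_!(i^*)^{-1}$ from \eqref{GeoInd} is exact for the perverse t-structure. The map $m$ is finite (it is a 2-sheeted quotient by the free $\mathfrak{R}$-action after identifying $Z^+\times_Z\mathfrak{g}^a\simeq\mathfrak{R}\times\mathfrak{g}^a$ non-equivariantly), and $m_!=m_*$ with $m^*=m^!$. It therefore commutes with $j_{!*}$ for orbit inclusions, because both $m_!$ and $m_*$ preserve the support and cosupport conditions. Hence $\mathrm{Ind}_Z^{Z^+}\IC_{(\mathbb{O},1)}$ is the intermediate extension of $\mathrm{Ind}_Z^{Z^+}\underline{\mathbb{C}}_{\mathbb{O}}[\dim\mathbb{O}]$ from the $Z^+$-stable locally closed set $Z^+\cdot\mathbb{O}$.

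On the orbit itself, $Z^+$-equivariant local systems on $Z^+\cdot\mathbb{O}\simeq Z^+/Z^+_y$ correspond to representations of $\pi_0(Z^+_y)$. Induction from $Z$ corresponds to $\mathrm{Ind}_{Z_y}^{Z^+_y}$ of the trivial representation, since $Z\cdot y=\mathbb{O}$ and $Z_y=Z\cap Z^+_y$, so Mackey's formula has a single double coset.

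In the first case this gives $\mathbb{C}[\mathfrak{R}]=\mathrm{triv}\oplus\mathrm{sgn}$, so the induced sheaf is $\IC_{(\mathbb{O},1)^+}\oplus\IC_{(\mathbb{O},-1)^+}$. In the second case $Z^+_y=Z_y$, the induced representation is trivial, and the local system on $Z^+\cdot\mathbb{O}$ is the one whose restriction to $\mathbb{O}$ and to $\mathbb{O}'$ is constant, giving $\IC_{(\mathbb{O}\sqcup\mathbb{O}',1)^+}$.

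The main obstacle is making rigorous that equivariant induction commutes with intermediate extension and matches representation-theoretic induction on stalks. I would handle this by using $\mathrm{For}\circ\mathrm{Ind}_Z^{Z^+}\simeq\mathrm{For}\oplus\tau^*\mathrm{For}$, which follows from the description of $m$ as a trivial double cover. This reduces the support and cosupport checks to the connected case, where both summands are $\IC$ sheaves.
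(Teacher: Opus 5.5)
Your proof is correct and is essentially the paper's argument. You write $\mathrm{Ind}_Z^{Z^+}=m_!(i^*)^{-1}$, identify the restriction of $m_!\IC(Z^+\times_Z\mathbb{O},1)$ to $Z^+\cdot\mathbb{O}$ with the local system attached to $\mathrm{Ind}_{Z_y}^{Z^+_y}\mathrm{triv}$, and use that $m$ is finite (the paper says ``proper and small'') so that $m_!$ carries the IC sheaf to the IC sheaf of that local system; you also prove the stabilizer statements, which the paper only asserts.

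One small slip: in the second case $Z^+_y\backslash Z^+/Z$ has two elements, not one. The extra Mackey term is the stalk at a point of $\mathbb{O}'$, which vanishes since $\mathbb{O}'\cap\overline{\mathbb{O}}=\emptyset$. Equivalently, the fiber of $Z^+\times_Z\mathbb{O}\simeq Z^+/Z_y$ over $y$ is $Z^+_y/Z_y$ in both cases, which is what you actually need.
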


\begin{proof}
	
	The induction equivalence
	\begin{equation*}
		i^*:D_{Z^+}(Z^+\times_Z\mathfrak{g}^a)\stackrel{\simeq}{\longrightarrow} D_Z(\mathfrak{g}^a)
	\end{equation*}
	sends $\IC(Z^+\times_Z\mathbb{O},1)$ to $\IC(\mathbb{O},1)$.
	
	Recall the action map $m:Z^+\times_Z\mathfrak{g}^a \to \mathfrak{g}^a$.

    If $Z^+\cdot \mathbb{O}=\mathbb{O}$, then $\mathrm{Supp}(m_!(\IC(Z^+\times_Z\mathbb{O},1)))\subset\bar{\mathbb{O}}$ and  $m_!(\IC(Z^+\times_Z\mathbb{O},1))|_{\mathbb{O}}=\mathcal{L}_1[\mathrm{dim}\mathbb{O}]\oplus\mathcal{L}_{-1}[\mathrm{dim}\mathbb{O}]$, where $\mathcal{L}_1$ and $\mathcal{L}_{-1}$ are the local systems on $\mathbb{O}$ corresponding to the trivial and sign representation of $\pi_0(Z^+_y)\simeq\mathfrak{R}$. Since the action map $m$ is proper and small, we have $m_!(\IC(Z^+\times_Z\mathbb{O},1))=\IC_{(\mathbb{O},1)^+}\oplus \IC_{(\mathbb{O},-1)^+}$.
	
	If $Z^+\cdot \mathbb{O}=\mathbb{O}\sqcup\mathbb{O}'$, then $\mathrm{dim}\mathbb{O}=\mathrm{dim}\mathbb{O}'=\mathrm{dim}\mathbb{O}\sqcup\mathbb{O}'$, $\mathrm{Supp}(m_!(\IC(Z^+\times_Z\mathbb{O},1)))\subset\bar{\mathbb{O}}\sqcup\bar{\mathbb{O}'}$ and  $m_!(\IC(Z^+\times_Z\mathbb{O},1))|_{\mathbb{O}\sqcup\mathbb{O}'}=\mathcal{L}_1[\mathrm{dim}\mathbb{O}]$ where $\mathcal{L}_1$ is the constant local system on $\mathbb{O}\sqcup\mathbb{O}'$ corresponding to the trivial representation of $\pi_0(Z^+_y)=1$. Since the action map $m$ is proper and small, we have $m_!(\IC(Z^+\times_Z\mathbb{O},1))=\IC_{(\mathbb{O}\sqcup\mathbb{O}',1)^+}$

	Using $\mathrm{Ind}_Z^{Z^+}\IC(\mathbb{O},1)=m_!(i^*)^{-1}\IC(\mathbb{O},1)$, we get the lemma.	
\end{proof}
Thus we get the lemma:
\begin{lemma}
	The semisimple complex $\widetilde{K}^+$ consists of all simple $Z^+$-equivariant perverse sheaves on $\mathfrak{g}^a$
\end{lemma}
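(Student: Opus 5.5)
The statement asks that every simple object of $\mathrm{Perv}_{Z^+}(\mathfrak{g}^a)$ appear (up to shift and multiplicity) as a summand of $\widetilde{K}^+$. I will combine three earlier facts: the identification $\widetilde{K}^+=\mathrm{Ind}_Z^{Z^+}\widetilde{K}$ from \eqref{GeoInd}; the decomposition of $\widetilde{K}$, in which every $\IC_{(\mathbb{O},1)}$ for a $Z$-orbit $\mathbb{O}$ occurs with nonzero multiplicity space $L_{(\mathbb{O},1)}$; and the preceding lemma computing $\mathrm{Ind}_Z^{Z^+}\IC_{(\mathbb{O},1)}$.

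\emph{Step 1: classify the simple objects.} A simple object of $\mathrm{Perv}_{Z^+}(\mathfrak{g}^a)$ is $\IC(\mathbb{O}^+,\chi)$, where $\mathbb{O}^+$ is a $Z^+$-orbit and $\chi$ is an irreducible representation of $\pi_0(Z^+_y)$ for some $y\in\mathbb{O}^+$. Because $Z$ has index $2$ in $Z^+$, every $Z^+$-orbit is of the form $Z^+\cdot\mathbb{O}$ for a $Z$-orbit $\mathbb{O}$, and there are exactly two cases. Either $Z^+\cdot\mathbb{O}=\mathbb{O}$, or $Z^+\cdot\mathbb{O}=\mathbb{O}\sqcup\mathbb{O}'$. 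Under the standing assumption that $Z_y$ is connected, $\pi_0(Z^+_y)=Z^+_y/Z_y$, which is $\mathfrak{R}$ in the first case and trivial in the second. So the simple objects are $\IC_{(\mathbb{O},\pm1)^+}$ in the first case and $\IC_{(\mathbb{O}\sqcup\mathbb{O}',1)^+}$ in the second.

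\emph{Step 2: apply induction.} Since $\mathrm{Ind}_Z^{Z^+}=m_!(i^*)^{-1}$ is additive and commutes with shifts, the decomposition of $\widetilde{K}$ gives
\[
\widetilde{K}^+=\bigoplus_{i,\mathbb{O}} L_{(\mathbb{O},1)}(i)\otimes \mathrm{Ind}_Z^{Z^+}\IC_{(\mathbb{O},1)}[i].
\]
The preceding lemma then yields $\IC_{(\mathbb{O},1)^+}\oplus\IC_{(\mathbb{O},-1)^+}$ for each $\mathbb{O}$ of the first kind, and $\IC_{(\mathbb{O}\sqcup\mathbb{O}',1)^+}$ for each $\mathbb{O}$ of the second kind.

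\emph{Step 3: conclude.} Each multiplicity space $L_{(\mathbb{O},1)}=\bigoplus_i L_{(\mathbb{O},1)}(i)$ is nonzero by the non-vanishing result for $\widetilde{K}$. Hence every simple $Z^+$-equivariant perverse sheaf listed in Step 1 occurs in $\widetilde{K}^+$. Moreover its multiplicity space is $L_{(\mathbb{O},1)}$, or $L_{(\mathbb{O},1)}\oplus L_{(\mathbb{O}',1)}$ in the second case, and this is needed later to identify $\mathbf{P}_{\phi^+}$.

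\emph{Main obstacle.} The only delicate input is the non-vanishing claim: that every $Z$-orbit in $\mathfrak{g}^a$ supports a summand $\IC_{(\mathbb{O},1)}$ of $\widetilde{K}=\tilde f_!\underline{\mathbb{C}}$. I would justify it as in Lusztig's connected case for $\GL_n$, where the Springer correspondence contains all nilpotent orbits with trivial local system and every component group is trivial. Concretely, I would argue by induction on orbits ordered by closure. For $y\in\mathbb{O}$, the fibre $\tilde f^{-1}(y)$ has nonzero top cohomology in degree $2\dim\tilde f^{-1}(y)$, and the contributions coming from IC sheaves on larger orbits cannot account for it. Everything else in the argument is formal.
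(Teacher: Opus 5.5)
Your Steps 1--3 are the paper's argument. The paper also derives the lemma from three facts: $\widetilde{K}^+=\mathrm{Ind}_Z^{Z^+}\widetilde{K}$, the occurrence of every $\IC_{(\mathbb{O},1)}$ in $\widetilde{K}$, and the preceding lemma computing $\mathrm{Ind}_Z^{Z^+}\IC_{(\mathbb{O},1)}$. It simply quotes the non-vanishing for $\widetilde{K}$ as a known result, and given that input your proof is fine.

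The gap is in the justification you propose for the non-vanishing. Your top-degree argument is the Borho--MacPherson argument for the Springer resolution, and it depends on semismallness. There $2\dim\mathcal{B}_y=\dim\mathcal{N}-\dim\mathbb{O}$ and no shifts occur, so the top fibre cohomology sits in stalk degree $-\dim\mathbb{O}$, where the strict support condition kills every $\IC$ of a larger orbit. The map $\tilde f\colon\dot{\mathfrak g}^a\to\mathfrak g^a$ is not semismall. So $\widetilde{K}$ contains shifted summands $\IC_{\phi''}[-i]$ with $i>0$ supported on larger orbits, and their stalks at $y$ can land exactly in degree $2\dim\tilde f^{-1}(y)$.

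Here is a concrete case. Take $n=3$ and $\bar t=\mathrm{diag}(r_0,-r_0,-r_0)$. Then $\mathfrak g^a=\mathbb{C}E_{12}\oplus\mathbb{C}E_{13}\cong\mathbb{C}^2$, with $Z$-orbits $\{0\}$ and $\mathbb{C}^2\setminus\{0\}$. The component of $\dot{\mathfrak g}^a$ formed by the $\bar t$-stable flags with $F_1=\mathbb{C}e_1$ is $\mathbb{P}^1\times\mathfrak g^a$, mapping by projection. Its push-forward is $\underline{\mathbb{C}}\oplus\underline{\mathbb{C}}[-2]$, so the whole of $H^2(\mathbb{P}^1)$ over $0$ comes from the open orbit. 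The point orbit is detected only on the other components, the flags with $F_1\subset\langle e_2,e_3\rangle$; one of these is the blow-up of $\mathfrak g^a$ at $0$.

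Nothing in your sketch controls how much top cohomology the larger orbits absorb. For a general equivariant proper map from a smooth variety the claim is false; the component above on its own is a counterexample. Likewise, the fact that the ungraded Springer correspondence for $\GL_n$ contains all orbits does not pass to $\tilde f$ for free. That transfer is exactly Lusztig's theorem.

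To close the gap, use one of the following.
\begin{itemize}
\item Cite Lusztig's classification in \cite{lusztig2}. The simple modules of $\mathbb{C}_a\otimes_{Z(\mathbb{H}(G))}\mathbb{H}(G)\cong\mathrm{Hom}^*(\widetilde K,\widetilde K)$ are indexed by $Z$-orbits of pairs $(y,\rho)$ with $\rho$ occurring in $\mathbb{C}_a\otimes H_*^{M_y^0}(\mathcal{P}_y)$. For $\GL_n$ all component groups are trivial and $\mathcal{P}_y^a\neq\emptyset$, so this means all $Z$-orbits in $\mathfrak g^a$.
\item Count via Zelevinsky. The $\IC$'s occurring in $\widetilde K$ index the simple $\mathrm{Hom}^*(\widetilde K,\widetilde K)$-modules. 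Through $\Theta$ and $\Lambda_{\mathcal{J}}$ these correspond to irreducible representations with fixed cuspidal support, hence to multisegments with fixed support. Those are equinumerous with the $Z$-orbits on $\mathfrak g^a=\bigoplus_\mu\mathrm{Hom}(V_\mu,V_{\mu+2r_0})$, which are representations of a type-$A$ quiver.
\end{itemize}
Since each orbit carries only the trivial local system, either route forces every $\IC_{(\mathbb{O},1)}$ to occur.
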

and the theorem
\begin{theorem}
	There is a bijection between simple $Z^+$-equivariant perverse sheaves on $\mathfrak{g}^a$ and simple $\mathrm{Hom}^*(\widetilde{K}^+,\widetilde{K}^+)^\tau$-modules, given by
	\begin{equation}
		\begin{aligned}
			\IC_{\phi^+}\mapsto \mathbf{P}_{\phi^+}&:=\mathrm{Hom}^0(\IC_{\phi^+},\widetilde{K}^{+'})^\tau\\
			&=L_{\phi^+}
		\end{aligned}
	\end{equation}
	where $\phi^+=(\mathbb{O},\chi)$ runs over all  pairs consisting of  a $Z^+$-orbit $\mathbb{O}$ in $\mathfrak{g}^a$ and an irreducible $Z^+$-equivariant local system $\chi$ on $\mathbb{O}$.
\end{theorem}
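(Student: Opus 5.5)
The proof assembles three ingredients already in place: the radical computation \eqref{SimMod2}, the identification \eqref{SimMod1} of the semisimple quotient, and the preceding lemma, which says that every simple $Z^+$-equivariant perverse sheaf on $\mathfrak{g}^a$ occurs in $\widetilde{K}^+$.

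\textbf{Step 1: reduce to the semisimple quotient.} The algebra $\mathrm{Hom}^*(\widetilde{K}^+,\widetilde{K}^+)^\tau$ is finite dimensional. By \eqref{SimMod2}, the projection to degree $0$ is a surjection onto $\mathrm{Hom}^0(\widetilde{K}^{+'},\widetilde{K}^{+'})^\tau$, and its kernel $N$ is the part of positive degree $k\ge 1$. Since the grading is bounded, $N$ is a nilpotent two-sided ideal. A nilpotent ideal acts by zero on every simple module. Hence simple modules of $\mathrm{Hom}^*(\widetilde{K}^+,\widetilde{K}^+)^\tau$ coincide with simple modules of the quotient, and we may work with the quotient.

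\textbf{Step 2: identify the quotient.} By \eqref{ProofSim1} and \eqref{ProofSim2}, taking $\tau$-invariants of $\mathrm{Hom}^0$ turns $\mathrm{Hom}^0(\IC_{\phi^+},\IC_{\psi^+})$ into $\mathrm{Hom}^0_{D_{Z^+}(\mathfrak{g}^a)}(\IC_{\phi^+},\IC_{\psi^+})=\mathbb{C}\,\delta_{\phi^+,\psi^+}$. This gives the product of matrix algebras $\bigoplus_{\phi^+}\mathrm{End}_{\mathbb{C}}L_{\phi^+}$ in \eqref{SimMod1}, where $\phi^+$ ranges over those pairs with $L_{\phi^+}\neq0$. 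The simple modules of such a product are exactly the $L_{\phi^+}$ with $L_{\phi^+}\ne 0$, and they are pairwise non-isomorphic.

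\textbf{Step 3: identify $\mathbf{P}_{\phi^+}$ with $L_{\phi^+}$ and check the module structure.} We have $\mathbf{P}_{\phi^+}=\mathrm{Hom}^0(\IC_{\phi^+},\widetilde{K}^{+'})^\tau$, on which the algebra acts by post-composition. I would verify that this action, restricted to degree $0$, is the natural action of $\mathrm{End}_{\mathbb{C}}L_{\phi^+}$ on $L_{\phi^+}$. This is where care is needed. The $\tau$-action on $\mathrm{Hom}^0(\IC_{\phi^+},\IC_{\psi^+})$ depends on the chosen descent isomorphisms $\IC_{\phi^+}\simeq\tau^*\IC_{\phi^+}$. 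I would fix the canonical ones coming from the $Z^+$-equivariant structure, as in Lemma~\ref{Inv}. With that choice the invariant line is spanned by the identity, and the invariant part is exactly $L_{\phi^+}\otimes\mathbb{C}\cdot\mathrm{id}$.

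\textbf{Step 4: surjectivity onto all simple perverse sheaves.} Nonvanishing of $L_{\phi^+}$ for every $\phi^+$ follows from the preceding lemma, which rests on the decomposition of $\mathrm{Ind}_Z^{Z^+}\IC_{(\mathbb{O},1)}$. There are two cases.
\begin{itemize}
\item For a $Z^+$-stable orbit, both $\IC_{(\mathbb{O},\pm1)^+}$ appear.
\item For a pair of orbits swapped by $\tau$, the single $\IC_{(\mathbb{O}\sqcup\mathbb{O}',1)^+}$ appears.
\end{itemize}
In either case every multiplicity is inherited from the nonzero $L_\phi$ of $\widetilde{K}$.

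\textbf{Main obstacle.} I expect the difficulty to sit in the descent compatibility of Step 3 rather than in any algebra. Two things must be checked. First, the decomposition of $\widetilde{K}^+$ into $\IC_{\phi^+}$ must be genuinely $Z^+$-equivariant, so that $\tau$ acts on each $\mathrm{Hom}(L_{\phi^+},L_{\psi^+})$ factor trivially and only through the $\IC$ factor. Second, the invariants in positive degree must still form a nilpotent ideal; this is immediate because they form a subspace of the positive-degree part.
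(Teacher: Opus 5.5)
Your proposal is correct and follows essentially the same route as the paper. Both reduce to the semisimple quotient through the nilpotent positive-degree kernel in \eqref{SimMod2}, identify that quotient as $\bigoplus_{\phi^+}\mathrm{End}_{\mathbb{C}}L_{\phi^+}$ via \eqref{ProofSim1}--\eqref{ProofSim2}, compute $\mathbf{P}_{\phi^+}=L_{\phi^+}$, and use $\widetilde{K}^+=\mathrm{Ind}_Z^{Z^+}\widetilde{K}$ to show that every simple $Z^+$-equivariant perverse sheaf occurs. One phrasing point: $\mathbf{P}_{\phi^+}$ is a module through the projection \eqref{SimMod2}, with positive-degree elements acting by zero, rather than by post-composition with the whole graded algebra, since positive-degree elements shift out of $\mathrm{Hom}^0$; this is how the paper sets it up, and your Step 1 already puts you in that position.
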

\begin{remark}
	One can prove that $\mathrm{Res}^{Z^+}_Z\mathrm{Ind}_Z^{Z^+}\widetilde{K}=\widetilde{K}\oplus\tau^*\widetilde{K}$ and that there is an embedding of subalgebras  $\mathrm{Hom}^*(\widetilde{K},\widetilde{K})\hookrightarrow 	\mathrm{Hom}^*(\widetilde{K}^+,\widetilde{K}^+)^\tau$ given by $f\mapsto f+\tau^*f$. There is an isomorphism  $\mathrm{Hom}^0(\IC_{\phi^+},\widetilde{K}) \to \mathrm{Hom}^0(\IC_{\phi^+},\widetilde{K}^{+'})^\tau$ given by $h\mapsto h+\tau^*h$, which is an isomorphism of $\operatorname{Hom}^*(\widetilde{K},\widetilde{K})$-modules. 
    
    Hence if $Z^+\cdot \mathbb{O}=\mathbb{O}$, then $\mathbf{P}_{(\mathbb{O},1)^+}$ and $\mathbf{P}_{(\mathbb{O},-1)^+}$ have the $\mathrm{Hom}^*(\widetilde{K},\widetilde{K})$-module structure as $\mathbf{P}_{(\mathbb{O},1)}$; if $Z^+\cdot \mathbb{O}=\mathbb{O}\sqcup\mathbb{O}'$, then $\mathbf{P}_{(\mathbb{O}\sqcup\mathbb{O}',1)^+}$ has the $\mathrm{Hom}^*(\widetilde{K},\widetilde{K})$-module structure as $\mathbf{P}_{(\mathbb{O},1)}\oplus\mathbf{P}_{(\mathbb{O}',1)}$.
\end{remark}
\subsubsection{Standard module}
We define $E(i_y^!\widetilde{K}^+):=\oplus_kH^k(i_y^!\widetilde{K}^+)$. Then $E(i_y^!\widetilde{K}^+)$ can be viewed as a $\mathrm{Hom}^*(\widetilde{K}^+,\widetilde{K}^+)^\tau$-module via:
\begin{equation}
\mathrm{Hom}^*(\widetilde{K}^+,\widetilde{K}^+)^\tau\to \mathrm{Hom}^*(\widetilde{K}^+,\widetilde{K}^+) \to  \mathrm{Hom}^*(i_y^!\widetilde{K}^+,i_y^!\widetilde{K}^+)
\end{equation}
Let $Z^+_y$ be the stabilizer of $y$ in $Z^+$. Then $Z^+_y$ acts naturally on the stalks $H^k(i_y^!\widetilde{K}^+)$ of each $Z^+$-equivariant IC sheaf in $\widetilde{K}^+$, and this action factors through $Z^+_y/(Z^+_y)^0$. This induces an action of $Z^+_y/(Z^+_y)^0$ on $E(i_y^!\widetilde{K}^+)$ which commutes with the $\mathrm{Hom}^*(\widetilde{K}^+,\widetilde{K}^+)^\tau$-module structure. Let $\rho$ be an irreducible representation of $Z^+_y/(Z^+_y)^0$, and denote by $E(i_y^!\widetilde{K}^+)^\rho$ the $\rho$-isotypical component. Hence $E(i_y^!\widetilde{K}^+)^\rho$ depends only on $\psi^+=(\mathbb{O},\rho)$, a $Z^+$-orbit $y\in \mathbb{O}$ and an irreducible $Z^+$-equivariant local system $\rho$ on $\mathbb{O}$. We call $\mathbf{E}_{\psi^+}:=E(i_y^!\widetilde{K}^+)^\rho$ a standard module.

Similarly the group $Z^+_y/(Z^+_y)^0$ acts naturally on the stalks $H^k(i_y^!\IC_{\phi^+})$, and we also have $H^k(i_y^!\IC_{\phi^+})^\rho$.

Now we  consider the Grothendieck group of $\mathrm{Hom}(\widetilde{K}^+,\widetilde{K}^+)^\tau$-modules of finite dimension over $\mathbb{C}$. There is a natural filtration on $E(i_y^!\widetilde{K}^+)$ by the cohomological degree $m$. The action $Z^+_y/(Z^+_y)^0$ commutes with $\mathrm{Hom}^*(\widetilde{K}^{+'},\widetilde{K}^{+'})^\tau$ and preserves the filtration.

\begin{proposition}\label{GeoMul}
	In the Grothendieck group of $\mathrm{Hom}^*(\widetilde{K}^+,\widetilde{K}^+)^\tau$-modules of finite dimension over $\mathbb{C}$, we have the equality $\mathbf{E}_{\psi^+}\simeq \bigoplus_{\phi^+}(\bigoplus_kH^k(i_y^!\IC_{\phi^+}))^\rho\otimes\mathbf{P}_{\phi^+}$, hence $m(\mathbf{P}_{\phi^+},\mathbf{E}_{\psi^+})=\sum_k\mathrm{dim}H^k(i_y^!\IC_{\phi^+})^\rho$.
\end{proposition}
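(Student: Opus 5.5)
This is the twisted version of Lusztig's argument in \cite[Section 10]{lusztig2}. We start from the decomposition $\widetilde{K}^+\simeq\bigoplus_{i,\phi^+}L_{\phi^+}(i)\otimes\IC_{\phi^+}[i]$ and apply $i_y^!$. Taking cohomology gives
\begin{equation*}
E(i_y^!\widetilde{K}^+)=\bigoplus_{k}H^k(i_y^!\widetilde{K}^+)\simeq\bigoplus_{\phi^+}L_{\phi^+}\otimes\Bigl(\bigoplus_kH^k(i_y^!\IC_{\phi^+})\Bigr),
\end{equation*}
if we forget the grading shifts coming from the $L_{\phi^+}(i)$. This isomorphism is compatible with the action of $Z^+_y/(Z^+_y)^0$, because that group acts only on the second factor: the decomposition is one of $Z^+$-equivariant complexes, and $L_{\phi^+}$ is a multiplicity space on which $Z^+_y$ acts trivially. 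Taking the $\rho$-isotypic part therefore gives the vector space identity
\begin{equation*}
\mathbf{E}_{\psi^+}\simeq\bigoplus_{\phi^+}L_{\phi^+}\otimes\Bigl(\bigoplus_kH^k(i_y^!\IC_{\phi^+})\Bigr)^\rho.
\end{equation*}
It remains to upgrade this to an identity in the Grothendieck group of $\mathrm{Hom}^*(\widetilde{K}^+,\widetilde{K}^+)^\tau$-modules.

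For this we use a filtration. Set $\mathcal{E}_{\ge m}=\bigoplus_{k\ge m}H^k(i_y^!\widetilde{K}^{+\prime})$, using the unshifted complex $\widetilde{K}^{+\prime}=\bigoplus L_{\phi^+}\otimes\IC_{\phi^+}$, and identify $E(i_y^!\widetilde{K}^+)$ with $E(i_y^!\widetilde{K}^{+\prime})$. An element of $\mathrm{Hom}^k(\widetilde{K}^{+\prime},\widetilde{K}^{+\prime})^\tau$ maps $H^j$ to $H^{j+k}$. Since $\mathrm{Hom}^k=0$ for $k<0$ (perversity), the whole algebra preserves the decreasing filtration $\mathcal{E}_{\ge m}$. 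Moreover, the radical $\bigoplus_{k>0}$, which is the kernel of \eqref{SimMod2}, acts by zero on $\mathrm{gr}_m=H^m(i_y^!\widetilde{K}^{+\prime})$. So each graded piece is a module over the semisimple quotient $\bigoplus_{\phi^+}\mathrm{End}_{\mathbb{C}}L_{\phi^+}$, and in fact
\begin{equation*}
\mathrm{gr}_m\simeq\bigoplus_{\phi^+}L_{\phi^+}\otimes H^m(i_y^!\IC_{\phi^+}),
\end{equation*}
where $\mathrm{End}(L_{\phi^+})$ acts on the first factor. To see this, note that the degree-$0$ part $\mathrm{Hom}^0(\widetilde{K}^{+\prime},\widetilde{K}^{+\prime})^\tau$ is spanned by $\phi\otimes\mathrm{id}_{\IC_{\phi^+}}$, by \eqref{ProofSim1}, \eqref{ProofSim2} and \eqref{SimMod1}. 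Here one must check that the $\tau$-invariant degree-$0$ morphisms are exactly the $Z^+$-equivariant ones, and that these act on $H^m(i_y^!\IC_{\phi^+})$ compatibly with the $Z^+_y$-action.

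The $Z^+_y/(Z^+_y)^0$-action commutes with the algebra action and preserves the filtration, so taking $\rho$-isotypic parts is exact, since the group is finite and we work in characteristic $0$. This gives a filtration of $\mathbf{E}_{\psi^+}$ with graded pieces $\bigoplus_{\phi^+}\mathbf{P}_{\phi^+}\otimes H^m(i_y^!\IC_{\phi^+})^\rho$, using $\mathbf{P}_{\phi^+}=L_{\phi^+}$. Summing over $m$ in the Grothendieck group yields the stated equality. Because the $\mathbf{P}_{\phi^+}$ are pairwise non-isomorphic simple modules (previous theorem), reading off multiplicities gives $m(\mathbf{P}_{\phi^+},\mathbf{E}_{\psi^+})=\sum_k\dim H^k(i_y^!\IC_{\phi^+})^\rho$.

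The main obstacle is the compatibility of the algebra action with the $Z^+_y$-action on stalks. The algebra $\mathrm{Hom}^*(\widetilde{K}^+,\widetilde{K}^+)^\tau$ consists of non-equivariant morphisms fixed by $\tau$, while the component group acts via the $Z^+$-equivariant structure. To show they commute, I would identify $\mathrm{Hom}^*(\widetilde{K}^+,\widetilde{K}^+)^\tau$ with $\mathbb{C}_a\otimes_{H^*_Z}\mathrm{Hom}^*_{D_Z(\mathfrak{g}^a)}(\widetilde{K}^+,\widetilde{K}^+)^\tau=\mathbb{C}_a\otimes_{H^*_{Z^+}}\mathrm{Hom}^*_{D_{Z^+}}(\widetilde{K}^+,\widetilde{K}^+)$, using \eqref{HomRed1} and Lemma \ref{Inv}. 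Elements of the latter are $Z^+$-equivariant morphisms, so their action on $i_y^!$ commutes with $Z^+_y$. The only remaining point is that $(Z^+_y)^0$ acts trivially on $H^*(i_y^!\,\cdot)$, which holds because $(Z^+_y)^0$ is connected.
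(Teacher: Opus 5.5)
Your proposal is correct and follows essentially the paper's argument: you filter $E(i_y^!\widetilde{K}^{+\prime})$ by cohomological degree, note that the positive-degree part (the radical) kills the graded pieces so the action there factors through $\bigoplus_{\phi^+}\mathrm{End}_{\mathbb{C}}L_{\phi^+}$, and then take $\rho$-isotypic parts and read off multiplicities.

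One small correction concerns your last paragraph. The map behind \eqref{HomRed1} restricts to $T_a$ and evaluates $H^*_{T_a}$ at $a$, so it is not the forgetful functor, and it does not by itself show that the action commutes with $Z^+_y$. The clean reason is that $Z$ is connected, so $Z^+$ acts on $\mathrm{Hom}^*(\widetilde{K}^+,\widetilde{K}^+)$ through $Z^+/Z=\mathfrak{R}$. A $\tau$-invariant morphism is therefore fixed by every $g\in Z^+_y$, and hence commutes with the action of $g$ on the stalk at $y$.
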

\begin{proof}
	We have $(\mathrm{gr}H^k(i_y^!\IC_{\phi^+}))^\rho= \bigoplus_{\phi^+}(\bigoplus_kH^k(i_y^!\IC_{\phi^+}))^\rho\otimes L_{\phi^+}$, and the $\mathrm{Hom}^*(\widetilde{K}^+,\widetilde{K}^+)^\tau$-action on $(\mathrm{gr}H^k(i_y^!\IC_{\phi^+}))^\rho$ factors through the projection
	\begin{equation*}
		\mathrm{Hom}^*(\widetilde{K}^+,\widetilde{K}^+)^\tau\simeq\mathrm{Hom}^*(\widetilde{K}^{+'},\widetilde{K}^{+'})^\tau\to 	\mathrm{Hom}^0(\widetilde{K}^{+'},\widetilde{K}^{+'})^\tau=\bigoplus_{\phi^+}\mathrm{End}_{\mathbb{C}}L_{\phi^+}
	\end{equation*}
	Hence $\mathbf{P}_{\phi^+}=L_{\phi^+}$ occurs as a $\mathrm{Hom}^*(\widetilde{K}^+,\widetilde{K}^+)^\tau$-module  in $(\mathrm{gr}H^k(i_y^!\IC_{\phi^+}))^\rho$ exactly $\sum_k\mathrm{dim}H^k(i_y^!\IC_{\phi^+})^\rho$ times.
\end{proof}

Take $\phi^+=(\mathbb{O},\rho)$, a $Z^+$-orbit $y\in \mathbb{O}$ and an irreducible $Z^+$-equivariant local system $\rho$ on $\mathbb{O}$. We have a morphism 
\begin{equation}\label{QuoIrr}
	\mathcal{H}^*(i^!_{\mathbb{O}}\widetilde{K}^+)\to 	\mathcal{H}^*(i^*_{\mathbb{O}}\widetilde{K}^+)
\end{equation}
 obtained by applying the functor $i^*$ to the canonical morphism $i_!i^!\mathcal{F}\to \mathcal{F}$ and using that $i^*i_!i^!\mathcal{F}=i^!\mathcal{F}$.

This morphism \eqref{QuoIrr} is compatible with the $\mathrm{Hom}^*(\widetilde{K}^+,\widetilde{K}^+)$ action. Denote 
\begin{equation*}
	L_{y,\rho}=\mathrm{Im}(\mathcal{H}^*_y(i^!_{\mathbb{O}}\widetilde{K}^+)_\rho\to 	\mathcal{H}^*_y(i^*_{\mathbb{O}}\widetilde{K}^+)_{\rho})
\end{equation*}  

\begin{proposition}\label{QuoStand}
	The isomorphism $L_{\phi^+}\simeq L_{y,\rho}$ intertwines the $\mathrm{Hom}(\widetilde{K}^+,\widetilde{K}^+)^\tau$-action.
\end{proposition}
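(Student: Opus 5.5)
The plan is to reduce Proposition \ref{QuoStand} to the connected-case statement of Lusztig \cite[\S 8--10]{lusztig2}, namely that for the $Z$-equivariant complex $\widetilde{K}$ the image of $\mathcal{H}^*_y(i^!_{\mathbb{O}}\widetilde{K})\to\mathcal{H}^*_y(i^*_{\mathbb{O}}\widetilde{K})$ is the simple module $L_{\phi}$. We then transport this through the decomposition
\begin{equation*}
\widetilde{K}^{+'}=\bigoplus_{\phi^+}L_{\phi^+}\otimes\IC_{\phi^+},
\end{equation*}
keeping track of the $\tau$-invariants and the $\rho$-isotypic parts.

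\textbf{Step 1: reduce to a single IC sheaf.} Since \eqref{QuoIrr} is functorial in $\widetilde{K}^+$, it decomposes summand by summand. For $\psi^+=(\mathbb{O}',\chi')$, the term $L_{\psi^+}\otimes\IC_{\psi^+}$ contributes $L_{\psi^+}\otimes\mathrm{Im}\bigl(\mathcal{H}^*_y i^!_{\mathbb{O}}\IC_{\psi^+}\to\mathcal{H}^*_y i^*_{\mathbb{O}}\IC_{\psi^+}\bigr)$. I claim this image vanishes unless $\mathbb{O}'=\mathbb{O}$:
\begin{itemize}
\item If $\mathbb{O}\not\subset\bar{\mathbb{O}'}$, both stalks are zero.
\item If $\mathbb{O}\subset\bar{\mathbb{O}'}\setminus\mathbb{O}'$, the IC support condition puts $\mathcal{H}^k i^*_{\mathbb{O}}\IC_{\psi^+}$ in degrees $k<-\dim\mathbb{O}$. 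The cosupport condition puts $\mathcal{H}^k i^!_{\mathbb{O}}\IC_{\psi^+}$ in degrees $k>-\dim\mathbb{O}$. The degree-preserving map between them is therefore zero.
\item If $\mathbb{O}'=\mathbb{O}$, the map is the identity on the local system $\chi'[\dim\mathbb{O}]$.
\end{itemize}
Taking the stalk at $y$ and the $\rho$-isotypic part for $Z^+_y/(Z^+_y)^0$, only $\chi'=\rho$ survives, and it contributes one copy of $L_{\phi^+}$ for $\phi^+=(\mathbb{O},\rho)$. This gives a vector space isomorphism $L_{y,\rho}\simeq L_{\phi^+}$.

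\textbf{Step 2: compatibility of the actions.} We check that this isomorphism intertwines the $\mathrm{Hom}^*(\widetilde{K}^+,\widetilde{K}^+)^\tau$-actions. Take $f$ of degree $k$. By \eqref{SimMod2}, the components with $k>0$ and the components $\mathrm{Hom}^0(\IC_{\psi^+},\IC_{\psi'^+})$ with $\psi^+\neq\psi'^+$ form the radical, so they should act by zero on $L_{y,\rho}$. A degree $k>0$ map raises cohomological degree by $k$. The image $L_{y,\rho}$ is concentrated in the single degree $-\dim\mathbb{O}$, because by Step 1 it comes only from the summands supported on $\mathbb{O}$. Hence $f$ maps $L_{y,\rho}$ into degree $-\dim\mathbb{O}+k$ of the target, where the image is zero. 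The degree $0$ part acts through $\bigoplus_{\phi^+}\mathrm{End}_{\mathbb{C}}L_{\phi^+}$, which is exactly its action on $\mathbf{P}_{\phi^+}=L_{\phi^+}$.

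\textbf{Step 3: the main obstacle, the $\tau$-invariants.} The algebra acting is $\mathrm{Hom}^*(\widetilde{K}^+,\widetilde{K}^+)^\tau$, not the full $\mathrm{Hom}^*$, and the multiplicity space $L_{\phi^+}$ was identified only through \eqref{ProofSim1}--\eqref{ProofSim2}. One must check that the $Z^+_y/(Z^+_y)^0$-action on stalks and the $\tau$-action are compatible, so that taking $\rho$-isotypic parts commutes with the projection to the degree-$0$ semisimple quotient. The first thing I would try is to compute with $Z^+$-equivariant objects directly. The morphism $i_!i^!\to\mathrm{id}$ is defined in $D_{Z^+}$, so by Lemma \ref{Inv} every $\tau$-invariant morphism descends to $D_{Z^+}(\mathfrak{g}^a)$ and commutes with the stalk maps together with their $Z^+_y$-actions. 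If that is delicate, the fallback is the Remark's description $\widetilde{K}^+=\mathrm{Ind}_Z^{Z^+}\widetilde{K}$, treating the two cases $Z^+\cdot\mathbb{O}=\mathbb{O}$ and $Z^+\cdot\mathbb{O}=\mathbb{O}\sqcup\mathbb{O}'$ separately and reducing to Lusztig's connected result for $\widetilde{K}$.
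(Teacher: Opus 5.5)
Your Steps 1 and 2 are essentially the paper's proof. The paper also drops the summands $\IC_{\psi^+}$ with $\mathbb{O}_{\psi^+}\neq\mathbb{O}$ to get $L_{\phi^+}\simeq L_{y,\rho}$ as vector spaces, then shows that the $\tau$-invariant algebra acts on $L_{y,\rho}$ through the degree-zero quotient $\bigoplus_{\phi^+}\mathrm{End}_{\mathbb{C}}L_{\phi^+}$. Your support/cosupport and degree-concentration arguments are a more explicit version of its filtration remark; note that the single-degree claim holds for the unshifted $\widetilde{K}^{+'}$, not for $\widetilde{K}^+$ itself.

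The compatibility in Step 3 is simply asserted in the paper when standard modules are defined. The quickest justification is this: the connected group $Z$ acts trivially on non-equivariant $\mathrm{Hom}$'s between $Z$-equivariant complexes, and any element of $Z^+_y$ outside $Z$ acts through $\tau$. Hence $\tau$-invariant endomorphisms commute with the $Z^+_y/(Z^+_y)^0$-action on stalks. Routing this through Lemma \ref{Inv}, as you propose, would additionally require lifting $\tau$-invariant non-equivariant morphisms to $Z$-equivariant ones.
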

\begin{proof}
	Denote $\psi^+=(\mathbb{O}_{\psi^+},\rho_{\psi^+})$, $\phi^+=(\mathbb{O}_{\phi^+},\rho_{\phi^+})$. If $\mathbb{O}_{\psi^+}\neq \mathbb{O}_{\phi^+}$, the map  $i^!_{{\mathbb{O}}_{\psi^+}}\widetilde{K}^+ \to 	i^*_{{\mathbb{O}}_{\phi^+}}\widetilde{K}^+$ vanishes. Using the decomposition of $\widetilde{K}^+$, we have $L_{\phi^+}\simeq L_{y,\rho}$ as vector spaces. Since $\mathrm{Hom}^*(\widetilde{K}^{+'},\widetilde{K}^{+'})^\tau$ preserves the filtration of $\mathcal{H}^*_y(i^!_{\mathbb{O}}\widetilde{K}^+)$ and a similar result holds in the $i^*_{\mathbb{O}}$-case, the action $\mathrm{Hom}^*(\widetilde{K}^{+'},\widetilde{K}^{+'})^\tau$ on $L_{y,\rho}$ factors through $\mathrm{Hom}^*(\widetilde{K}^{+'},\widetilde{K}^{+'})^\tau\to 	\mathrm{Hom}^0(\widetilde{K}^{+'},\widetilde{K}^{+'})^\tau=\bigoplus_{\phi^+}\mathrm{End}_{\mathbb{C}}L_{\phi^+}$.
\end{proof}

Hence we can view all simple modules $\mathbf{P}_{\phi^+}\simeq L_{\phi^+}\simeq L_{y,\rho}$ as a simple quotient of the standard module $\mathbf{E}_{\phi^+}$.

\subsubsection{Induction theorem}

Recall $\tilde{f}^+: \dot{\mathfrak{g}}^{+,a}\to \mathfrak{g}^a$, and denote by $\mathcal{P}_y^{+,a}$ the inverse image $(\tilde{f}^+)^{-1}(y)$ of $y$. Consider the Cartesian diagram:
\begin{equation*}
	\xymatrix{
		\mathcal{P}_y^{+,a}
		\ar[r]^{\tilde{i}}
		\ar[d]_{\tilde{f}^+_y} &\dot{\mathfrak{g}}^{+,a} \ar[d]^{\tilde{f}^+} \\
		y  \ar[r]^{\tilde{i}_y} & \mathfrak{g}^a
	}
\end{equation*}
Then $H^*(\tilde{i}_y^!\widetilde{K}^+)=H^*(\tilde{i}_y^!\tilde{f}^+_*\tilde{\dot{\mathcal{L}}}^+)=H^*((\tilde{f}^+_y)_*\tilde{i}^!\tilde{\dot{\mathcal{L}}}^+)=H^*(\mathcal{P}_y^{+,a},\tilde{i}^!\tilde{\dot{\mathcal{L}}}^+)=H^*(\mathcal{P}_y^{+,a},D\tilde{i}^*D\tilde{\dot{\mathcal{L}}}^+)=H^*(\mathcal{P}_y^{+,a},D\tilde{i}^*(\tilde{\dot{\mathcal{L}}}^+)^*)=H^*(\mathcal{P}_y^{+,a},D(\tilde{\dot{\mathcal{L}}}^+)^*)=H_*^{\{1\}}(\mathcal{P}_y^{+,a},\tilde{\dot{\mathcal{L}}}^+)$.

We also have $f^+: \dot{\mathfrak{g}}^{+}\to \mathfrak{g}$ and $f: \dot{\mathfrak{g}}\to \mathfrak{g}$. Denote by $\mathcal{P}_y^{+}$ the inverse image $({f}^+)^{-1}(y)$ of y, and by $\mathcal{P}_y$ the inverse image ${f}^{-1}(y)$ of y. Then in our case $\mathcal{P}_y^{+,a}=\mathcal{P}_y^{+}$. Since $H_c^{\mathrm{odd}}(\mathcal{P}_y,\dot{\mathcal{L}})=0$, we have $H_c^{\mathrm{odd}}(\mathcal{P}_y^+,\dot{\mathcal{L}}^+)=0$. Then 
\begin{align}
	H_*^{\{1\}}(\mathcal{P}_y^{+,a},\tilde{\dot{\mathcal{L}}}^+)&\simeq\mathbb{C}_a\otimes_{H^*_{T_a}} H_*^{T_a}(\mathcal{P}_y^{+,a},\tilde{\dot{\mathcal{L}}}^+)\nonumber\\
	&\simeq \mathbb{C}_a\otimes_{H^*_{M_y(a)^0}}H_*^{M_y(a)^0}(\mathcal{P}_y^{+,a},\tilde{\dot{\mathcal{L}}}^+)\nonumber\\
	&\simeq \mathbb{C}_a\otimes_{H^*_{M_y(a)^0}}H_*^{M_y(a)^0}(\mathcal{P}_y^{+},\tilde{\dot{\mathcal{L}}}^+)\nonumber\\ &\simeq\mathbb{C}_a\otimes_{H^*_{M_y^0}}H^{M_y^0}_*(\mathcal{P}_y^{+},{\dot{\mathcal{L}}}^+)\label{ModRed}
\end{align}

where $M_y$ is the stabilizer of $y$ in $G^+\times\mathbb{C}^*$, $a=(\bar{t},r_0)$, and $M_y(a)$ is the stabilizer of $a$ in $M_y$.

In \cite{Aubert2016GradedHA}, we also have a $\mathbb{H}(G^+)\times(Z_{G^+\times\mathbb{C}^*}(a,y)/Z_{G^+\times\mathbb{C}^*}^0(a,y))$-module structure on  $\mathbb{C}_a\otimes_{H_{M_y^0}}H^{M_y^0}_*(\mathcal{P}_y^{+},{\dot{\mathcal{L}}}^+)$.

\begin{theorem}\label{StandIso}
	There is an isomorphism of $\mathbb{H}(G^+)\times(Z_{G^+\times\mathbb{C}^*}(a,y)/Z_{G^+\times\mathbb{C}^*}^0(a,y)) $-representation between $H^*(i_y^!\widetilde{K}^+)$ and $\mathbb{C}_a\otimes_{H_{M_y^0}}H^{M_y^0}_*(\mathcal{P}_y^{+},{\dot{\mathcal{L}}}^+)$.
\end{theorem}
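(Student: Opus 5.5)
The underlying vector space identification is already in hand: the chain of equalities preceding the statement gives $H^*(i_y^!\widetilde{K}^+)\simeq H_*^{\{1\}}(\mathcal{P}_y^{+,a},\tilde{\dot{\mathcal{L}}}^+)$. The isomorphisms \eqref{ModRed} then identify this with $\mathbb{C}_a\otimes_{H^*_{M_y^0}}H^{M_y^0}_*(\mathcal{P}_y^{+},\dot{\mathcal{L}}^+)$, using the odd vanishing $H_c^{\mathrm{odd}}(\mathcal{P}_y^+,\dot{\mathcal{L}}^+)=0$ (deduced from the connected case, exactly as in Lemma \ref{OddV}, since $\mathcal{P}_y^+$ is a disjoint union of copies of $\mathcal{P}_y$). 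Equivariant formality gives the first step and the standard reduction $T_a\subset M_y(a)^0\subset M_y^0$ of \cite{lusztig2} gives the rest. So the work is to check that each of these linear isomorphisms intertwines the two actions: that of $\mathbb{H}(G^+)$ and that of the component group $Z_{G^+\times\mathbb{C}^*}(a,y)/Z^0_{G^+\times\mathbb{C}^*}(a,y)$.

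For the Hecke algebra action, I would proceed in two stages. First, treat the connected subalgebra $\mathbb{H}(G)$ together with the $S(\mathfrak{t}^*)\otimes\mathbb{C}[\mathfrak{r}]$-part. On the left, the action comes through \eqref{HomRedF}, $\mathbb{C}_a\otimes_{Z(\mathbb{H}(G^+))}\mathbb{H}(G^+)\simeq\mathrm{Hom}^*(\widetilde{K}^+,\widetilde{K}^+)^\tau$, followed by the functor $i_y^!$. On the right, it is the convolution action on Borel--Moore homology of \cite{Aubert2016GradedHA}, which is defined through $H_*^{G^+\times\mathbb{C}^*}(\ddot{\mathfrak{g}}^+,\ddot{\mathcal{L}}^+)$. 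For these generators the compatibility is Lusztig's theorem \cite[\S 8--10]{lusztig2}, applied to each of the (finitely many) copies of $\dot{\mathfrak{g}}$ inside $\dot{\mathfrak{g}}^+$. Concretely, the identification \eqref{Thm1} of $\mathrm{Hom}^*(K^+,K^+)$ with equivariant Borel--Moore homology of the Steinberg variety is compatible with the base change $i_y^!$. Base change along $y\hookrightarrow\mathfrak{g}$ turns composition of endomorphisms of $f^+_!\dot{\mathcal{L}}^+$ into convolution of $H_*(\ddot{\mathfrak{g}}^+)$ on $H_*(\mathcal{P}_y^+)$, and it also respects the localization steps \eqref{local1}--\eqref{local3}.

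Second, for the generator $\tau$, I would use the description of $\tau$ in Theorem \ref{GraHeG} as the $\tau^*$-action on $\mathrm{Hom}_{D_{G\times\mathbb{C}^*}}(K^+,K^+)$. Choose a representative $\bar\tau\in Z^+$ fixing $a$, as in diagram \eqref{TAct}. Then $\tau$ acts on $H^*(i_y^!\widetilde{K}^+)$ through the $\bar\tau$-translate of the fiber $\mathcal{P}_y^+$ composed with the canonical identification $\dot{\mathcal{L}}^+\simeq\tau^*\dot{\mathcal{L}}^+$, which is the identity because the sheaf is constant. This is precisely how \cite{Aubert2016GradedHA} define $\Delta(\tau)$ on homology, so the two actions agree. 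Proposition \ref{ModAct} guarantees that these $\tau$-actions are compatible with the $H^*$-module structures, so they pass through the tensor products $\mathbb{C}_a\otimes$.

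For the component group, both sides carry the action of $Z_{G^+\times\mathbb{C}^*}(a,y)$ by transport of structure: on the stalk $i_y^!$ of a $Z^+$-equivariant complex on one side, and on $(\mathcal{P}^+_y,\dot{\mathcal{L}}^+)$ on the other. Every map in the chain is induced by equivariant functors (restriction of equivariance, base change, forgetting), so each map commutes with this group. The identity component acts trivially after specializing at $\mathbb{C}_a$.

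I expect the main obstacle to be the $\tau$-compatibility in the presence of a non-split extension (Remark \ref{SplitRed}), where $\bar\tau$ cannot be chosen to have order two. In that case I would argue that the action only depends on $\bar\tau$ up to $Z^0$, which acts trivially after $\mathbb{C}_a\otimes$. This reduces everything to the independence-of-representative argument already used to define $\tau^*$.
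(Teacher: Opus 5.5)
Your overall plan matches the paper's: get the linear isomorphism from the chain preceding the statement and \eqref{ModRed}, then check that every step intertwines the actions. Citing Lusztig for the connected generators is also reasonable. The gap is in your separate treatment of the generator $\tau$.

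\textbf{The algebra side.} You read Theorem \ref{GraHeG} as saying that $\tau\in\mathbb{H}(G^+)$ acts as the $\tau^*$-action on $\mathrm{Hom}^*_{D_{G\times\mathbb{C}^*}(\mathfrak{g})}(K^+,K^+)$. But $\tau^*$ is an algebra \emph{automorphism} of that Hom-algebra, and $\mathbb{H}(G^+)$ is its fixed subalgebra. $N_\tau$ is one particular $\tau^*$-invariant element of degree $0$. It comes from $\mathrm{Hom}^0(K^+,K^+)\simeq\mathbb{C}[W^+]$ of \cite{Aubert2016GradedHA}, i.e. from the automorphism $(x,gB)\mapsto(x,g\dot\tau B)$ of $\dot{\mathfrak{g}}^+$ over $\mathfrak{g}$, with $\dot\tau\in N_{G^+}(B,T)$. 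Proposition \ref{ModAct} and the passage of $\tau^*$ through $\mathbb{C}_a\otimes$ are statements about $\tau^*$. They say nothing about how $N_\tau$ acts on $H^*(i_y^!\widetilde{K}^+)$.

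\textbf{The geometric side.} Left translation by a representative $\bar\tau\in Z^+$ as in \eqref{TAct} sends $\mathcal{P}^+_y$ to the fiber over $\bar\tau\cdot y$. So it does not act on $\mathcal{P}^+_y$ unless $\bar\tau\in Z^+_y$, and that is impossible when $Z^+\cdot\mathbb{O}=\mathbb{O}\sqcup\mathbb{O}'$. When $\bar\tau\in Z^+_y$ does occur, left translation gives the action of the non-identity coset of $\pi_0(Z^+_y)$. That action commutes with all of $\mathbb{H}(G^+)$, whereas $N_\tau h N_\tau^{-1}=\tau(h)$ for $h\in\mathbb{H}(G)$, so it cannot be the action of $N_\tau$. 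The Hecke element acts on $\mathcal{P}^+_y$ by \emph{right} multiplication by $\dot\tau$, which interchanges the two copies of $\mathcal{P}_y$. Hence your sentence ``this is precisely how \cite{Aubert2016GradedHA} define $\Delta(\tau)$, so the two actions agree'' is unsupported. Your worry about non-split extensions is also beside the point, since right multiplication by $\dot\tau$ does not depend on the choice of $\dot\tau$ modulo $T$.

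\textbf{How to close the gap.} Do not single out $\tau$ at all; this is the paper's route.
\begin{enumerate}
\item Write $H^*_A(N,\mathcal{F})=\mathrm{Hom}^*_A(\mathbb{C}_N,\mathcal{F})$, so each module becomes a Hom-space $\mathrm{Hom}^*(\mathbb{C},-)$ with the relevant endomorphism algebra acting by composition.
\item Use the adjunction \eqref{ActBasCh1}, the Cartesian squares for $\dot{\widetilde{\mathcal{O}}}\to\widetilde{\mathcal{O}}$ and $\mathcal{P}^+_y\to y$, and the induction equivalence $D_{G^+\times\mathbb{C}^*}(\dot{\widetilde{\mathcal{O}}})\simeq D_{M_y^0}(\mathcal{P}^+_y)$. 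Together these identify the $\Delta$-action of \cite{Aubert2016GradedHA} with the composition action of $\mathrm{End}^*_{D_{G^+\times\mathbb{C}^*}(\mathfrak{g})}(K^+)\cong\mathbb{H}(G^+)$ on $\mathrm{Hom}^*((j_y)_!\mathbb{C},K^+)$.
\item Push this through $\mathrm{Res}^{G\times\mathbb{C}^*}_Z$, then through $\Psi$ using $\Psi(\Psi_1(u)\circ u')=\Psi_2(u)\circ\Psi(u')$, then through $\mathrm{Res}^Z_{T_a}$ and $\Upsilon$, down to $\mathrm{Hom}^*_{D(y)}(\mathbb{C},j_y^!\widetilde{K}^+)$.
\item Match the result with \eqref{HomRed1}, \eqref{HomRedF} and \eqref{ModRed}.
\end{enumerate}
Every step is functorial in the endomorphism of $K^+$. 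So $N_\tau$ and $\mathbb{H}(G)$ are handled simultaneously, and you never need a geometric description of $N_\tau$ on the fiber.
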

\begin{proof}
	For any reductive group $A$, $A$-variety $N$ and $\mathcal{F}\in D_A(N)$, we have $H^*_A(N,\mathcal{F})\simeq \mathrm{Hom}^*_A(\mathbb{C}_N,\mathcal{F})$, so $H^*_A(N,\mathcal{F})$ has a $\mathrm{Hom}^*_A(\mathcal{F},\mathcal{F})$-module structure given by 
	\begin{equation}\label{ActTransHom1}
		\mathrm{Hom}^*_A(\mathbb{C}_N,\mathcal{F})\times\mathrm{Hom}^*_A(\mathcal{F},\mathcal{F})\to \mathrm{Hom}^*_A(\mathbb{C}_N,\mathcal{F})
	\end{equation}
	Let $A'$ be a reductive group and $\epsilon:A'\to A$ a group homomorphism. For an $A'$-variety $N'$ and an $\epsilon$-map $s: N'\to N$, we can define the functor $s^*:D_A'(N')\to D_A(N)$. We have the commutative diagram 
	\begin{equation}\label{ActBasCh1}
		\begin{tikzcd}
			\mathrm{Hom}^*_A(s_!\mathbb{C}_N,\mathcal{F})\times\mathrm{Hom}^*_A(\mathcal{F},\mathcal{F}) \ar[r] \ar[d]& \mathrm{Hom}^*_A(s_!\mathbb{C}_N,\mathcal{F})\ar[d]\\
			\mathrm{Hom}^*_{A'}(\mathbb{C}_{N'},s^!\mathcal{F})\times\mathrm{Hom}^*_{A'}(s^!\mathcal{F},s^!\mathcal{F}) \ar[r] & \mathrm{Hom}^*_{A'}(\mathbb{C}_{N'},s^!\mathcal{F})
		\end{tikzcd}
	\end{equation}

	Let	$\widetilde{\mathcal{O}}=(G^+\times\mathbb{C}^*)/M_y^0\stackrel{h}{\longrightarrow}\mathfrak{g}$ be defined by $(g_1,\lambda)\mapsto\lambda^{-2}\mathrm{Ad}(g_1)y$	and $\dot{\widetilde{\mathcal{O}}}=(G^+\times\mathbb{C}^*\times\mathcal{P}_y^{+})/M_y^0\stackrel{\dot{h}}{\longrightarrow}\dot{\mathfrak{g}}^+$ be defined by $(g_1,\lambda,gB)\mapsto(\lambda^{-2}\mathrm{Ad}(g_1)y,g_1gB)$.
	
	We have the Cartesian diagram:
	\begin{equation}
		\xymatrix{
			\dot{\widetilde{\mathcal{O}}}
			\ar[r]^{\dot{h}}
			\ar[d]_{{f}^+_{\mathcal{O}}} &\dot{\mathfrak{g}}^{+} \ar[d]^{{f}^+} \\
			\widetilde{\mathcal{O}}  \ar[r]^{h} & \mathfrak{g}
		}
	\end{equation}
    Then
	\begin{align*}
		H^{G^+\times\mathbb{C}^*}_*(\dot{\widetilde{\mathcal{O}}},\dot{h}^*{\dot{\mathcal{L}}}^+)&=H_{G^+\times\mathbb{C}^*}^*(\dot{\widetilde{\mathcal{O}}},D\dot{h}^*({\dot{\mathcal{L}}}^+)^*)=H_{G^+\times\mathbb{C}^*}^*(\dot{\widetilde{\mathcal{O}}},\dot{h}^!{\dot{\mathcal{L}}}^+)\\
		&=H_{G^+\times\mathbb{C}^*}^*({\widetilde{\mathcal{O}}},({f}^+_{\mathcal{O}})_*\dot{h}^!{\dot{\mathcal{L}}}^+)=H_{G^+\times\mathbb{C}^*}^*({\widetilde{\mathcal{O}}},h^!(f^+)_*{\dot{\mathcal{L}}}^+)=H_{G^+\times\mathbb{C}^*}^*({\widetilde{\mathcal{O}}},h^!K^+)
	\end{align*}
	Hence we have a  $\mathrm{Hom}^*_{G^+\times\mathbb{C}^*}(K^+,K^+)$-action on $H^{G^+\times\mathbb{C}^*}_*(\dot{\widetilde{\mathcal{O}}},\dot{h}^*{\dot{\mathcal{L}}}^+)\simeq H_{G^+\times\mathbb{C}^*}^*({\widetilde{\mathcal{O}}},h^!K^+)$ given by 
	\begin{equation}
		\mathrm{Hom}^*_{D_{G^+\times\mathbb{C}^*}(\mathfrak{g})}(K^+,K^+)\stackrel{h^!}{\longrightarrow}\mathrm{Hom}^*_{D_{G^+\times\mathbb{C}^*}({\widetilde{\mathcal{O}}})}(h^!K^+,h^!K^+)
	\end{equation}
	
	Since $\mathcal{P}_y^{+}$ is a closed $M_y^0$-stable subvariety of the $G^+\times\mathbb{C}^*$-variety $\dot{\widetilde{\mathcal{O}}}$ and the map $M_y^0\backslash G^+\times\mathbb{C}^*\times\mathcal{P}_y^{+}\simeq \dot{\widetilde{\mathcal{O}}}$ is an isomorphism of $G^+\times\mathbb{C}^*$-variety, we have an induction equivalence $j_1^*:D_{ G^+\times\mathbb{C}^*}(\dot{\widetilde{\mathcal{O}}})\to D_{M_y^0}(\mathcal{P}_y^{+})$, where  $\dot{j_1}:\mathcal{P}_y^{+}\hookrightarrow\dot{\widetilde{\mathcal{O}}}$.
	
	We have the Cartesian diagram:
	\begin{equation}
		\xymatrix{
			\mathcal{P}_y^{+}
			\ar[r]^{\dot{j_1}}
			\ar[d]_{{f}^+_y} &	\dot{\widetilde{\mathcal{O}}} \ar[d]^{{f}^+_{\mathcal{O}}} \\
			y  \ar[r]^{j_1} & \widetilde{\mathcal{O}}
		}
	\end{equation}
	Using the same argument, we have a $\mathrm{Hom}^*_{G^+\times\mathbb{C}^*}(K^+,K^+)$-action on $H^{M_y^0}_*(\mathcal{P}_y^{+},j_1^*\dot{h}^*{\dot{\mathcal{L}}}^+)=H_{M_y^0}^*(y,j_1^!{h}^!K^+)$ given by 
	
	\begin{align}
		\mathrm{Hom}^*_{D_{G^+\times\mathbb{C}^*}(\mathfrak{g})}(K^+,K^+)&\stackrel{h^!}{\longrightarrow}\mathrm{Hom}^*_{D_{G^+\times\mathbb{C}^*}({\widetilde{\mathcal{O}}})}(h^!K^+,h^!K^+)\nonumber\\
		&\stackrel{j_1^!}{\longrightarrow}\mathrm{Hom}^*_{D_{M_y^0}(y)}(j_1^!h^!K^+,j_1^!h^!K^+)\label{EquActGeo1}
	\end{align}
	
	Using the induction equivalence $j_1^*:D_{ G^+\times\mathbb{C}^*}(\dot{\widetilde{\mathcal{O}}})\to D_{M_y^0}(\mathcal{P}_y^{+})$, we have $H^{M_y^0}_*(\mathcal{P}_y^{+},j_1^*\dot{h}^*{\dot{\mathcal{L}}}^+)\simeq H^{G^+\times\mathbb{C}^*}_*(\dot{\widetilde{\mathcal{O}}},\dot{h}^*{\dot{\mathcal{L}}}^+)$. Combining with  \eqref{ActComDia} we can reformulate the $\Delta$ action on $H^{M_y^0}_*(\mathcal{P}_y^{+},{\dot{\mathcal{L}}}^+)$ in \cite[Section3]{Aubert2016GradedHA} or \cite[8.1]{lusztig1} given by $H^{M_y^0}_*(\mathcal{P}_y^{+},j_1^*\dot{h}^*{\dot{\mathcal{L}}}^+)\simeq H_{M_y^0}^*(y,j_1^!{h}^!K^+)$ and \eqref{EquActGeo1}.
	
	Denote $j_y=j_1\circ h$. Using \eqref{ActTransHom1}, we transfer the $\mathrm{Hom}^*_{D_{M_y^0}(y)}(j_y^!K^+,j_y^!K^+)$-action on $H_{M_y^0}^*(y,j_y^!K^+)$:
	\begin{equation}
		\mathrm{Hom}^*_{D_{M_y^0}(y)}(\mathbb{C}_y,j_y^!K^+)\times\mathrm{Hom}^*_{D_{M_y^0}(y)}(j_y^!K^+,j_y^!K^+)\to \mathrm{Hom}^*_{D_{M_y^0}(y)}(\mathbb{C}_y,j_y^!K^+)
	\end{equation}
	Using \eqref{ActBasCh1}, we can transfer this action to 
	\begin{equation}\label{HomP1}
		\mathrm{Hom}^*_{D_{G^+\times\mathbb{C}^*}(\mathfrak{g})}((j_y)_!\mathbb{C},K^+)\times\mathrm{Hom}^*_{D_{G^+\times\mathbb{C}^*}(\mathfrak{g})}(K^+,K^+)\to \mathrm{Hom}^*_{D_{G^+\times\mathbb{C}^*}(\mathfrak{g})}(j_y)_!\mathbb{C},K^+)
	\end{equation}
	
	Recall that we denote $a=(\bar{t},r_0)$ and $Z=Z_{G\times \mathbb{C}^*}(a)$. Applying $\mathrm{Res}^{G\times \mathbb{C}^*}_{Z}$ to \eqref{HomP1}, we have 
	
	\begin{equation*}
		\begin{tikzcd}[column sep=small]
			\mathrm{Hom}^*_{D_{G\times \mathbb{C}^*}(\mathfrak{g})}((j_y)_!\mathbb{C},{K}^+)\times\mathrm{Hom}^*_{D_{G\times \mathbb{C}^*}(\mathfrak{g})}({K}^+,{K}^+) \ar[r] \ar[d,"\mathrm{Res}^{G\times \mathbb{C}^*}_{Z}"]& \mathrm{Hom}^*_{D_{G\times \mathbb{C}^*}(\mathfrak{g})}((j_y)_!\mathbb{C},{K}^+)\ar[d,"\mathrm{Res}^{G\times \mathbb{C}^*}_{Z}"]\\
			\mathrm{Hom}^*_{D_{Z}(\mathfrak{g})}((j_y)_!\mathbb{C},{K}^+)\times\mathrm{Hom}^*_{D_{Z}(\mathfrak{g})}({K}^+,{K}^+) \ar[r]&  \mathrm{Hom}^*_{D_{Z}(\mathfrak{g})}((j_y)_!\mathbb{C},{K}^+)
		\end{tikzcd}
	\end{equation*}

	Recall the set up of \eqref{ProofLocal2}.
	
	We have
	\begin{equation*}
		(\dot{j}^+)_!(\dot{j}^+)^!\dot{\mathcal{L}}^+\stackrel{\mu}{\longrightarrow}\dot{\mathcal{L}}^+\stackrel{\nu}{\longrightarrow}(\dot{j}^+)_*(\dot{j}^+)^*\dot{\mathcal{L}}^+
	\end{equation*}
	
	and canonical morphisms of $H_Z^*$-algebras
	\begin{equation*}
		\begin{tikzcd}
			\mathrm{Hom}^*_{D_{Z}(\mathfrak{g})}\Bigl((f^+)_*\dot{\mathcal{L}}^+,(f^+)_*\dot{\mathcal{L}}^+\Bigr)\\
			\mathrm{Hom}^*_{D_{Z}(\mathfrak{g})}\Bigl((f^+)_*(\dot{j}^+)_*(\dot{j}^+)^*\dot{\mathcal{L}}^+,(f^+)_*(\dot{j}^+)_!(\dot{j}^+)^!\dot{\mathcal{L}}^+\Bigr)
			\arrow[d, "\Psi_2"']\arrow[u, "\Psi_1"] \\
			\mathrm{Hom}^*_{D_{Z}(\mathfrak{g})}((f^+)_*(\dot{j}^+)_*(\dot{j}^+)^*\dot{\mathcal{L}}^+,(f^+)_*(\dot{j}^+)_*(\dot{j}^+)^*\dot{\mathcal{L}}^+)
		\end{tikzcd}	
	\end{equation*}

	defined by $\Psi_1(u)=(f^+)_*(\mu)\circ u \circ (f^+)_*(\nu)$, $\Psi_2(u)=(f^+)_*(\nu\mu)\circ u$.

	Hence we can define the morphism:
	\begin{equation*}
		\mathrm{Hom}^*_{D_{Z}(\mathfrak{g})}((j_y)_!\mathbb{C},(f^+)_*\dot{\mathcal{L}}^+)\stackrel{\Psi}{\longrightarrow}\mathrm{Hom}^*_{D_{Z}(\mathfrak{g})}((j_y)_!\mathbb{C},(f^+)_*(\dot{j}^+)_*(\dot{j}^+)^*\dot{\mathcal{L}}^+)	
	\end{equation*}
 by $\Psi(u')=(f^+)_*(\nu)\circ u'$.
	
	Thus we have the commutative diagram:

	\begin{equation*}
		\begin{tikzcd}[column sep=small, row sep=scriptsize]
			\mathrm{Hom}^*_{D_{Z}(\mathfrak{g})}((j_y)_!\mathbb{C},K^+)\times\mathrm{Hom}^*_{D_{Z}(\mathfrak{g})}(K^+,K^+) \ar[r] & \mathrm{Hom}^*_{D_{Z}(\mathfrak{g})}((j_y)_!\mathbb{C},K^+)\ar[dd,"\Psi"]\\
			\mathrm{Hom}^*_{D_{Z}(\mathfrak{g})}((j_y)_!\mathbb{C},K^+)\times\mathrm{Hom}^*_{D_{Z}(\mathfrak{g})}\Bigl(K^+,(f^+)_*(\dot{j}^+)_!(\dot{j}^+)^!\dot{\mathcal{L}}^+\Bigr) \ar[u,"1\times\Psi_1"] \ar[d,"\Psi\times\Psi_2"']&  \\
			\mathrm{Hom}^*_{D_{Z}(\mathfrak{g})}((j_y)_!\mathbb{C},\widetilde{K}^+)\times\mathrm{Hom}^*_{D_{Z}(\mathfrak{g})}(\widetilde{K}^+,\widetilde{K}^+) \ar[r]& 	\mathrm{Hom}^*_{D_{Z}(\mathfrak{g})}((j_y)_!\mathbb{C},\widetilde{K}^+)
		\end{tikzcd}
	\end{equation*}
	because $\Psi(\Psi_1(u)\circ u')=(f^+)_*(\nu)\circ (f^+)_*(\mu)\circ u \circ (f^+)_*(\nu) \circ u'=(f^+)_*(\nu\mu)\circ u\circ (f^+)_*(\nu) \circ u'=\Psi_2(u)\circ \Psi(u')$.
	
	Using the group homomorphism $T_a\to Z$ and $T_a\to 1$ which give rise to a functor $D(\mathfrak{g}^a)\stackrel{\Upsilon}{\longrightarrow} D_{T_a}(\mathfrak{g}^a)$, we get 
	
	\begin{equation*}
		\begin{tikzcd}[column sep=small]
			\mathrm{Hom}^*_{D_{Z}(\mathfrak{g})}((j_y)_!\mathbb{C},\widetilde{K}^+)\times\mathrm{Hom}^*_{D_{Z}(\mathfrak{g})}(\widetilde{K}^+,\widetilde{K}^+) \ar[r] \ar[d,"\mathrm{Res}^Z_{T_a}"]& \mathrm{Hom}^*_{D_{Z}(\mathfrak{g})}((j_y)_!\mathbb{C},\widetilde{K}^+)\ar[d,"\mathrm{Res}^Z_{T_a}"]\\
			\mathrm{Hom}^*_{D_{T_a}(\mathfrak{g})}((j_y)_!\mathbb{C},\widetilde{K}^+)\times\mathrm{Hom}^*_{D_{T_a}(\mathfrak{g})}(\widetilde{K}^+,\widetilde{K}^+) \ar[r]&  \mathrm{Hom}^*_{D_{T_a}(\mathfrak{g})}((j_y)_!\mathbb{C},\widetilde{K}^+)\\
			\mathrm{Hom}^*_{D_{1}(\mathfrak{g})}((j_y)_!\mathbb{C},\widetilde{K}^+)\times\mathrm{Hom}^*_{D_{1}(\mathfrak{g})}(\widetilde{K}^+,\widetilde{K}^+)\ar[u,"\Upsilon"] \ar[r]& 	\mathrm{Hom}^*_{D_{1}(\mathfrak{g})}((j_y)_!\mathbb{C},\widetilde{K}^+)\ar[u,"\Upsilon"]
		\end{tikzcd}
	\end{equation*}
	
	Using \eqref{ActBasCh1}, we can transfer \begin{equation*}
		\begin{tikzcd}
			\mathrm{Hom}^*_{D_{1}(\mathfrak{g})}((j_y)_!\mathbb{C},\widetilde{K}^+)\times\mathrm{Hom}^*_{D_{1}(\mathfrak{g})}(\widetilde{K}^+,\widetilde{K}^+) \ar[r]& 	\mathrm{Hom}^*_{D_{1}(\mathfrak{g})}((j_y)_!\mathbb{C},\widetilde{K}^+)
		\end{tikzcd}
	\end{equation*}
	to 
	\begin{equation*}
		\begin{tikzcd}
			\mathrm{Hom}^*_{D(y)}(\mathbb{C},j_y^!\widetilde{K}^+)\times\mathrm{Hom}^*_{D(y)}(j_y^!\widetilde{K}^+,j_y^!\widetilde{K}^+) \ar[r]& 	\mathrm{Hom}^*_{D(y)}((\mathbb{C},j_y^!\widetilde{K}^+)
		\end{tikzcd}
	\end{equation*}

	Using all the commutative diagrams above together with \eqref{HomRed1}, \eqref{HomRedF} and \eqref{ModRed}, we get the theorem.
	
\end{proof}

\begin{remark}
	Recall $a=(\bar{t},r_0)$ and $Z^+=Z_{G^+\times\mathbb{C}^*}(a)=Z_{G^+}(\bar{t})\times\mathbb{C}^*$. For a nilpotent element $y$, it is possible to choose an algebraic homomorphism $\varphi_y:\operatorname{SL}_2(\mathbb{C})\to G$ such that $d_{\varphi_y}\begin{pmatrix}
		0 &1\\
		0& 0
	\end{pmatrix}=y$. There is a homomorphism $Z_{G^+}(y)\times \mathbb{C}^*\to Z_{G^+\times\mathbb{C}^*}(y)$ given by $(g,r)\mapsto g\varphi_y\begin{pmatrix}
		r &0\\
		0& r^{-1}
	\end{pmatrix}$. Hence $\pi_0(Z^+_y)=\pi_0(Z_{G^+\times\mathbb{C}^*}(a,y))=Z_{G^+\times\mathbb{C}^*}(a,y)/Z_{G^+\times\mathbb{C}^*}^0(a,y)\simeq Z_{G^+}(\bar{t},y)/Z_{G^+}^0(\bar{t},y)=\pi_0(Z_{G^+}(\bar{t},y))$.
\end{remark}

\begin{remark}
	We use notation similar to that in  \cite{Aubert2016GradedHA}, $E_{y,\bar{t},r_0}^+=\mathbb{C}_a\otimes_{H_{M_y^0}}H^{M_y^0}_*(\mathcal{P}_y^{+},{\dot{\mathcal{L}}}^+)$, where $a=(\bar{t},r_0)$, and $E_{y,\bar{t},r_0,\rho}^+=\mathrm{Hom}_{\pi_0(Z^+_y)}(\rho,E_{y,\bar{t},r_0}^+)$. 
\end{remark}

Therefore, from Theorem \ref{StandIso} we have 
\begin{theorem}\label{StandIso2}
	Take $\tau\in \mathfrak{R}$ and $\bar{t}$ satisfying $\tau\cdot \bar{t} \subset G\cdot \bar{t}$ and set $a=(\bar{t},r_0)$. Let $\psi^+=(\mathbb{O},\rho)$, a $Z^+$-orbit $\mathbb{O}$ on $\mathfrak{g}^a$ and an irreducible $Z^+$-equivariant local system $\rho$ on $\mathbb{O}$, and $y\in \mathbb{O}$.
	
	There is an isomorphism of $\mathbb{H}(G^+)$-representation between $\mathbf{E}_{\psi^+}$ and $E_{y,\bar{t},r_0,\rho}^+$.
\end{theorem}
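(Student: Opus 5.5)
The plan is to deduce the statement from Theorem \ref{StandIso} by taking $\rho$-isotypic components on both sides. Theorem \ref{StandIso} gives an isomorphism
\begin{equation*}
H^*(i_y^!\widetilde{K}^+)\simeq \mathbb{C}_a\otimes_{H_{M_y^0}}H^{M_y^0}_*(\mathcal{P}_y^{+},{\dot{\mathcal{L}}}^+)=E^+_{y,\bar{t},r_0}
\end{equation*}
of $\mathbb{H}(G^+)\times\bigl(Z_{G^+\times\mathbb{C}^*}(a,y)/Z_{G^+\times\mathbb{C}^*}^0(a,y)\bigr)$-representations. On the left, the $\mathbb{H}(G^+)$-action is transported through \eqref{HomRedF}, i.e. through $\mathbb{C}_a\otimes_{Z(\mathbb{H}(G^+))}\mathbb{H}(G^+)\simeq\mathrm{Hom}^*(\widetilde{K}^+,\widetilde{K}^+)^\tau$. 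This is exactly the module structure used to define $\mathbf{E}_{\psi^+}=E(i_y^!\widetilde{K}^+)^\rho$.

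The first step is to identify the component groups. By the first remark after Theorem \ref{StandIso}, $\pi_0(Z^+_y)=Z^+_y/(Z^+_y)^0$, where $Z^+_y$ is the stabilizer of $y$ in $Z^+=Z_{G^+\times\mathbb{C}^*}(a)$. This group coincides with $Z_{G^+\times\mathbb{C}^*}(a,y)/Z^0_{G^+\times\mathbb{C}^*}(a,y)$, since both are the stabilizer of $y$ in $Z^+$ modulo its identity component. I will check that the action of $\pi_0(Z^+_y)$ on the stalks $H^k(i_y^!\widetilde{K}^+)$ defined in the standard-module subsection matches the action used in Theorem \ref{StandIso}. Both come from the $Z^+_y$-equivariant structure of $\widetilde{K}^+$ restricted to the point $y$, and the chain of restriction functors in the proof of Theorem \ref{StandIso} ($\mathrm{Res}^{G\times\mathbb{C}^*}_Z$, $\mathrm{Res}^Z_{T_a}$, forgetful to $D(y)$) commutes with the action of $Z^+_y$, using a representative $\bar\tau$ as in \eqref{TAct}. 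The hypothesis $\tau\cdot\bar t\subset G\cdot\bar t$ lets us conjugate so that $Z^+\neq Z$ and such a $\bar\tau$ fixing $a$ exists, which is the setting in which \eqref{HomRed1} was proved.

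The second step is formal. Since the $\pi_0(Z^+_y)$-action commutes with the $\mathbb{H}(G^+)$-action on both sides, and the isomorphism is equivariant for both, applying $\mathrm{Hom}_{\pi_0(Z^+_y)}(\rho,-)$, equivalently taking the $\rho$-isotypic part, gives an $\mathbb{H}(G^+)$-isomorphism $\mathbf{E}_{\psi^+}\simeq E^+_{y,\bar t,r_0,\rho}$. Strictly, this identifies $\mathrm{Hom}_{\pi_0}(\rho,-)$ with the isotypic component only up to multiplicity $\dim\rho$. Since $\pi_0(Z^+_y)$ is $1$ or $\mathfrak{R}\simeq\mathbb{Z}/2$ here, by the lemma on $\mathrm{Ind}_Z^{Z^+}$, every $\rho$ is one-dimensional and the two agree.

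The expected obstacle is the compatibility in the first step: making sure that the $\pi_0(Z^+_y)$-action on the geometric side, induced from equivariance of $i_y^!\widetilde{K}^+$ after passing through $\mathbb{C}_a\otimes$ and the $\tau$-invariants, agrees with the action on $E^+_{y,\bar t,r_0}$ from \cite{Aubert2016GradedHA}. This comes down to the isomorphisms in \eqref{ModRed}. My first approach is to verify that each step there, namely the change of equivariance group $T_a\subset M_y(a)^0\subset M_y^0$ and $\mathcal{P}^{+,a}_y=\mathcal{P}^+_y$, is natural with respect to conjugation by elements of $Z_{G^+\times\mathbb{C}^*}(a,y)$, using $\mathrm{Res}\circ\tau^*=\tau^*\circ\mathrm{Res}$ from \cite{EquBeL}.
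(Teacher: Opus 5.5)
Your proposal is correct and follows the paper's route: the paper deduces this statement directly from Theorem \ref{StandIso} by passing to $\rho$-components on both sides. The equivariance for $Z_{G^+\times\mathbb{C}^*}(a,y)/Z^0_{G^+\times\mathbb{C}^*}(a,y)$ that you plan to re-verify is already part of the statement of Theorem \ref{StandIso}, so that step is only a consistency check; your remark that $E(i_y^!\widetilde{K}^+)^\rho$ and $\mathrm{Hom}_{\pi_0(Z^+_y)}(\rho,-)$ agree here only because every $\rho$ is one-dimensional (since $Z_y$ is connected, $\pi_0(Z^+_y)\hookrightarrow\mathfrak{R}$) makes explicit a point the paper leaves implicit.
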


Take a subgroup $Q^+$ of $G^+$, and let $	\mathbf{E}_{\psi^+(Q^+)}$ be the analog of the standard module of $\mathbb{H}(Q^+)$.

\begin{theorem}\label{Ind31}[{\cite[Thm3.4]{Aubert2016GradedHA}}]
	Take $t_0\in \mathfrak{t}_{\mathbb{R}}$, $r_0> 0$, $a=(t_0,r_0)$, and $t_0':=t_0-d_{\varphi_y}(\begin{pmatrix}
		r_0 &0\\
		0& -r_0
	\end{pmatrix})$ satisfying  $\alpha(t_0')\leq 0$ for all positive roots $\alpha$.  Then
	\begin{equation}
	\mathbb{H}(G^+)\otimes_{\mathbb{H}(Q^+)}\mathbf{E}_{\psi^+(Q^+)}\simeq \mathbf{E}_{\psi^+(G^+)}
	\end{equation}
	
	as an $\mathbb{H}(G^+)$-module.
\end{theorem}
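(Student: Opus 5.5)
The plan is to deduce the statement from its connected analogue by restricting to $\mathbb{H}(G)$ and then tracking the $\pi_0$-actions. The order of steps is as follows.

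\textbf{Step 1 (reduction to the connected case).} Write $\mathbb{H}(G^+)=\mathbb{H}(G)\rtimes\mathfrak{R}$ and $\mathbb{H}(Q^+)=\mathbb{H}(Q)\rtimes\mathfrak{R}$, where $Q=Q^+\cap G$ is $\mathfrak{R}$-stable. This gives an identification of $\mathbb{H}(G)$-modules
\[
\mathbb{H}(G^+)\otimes_{\mathbb{H}(Q^+)}N\simeq \mathbb{H}(G)\otimes_{\mathbb{H}(Q)}N .
\]
The $\mathfrak{R}$-action on the left is transported to $h\otimes n\mapsto \tau(h)\otimes \tau n$ on the right.

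\textbf{Step 2 (unfolding the geometric side).} By Theorem \ref{StandIso2}, $\mathbf{E}_{\psi^+(G^+)}\simeq E^+_{y,\bar t,r_0,\rho}=\mathrm{Hom}_{\pi_0(Z^+_y)}(\rho,E^+_{y,t_0,r_0})$. The module $E^+_{y,t_0,r_0}=\mathbb{C}_a\otimes_{H_{M_y^0}}H_*^{M_y^0}(\mathcal{P}^+_y,\dot{\mathcal{L}}^+)$ is computed on $\mathcal{P}_y^+=\mathcal{P}_y\sqcup \bar\tau\mathcal{P}_y$, because $G^+/B=G/B\sqcup \bar\tau G/B$. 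As an $\mathbb{H}(G)$-module it is therefore $E_{y,t_0,r_0}\otimes\mathbb{C}[\mathfrak{R}]$, with $\pi_0(Z_y^+)$ acting through $\pi_0(Z_y)$ and permuting the two pieces. The same decomposition holds for $Q^+$ and $\mathcal{P}_y^{Q^+}$.

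\textbf{Step 3 (equivariant induction isomorphism).} Apply Lusztig's connected induction theorem for $\mathbb{H}(Q)\subset\mathbb{H}(G)$. Under the dominance hypothesis $\alpha(t_0')\le 0$, it gives $\mathbb{H}(G)\otimes_{\mathbb{H}(Q)}E^Q_{y,t_0,r_0}\simeq E^G_{y,t_0,r_0}$. Geometrically, this map is the pushforward along $\mathcal{P}^Q_y\times_{\,Q}G\to\mathcal{P}_y$: the fibration $\mathcal{P}_y\to G/P$ is used, and the strata with $\alpha(t_0')\le 0$ are shown to contribute freely as $\mathbb{H}(G)$-modules over the open cell. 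Since $Q$ and $P$ are $\bar\tau$-stable and $\bar\tau$ can be chosen to fix $a$ and $y$, this pushforward commutes with $\bar\tau$. I will therefore run the same map on $\mathcal{P}^{Q^+}_y\to\mathcal{P}^+_y$ component by component. This yields an $\mathbb{H}(G^+)\times\pi_0(Z^+_y)$-equivariant isomorphism
\[
\mathbb{H}(G^+)\otimes_{\mathbb{H}(Q^+)}E^{+,Q}_{y,t_0,r_0}\simeq E^{+,G}_{y,t_0,r_0},
\]
where $\pi_0(Z^+_{Q,y})\simeq\pi_0(Z^+_{G,y})$ as in the Remark after Theorem \ref{StandIso}. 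I will show equivariance for $\Delta(\tau)$ using diagram \eqref{ActComDia}.

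\textbf{Step 4 and main obstacle.} Taking $\rho$-isotypic parts of the Step 3 isomorphism, together with Theorem \ref{StandIso2} for both $Q^+$ and $G^+$, gives the claim. The main obstacle is Step 3. It requires checking that the identification of $\pi_0$ for $Q^+$ and $G^+$ is compatible with the induction map, and that the chosen $\bar\tau$ intertwines the $\mathbb{H}$-actions on both sides. Remark \ref{SplitRed} warns that the extension may fail to split, so the choice of $\bar\tau$ is delicate. I would first try $\bar\tau\in N_{G^+}(B,T)$ fixing the pinning, modified by an element of $Z_Q(a)$ so that it also fixes $y$.
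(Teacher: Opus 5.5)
The paper gives no proof of this statement; it is quoted from \cite[Thm.~3.4]{Aubert2016GradedHA}. Your strategy is the natural one: write both standard modules as induced from the connected algebras, apply Lusztig's connected induction theorem, then match the component-group actions. Step 1 is correct, provided $\mathfrak{R}_{Q^+}=\mathfrak{R}_{G^+}$ inside $W^+$. Steps 2--3, however, contain a genuine error.

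\textbf{The single $\bar\tau$ in Steps 2--3.} You use one $\bar\tau$ for two unrelated jobs.
\begin{itemize}
\item The $\mathfrak{R}$-part $\Delta(\tau)$ of the $\mathbb{H}(G^+)$-action comes from \emph{right} translation $gB\mapsto g\bar\tau B$ with $\bar\tau\in N_{G^+}(B,T)$. This is also what splits the fibre: $\mathcal{P}^+_y=\mathcal{P}_y\sqcup\mathcal{P}_y\bar\tau$. Your $\bar\tau\mathcal{P}_y$ is a left translate and agrees with this only if $\bar\tau$ fixes $y$.
\item Consequently $E^+\cong E\oplus\tau^*E$ as $\mathbb{H}(G)$-modules, i.e.\ $E^+\cong\Ind_{\mathbb{H}(G)}^{\mathbb{H}(G^+)}E$. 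It is not $E\otimes\mathbb{C}[\mathfrak{R}]$ with $\mathbb{H}(G)$ acting on the first factor.
\item The $\pi_0(Z^+_y)$-action comes from \emph{left} translation by $Z_{G^+\times\mathbb{C}^*}(a,y)$.
\end{itemize}
An element over $\tau$ that normalizes $(B,T)$, $P$, $Q$ and also fixes $(a,y)$ does not exist in general.
\begin{itemize}
\item If $Z^+_y=Z_y$ (the case $Z^+\cdot\mathbb{O}=\mathbb{O}\sqcup\mathbb{O}'$ of Proposition \ref{GeoActM}), nothing over $\tau$ fixes $(a,y)$ at all.
\item If $Z^+_y/Z_y\simeq\mathfrak{R}$, the elements fixing $(a,y)$ generally do not normalize $B$. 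Your modification $q\bar\tau$ with $q\in Z_Q(a)$ leaves $N_{G^+}(B)$, so it no longer implements $\Delta(\tau)$.
\end{itemize}
So the equivariance claimed in Step 3 is not established by your argument.

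The repair is to keep the two roles apart. Right translation by $\bar\tau\in N_{G^+}(B,T)$ commutes with the left $G\times\mathbb{C}^*$-action, hence with Lusztig's construction, with no condition on $a$ or $y$. Transitivity of induction and the connected theorem then give $\mathbb{H}(G^+)\otimes_{\mathbb{H}(Q^+)}E^{+,Q}\cong\mathbb{H}(G^+)\otimes_{\mathbb{H}(Q)}E^{Q}\cong\mathbb{H}(G^+)\otimes_{\mathbb{H}(G)}E\cong E^{+}$. The $\pi_0$-equivariance is a separate naturality statement for left translations.

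\textbf{The component groups in Step 4.} Taking $\rho$-isotypic parts needs $\pi_0(Z_{Q^+\times\mathbb{C}^*}(a,y))\cong\pi_0(Z_{G^+\times\mathbb{C}^*}(a,y))$. The Remark after Theorem \ref{StandIso} does not give this; it only compares $Z^+_y$ with $Z_{G^+}(\bar t,y)$. This is exactly where the hypothesis on $t_0'$ must enter.
\begin{itemize}
\item With $\varphi_y$ adapted to $t_0$, the reductive part of $Z_{G^+}(t_0,y)$ is $Z_{G^+}(t_0',\varphi_y)\subseteq Z_{G^+}(t_0')$. So you need $Z_{G^+}(t_0')\subseteq Q^+$.
\item Anti-dominance of $t_0'$ also shows: if $Z_{G^+}(t_0')\not\subseteq G$, then the element of $N_{G^+}(B,T)$ over $\tau$ already fixes $t_0'$, because two $G$-conjugate anti-dominant elements of $\mathfrak{t}$ coincide.
\item Hence that element lies in $Q^+$, normalizes the standard parabolic with Levi $Q$, and $\mathfrak{R}_{Q^+}=\mathfrak{R}_{G^+}$. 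This is precisely what your Step 1 assumes without comment.
\end{itemize}
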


Recall that $G$ is the identity component group of $G^+$ and we assume that $Z_y$ is connected for all $y$. Then we have the standard module $E_{y,\bar{t},r_0}$ of graded Hecke algebra $\mathbb{H}(G)$, where $[\bar{t},y]=2r_0y$.

\begin{proposition}\label{GeoActM}[{\cite[Lemma3.18]{Aubert2016GradedHA}}]
	Take $\tau\in \mathfrak{R}$ and $\bar{t}$ satisfying $\tau\cdot \bar{t} \subset G\cdot \bar{t}$, and set $a=(\bar{t},r_0)$. Take a $Z$-orbit $\mathbb{O}$ on $\mathfrak{g}^a$ and $y\in \mathbb{O}$.

	 Denote by $\tau^* E_{y,\bar{t},r_0,1}$ the ${\mathbb{H}(G)}$-module which has the same underlying vector space $E_{y,\bar{t},r_0,1}$ but with ${\mathbb{H}(G)}$-action given by $\tau ^*(h)\cdot m=\tau(h)\cdot m$.
	
	If $Z_y^+=Z_y=1$, then $Z^+\cdot \mathbb{O}=\mathbb{O}\sqcup\mathbb{O}'$, and we have $E_{y,\bar{t},r_0,1}^+=\mathrm{Ind}_{\mathbb{H}(G)}^{\mathbb{H}(G^+)}E_{y,\bar{t},r_0,1}$

	If $Z^+_y/Z_y\simeq \mathfrak{R}$, then $Z^+\cdot \mathbb{O}=\mathbb{O}$, and we can define a geometric normalization $J^\tau$ as in \cite[Proposition3.15]{Aubert2016GradedHA} such that  $J^\tau$ is an isomorphism of $\mathbb{H}(G)$-modules between $E_{y,\bar{t},r_0,1}$ and  $\tau^* E_{y,\bar{t},r_0,1}$. Hence there is an isomorphism of $\mathbb{H}(G^+)$-modules $E_{y,\bar{t},r_0,\rho}^+\simeq  E_{y,\bar{t},r_0,1}\otimes V_\rho$ where $E_{y,\bar{t},r_0,1}\otimes V_\rho$ has the $\mathbb{H}(G^+)=\mathbb{H}(G)\rtimes\mathfrak{R}$-action given by
	\begin{equation*}
		(h,\tau)\cdot (m\otimes n)=h\cdot J^\tau(m) \otimes \rho(\tau)\cdot n \quad h\in \mathbb{H}(G), \tau \in \mathfrak{R}, m\in E_{y,\bar{t},r_0,1}, n\in V_\rho
	\end{equation*} 
\end{proposition}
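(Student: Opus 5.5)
The statement to prove is Proposition \ref{GeoActM}, cited from \cite[Lemma3.18]{Aubert2016GradedHA}. My plan is to derive it from the induction structure $\widetilde{K}^+=\mathrm{Ind}_Z^{Z^+}\widetilde{K}$ established in \eqref{GeoInd}, together with Theorem \ref{StandIso2}, rather than re-doing the construction of \cite{Aubert2016GradedHA}. The first step is to pass to the fibre description. By Theorem \ref{StandIso} we have $E^+_{y,\bar{t},r_0}\simeq H^*(i_y^!\widetilde{K}^+)$. Since $\mathcal{P}_y^+$ is a disjoint union $\mathcal{P}_y\sqcup \bar\tau\mathcal{P}_{\mathrm{Ad}(\bar\tau^{-1})y}$, we get a decomposition
\begin{equation*}
E^+_{y,\bar{t},r_0}\simeq E_{y,\bar{t},r_0}\oplus \tau^*E_{\tau^{-1}y,\bar t,r_0}
\end{equation*}
as $\mathbb{H}(G)$-modules. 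This uses the $\mathbb{H}(G)$-equivariance of the localization maps \eqref{ModRed}, applied to each component, and that restricting $\Delta$ from $\mathbb{H}(G^+)$ to $\mathbb{H}(G)$ preserves the two pieces. The element $N_\tau$ acts by the pushforward along $\bar\tau$, which swaps the two components.

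\textbf{Case $Z^+_y=Z_y$.} Here $\mathrm{Ad}(\bar\tau)y\notin\mathbb{O}$, so $Z^+\cdot\mathbb{O}=\mathbb{O}\sqcup\mathbb{O}'$ and $\pi_0(Z^+_y)=1$. The decomposition above, with $N_\tau$ swapping the summands, is exactly the module $\mathbb{H}(G^+)\otimes_{\mathbb{H}(G)}E_{y,\bar t,r_0,1}=E\oplus N_\tau E$. To identify them, I would use the universal property: the inclusion $E_{y,\bar t,r_0,1}\hookrightarrow E^+_{y,\bar t,r_0,1}$ induces a map from the induced module, which is surjective because $N_\tau$ generates the second summand. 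A dimension count then shows it is an isomorphism.

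\textbf{Case $Z^+_y/Z_y\simeq\mathfrak{R}$.} Here we can choose $\bar\tau\in Z^+_y$, so it fixes $y$ and $a$, and the two components of $\mathcal{P}_y^+$ are $\mathcal{P}_y$ and $\bar\tau\mathcal{P}_y$. Define $J^\tau$ as the composite of pushforward along $\bar\tau:\mathcal{P}_y\to\bar\tau\mathcal{P}_y$, which is $M_y^0$-twisted-equivariant, with the identification of $\bar\tau\mathcal{P}_y$ back with $\mathcal{P}_y$ via the $N_\tau$-action. The relation $N_\tau\xi N_\tau^{-1}=\tau(\xi)$ in $\mathbb{H}(G^+)$ then shows that $J^\tau$ intertwines $E_{y,\bar t,r_0,1}$ with $\tau^*E_{y,\bar t,r_0,1}$. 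With the constant sheaf and the canonical choice of identification (the Remark after Theorem \ref{IsoHecke}), we get $(J^\tau)^2=1$.

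Next, $\pi_0(Z^+_y)=\mathfrak{R}$ acts on $E^+_{y,\bar t,r_0}\simeq E\oplus E$ by swapping the summands via $\bar\tau$, and this action commutes with $\mathbb{H}(G^+)$. Diagonalizing, we get
\begin{equation*}
E^+\simeq \bigoplus_{\rho=\pm1}E_{y,\bar t,r_0,1}\otimes V_\rho.
\end{equation*}
On the $\rho$-eigenspace, $N_\tau$ acts by $J^\tau\otimes\rho(\tau)$, and taking $\mathrm{Hom}_{\pi_0}(\rho,-)$ gives the stated formula.

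The main obstacle is the second case. The difficulty is verifying that the geometric $N_\tau$-action and the $\pi_0(Z^+_y)$-action coincide up to the sign $\rho(\tau)$ on the two summands, without a hidden scalar. I would try to settle this first using the descent datum of Lemma \ref{Inv}: the $\kappa_\tau$ for the constant sheaf is the identity, so no scalar can appear. If the non-split extension of Remark \ref{SplitRed} causes trouble, I would instead fix $\bar\tau$ through the pinning-preserving representative of $\mathfrak{R}$.
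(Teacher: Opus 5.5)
Your argument is correct, and it is a genuinely different route: the paper gives no proof of Proposition~\ref{GeoActM} and simply imports it from \cite[Lemma 3.18, Proposition 3.15]{Aubert2016GradedHA}. There it is established for general disconnected groups and cuspidal local systems, where there may be a nontrivial $2$-cocycle and disconnected centralizers.

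You instead use the special features available here. The group $\mathfrak{R}$ has order two. The sheaf is constant, so $N_\tau^2=1$ in $\mathbb{C}[W^+]$. And $Z_y$ is connected. This lets you make the Clifford theory explicit:
\begin{itemize}
\item $E^+_{y,\bar t,r_0}=E_1\oplus E_2$ with $E_1\cong E_{y,\bar t,r_0}$;
\item $N_\tau$, realized by right multiplication by a $B$-normalizing representative $\bar\tau_B$, swaps the two summands;
\item a chosen $z\in Z^+_y\setminus Z_y$, acting by left multiplication, also swaps them and commutes with $\mathbb{H}(G^+)$;
\item you set $J^\tau:=N_\tau z|_{E_1}$.
\end{itemize}
You still rely on the construction of \cite{Aubert2016GradedHA} for two facts you only assert. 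First, the $\mathbb{H}(G)$-action restricted to the component $G/B\subset G^+/B$ is the one defining $E_{y,\bar t,r_0}$. Second, $N_\tau$ acts geometrically by that right multiplication. You do not, however, rely on Lemma~3.18 itself. What you gain is an explicit formula for $J^\tau$, which is exactly what Section~6 needs when comparing with the Whittaker normalization. Your Case~1 argument (surjection from the induced module plus a dimension count) is fine.

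The ``main obstacle'' you flag is not one. With $J^\tau$ as above, $(J^\tau)^2=N_\tau^2z^2=1$. This uses $N_\tau^2=1$ and the fact that $z^2\in Z_y=(Z^+_y)^0$ acts trivially; the latter comes from the connectedness of $Z_y$, not just from the constant sheaf. For $m\in E_1$ and $\epsilon=\rho(\tau)$, the eigenspace computation
\[
N_\tau\bigl(m+\epsilon zm\bigr)=zJ^\tau m+\epsilon J^\tau m=\epsilon\bigl(J^\tau m+\epsilon zJ^\tau m\bigr)
\]
is exact, so no scalar can hide anywhere. Neither the descent datum of Lemma~\ref{Inv} nor a special representative is needed. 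The non-splitting in Remark~\ref{SplitRed} is also harmless, since $\bar\tau_B^2\in N_{G^+}(B,T)\cap G=T\subset B$ acts trivially on $G^+/B$.

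Two smaller points:
\begin{itemize}
\item The announced use of $\widetilde K^+=\mathrm{Ind}_Z^{Z^+}\widetilde K$ never actually enters. The argument runs directly on $\mathcal{P}^+_y=\mathcal{P}_y\sqcup(\mathcal{P}^+_y\cap \bar\tau G/B)$.
\item The second summand carries the parameter $\mathrm{Ad}(\bar\tau^{-1})\bar t$, not $\bar t$, unless $\bar\tau$ is taken in $Z^+$.
\end{itemize}
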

\begin{proposition}[{\cite[Theorem3.20]{Aubert2016GradedHA}}]\label{StandMod2}
	Take $r_0> 0$, $\tau\in \mathfrak{R}$ and $\bar{t}$ satisfying $\tau\cdot \bar{t} \subset G\cdot \bar{t}$, and set $a=(\bar{t},r_0)$. Take a $Z$-orbit $\mathbb{O}$ on $\mathfrak{g}^a$ and $y\in \mathbb{O}$.
	
	If $Z_y^+=Z_y=1$, then $E_{y,\bar{t},r_0,1}^+\simeq\mathrm{Ind}_{\mathbb{H}(G)}^{\mathbb{H}(G^+)}E_{y,\bar{t},r_0,1}$ has a unique irreducible quotient $\mathbb{H}(G^+)$-module $\mathrm{Ind}_{\mathbb{H}(G)}^{\mathbb{H}(G^+)}M_{y,\bar{t},r_0,1}$, where $M_{y,\bar{t},r_0,1}$ is the unique irreducible quotient $\mathbb{H}(G)$-module of $E_{y,\bar{t},r_0,1}$. Denote $M_{y,\bar{t},r_0,1}^+=\mathrm{Ind}_{\mathbb{H}(G)}^{\mathbb{H}(G^+)}M_{y,\bar{t},r_0,1}$.

	If $Z^+_y/Z_y\simeq \mathfrak{R}$, then $E_{y,\bar{t},r_0,\rho}^+\simeq  E_{y,\bar{t},r_0,1}\otimes V_\rho$ has a unique irreducible quotient $\mathbb{H}(G^+)$-module $M_{y,\bar{t},r_0,1}\otimes V_\rho$,  where $M_{y,\bar{t},r_0,1}$ is the unique irreducible quotient $\mathbb{H}(G)$-module of $E_{y,\bar{t},r_0,1}$ and $M_{y,\bar{t},r_0,1}\otimes V_\rho$ has the $\mathbb{H}(G^+)=\mathbb{H}(G)\rtimes\mathfrak{R}$-action given by
	\begin{equation*}
		(h,\tau)\cdot (m\otimes n)=h\cdot J^\tau(m) \otimes \rho(\tau)\cdot n \quad h\in \mathbb{H}(G), \tau \in \mathfrak{R}, m\in M_{y,\bar{t},r_0,1}, n\in V_\rho
	\end{equation*} We denote this $\mathbb{H}(G^+)$-module by $M_{y,\bar{t},r_0,\rho}^+$.
\end{proposition}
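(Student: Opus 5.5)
The plan is to reduce Proposition \ref{StandMod2} to the connected statement for $\mathbb{H}(G)$, namely that $E_{y,\bar{t},r_0,1}$ has a unique irreducible quotient $M_{y,\bar{t},r_0,1}$ (Lusztig; this is where $r_0>0$ enters). The link between $\mathbb{H}(G)$ and $\mathbb{H}(G^+)$ will be Clifford theory for $\mathbb{H}(G^+)=\mathbb{H}(G)\rtimes\mathfrak{R}$ with $\mathfrak{R}=\langle 1,\tau\rangle$ of order two. In both cases I first identify the standard module with the algebraic description in Proposition \ref{GeoActM}.

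\emph{Case $Z_y^+=Z_y$.} Here $Z^+\cdot\mathbb{O}=\mathbb{O}\sqcup\mathbb{O}'$ with $\mathbb{O}'=\tau\cdot\mathbb{O}$. Hence $\tau^*E_{y,\bar{t},r_0,1}\simeq E_{\tau(y),\tau(\bar t),r_0,1}$, and its unique irreducible quotient $\tau^*M_{y,\bar{t},r_0,1}$ is the Langlands-type quotient attached to the orbit $\mathbb{O}'\neq\mathbb{O}$. By the injectivity of Lusztig's classification for $\mathbb{H}(G)$, this gives $\tau^*M\not\simeq M$.

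Since $\mathbb{H}(G^+)$ is free of rank two over $\mathbb{H}(G)$, we have $\mathrm{Res}\,\mathrm{Ind}\,E\simeq E\oplus\tau^*E$. By Frobenius reciprocity (induction and coinduction coincide for a finite index crossed product), any irreducible quotient $N$ of $\mathrm{Ind}\,E$ satisfies
\[
0\neq\mathrm{Hom}_{\mathbb{H}(G^+)}(\mathrm{Ind}\,E,N)=\mathrm{Hom}_{\mathbb{H}(G)}(E,\mathrm{Res}\,N).
\]
So $\mathrm{Res}\,N$ contains $M$, and by Clifford theory $N\simeq\mathrm{Ind}\,M$, which is irreducible because $\tau^*M\not\simeq M$. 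For uniqueness, the head of $\mathrm{Res}\,\mathrm{Ind}\,E$ is $M\oplus\tau^*M$, multiplicity free. Therefore $\mathrm{Ind}\,M$ appears in the head of $\mathrm{Ind}\,E$ with multiplicity $\dim\mathrm{Hom}(E,\mathrm{Res}\,\mathrm{Ind}\,M)=1$. Any other irreducible quotient would force an extra copy of $M$ or $\tau^*M$ in the head of $\mathrm{Res}$, which is impossible.

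\emph{Case $Z_y^+/Z_y\simeq\mathfrak{R}$.} Here $E^+_{y,\bar t,r_0,\rho}\simeq E\otimes V_\rho$, with $\tau$ acting by $J^\tau\otimes\rho(\tau)$, and $\rho$ is one of the two characters of $\mathfrak{R}$. The radical $\mathrm{rad}_{\mathbb{H}(G)}E$ is the unique maximal $\mathbb{H}(G)$-submodule. The map $J^\tau$ is an $\mathbb{H}(G)$-isomorphism $E\to\tau^*E$, so it carries maximal submodules to maximal submodules and therefore preserves $\mathrm{rad}\,E$. Hence $J^\tau$ descends to $M$; this descended $J^\tau$ defines $M\otimes V_\rho$, and the quotient map is $\mathbb{H}(G^+)$-equivariant. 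This module is irreducible because its restriction $M$ is already irreducible.

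Uniqueness in this case: let $N'\subset E\otimes V_\rho$ be a proper $\mathbb{H}(G^+)$-submodule. Its restriction is a proper $\mathbb{H}(G)$-submodule of $E$ ($\dim V_\rho=1$), hence lies in $\mathrm{rad}\,E$. So $\mathrm{rad}\,E\otimes V_\rho$ is the unique maximal $\mathbb{H}(G^+)$-submodule, and the quotient is $M\otimes V_\rho$.

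The main obstacle I expect is justifying $\tau^*M\not\simeq M$ in the first case, and the compatibility of $J^\tau$ with the radical and with $J^\tau\circ J^\tau=1$ needed for the descended action to be a genuine $\mathfrak{R}$-action. For the first point I would appeal to the uniqueness in the geometric classification for $\mathbb{H}(G)$, since distinct $Z$-orbits give distinct simples. For the second, the relation $(J^\tau)^2=1$ is already part of Proposition \ref{GeoActM}, and it passes to $M$ by functoriality of the quotient.
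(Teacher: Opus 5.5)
Your proof is correct. It is the Clifford-theoretic reduction to Lusztig's connected-case result, which is how \cite[Theorem 3.20]{Aubert2016GradedHA} is proved (the paper itself just quotes that theorem and gives no separate argument); the only step you leave implicit is that $\mathrm{Res}\,N$ is semisimple for irreducible $N$, which is what lets a nonzero map $E\to\mathrm{Res}\,N$ factor through the head $M$.
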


Using $\mathbf{E}_{\phi^+}\simeq E_{y,\bar{t},r_0,\rho}^+$ from Theorem \ref{StandIso2}, the fact that $\mathbf{P}_{\phi^+}\simeq L_{\phi^+}\simeq L_{y,\rho}$ is a simple quotient of the standard module $\mathbf{E}_{\phi^+}$ in Proposition \ref{QuoStand} and the uniqueness in Proposition \ref{StandMod2}, we have 
\begin{theorem}
	There is an isomorphism of $\mathbb{H}(G^+)$-modules $\mathbf{P}_{\phi^+}\simeq L_{\phi^+}\simeq L_{y,\rho}\simeq M_{y,\bar{t},r_0,1}^+$.
\end{theorem}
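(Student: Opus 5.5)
The argument combines three earlier results. Proposition \ref{QuoStand} identifies $\mathbf{P}_{\phi^+}\simeq L_{\phi^+}\simeq L_{y,\rho}$ as a simple quotient of the standard module $\mathbf{E}_{\phi^+}$. Theorem \ref{StandIso2} identifies $\mathbf{E}_{\phi^+}$ with the algebraically defined standard module $E^+_{y,\bar t,r_0,\rho}$. Proposition \ref{StandMod2} says that $E^+_{y,\bar t,r_0,\rho}$ has a unique irreducible quotient, namely $M^+_{y,\bar t,r_0,\rho}$. The claimed isomorphism follows once these identifications are shown to be compatible with the $\mathbb{H}(G^+)$-actions. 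Note also that the statement writes $M^+_{y,\bar t,r_0,1}$; for the general local system $\rho$ of $\phi^+=(\mathbb{O},\rho)$ one should read $M^+_{y,\bar t,r_0,\rho}$.

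\textbf{Step 1.} Transport all modules along the algebra isomorphism \eqref{HomRedF},
$\mathbb{C}_a\otimes_{Z(\mathbb{H}(G^+))}\mathbb{H}(G^+)\simeq \mathrm{Hom}^*(\widetilde K^+,\widetilde K^+)^\tau$.
This makes $\mathbf{P}_{\phi^+}$, $L_{y,\rho}$ and $\mathbf{E}_{\phi^+}$ into $\mathbb{H}(G^+)$-modules on which the centre acts through $a=(\bar t,r_0)$.

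\textbf{Step 2.} Check that the surjection $\mathbf{E}_{\phi^+}\to L_{y,\rho}$ is $\mathbb{H}(G^+)$-linear. It comes from the morphism \eqref{QuoIrr}, $\mathcal{H}^*_y(i^!_{\mathbb{O}}\widetilde K^+)_\rho\to\mathcal{H}^*_y(i^*_{\mathbb{O}}\widetilde K^+)_\rho$. This map is induced by the adjunction $i_!i^!\to\mathrm{id}$, so it commutes with every endomorphism of $\widetilde K^+$, in particular with the $\tau$-invariant ones. It also commutes with the $\pi_0(Z^+_y)$-action, so it restricts to $\rho$-isotypic parts. Hence $L_{y,\rho}$ is a nonzero irreducible quotient of $\mathbf{E}_{\phi^+}$. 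Nonvanishing holds because $L_{\phi^+}\neq0$: by the lemma above, every simple $Z^+$-equivariant perverse sheaf occurs in $\widetilde K^+$.

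\textbf{Step 3.} Compose with the isomorphism $\mathbf{E}_{\phi^+}\simeq E^+_{y,\bar t,r_0,\rho}$ of Theorem \ref{StandIso2}. The module $L_{y,\rho}$ then becomes an irreducible quotient of $E^+_{y,\bar t,r_0,\rho}$. By the uniqueness in Proposition \ref{StandMod2}, treating separately the cases $Z^+_y=Z_y$ (orbit splits, induced module) and $Z^+_y/Z_y\simeq\mathfrak{R}$ (orbit stable, twist by $J^\tau$ and $V_\rho$), it must be $M^+_{y,\bar t,r_0,\rho}$.

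\textbf{Main obstacle.} The delicate point is that the $\mathbb{H}(G^+)$-structure on $\mathbf{E}_{\phi^+}$ coming from the $\tau$-invariant Hom algebra must agree with the $\Delta$-action of \cite{Aubert2016GradedHA}. This agreement must also respect the $\rho$-isotypic decomposition, including the choice of normalization $J^\tau$, which the sheaf side fixes through the canonical identification $\dot{\mathcal L}^+\simeq\tau^*\dot{\mathcal L}^+$ by the identity. Theorem \ref{StandIso} gives compatibility with $\mathbb{H}(G^+)\times\pi_0(Z^+_y)$. I would check that passing to $\rho$-isotypic components on both sides is the same operation. Since Theorem \ref{StandIso} is already stated equivariantly for $\pi_0(Z^+_y)$, I expect this to be a direct check.
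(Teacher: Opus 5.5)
Your proposal is correct and is essentially the paper's argument. The paper likewise combines three results: $\mathbf{E}_{\phi^+}\simeq E^+_{y,\bar t,r_0,\rho}$ from Theorem~\ref{StandIso2}; the identification $\mathbf{P}_{\phi^+}\simeq L_{\phi^+}\simeq L_{y,\rho}$ as a simple quotient of $\mathbf{E}_{\phi^+}$ from Proposition~\ref{QuoStand}; and the uniqueness of the irreducible quotient in Proposition~\ref{StandMod2}. Your reading of the target as $M^+_{y,\bar t,r_0,\rho}$ for general $\rho$ is also the right one.
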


\section{Langlands parameters}
\subsection{Connected case}
In $\GL_n$, we can write a Levi subgroup as $\mathbb{M}=\GL_{n_1}\times\cdots\times \GL_{n_k}$. For $\pi_i\in \mathrm{Rep}(\GL_{n_i}(\mathbb{Q}_p))$, we write $\pi_1\times \cdots \times \pi_k \in \mathrm{Rep}(\GL_n(\mathbb{Q}_p))$ for the normalized parabolic induction $i_{\mathbb{P}(\mathbb{Q}_p)}(\pi_1\otimes\cdots\otimes\pi_r)$ via a parabolic subgroup $\mathbb{P}(\mathbb{Q}_p)$ containing $\mathbb{M}(\mathbb{Q}_p)$ and ${\mathbb{B}}(\mathbb{Q}_p)$.

Let $F=\mathbb{Q}_p$, $\varrho_0^i=|\cdot|_F^{z_i}$, $z_i\in \mathbb{C}$, $\varrho_1^i=\varrho_0|\cdot|_F^{\frac{1-n_i}{2}}$, $\varrho_2^i=\varrho_0|\cdot|_F^{\frac{3-n_i}{2}}$, $\cdots$, $\varrho_{n_i}^i=\varrho_0|\cdot|_F^{\frac{n_i-1}{2}}$, which are one dimensional representations of $\GL_1(F)=F^\times$. Let $\Delta_i=[\varrho_1^i,\varrho_{n_i}^i]$ be a segment. 
Then we have the unique irreducible quotient  $\St\langle \Delta_i\rangle$ of $\varrho_1\times\varrho_2\times\cdots\times\varrho_{n_i}$ in $\Rep(\GL_{n_i}(F))$.

A sequence of segments $(\Delta_1,\cdots,\Delta_k)$ is said to be ordered if for every $1\leq i < j \leq k$, $\Delta_i$ does not precede $\Delta_j$. 

Let $\pi \in \mathrm{Rep}(\GL_n(\mathbb{Q}_p))$. Recall that the socle (resp., cosocle) of $\pi$, denoted by $\mathrm{soc}(\pi)$ (resp., $\mathrm{cos}(\pi)$), is the largest semisimple subrepresentation (resp., quotient) of $\pi$. 

For any ordered form $(\Delta_1,\cdots,\Delta_k)$ of a multisegment $\mathfrak{m}$, the representation $\St\langle \Delta_1\rangle \times \cdots \times \St\langle \Delta_k\rangle$ has a unique irreducible quotient \begin{align}
	\mathrm{cos}(\St\langle \Delta_1\rangle \times \cdots \times \St\langle \Delta_k\rangle)
\end{align} 
Then we get the Langlands classification of  all irreducible representations in $Rep(\mathcal{G})^{[\mathcal{T},1_{triv}]}$.

Denote $\mathcal{G}=\GL_n(\Qp)$ and $G=\GL_n(\mathbb{C})$,  the dual group of $\mathcal{G}$.
Using 
\begin{equation}\label{PRepEq1}
	\Lambda_{\mathcal{J}}: 	\mathrm{Rep}(\mathcal{G})^{[\mathcal{T},1_{triv}]}\simeq\mathrm{Rep}(\mathcal{G})^{(\mathcal{J},1_{triv})}\simeq  \mathcal{H}(\mathcal{J}\backslash\mathcal{G}/\mathcal{J})-\mathrm{Mod}
\end{equation}
\begin{equation}\label{PRepEq2}
	\mathcal{H}(\mathcal{J}\backslash\mathcal{G}/\mathcal{J})\simeq \mathcal{H}(G,p^{1/2})\end{equation}
\begin{equation}\label{PRepEq3}
	\Theta:	\mathbb{H}(Z_{G}(t_c))-\mathrm{Mod}_{(\mathrm{log}t_h, \mathrm{log}p^{1/2})}\simeq\mathcal{H}(G,v)-\mathrm{Mod}_{(t,p^{1/2})}
\end{equation} 

where $t=t_ct_h\in G$ is semisimple, we can classify all irreducible representations in $\mathrm{Rep}(\mathcal{G})^{(\mathcal{J},1_{triv})}$. 

We have the Iwahori-Matsumoto involution $\IM$ on $\mathbb{H}(Z_{G}(t_c))$ given by 
\begin{equation}
\begin{aligned}
	\IM(N_w)&=\mathrm{sign}(w)N_w\quad &(w\in W) \\
	\IM(\xi)&=-\xi \quad &(\xi\in \mathfrak{t}^*)
\end{aligned}
\end{equation}

For a representation $(\pi,V)$ of  $\mathbb{H}(Z_{G}(t_c))$, denote by $(\IM^*\pi,V)$ the representation twisted by $\IM$:\begin{align}
	(\IM^*\pi)(g)\cdot v:=\pi(\IM(g))\cdot v
\end{align}

Moreover, using the compatibility between these equivalences and induction, we can obtain a correspondence between the standard representations in $\mathrm{Rep}(\mathcal{G})^{(\mathcal{J},1_{triv})}$ and the standard modules in $\mathbb{H}(Z_{G}(t_c))-\mathrm{Mod}$.

Denote by $\nu_{n_i}$  the irreducible $n_i$-dimensional  representation of $\mathrm{SL_2(\mathbb{C})}$ and set $\nu=\oplus_{i=1}^k\nu_{n_i}$. Let $y=d_\nu\begin{pmatrix}
	0 &1\\
	0& 0
\end{pmatrix}$, $t'=\times_{{n_1}}(p^{z_1})\times\cdots\times\times_{{n_k}}(p^{z_k})$, $t'_h=|t'|$, $\bar{t}=-\mathrm{log}t'_h+d_\nu\begin{pmatrix}
	r_0 &0\\
	0& -r_0
\end{pmatrix}$, where $r_0=\mathrm{log}p^{1/2}$.

Hence the standard module 
$\IM^*E_{y,\bar{t},r_0,1}$ of $\mathbb{H}(Z_{G}(t_c))$ corresponds to $\St \langle \Delta_1\rangle \times \cdots \times \St\langle \Delta_k\rangle$ in $\mathrm{Rep}(\mathcal{G})^{[\mathcal{T},1_{triv}]}$. The irreducible quotient module $\IM^*M_{y,\bar{t},r_0,1}$  corresponds to $\mathrm{cos}(\St\langle \Delta_1\rangle \times \cdots \times \St\langle \Delta_k\rangle)$ through \eqref{PRepEq1} and \eqref{PRepEq3}.

 Denote by $^L\mathcal{G}=G\times W_F$ the $L$-group of $G$, where $W_F$ is the Weil group of $F$.

\begin{definition}
	A  Langlands parameter of $\mathcal{G}$ is a continuous group homomorphism
	\begin{equation*}
		\phi: W_F\times \mathrm{SL}_2(\mathbb{C}) \to {^L\mathcal{G}} 
	\end{equation*}
	such that 
	
	(1) it respects the projections to $W_F$ for both  $W_F\times \mathrm{SL}_2(\mathbb{C})$ and ${^L\mathcal{G}}$;
	
	(2) its restriction to $\mathrm{SL}_2(\mathbb{C})$ is algebraic;
	
	(3) the image of $W_F$ consists of semisimple elements in ${^L\mathcal{G}}$.
\end{definition}
Let $P({^L\mathcal{G}})$ be the set of Langlands parameters of $\mathcal{G}$. We say two Langlands parameters are equivalent if they are conjugate under $G$, and we denote the set of equivalence classes of Langlands parameters of $\mathcal{G}$ by $\Phi(\mathcal{G})$.

An infinitesimal parameter of $\mathcal{G}$ is a continuous homomorphism\begin{equation*}
	\lambda:W_F\to {^L\mathcal{G}}
\end{equation*} 
such that it satisfies the condition (1) and (3) of the definition of Langlands parameters.
 
If $\phi\in P({^L\mathcal{G}})$, we can associate it with an infinitesimal parameter by 
\begin{equation*}
	\lambda_\phi(\omega)=\phi(\omega,\begin{pmatrix}
		|\omega|^{1/2} & \\ &|\omega|^{-1/2}
	\end{pmatrix}), \quad \omega\in W_F
\end{equation*}
If we fix an infinitesimal parameter $\lambda$, let
\begin{equation*}
	P(\lambda,{^L\mathcal{G}}):=\{\phi\in P({^L\mathcal{G}})|\lambda_\phi=\lambda\}
\end{equation*}

and \begin{equation*}
	\Xi(\lambda,{^L\mathcal{G}}):=\{(\phi,\rho)|\phi\in P(\lambda,{^L\mathcal{G}})/Z_{{G}}(\lambda),\rho\in \mathrm{Irr}(\pi_0(Z_{{G}}(\phi)))\}
\end{equation*}

By varying over all conjugacy classes of infinitesimal parameter of $\mathcal{G}$, we get 
\begin{equation*}
	\Xi({^L\mathcal{G}}):=\{(\phi,\rho)|\phi\in \Phi(\mathcal{G}), \rho\in \mathrm{Irr}(\pi_0(Z_{{G}}(\phi)))\}
\end{equation*}

Hence, from the local Langlands correspondence, there is a natural bijection between $\mathrm{Rep}(\mathcal{G})^{[\mathcal{T},1_{\triv}]}$ and  the subset of $\Xi(^LG)$ whose Langlands parameter is trivial on the inertial group $I_F$.

We can choose $\phi\in \Phi(\mathcal{G})$ such that $t'=\phi(\mathrm{Fr})=\times_{n_1}(p^{z_1})\times\cdots\times\times_{n_k}(p^{z_k})$ satisfies 
\begin{condition}\label{IndCond}
	$|p^{z_1}|\geq |p^{z_2}| \geq \cdots \geq |p^{z_k}|$
\end{condition}

Let $y=d_\phi\begin{pmatrix}
	0 &1\\
	0& 0
\end{pmatrix}$, $t=\lambda_\phi(\mathrm{Fr})$, $\bar{t}=-\mathrm{log}t'_h+d_\nu\begin{pmatrix}
	r_0 &0\\
	0& -r_0
\end{pmatrix}=-\mathrm{log}|t'|+d_\nu\begin{pmatrix}
	r_0 &0\\
	0& -r_0
\end{pmatrix}$ and  $r_0=\mathrm{log}p^{1/2}$.

Denote 
\begin{equation}
\begin{aligned}
	E(\phi,1)&:=(\Lambda_{\mathcal{J}})^{-1}\circ\Theta\circ\IM^*E_{y,\bar{t},r_0,1}\\
	M(\phi,1)&:=(\Lambda_{\mathcal{J}})^{-1}\circ\Theta\circ \IM^*M_{y,\bar{t},r_0,1}
\end{aligned}
\end{equation}

Let $\varrho_0=|\cdot|_F^{z_i}$, $\varrho_1=\varrho_0|\cdot|_F^{\frac{1-n_i}{2}}$,  $\cdots$, $\varrho_{n_i}=\varrho_0|\cdot|_F^{\frac{n_i-1}{2}}$, and $\Delta_i=[\varrho_1,\varrho_{n_i}]$ be a segment. Then $(\Delta_1,\cdots,\Delta_k)$ is an ordered sequence of segments.  We have 
\begin{equation}
	\begin{aligned}
		E(\phi,1)&\simeq \St \langle \Delta_1\rangle \times \cdots \times \St\langle \Delta_k\rangle\\
		M(\phi,1)&\simeq \mathrm{cos}(\St\langle \Delta_1\rangle \times \cdots \times \St\langle \Delta_k\rangle)
	\end{aligned}
\end{equation}

\subsection{Disconnected case}
Recall $\mathcal{G}^+=\mathcal{G}\rtimes \Gamma$, where $\Gamma = \langle\gamma\rangle$, and $G^+:=G\rtimes \hat{\Gamma}$, where $\hat{\Gamma} = \langle\hat{\gamma}\rangle$.
Now we define \begin{equation*}
	\Xi(\lambda,{^L\mathcal{G}^+}):=\{(\phi,\rho)|\phi\in P(\lambda,{^L\mathcal{G}})/Z_{{G^+}}(\lambda),\rho\in \mathrm{Irr}(\pi_0(Z_{{G^+}}(\phi)))\}
\end{equation*}

By varying over all conjugacy classes of infinitesimal parameters of $\mathcal{G}$, we  get $\Xi({^L\mathcal{G}^+})$.

Note that $t=\lambda_\phi(\mathrm{Fr})$, $t'=\phi(\mathrm{Fr})$, $\bar{t}=-\mathrm{log}|t'|+d_\nu\begin{pmatrix}
	r_0 &0\\
	0& -r_0
\end{pmatrix}$, $r_0=\mathrm{log}p^{1/2}$, and $Z_{{G^+}}(\phi)=Z_G(t,y)= Z_{Z_G(t_c)}(\bar{t},y)$. Using what have been proved in  section 2 of this paper, we have

\begin{equation}\label{EqCat1}
	\Lambda_{\mathcal{J}}^+:\mathrm{Rep}(\mathcal{G}^+)^{[\mathcal{T},1_{\triv}]}\simeq\mathrm{Rep}(\mathcal{G}^+)^{(\mathcal{J},1_{\triv})}\simeq  \mathcal{H}(\mathcal{J}\backslash\mathcal{G}^+/\mathcal{J})-\mathrm{Mod}
\end{equation}
\begin{equation}
	\mathcal{H}(\mathcal{J}\backslash\mathcal{G}^+/\mathcal{J})\simeq \mathcal{H}(G^+,p^{1/2})\end{equation}
If $\hat{\Gamma}\cdot t\not\subseteq G\cdot t$, then
\begin{equation}
	\mathrm{Ind}_{\mathcal{H}(G,v)}^{\mathcal{H}(G^+,v)}:	\mathcal{H}(G,v)-\mathrm{Mod}_{(t,p^{1/2})}\simeq \mathcal{H}(G^+,v)-\mathrm{Mod}_{(t,p^{1/2})}
\end{equation}
If $\hat{\Gamma}\cdot t\subseteq G\cdot t$, then
\begin{equation}\label{EqCatDTC1}
	\Theta^+:	 \mathbb{H}(Z_{G^+}(t_c))-\mathrm{Mod}_{(\mathrm{log}t_h, \mathrm{log}p^{1/2})}\simeq \mathcal{H}(G^+,v)-\mathrm{Mod}_{(t,p^{1/2})}
\end{equation} 
 Using the classification of all simple modules of $\mathbb{H}(Z_{G^+}(t_c))$ obtained in Section 3, we obtain all irreducible representations in $\mathrm{Rep}(\mathcal{G}^+)^{[\mathcal{T},1_{\triv}]}$ which are parameterized by $(\phi,\rho)\in \Xi({^L\mathcal{G}^+})$ satisfying $\phi(I_F)=1$.

We have the Iwahori-Matsumoto involution $\IM$ on $\mathbb{H}(Z_{G^+}(t_c))$ given by \begin{equation}
	\begin{aligned}
		\IM(N_w)&=\mathrm{sign}(w)N_w\quad &(w\in W) \\
		\IM(N_\tau)&=N_\tau\quad &(\tau\in \mathfrak{R})\\
		\IM(\xi)&=-\xi \quad &(\xi\in \mathfrak{t}^*)
	\end{aligned}
\end{equation}

Denote \begin{align}
	&	M^+(\phi,1):= \mathrm{ind}_{\mathcal{G}}^{\mathcal{G}^+}M(\phi,1) \quad &\hat{\Gamma}\cdot t\not\subseteq G\cdot t \\
	& M^+(\phi,\rho):=(\Lambda_{\mathcal{J}}^+)^{-1}\circ(\Theta^+)\circ\IM^*M_{y,\bar{t},r_0,\rho}^+ \quad &\hat{\Gamma}\cdot t\subseteq G\cdot t \label{TwiStanM2}
\end{align}

where  $y=d_\phi\begin{pmatrix}
	0 &1\\
	0& 0
\end{pmatrix}$, $t=\lambda_\phi(\mathrm{Fr})$, $t'=\phi(\mathrm{Fr})$, $\bar{t}=-\mathrm{log}|t'|+d_\nu\begin{pmatrix}
	r_0 &0\\
	0& -r_0
\end{pmatrix}$, and $\rho\in \mathrm{Irr}(\pi_0(Z_{Z_G(t_c)}(\bar{t},y)))=\mathrm{Irr}(\pi_0(Z_{{G^+}}(\phi)))$.

Hence we get a Langlands classification of $\mathrm{Irr}(\mathrm{Rep}(\mathcal{G}^+)^{[T,1_{\triv}]})$ using enhanced Langlands parameters that are  trivial on $I_F$.

If $\mathrm{Res}^{\mathcal{G}^+}_{\mathcal{G}}M^+(\phi,\rho)$ is reducible, we have $M^+(\phi,\rho)\simeq \mathrm{ind}_{\mathcal{G}}^{\mathcal{G}^+} M(\phi,1)$.  We define the twisted standard representation
\begin{equation}
	E^+(\phi,1):=\mathrm{ind}_{\mathcal{G}}^{\mathcal{G}^+} E(\phi,1)
\end{equation}

Hence if $\hat{\Gamma}\cdot t\not\subseteq G\cdot t$, we have
\begin{equation}
\begin{aligned}
	E^+(\phi,1)&=\mathrm{ind}_{\mathcal{G}}^{\mathcal{G}^+} E(\phi,1)\\
	&\simeq \mathrm{ind}_{\mathcal{G}}^{\mathcal{G}^+} \circ(\Lambda_{\mathcal{J}})^{-1}\circ\Theta\circ\IM^*E_{y,\bar{t},r_0,1}\\
	&\simeq 	(\Lambda_{\mathcal{J}}^+)^{-1}\circ \mathrm{Ind}_{\mathcal{H}(G,v)}^{\mathcal{H}(G^+,v)}(\Theta\circ \IM^*E_{y,\bar{t},r_0,1})
\end{aligned}
\end{equation}

If $\hat{\Gamma}\cdot t\subseteq G\cdot t$, we have
\begin{equation}\label{StaModI1}
	\begin{aligned}
		E^+(\phi,1)&=\mathrm{ind}_{\mathcal{G}}^{\mathcal{G}^+} E(\phi,1)\\
		&\simeq \mathrm{ind}_{\mathcal{G}}^{\mathcal{G}^+}\circ(\Lambda_{\mathcal{J}})^{-1}\circ\Theta\circ\IM^*E_{y,\bar{t},r_0,1}\\
		&\simeq(\Lambda_{\mathcal{J}}^+)^{-1}\circ\Theta^+\circ\mathrm{Ind}_{\mathbb{H}(Z_{G}(t_c))}^{\mathbb{H}(Z_{G^+}(t_c))}\IM^*E_{y,\bar{t},r_0,1}\\
		&\simeq(\Lambda_{\mathcal{J}}^+)^{-1}\circ\Theta^+\circ \IM^*\mathrm{Ind}_{\mathbb{H}(Z_{G}(t_c))}^{\mathbb{H}(Z_{G^+}(t_c))}E_{y,\bar{t},r_0,1}\\
		&\simeq(\Lambda_{\mathcal{J}}^+)^{-1}\circ\Theta^+\circ\IM^*E_{y,\bar{t},r_0,1}^+
	\end{aligned} 
\end{equation}

If $\mathrm{Res}^{\mathcal{G}^+}_{\mathcal{G}}M^+(\phi,\rho)$ is irreducible, then $\mathrm{Res}^{\mathcal{G}^+}_{\mathcal{G}}M^+(\phi,\rho)=M(\phi,1)$ and $M(\phi,1)\simeq \gamma^*M(\phi,1)$, where $\gamma^*M(\phi,1)$ is given by twisting the representation $M(\phi,1)$ with the action $\gamma$.

We choose  $\phi\in \Phi(\mathcal{G})$ such that  $t'=\phi(\mathrm{Fr})$ satisfies Condition \ref{IndCond}. Then we have an ordered sequence of segments $(\Delta_1,\cdots,\Delta_k)$ such that  $M(\phi,1)\simeq \mathrm{cos}(\St\langle \Delta_1\rangle \times \cdots \times \St\langle \Delta_k\rangle)$
where  $\varrho_0=|\cdot|_F^{z_i}$, $\varrho_1=\varrho_0|\cdot|_F^{\frac{1-n_i}{2}}$,  $\cdots$, $\varrho_{n_i}=\varrho_0|\cdot|_F^{\frac{n_i-1}{2}}$, and a segment $\Delta_i=[\varrho_1,\varrho_{n_i}]$.  For convenience, we write  $\St\langle \Delta_i\rangle$ as $\St(z_i,n_i)$:
\begin{align}
	M(\phi,1)&\simeq \mathrm{cos}(\St(z_1,n_1)\times \cdots \times \St(z_k,n_k))
\end{align}

Recall that  $\hat{\Gamma}$ acts on $G=\GL_n(\mathbb{C})$, and  from the representation theory of the graded Hecke algebra (Proposition \ref{StandMod2}) we have ${\gamma^*( M(\phi,1))}\simeq M(\hat{\gamma}(\phi),1)$.

Then \begin{align}
	M(\hat{\gamma}(\phi),1)\simeq \mathrm{cos}(\St(-z_k,n_k)\times\cdots\times \St(-z_1,n_1))
\end{align}

Hence, if  $M(\phi,1)\simeq \gamma^*M(\phi,1)$, then for every $(z_i,n_i)$ either $p^{z_i}={(p^{z_i})}^{-1}$ or $p^{z_j}=(p^{z_i})^{-1}$ with $n_j=n_i$ for some $(z_j,n_j)$.

\begin{remark}
	From \cite[Theorem 7.3]{RepBernZel}, we know that if $\pi\in \mathrm{Rep}(\GL_n(\mathbb{Q}_p))$ is an irreducible representation, then ${\gamma^*(\pi)}$ is  the contragredient representation of $\pi$.
\end{remark}

Note that $t'=\phi(\mathrm{Fr})=\times_{n_1}(p^{z_1})\times\cdots\times\times_{n_k}(p^{z_k})$ satisfies  \begin{align}
	|p^{z_1}|\geq |p^{z_2}| \geq \cdots \geq |p^{z_k}|
\end{align} 

Then either $p^{z_i}=(p^{z_i})^{-1}$ or $p^{z_{k+1-i}}=(p^{z_{i}})^{-1}$ with $n_{k+1-i}=n_{i}$.

Pick those $(z_i,n_i)$ satisfying $p^{z_i}=\pm 1$ and write \begin{align} \mathcal{Q}^0=\GL_{n_{i_1}+\cdots+n_{i_l}}(\Qp)
\end{align}
Hence we have\begin{align}
	\mathcal{Q}=\GL_{n_1}(\mathbb{Q}_p)\times\cdots\times \mathcal{Q}^0\times\cdots\times \GL_{n_k}(\mathbb{Q}_p)
\end{align}
where  $n_{k+1-i}=n_{i}$. 

Then $\gamma$ normalizes $\mathcal{Q}$,  and we can define $\mathcal{Q}^+=\mathcal{Q}\rtimes \Gamma$. Hence we have an irreducible representation  $M^+(\phi_{\mathcal{Q}},\rho)$ of $\mathcal{Q}^+$

Take the twisted parabolic subgroup $\mathcal{P}^+$ which contains the upper triangular Borel subgroup and $\mathcal{Q}^+$.

We define the twisted standard representation \begin{equation}
	E^+(\phi,\rho):= i_{\mathcal{P}^+} M^+(\phi_{\mathcal{Q}},\rho)
\end{equation}

Denote by $\hat{\mathfrak{g}}$ the Lie algebra of $G$, and define the Vogan moduli space \begin{equation*}
	V(\lambda,{^L\mathcal{G}}):=\{x\in \hat{\mathfrak{g}}|\mathrm{Ad}(\lambda(\mathrm{Fr}))x=px\}
\end{equation*}

Let $\mathrm{Per}(\lambda,{^L\mathcal{G}}^+)$ be the category of $Z_{{G}^+}(\lambda)$-equivariant perverse sheaves on $V(\lambda,{^L\mathcal{G}})$. Then we can parametrize simple objects $\IC(\zeta^+)$ in this category by  $\zeta^+=(\phi(\zeta^+),\rho(\zeta^+))\in \Xi(\lambda,{^L\mathcal{G}^+})$.

\begin{theorem}\label{MainTh2}
	Taking $\xi^+,\zeta^+\in \Xi(\lambda,{^L\mathcal{G}^+})$, we have irreducible representation $M^+(\phi(\xi^+),\rho(\xi^+))$ and standard representation $E^+(\phi(\zeta^+),\rho(\zeta^+))$. Then the multiplicity in the Grothendieck group is given by  
	\begin{equation}
		m(M^+(\phi(\xi^+),\rho(\xi^+)),E^+(\phi(\zeta^+),\rho(\zeta^+)))=\sum_k\mathrm{dim}H^k(i_{y(\zeta^+)}^!\IC(\xi^+)))^{\rho(\zeta^+)}
	\end{equation}
	where $y=d_{\phi(\zeta^+)}\begin{pmatrix}
		0 &1\\
		0& 0
	\end{pmatrix}$.
\end{theorem}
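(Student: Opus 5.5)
The plan is to move the question entirely to the twisted graded Hecke algebra $\mathbb{H}(Z_{G^+}(t_c))$ and apply Proposition \ref{GeoMul} there. The transfer uses the chain of exact equivalences \eqref{EqCat1}--\eqref{EqCatDTC1}, namely $\Lambda_{\mathcal{J}}^+$, the identification $\mathcal{H}(\mathcal{J}\backslash\mathcal{G}^+/\mathcal{J})\simeq\mathcal{H}(G^+,p^{1/2})$, and $\Theta^+$, together with the twist $\IM^*$. Exact equivalences preserve multiplicities in Grothendieck groups, so it suffices to identify the images of $M^+(\phi(\xi^+),\rho(\xi^+))$ and $E^+(\phi(\zeta^+),\rho(\zeta^+))$ under these equivalences, and to identify the geometric side $V(\lambda,{^L\mathcal{G}})$ with $\mathfrak{g}^a$.

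\textbf{Case split.} Fix $\lambda$ and $t=\lambda(\mathrm{Fr})$.

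If $\hat{\Gamma}\cdot t\not\subseteq G\cdot t$, then $Z_{G^+}(\lambda)=Z_G(\lambda)$ and every $\rho$ is trivial. Both $M^+$ and $E^+$ are $\mathrm{ind}_{\mathcal{G}}^{\mathcal{G}^+}$ of the connected objects, and induction is an equivalence on the relevant blocks. The formula then reduces to the connected Kazhdan--Lusztig conjecture recalled in the introduction.

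If $\hat{\Gamma}\cdot t\subseteq G\cdot t$, there are two steps.

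\emph{Identify the simple objects.} By \eqref{TwiStanM2}, the simple $M^+(\phi,\rho)$ corresponds to $\IM^*M^+_{y,\bar t,r_0,\rho}$. By the theorem following Proposition \ref{StandMod2}, this is $\IM^*\mathbf{P}_{\phi^+}$, where $\phi^+$ is the $Z^+$-orbit datum attached to $(\phi,\rho)$.

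\emph{Identify the standard objects.} I will show that $E^+(\phi,\rho)$ corresponds to $\IM^*\mathbf{E}_{\psi^+}\simeq \IM^*E^+_{y,\bar t,r_0,\rho}$ (Theorem \ref{StandIso2}).
\begin{itemize}
\item When $\mathrm{Res}M^+$ is reducible, this is \eqref{StaModI1}.
\item When it is irreducible, $E^+(\phi,\rho)=i_{\mathcal{P}^+}M^+(\phi_{\mathcal{Q}},\rho)$. Diagram \eqref{commutediagram2} turns $i_{\mathcal{P}^+}$ into $\mathcal{H}(G^+)\otimes_{\mathcal{H}(M^+)}(-)$, and Theorem \ref{Ind22} turns that into $\mathbb{H}(Z_{G^+}(t_c))\otimes_{\mathbb{H}(Z_{Q^+}(t_c))}(-)$.
\item On the Levi $\mathcal{Q}^+$, the parameter $\phi_{\mathcal{Q}}$ splits into a part $\pi_1\otimes\cdots\otimes\pi_k$ with generic $z_i$ and a tempered middle part $\pi^0$. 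I claim that $M^+(\phi_{\mathcal{Q}},\rho)$ equals the standard module $\IM^*E^+_{y,\bar t,r_0,\rho}$ for $Q^+$. The tempered middle factor has irreducible standard module, and the essentially square-integrable outer factors $\St(z_i,n_i)$ are already irreducible.
\item Theorem \ref{Ind31} then gives $\mathbb{H}(G^+)\otimes_{\mathbb{H}(Q^+)}\mathbf{E}_{\psi^+(Q^+)}\simeq\mathbf{E}_{\psi^+(G^+)}$. Its dominance hypothesis $\alpha(t_0')\le 0$ is ensured by Condition \ref{IndCond} and the ordering of segments. One must also check that $\IM^*$ commutes with induction, which holds because $\IM$ restricts to $\IM$ on $\mathbb{H}(Z_{Q^+}(t_c))$.
\end{itemize}

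\emph{Match the geometry.} Under $\bar t=-\log|t'|+d_\nu(\mathrm{diag}(r_0,-r_0))$ and $r_0=\log p^{1/2}$, the space $\mathfrak{g}^a=\{x:[\bar t,x]=2r_0x\}$ matches $V(\lambda,{^L\mathcal{G}})=\{x:\mathrm{Ad}(t)x=px\}$. The condition $\mathrm{Ad}(t_c)x=x$ is automatic for the unramified parameters here, after passing to $Z_{G^+}(t_c)$ and its Lie algebra. This identification is $Z_{G^+}(\lambda)$-equivariant, with $Z^+=Z_{G^+}(\lambda)\times\mathbb{C}^*$ acting through the same orbits; the extra $\mathbb{C}^*$ has connected stabilizer contributions, by the remark identifying $\pi_0(Z^+_y)\simeq\pi_0(Z_{G^+}(\bar t,y))$. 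Hence $\mathrm{Per}(\lambda,{^L\mathcal{G}}^+)$ identifies with $Z^+$-equivariant perverse sheaves on $\mathfrak{g}^a$, $\IC(\xi^+)$ identifies with $\IC_{\phi^+}$, and $y(\zeta^+)$ identifies with $y$. Proposition \ref{GeoMul} now yields the stated multiplicity. The $\IM^*$ twist is harmless because it is an autoequivalence applied uniformly to both modules.

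\textbf{Main obstacle.} The hardest step is the standard-module identification in the irreducible-restriction case. Two things must be checked. First, $M^+(\phi_{\mathcal{Q}},\rho)$ for the disconnected Levi $\mathcal{Q}^+$, whose $\gamma$ swaps blocks $i$ and $k+1-i$, really is the geometric standard module of $\mathbb{H}(Z_{Q^+}(t_c))$, including matching the extension $\rho$ through the normalization $J^\tau$ of Proposition \ref{GeoActM}. Second, the hypotheses of Theorem \ref{Ind31} hold. For the first point, I would use Proposition \ref{GeoActM} on $Q^+$, note that the outer blocks are permuted freely by $\tau$ so that the $\rho$-twist lives only on $\mathcal{Q}^0$, and compare with Proposition \ref{StandMod2} applied to the tempered factor.
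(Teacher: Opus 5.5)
Your proposal is correct and follows the paper's proof essentially step for step. It uses the same case split on whether $\hat{\Gamma}\cdot t\subseteq G\cdot t$, identifies $E^+(\phi,\rho)$ with $\IM^*E^+_{y,\bar t,r_0,\rho}$ either via \eqref{StaModI1} or via $i_{\mathcal{P}^+}$, \eqref{commutediagram2}, Theorem~\ref{Ind22} and Theorem~\ref{Ind31}, and concludes with Proposition~\ref{GeoMul}. The differences are minor: the paper justifies $M^{Q,+}_{y,\bar t,r_0,\rho}=E^{Q,+}_{y,\bar t,r_0,\rho}$ simply by noting that $y$ lies in the open orbit, which is equivalent to your essentially-tempered argument, and it leaves implicit the dominance hypothesis of Theorem~\ref{Ind31} and the identification of $V(\lambda,{^L\mathcal{G}})$ with $\mathfrak{g}^a$ that you spell out.
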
 
\begin{proof}
	Denote $t=\lambda_\phi(\mathrm{Fr})$. 
	
	If $\hat{\Gamma}\cdot t\not\subseteq G\cdot t$, we have $M^+(\phi,1):= \mathrm{ind}_{\mathcal{G}}^{\mathcal{G}^+}M(\phi,1)$ and $E^+(\phi,1):= \mathrm{ind}_{\mathcal{G}}^{\mathcal{G}^+}E(\phi,1)$. Using \eqref{IndComPG}, \eqref{EqCat1} and \eqref{EqCatDTC1}, we can reduce the disconnected case to the connected case, and get the theorem.
	
	If $\hat{\Gamma}\cdot t\subseteq G\cdot t$, we have \begin{align}
		M^+(\phi,\rho):=(\Lambda_{\mathcal{J}}^+)^{-1}\circ\Theta^+\circ\IM^*M_{y,\bar{t},r_0,\rho}^+
	\end{align}
	If $M(\phi,1)\simeq \gamma^*M(\phi,1)$, we have defined 
	\begin{align}
		E^+(\phi,\rho)= i_{\mathcal{P}^+} M^+(\phi_{\mathcal{Q}},\rho)
	\end{align}
	The irreducible representation $M^+(\phi_{\mathcal{Q}},\rho)$ of $\mathcal{Q}$ is given by  \begin{align}
		\Lambda^+_{\mathcal{J}_{\mathcal{Q}}}: \mathrm{Rep}(\mathcal{Q}^+)^{(\mathcal{J}_{\mathcal{Q}},1_{\triv})}  \xrightarrow[]{\simeq} \mathcal{H}(\mathcal{J}_{\mathcal{Q}}\backslash\mathcal{Q}^+/\mathcal{J}_{\mathcal{Q}})-\Mod
	\end{align}
	
	\begin{align}
		\mathcal{H}(\mathcal{J}_{\mathcal{Q}}\backslash\mathcal{Q}^+/\mathcal{J}_{\mathcal{Q}})\simeq \mathcal{H}(Q^+,p^{1/2})\simeq \mathcal{H}(Q,p^{1/2})\rtimes \Gamma
	\end{align}
	
	and
	\begin{equation*}
		\Theta^+_Q:	\mathcal{H}(Q^+,v)-\mathrm{Mod}_{(t,p^{1/2})}\simeq \mathbb{H}(Z_{Q^+}(t_c))-\mathrm{Mod}_{(\mathrm{log}t_h, \mathrm{log}p^{1/2})}
	\end{equation*} 
	which means \begin{align}
		M^+(\phi_{\mathcal{Q}},\rho)=(\Lambda_{\mathcal{J_Q}}^+)^{-1}\circ\Theta^+_Q\circ \IM^*M_{y,\bar{t},r_0,\rho}^{Q,+} 
	\end{align}
	
	From \eqref{commutediagram2} and Theorem \ref{Ind22}, we have
	\[
	\begin{tikzcd}
		\mathrm{Rep}(\mathcal{G}^+)^{(\mathcal{J},1_{\triv})} \ar[r,"{ \Lambda^+_{\mathcal{J}}}"] & \mathcal{H}(G^+,p^{1/2})-\Mod\\
		\mathrm{Rep}(\mathcal{Q}^+)^{(\mathcal{J}_{\mathcal{Q}},1_{\triv})} \ar[r,"{\Lambda^+_{\mathcal{J}_{\mathcal{Q}}}}"] \ar[u,"{i_{\mathcal{P}^+}}"]& \mathcal{H}(Q^+,p^{1/2})-\Mod\ar[u,"\Ind"]
	\end{tikzcd}
	\]
	and  \[
	\begin{tikzcd}
		\mathcal{H}(Q^+,v)-\mathrm{Mod}_{(t,p^{1/2})}\ar[r,"{ \Theta^+}"] & \mathbb{H}(Z_{Q^+}(t_c))-\mathrm{Mod}_{(\mathrm{log}t_h, \mathrm{log}p^{1/2})}\\
		\mathcal{H}(Q^+,v)-\mathrm{Mod}_{(t,p^{1/2})}\ar[r,"{\Theta^+_Q}"] \ar[u,"{\Ind}"]& \mathbb{H}(Z_{Q^+}(t_c))-\mathrm{Mod}_{(\mathrm{log}t_h, \mathrm{log}p^{1/2})}\ar[u,"\Ind"]
	\end{tikzcd}
	\]
	
	Hence we have \begin{align}
		E^+(\phi,\rho)&= i_{\mathcal{P}^+} M^+(\phi_{\mathcal{Q}},\rho)\notag\\
		&=i_{\mathcal{P}^+}(\Lambda_{\mathcal{J_Q}}^+)^{-1}\circ\Theta^+_Q\circ\IM^*M_{y,\bar{t},r_0,\rho}^{Q,+}\notag\\
		&\simeq (\Lambda_{\mathcal{J}}^+)^{-1}\circ\Theta^+\circ \Ind_{\mathbb{H}(Z_{Q^+}(t_c))}^{\mathbb{H}(Z_{G^+}(t_c))}\IM^*M_{y,\bar{t},r_0,\rho}^{Q,+}\notag\\
		&\simeq (\Lambda_{\mathcal{J}}^+)^{-1}\circ\Theta^+\circ \IM^*\Ind_{\mathbb{H}(Z_{Q^+}(t_c))}^{\mathbb{H}(Z_{G^+}(t_c))}M_{y,\bar{t},r_0,\rho}^{Q,+}\label{KZInd1}\\
		&\simeq (\Lambda_{\mathcal{J}}^+)^{-1}\circ\Theta^+\circ \IM^*\Ind_{\mathbb{H}(Z_{Q^+}(t_c))}^{\mathbb{H}(Z_{G^+}(t_c))}E_{y,\bar{t},r_0,\rho}^{Q,+}\label{KZInd2}\\
		&\simeq (\Lambda_{\mathcal{J}}^+)^{-1}\circ\Theta^+\circ\IM^*E_{y,\bar{t},r_0,\rho}^+\label{KZInd3}
	\end{align}
	Here \eqref{KZInd1} follows from a canonical isomorphism of $\mathbb{H}(Z_{G^+}(t_c))$-modules: \begin{equation}
		\begin{aligned}
			\IM^*(\mathbb{H}(Z_{G^+}(t_c))\otimes_{\mathbb{H}(Z_{Q^+}(t_c))}V) &\xrightarrow{\simeq} \mathbb{H}(Z_{G^+}(t_c))\otimes_{\mathbb{H}(Z_{Q^+}(t_c))}\IM^*(V)\\
			h\otimes v &\mapsto \IM(h)\otimes v
		\end{aligned}
	\end{equation}  The equality \eqref{KZInd2} follows from the fact that $y$ lies in the open orbit.
	The equality \eqref{KZInd3} follows from  the induction Theorem \ref{Ind31} for graded Hecke algebra.
	
	If $M(\phi,1)\not \simeq \gamma^*M(\phi,1)$, we also have
	\begin{align}
		E^+(\phi,\rho)\simeq(\Lambda_{\mathcal{J}}^+)^{-1}\circ\Theta^+\circ\IM^*E_{y,\bar{t},r_0,1}^+
	\end{align}
	which is given by \eqref{StaModI1}.
	Recall that $M^+(\phi,\rho)=(\Lambda_{\mathcal{J}}^+)^{-1}\circ\Theta^+\circ\IM^*M_{y,\bar{t},r_0,\rho}^+$. 
	Hence using Proposition \ref{GeoMul}, we get the theorem when $\hat{\Gamma}\cdot t\subseteq G\cdot t$.
\end{proof}

\section{Whittaker normalization}
In the last section, for any irreducible representation $M(\phi,1)$ of $\mathcal{G}=\GL_n(\mathbb{Q}_p)$ that is not $\gamma$-invariant, we can always get the irreducible representation $M^+(\phi,1)=\mathrm{ind}_{\mathcal{G}}^{\mathcal{G}^+} M(\phi,1)$ and the standard representation $E^+(\phi,1)=\mathrm{ind}_{\mathcal{G}}^{\mathcal{G}^+} E(\phi,1)$ by induction. However, if $M(\phi,1)$ is $\gamma$-invariant, there are two extensions of $M(\phi,1)$. In the previous section, we used geometry to obtain a uniform classification $M^+(\phi,1)$ and $M^+(\phi,-1)$ ( where 1 denotes the trivial representation of the component group and -1 denotes the sign representation of the component group).

But if $M(\phi,1)$ is $\gamma$-invariant, we can also use the Whittaker normalization introduced by Arthur on the $p$-adic side to get a canonical extension  $(M(\phi,1))^+_W$ of $\mathcal{G}^+=\GL_n(\Qp)\rtimes\Gamma$.
In this section, we will prove that $(M(\phi,1))^+_W\simeq M^+(\phi,1)$.
\subsection{Definition}
Recall that $\Pi$ denotes the set of simple roots determined by the upper triangular Borel subgroup. The abelian group $\prod_{\alpha\in\Pi}\mathcal{U}_\alpha$ is a quotient of $\mathcal{U}$, where $\mathcal{U}_\alpha$ is the root subgroup for $\alpha$. Given characters $\varphi_\alpha:\mathcal{U}_\alpha\to\mathbb{C}^*$, their product defines a  character on $\prod_{\alpha\in\Pi}\mathcal{U}_\alpha$ and hence a character $\varphi$ on $\mathcal{U}$. We say that $\varphi$ is principal if all the $\varphi_\alpha$ are non-trivial. We say that $\varphi$ is unramified if all the character $\varphi_\alpha$ are trivial on $\mathscr{O}$ but non-trival on $\mathscr{P}^{-1}$.

Since $\gamma$ normalizes $\mathcal{U}$, we make such a choice of $\varphi$  that $\varphi$ is  principal, unramified and $\gamma$ invariant.

Suppose first that an irreducible representation $\pi\in \mathrm{Rep}(\mathcal{G})$ is tempered. Then we have a Whittaker functional $\mathcal{W}$ on $\pi$. By definition, $\mathcal{W}$ is a nonzero linear functional on the underlying space $V$ of $\pi$ such that \begin{equation*}
	\mathcal{W}(\pi(u)v)=\varphi(u)\mathcal{W}(v)
\end{equation*}

It is unique up to a scalar multiple.

If the representation $\pi$  is equivalent to $\gamma^*(\pi)$, we can find a nontrivial intertwining operator $I$ from $\pi$ to $\gamma^*(\pi)$, which is unique up to a nonzero scalar multiple. Hence $\mathcal{W}\circ I$ is nonzero linear functional on $V$, and it satisfies
\begin{equation}
\begin{aligned}
	\mathcal{W}\circ I(\pi(u)v)&=\mathcal{W}(\pi(\gamma(u))I(v))\\
	&=\varphi(\gamma(u))\mathcal{W}(I(v))\\
	&=\varphi(u)\mathcal{W}\circ I(v)
\end{aligned}
\end{equation}
Therefore  $\mathcal{W}\circ I$ is also a Whittaker functional on $\pi$. It therefore equals $c\mathcal{W}$, for some $c\in \mathbb{C}^*$. We set \begin{equation*}
	I_W:=c^{-1}I
\end{equation*}
Then $I_W$ is the unique intertwining operator from $\pi$ to $\gamma^*(\pi)$ such that \begin{equation*}
	\mathcal{W}=\mathcal{W}\circ I_W
\end{equation*}

Hence we can define a representation $(\pi^+_W,V)$ of $\mathcal{G}^+$  by \begin{equation}
\begin{aligned}
	\pi^+_W(g,1)\cdot v&=\pi(g)\cdot v\\
	\pi^+_W(g,\gamma)\cdot v&= \pi(g)\cdot I_W(v) 
\end{aligned}
\end{equation}

Suppose $\pi$ is general irreducible and $\pi$ is equivalent to $\gamma^*(\pi)$. From the Langlands classification,  we have $\pi\simeq M(\phi,1)$ for some Langlands parameter $\phi$, which is the unique quotient of the standard module $E(\phi,1)$. Using the argument in the last section, we have
\begin{align}
	E(\phi,1)=\St(z_1,n_1)\times \cdots \times \St(z_k,n_k)
\end{align}
where either $p^{z_i}=(p^{z_i})^{-1}$ or $p^{z_{k+1-i}}=(p^{z_{i}})^{-1}$ with $n_{k+1-i}=n_{i}$.

Pick those  $(z_i,n_i)$ satisfying $p^{z_i}=\pm 1$, then we have a tempered representation \begin{align}
	\pi^0:=\St(z_{i_1},n_{i_1})\times\cdots\times \St(z_{i_l},n_{i_l})
\end{align} of 
$\mathcal{Q}^0=\GL_{n_{i_1}+\cdots+n_{i_l}}(\Qp)$.

Recall 
$\mathcal{Q}=\GL_{n_1}(\mathbb{Q}_p)\times\cdots\times \mathcal{Q}^0\times\cdots\times \GL_{n_k}(\mathbb{Q}_p)$.

We have the representation
\begin{align}\label{InvRep}
	\St (z_1,n_1) \otimes \cdots \otimes\pi^0\otimes\cdots\otimes \St( z_k,n_k)
\end{align}
of $\mathcal{Q}$, which satisfies
\begin{equation}
\begin{aligned}
	\gamma^*(\St(z_i,n_i))&\simeq \St(z_{k+1-i},n_{k+1-i})\\
	\gamma^*(\pi^0)&\simeq {\pi}^0
\end{aligned}
\end{equation}

Hence\begin{equation}
\begin{aligned}
	E(\phi,1)&=\St(z_1,n_1)\times \cdots \times \St(z_k,n_k)\\
	&=i_{\mathcal{P}}(\St (z_1,n_1) \otimes \cdots \otimes\pi^0\otimes\cdots\otimes \St(z_k,n_k))
\end{aligned}
\end{equation}

From the tempered case, we have the Whittaker normalized intertwining operator $I_{W,\pi^0}$ from $\pi^0$ to $\gamma^*(\pi^0)$. Since
$\gamma^*(\St(z_i,n_i))\simeq \St(z_{k+1-i})$,
we have a morphism $I_i$ from the underlying space of $\St(z_i,n_i)$ to the space $\St(z_{k+1-i},n_{k+1-i})$ such that $I_i$ is an isomorphism from the representation $\gamma^*(\St(z_i,n_i))$ to $\St(z_{k+1-i},n_{k+1-i})$.

Let $\phi \in \St (z_1,n_1) \times \cdots \times\pi^0\times\cdots\times \St( z_k,n_k)$,  which is the induction of $\St (z_1,n_1) \otimes \cdots \otimes\pi^0\otimes\cdots\otimes \St( z_k,n_k)$. For every $g\in \GL_n(\mathbb{Q}_p)$, we define\begin{align}\label{ActIndP1}
	I_W(\phi)(g):=(I_1^{-1}\otimes \cdots\otimes I_{W,\pi_0}\otimes\cdots\otimes I_1)\iota(\phi(\gamma(g)))
\end{align}
where $\iota$ is the isomorphism from $\St (z_1,n_1) \otimes \cdots \otimes\pi^0\otimes\cdots\otimes \St( z_k,n_k)$ to $\St (z_k,n_k) \otimes \cdots \otimes\pi^0\otimes\cdots\otimes \St(z_1,n_1)$ that swaps the terms.

Hence $I_W$ intertwines $E(\phi,1)$ and $\gamma^*E(\phi,1)$, and using  $I_W$ on  $E(\phi,1)=\St (z_1,n_1) \times \cdots \times\pi^0\times\cdots\times \St( z_k,n_k)$, we get the representation $(E(\phi,1))^+_W$ of $\mathcal{G}^+$. 
\begin{remark}
	From \cite{Transfer}, this $I_W$ induces a trivial action on the Whittaker functional of $E(\phi,1)$. Hence this action is the Whittaker-normalized from the definition in the introduction.
\end{remark}

Since $\pi\simeq M(\phi,1)$ is the unique irreducible quotient of  $E(\phi,1)$, we can get an action $I_{W,\pi}$ on $\pi\simeq M(\phi,1)$ that intertwines $\pi$ and $\gamma^*(\pi)$. Hence, using this $I_{W,\pi}$, we get the representation $(M(\phi,1))^+_W$ of $\mathcal{G}^+$.

Note that $\gamma$ normalizes $\mathcal{Q}$ and the representation $ \St (z_1,n_1) \otimes \cdots \otimes\pi^0\otimes\cdots\otimes \St( z_k,n_k)$ is also $\gamma$ invariant. We define \begin{align}\label{LeviAct}
	I_{\mathcal{Q}}:=(I_1^{-1}\otimes \cdots\otimes I_{W,\pi^0}\otimes\cdots\otimes I_1)\circ \iota
\end{align}
Hence  $I_{\mathcal{Q}}$  intertwines  $\St (z_1,n_1) \otimes \cdots \otimes\pi^0\otimes\cdots\otimes \St( z_k,n_k)$ and $\gamma^*( \St (z_1,n_1) \otimes \cdots \otimes\pi^0\otimes\cdots\otimes \St( z_k,n_k))$.

Therefore we can get a representation $(\St (z_1,n_1) \otimes \cdots \otimes\pi^0\otimes\cdots\otimes \St( z_k,n_k))^+$ of $\mathcal{Q}^+=\mathcal{Q}\rtimes\Gamma$.

\begin{proposition}\label{WhiInd}
	Denote $\pi_{\mathcal{Q}}:=\St (z_1,n_1) \otimes \cdots \otimes\pi^0\otimes\cdots\otimes \St( z_k,n_k)$ and  $\pi_{\mathcal{Q}}^+:=(\St (z_1,n_1) \otimes \cdots \otimes\pi^0\otimes\cdots\otimes \St( z_k,n_k))^+$. Then we have $E(\phi,1)=i_{\mathcal{P}}\pi_{\mathcal{Q}}$.
	
	There is an isomorphism of representations of $\mathcal{G}^+$:\begin{align}
		(E(\phi,1))^+_W\simeq i_{\mathcal{P}^+}\pi_{\mathcal{Q}}^+
	\end{align}
	which means that the representation $i_{\mathcal{P}^+}\pi_{\mathcal{Q}}^+$ of $\mathcal{G}^+$ has the same underlying vector space as $i_{\mathcal{P}}\pi_{\mathcal{Q}}$, and the $\gamma$ action on $i_{\mathcal{P}}\pi_{\mathcal{Q}}$ is given by \eqref{ActIndP1}.
\end{proposition}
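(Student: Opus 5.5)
The plan is to unwind the definition of normalized parabolic induction for $\mathcal{P}^+=\mathcal{Q}^+\mathcal{U}$ and compare it with the $\gamma$-action \eqref{ActIndP1} on $i_{\mathcal{P}}\pi_{\mathcal{Q}}$. The first point is that $E(\phi,1)=i_{\mathcal{P}}\pi_{\mathcal{Q}}$ is just the definition of $\St(z_1,n_1)\times\cdots\times\St(z_k,n_k)$: we regroup the factors with $p^{z_i}=\pm1$ into $\pi^0$ on $\mathcal{Q}^0$ and use transitivity of parabolic induction. The ordering is harmless, because $\pi^0$ is itself an induced representation inside $\mathcal{Q}^0$.

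\textbf{Step 1: the underlying spaces.} Write $i_{\mathcal{P}^+}\pi_{\mathcal{Q}}^+$ as the space of smooth $f:\mathcal{G}^+\to V_{\mathcal{Q}}$ with
\[
f(qug)=\delta_{\mathcal{P}^+}^{1/2}(q)\,\pi_{\mathcal{Q}}^+(q)f(g),\qquad q\in\mathcal{Q}^+,\ u\in\mathcal{U}.
\]
Because $\mathcal{G}^+=\mathcal{P}^+\mathcal{G}$ and $\mathcal{P}^+\cap\mathcal{G}=\mathcal{P}$, restriction $f\mapsto f|_{\mathcal{G}}$ identifies this space with $i_{\mathcal{P}}\pi_{\mathcal{Q}}$ as $\mathcal{G}$-representations. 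The inverse is
\[
f(g(1,\gamma)):=\pi^+_{\mathcal{Q}}(1,\gamma)\,f(\gamma(g))
\]
(with the conventions $(1,\gamma)(g,1)=(\gamma(g),1)(1,\gamma)$ and $\delta_{\mathcal{P}^+}(1,\gamma)=1$). Since $\gamma$ preserves $\mathcal{P}$ and $\mathcal{U}$, we have $\delta_{\mathcal{P}}\circ\gamma=\delta_{\mathcal{P}}$, and this makes the extension well defined and of the required equivariance.

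\textbf{Step 2: the $\gamma$-action.} Compute the action of $(1,\gamma)$ by right translation:
\[
((1,\gamma)\cdot f)(g)=f(g(1,\gamma))=\pi^+_{\mathcal{Q}}(1,\gamma)\,f(\gamma(g)).
\]
By construction, $\pi^+_{\mathcal{Q}}(1,\gamma)$ is the operator $I_{\mathcal{Q}}=(I_1^{-1}\otimes\cdots\otimes I_{W,\pi^0}\otimes\cdots\otimes I_1)\circ\iota$ from \eqref{LeviAct}. So the formula coincides with \eqref{ActIndP1}. Consequently the transported $\mathcal{G}^+$-structure on $i_{\mathcal{P}}\pi_{\mathcal{Q}}$ is exactly $(E(\phi,1))^+_W$, and restriction gives the isomorphism.

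\textbf{Main obstacle.} The real work is the bookkeeping in Step 1. We must check that $\pi_{\mathcal{Q}}^+$ is a genuine representation of $\mathcal{Q}^+$, which means $I_{\mathcal{Q}}^2=1$ and $I_{\mathcal{Q}}$ intertwines $\pi_{\mathcal{Q}}$ with $\gamma^*\pi_{\mathcal{Q}}$. The square is $1$ because the $I_i$ and $I_i^{-1}$ pair off under the swap $\iota$, and because $I_{W,\pi^0}^2$ preserves the Whittaker functional and is therefore the identity by Schur's lemma. We must also check that $\gamma$ acts on $\mathcal{Q}$ so that it carries the $i$-th block to the $(k+1-i)$-th block, compatibly with $\iota$. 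If needed, the identification $\gamma(\bar s_{\alpha_i})=\bar s_{\alpha_{n-i}}$ from \eqref{GAct1} pins down this block permutation. The intertwining property is the displayed relation $\gamma^*(\St(z_i,n_i))\simeq\St(z_{k+1-i},n_{k+1-i})$, together with the tempered case for $\pi^0$.
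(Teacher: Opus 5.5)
Your proposal is correct and follows the paper's proof. The paper also restricts to $\mathcal{G}$, uses $(g,\gamma)=(1,\gamma)(\gamma(g),1)$ to get $\phi((g,\gamma))=I_{\mathcal{Q}}\bar\phi(\gamma(g))$, and reads off the right-translation action of $(1,\gamma)$ as \eqref{ActIndP1}. The extra checks you list are the ones the paper leaves implicit when it asserts that restriction is an isomorphism and that $\pi_{\mathcal{Q}}^+$ is a representation: $\delta_{\mathcal{P}^+}(1,\gamma)=1$, and $I_{\mathcal{Q}}^2=1$ via the pairing of $I_i^{\pm1}$ together with Schur's lemma and Whittaker-invariance for $I_{W,\pi^0}$.
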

\begin{proof}
	Taking $\phi\in i_{\mathcal{P}^+}\pi_{\mathcal{Q}}^+$, we restrict $\phi$ as a function on $\mathcal{G}$, which means we have \begin{equation}
		\begin{aligned}
			\mathrm{Res}: i_{\mathcal{P}^+}\pi_{\mathcal{Q}}^+&\to i_{\mathcal{P}}\pi_{\mathcal{Q}}\\
			\phi&\mapsto\bar{\phi}
		\end{aligned}
	\end{equation} 
	Since \begin{align}
		\phi((g,\gamma))&=\phi((1,\gamma)(\gamma(g),1))\nonumber\\&=\pi_{\mathcal{Q}}^+((1,\gamma))\phi(\gamma(g),1)\nonumber\\&=I_{\mathcal{Q}}\circ \bar{\phi}(\gamma(g))\label{ActIndPhi}
	\end{align}
	the map $\mathrm{Res}$ is an isomorphism. Since $(1,\gamma)\cdot\phi(g,1)=\phi((g,1)\cdot(1,\gamma))=\phi((g,\gamma))$, the $(1,\gamma)$ action on $\bar{\phi}$ is given by \eqref{ActIndPhi}, which coincides with \eqref{ActIndP1}.
\end{proof}

Since $M^+(\phi,1)$ is the unique   irreducible quotient of $E^+(\phi,1)$, if we want to prove $(M(\phi,1))^+_W\simeq M^+(\phi,1)$,  it suffices to prove $ (E(\phi,1))^+_W\simeq E^+(\phi,1)$.

Since $E^+(\phi,1)= i_{\mathcal{P}^+} E^+(\phi_{\mathcal{Q}},1)=i_{\mathcal{P}^+} M^+(\phi_{\mathcal{Q}},1)$ and $(E(\phi,1))^+_W\simeq i_{\mathcal{P}^+}\pi_{\mathcal{Q}}^+$, we just need to prove $\pi_{\mathcal{Q}}^+\simeq M^+(\phi_{\mathcal{Q}},1)$.

\subsection{Tempered case}
Let $\mathcal{G}=\GL_n(\mathbb{Q}_p)$. Denote
\begin{equation}
\begin{aligned}
	\pi_n&:=\St(z,n)\quad p^z=1\\
	\pi'_n&=\St(z',n)\quad p^{z'}=-1
\end{aligned}
\end{equation}
Hence $z$ lies in  $(2\pi i /\mathrm{log}p)\mathbb{Z} $ and $z'$  lies in $\pi i/\mathrm{log}p+(2\pi i /\mathrm{log}p)\mathbb{Z}$.

Take an irreducible representation $\pi$ in $\mathrm{Rep}(\mathcal{G})^{[\mathcal{T},1_{\triv}]}$. If $\pi$ is a tempered representation, $\pi$ has the form \begin{equation*}
	\pi=\pi_{k_1}\times\cdots\times\pi_{k_i}\times\pi'_{l_1}\times\cdots\times\pi'_{l_j}
\end{equation*}

Hence we can view $\pi$ as a unique subrepresentation of the unramified principal series: 
\begin{equation*}
	\pi\hookrightarrow\sigma:= (\times_\lambda\rho|\cdot|_F^{\lambda})\times(\times_{\lambda'}\rho'|\cdot|_F^{\lambda'})
\end{equation*}
where $\rho=|\cdot|_F^z$ with $p^z=1$,  $\rho'=|\cdot|_F^{z'}$ with $p^{z'}=-1$,  and $\lambda$,$\lambda'$ are the half integers, taken in decreasing order.

Denote $k=k_1+\cdots+k_i$, $l=l_1+\cdots+l_{j}$, $\mathcal{M}=\GL_k(\mathbb{Q}_p)\times \GL_l(\mathbb{Q}_p)$.
Hence $\sigma=i_{\mathcal{T}}^{\mathcal{G}}\mathbb{C}_{(\lambda,\lambda')}=i_{\mathcal{M}}^{\mathcal{G}}i_{\mathcal{T}}^{\mathcal{M}}\mathbb{C}_{(\lambda,\lambda')}=i_{\mathcal{M}}^{\mathcal{G}}\sigma_{\mathcal{M}}$, where $\sigma_{\mathcal{M}}=i_{\mathcal{T}}^{\mathcal{M}}\mathbb{C}_{(\lambda,\lambda')}$

Denote $\pi_1=\pi_{k_1}\times\cdots\times\pi_{k_i}$, $\pi_{-1}=\pi'_{l_1}\times\cdots\times\pi'_{l_j}$, then $\pi=i_{\mathcal{M}}^{\mathcal{G}}(\pi_1\otimes\pi_{-1})$.
\subsubsection{Intertwining operator and Whittaker functional}  
We mainly use the ideas in \cite{haines2003iwahori}, but translate their right action into our left action situation.

Denote $V_{\mathcal{J}}=c\operatorname{-ind}_{\mathcal{J}}^{\mathcal{G}}(1)=C_c^\infty(\mathcal{J}\backslash \mathcal{G})$, $\mathcal{H}=C_c^\infty(\mathcal{J}\backslash\mathcal{G}/\mathcal{J})$.
Hence $V_{\mathcal{J}}$ is projective generator, and we have $\mathcal{H}'\stackrel{\simeq}{\longrightarrow}\mathrm{End}_G(V_{\mathcal{J}})$ $(\phi\mapsto t_\phi)$ given by left convolution:
\begin{equation*}
	t_\phi(f)(g)=\int_{\mathcal{G}}\phi(x)f(x^{-1}g)dx
\end{equation*}

We also can view $V_{\mathcal{J}}^{\mathcal{J}}$ as an $\mathcal{H}$-module via

\begin{equation*}
	\psi\circ f=\int_{\mathcal{G}}\psi(g)\pi(g)fdg
\end{equation*}

which means 

\begin{equation}\label{HAct1}
	(\psi\circ f)(x)=\int_{\mathcal{G}}\psi(g)f(xg)dg
\end{equation}
where $\psi \in \mathcal{H}$, $f\in V_{\mathcal{J}}^{\mathcal{J}}$.

Hence we have a left $\mathcal{H}$-module isomorphism
\begin{equation}\label{HeIso}
    \begin{aligned}
	\mathcal{H} &\stackrel{\simeq}{\longrightarrow} V_{\mathcal{J}}^{\mathcal{J}}\\
	\psi&\mapsto \psi\circ 1_{\mathcal{J}1\mathcal{J}} 
\end{aligned}    
\end{equation}

Using \eqref{HAct1}, we get
\begin{equation*}
	(\psi\circ 1_{\mathcal{J}1\mathcal{J}})(x)= \psi(x^{-1})
\end{equation*}

Hence for every $\psi'\in \mathcal{H}$ we can define an action on $V_{\mathcal{J}}^{\mathcal{J}}$
\begin{equation}\label{rightmul}
	\psi': \psi\circ 1_{\mathcal{J}1\mathcal{J}}\mapsto (\psi\star\psi')\circ 1_{\mathcal{J}1\mathcal{J}}
\end{equation}
which is a left $\mathcal{H}$-module isomorphism because of \eqref{HeIso}.

We have a commutative diagram
\begin{equation}\label{diagram}
	\begin{tikzcd}
		\mathrm{End}_{\mathcal{G}}(V_{\mathcal{J}})^{\mathrm{opp}}\times\mathrm{Hom}_{\mathcal{G}}(V_{\mathcal{J}},W)\arrow[r]\arrow[d]&\mathrm{Hom}_{\mathcal{G}}(V_{\mathcal{J}},W)\arrow[d]\\
		\mathcal{H}\times\mathrm{Hom}_{\mathcal{H}}(V_{\mathcal{J}}^{\mathcal{J}},W^{\mathcal{J}})\arrow[r]\arrow[d]&\mathrm{Hom}_{\mathcal{H}}(V_{\mathcal{J}}^{\mathcal{J}},W^{\mathcal{J}})\arrow[d]\\
		\mathcal{H}\times W^{\mathcal{J}}\arrow[r] & W^{{\mathcal{J}}}
	\end{tikzcd}
\end{equation}
where the second and the third rows are induced by the action on first row.

We have an anti-isomorphism from $\mathcal{H}$ to $\mathcal{H}'$ given by $\psi \mapsto \psi'$ with $\psi(x) = \psi'(x^{-1})$. Hence we obtain an isomorphism $\mathcal{H}\stackrel{\simeq}{\longrightarrow}\mathrm{End}_{\mathcal{G}}(V_{\mathcal{J}})^{\mathrm{opp}}$ $(\psi\mapsto t_\psi)$ via 
\begin{equation*}
	t_\psi(f)(g)=\int_{\mathcal{G}}\psi(x^{-1})f(x^{-1}g)dx
\end{equation*}
Then we can check that the multiplication in the middle of the diagram \eqref{diagram} is given by \eqref{rightmul}.

Denote the group ring $R=\mathbb{C}[\mathcal{T}/\mathcal{T}^0]$, which is isomorphic to $C_c^\infty(\mathcal{T}/\mathcal{T}^0)$. Thus the element $\varpi^\mu$ $(\mu\in X_*(\mathbb{T}))$ form a $\mathbb{C}$-basis for the vector space $R$. We view $R$ as a $\mathcal{T}$-module via the tautological character $\chi_{\mathrm{univ}} : \mathcal{T}/\mathcal{T}^0 \to R$ sending $\varpi^\mu$ to $\varpi^\mu$.

Let $\mathcal{B}=\mathcal{T}\mathcal{U}$ be the upper triangular Borel subgroup. We define $i_{\mathcal{B}}^{\mathcal{G}}(\chi_{\mathrm{univ}})$ as the space of locally constant $R$-valued functions $f$ on $\mathcal{G}$ satisfying \begin{equation*}
	f(tug)=\delta(t)^{1/2}t\cdot f(g)
\end{equation*}
where $\delta$ is the modulus character of $\mathcal{B}$, $t\in \mathcal{T}$, $u\in \mathcal{U}$, $g\in \mathcal{G}$, and the group $\mathcal{G}$ acts by right translations.

Define the $\mathcal{G}$-projection $\mathfrak{P}: C_c^\infty(\mathcal{G}) \to i_{\mathcal{B}}^{\mathcal{G}}(\chi_{\mathrm{univ}})$ by
\begin{equation*}
	\mathfrak{P}(f)(g)=\int_{\mathcal{B}}\chi_{\mathrm{univ}}^{-1}\delta^{1/2}(x)f(xg)dx
\end{equation*}
where $\chi_{\mathrm{univ}}$ and $\delta$ are trivial on $\mathcal{U}$, and $\mathcal{B}$ has the left Haar measure such that $\mathcal{B}\cap\mathcal{J}$ has  volume 1.

Recall that $\mathcal{G}=\sqcup \mathcal{B}W\mathcal{J}$, and $T_{(\mu,w)}$ is the characteristic function of the double coset $\mathcal{J}(\varpi^{-\mu},w)\mathcal{J}$, where $(\varpi^{-\mu},w)\in \mathcal{T}/\mathcal{T}^0\rtimes W$. Set $f_{(\mu,w)}=\mathfrak{P}(T_{(\mu,w)})$. In particular, $f_{(1,w)}$ is identically 0 off $\mathcal{B}w\mathcal{J}$ and for $x_1\in \mathcal{B}$, $x_2\in \mathcal{J}$ we have\begin{equation*}
	f_{(1,w)}(x_1wx_2)=\chi_{\mathrm{univ}}\delta^{1/2}(x_1)
\end{equation*}

From  \cite{ChirssPadic}, $\mathfrak{P}$ induces a $\mathcal{G}$-isomorphism from $V_{\mathcal{J}}$ to $i_{\mathcal{B}}^{\mathcal{G}}(\chi_{\mathrm{univ}})$. Consequently $\mathfrak{P}^{\mathcal{J}}$ induces an $\mathcal{H}$-isomorphism from $V^{\mathcal{J}}_{\mathcal{J}}$ to $i_{\mathcal{B}}^{\mathcal{G}}(\chi_{\mathrm{univ}})^{\mathcal{J}}$.

Using \eqref{HeIso}, we have a left $\mathcal{H}$-module isomorphism
\begin{align}
	\Psi:\mathcal{H} &\stackrel{\simeq}{\longrightarrow} i_{\mathcal{B}}^{\mathcal{G}}(\chi_{\mathrm{univ}})^{\mathcal{J}}\label{HIso2}\\
	\psi&\mapsto \psi\circ f_1 \nonumber
\end{align}
where $f_1=f_{(1,1)}$.

Recall $Y=\Hom(\mathbb{G}_m,\mathbb{T})$ and $Y\simeq \mathcal{T}/\mathcal{T}^0$ given by $y\mapsto y(\varpi^{-1})$. From Section 1 of our paper, we have an embedding $\mathbb{C}[Y]\to \mathcal{H}$. Then $\mathcal{H}$ becomes a right $\mathbb{C}[Y]$-module via right multiplication. In section 1, $\mathbb{C}[Y]$ has basis $\theta_y$ $(y\in Y)$. We define a $\mathbb{C}$-algebra isomorphism\begin{align}
	\mathbb{C}[Y]& \stackrel{\simeq}{\longrightarrow}  R=\mathbb{C}[\mathcal{T}/\mathcal{T}^0]\label{RIso}\\
	\theta_y &\mapsto y(\varpi^{-1})\nonumber
\end{align}
Thus $\mathcal{H}$ can also be viewed as a right $R=\mathbb{C}[\mathcal{T}/\mathcal{T}^0]$ module via \eqref{RIso}.

Note that $i_{\mathcal{B}}^{\mathcal{G}}(\chi_{\mathrm{univ}})$  naturally carries a right $R=\mathbb{C}[\mathcal{T}/\mathcal{T}^0]$-module structure. By  \cite[Lemma 5.10]{ChirssPadic}, the map $\Psi$ in \eqref{HIso2} is also a right $R$-module homomorphism. Combining  these facts, we have 
\begin{proposition}\label{HIsoPro1}
	The map $\Psi: \mathcal{H} \to i_{\mathcal{B}}^{\mathcal{G}}(\chi_{\mathrm{univ}})^{\mathcal{J}}$ defined by $\psi\mapsto \psi\circ f_1$ induces an isomorphism of $(\mathcal{H},R)$-bimodules.
\end{proposition}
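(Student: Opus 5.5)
The plan is to combine the isomorphism \eqref{HIso2} with the bimodule structures one at a time. Left $\mathcal{H}$-linearity of $\Psi$ is already known. It follows from \eqref{HeIso} together with the fact that $\mathfrak{P}^{\mathcal{J}}$ is an $\mathcal{H}$-isomorphism $V_{\mathcal{J}}^{\mathcal{J}}\to i_{\mathcal{B}}^{\mathcal{G}}(\chi_{\mathrm{univ}})^{\mathcal{J}}$. The reason is that $\mathfrak{P}$ is $\mathcal{G}$-equivariant, hence commutes with the action $\psi\circ(-)$ of \eqref{HAct1}, and $\mathfrak{P}(1_{\mathcal{J}1\mathcal{J}})=f_1$. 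Bijectivity therefore also comes for free. What remains is to check that $\Psi$ intertwines the right actions. On $\mathcal{H}$, the right action is right multiplication by $\theta_y$ via \eqref{RIso}. On $i_{\mathcal{B}}^{\mathcal{G}}(\chi_{\mathrm{univ}})$, it is the natural right $R$-action, i.e.\ multiplication of the $R$-valued function by $y(\varpi^{-1})\in R$.

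The key step is the identity
\begin{equation*}
\theta_y\circ f_1 = f_1\cdot y(\varpi^{-1}), \qquad y\in Y.
\end{equation*}
Granting it, left $\mathcal{H}$-linearity gives $\Psi(\psi\theta_y)=\psi\circ(\theta_y\circ f_1)=\psi\circ(f_1\cdot y(\varpi^{-1}))$. Since the $\mathcal{G}$-action (and hence the $\mathcal{H}$-action) on $i_{\mathcal{B}}^{\mathcal{G}}(\chi_{\mathrm{univ}})$ commutes with the $R$-scalar action, this equals $\Psi(\psi)\cdot y(\varpi^{-1})$. Because the $\theta_y$ form a basis of $\mathbb{C}[Y]$ and both actions are multiplicative, this is enough.

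To prove the identity, first reduce to $y\in Y^+$. Since $\theta_{y_1-y_2}=\theta_{y_1}\theta_{y_2}^{-1}$ and both actions are multiplicative, the general case follows from the dominant one. For $y$ dominant, $\theta_y=p^{-l(y)/2}T_y$, and I evaluate $(T_y\circ f_1)(x)=\int_{\mathcal{J}y(\varpi^{-1})\mathcal{J}}f_1(xg)\,dg$ by \eqref{HAct1}. Both sides are $\mathcal{J}$-invariant elements of $i_{\mathcal{B}}^{\mathcal{G}}(\chi_{\mathrm{univ}})$, so they are determined by their values on the representatives $w\in W$ of $\mathcal{B}\backslash\mathcal{G}/\mathcal{J}$. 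Decompose $\mathcal{J}y(\varpi^{-1})\mathcal{J}=\bigsqcup_u u\,y(\varpi^{-1})\mathcal{J}$ with $u$ running over $(\mathcal{J}\cap\mathcal{U})/y(\varpi^{-1})(\mathcal{J}\cap\mathcal{U})y(\varpi^{-1})^{-1}$. This is valid because $y$ is dominant with respect to the upper triangular Borel, and the number of cosets is $p^{l(y)}$. At $x=1$ every $u\,y(\varpi^{-1})$ lies in $\mathcal{B}$, so each coset contributes $\chi_{\mathrm{univ}}\delta^{1/2}(y(\varpi^{-1}))=y(\varpi^{-1})\,p^{-l(y)/2}$. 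Summing gives $p^{l(y)}\cdot p^{-l(y)/2}y(\varpi^{-1})$, and multiplying by $p^{-l(y)/2}$ gives exactly $y(\varpi^{-1})$, as required.

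The main obstacle is the values at $x=w\neq 1$, where $f_1(wg)$ vanishes unless $wg\in\mathcal{B}\mathcal{J}$. Here I would not compute directly. Instead I would argue that $T_y\circ f_1$ is supported on $\mathcal{B}\mathcal{J}$, using the positivity of $y(\varpi^{-1})$ relative to $(\mathcal{B},\mathcal{J})$ and the Iwahori factorization $\mathcal{J}=(\mathcal{J}\cap\bar{\mathcal{U}})(\mathcal{J}\cap\mathcal{T})(\mathcal{J}\cap\mathcal{U})$, which shows that conjugation by $y(\varpi^{-1})$ contracts $\mathcal{J}\cap\bar{\mathcal{U}}$. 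Alternatively, I would simply invoke \cite[Lemma 5.10]{ChirssPadic}, as in the text preceding the statement, which records precisely this right $R$-equivariance. In either case, the sign conventions between $y(\varpi^{-1})$ and $\varpi^{-\mu}$ must be checked against \eqref{RIso}.
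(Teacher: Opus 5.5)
Your architecture is the paper's. Left $\mathcal{H}$-linearity and bijectivity come from \eqref{HeIso}, the $\mathcal{G}$-isomorphism $\mathfrak{P}$, and $\mathfrak{P}(1_{\mathcal{J}})=f_1$. The paper obtains right $R$-linearity only by citing \cite[Lemma 5.10]{ChirssPadic}, which is your fallback, and your reduction of right $R$-linearity to the identity $\theta_y\circ f_1=f_1\cdot y(\varpi^{-1})$ for $y\in Y^+$ is sound. The self-contained verification you sketch, however, is wrong in its details; the value at $x=1$ comes out right only because two errors cancel.

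First, the coset decomposition is wrong. For $y\in Y^+$ and $\bar y=y(\varpi^{-1})$ one has $\alpha(\bar y)=\varpi^{-\langle\alpha,y\rangle}$ for $\alpha\in R^+$. So conjugation by $\bar y$ \emph{expands} $\mathcal{J}\cap\mathcal{U}$ and contracts $\mathcal{J}\cap\bar{\mathcal{U}}$. This matches the paper's remark that $\bar y$ is positive relative to the lower parabolic $\mathcal{P}'$; relative to $\mathcal{B}$ it is $\bar y^{-1}$ that is positive. Hence your cosets $u\bar y\mathcal{J}$ with $u\in\mathcal{J}\cap\mathcal{U}$ all coincide with $\bar y\mathcal{J}$. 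The correct index set for $\mathcal{J}\bar y\mathcal{J}/\mathcal{J}$ is $(\mathcal{J}\cap\bar{\mathcal{U}})/\bar y(\mathcal{J}\cap\bar{\mathcal{U}})\bar y^{-1}$. Since $\mathcal{B}\mathcal{J}\cap\bar{\mathcal{U}}=\mathcal{J}\cap\bar{\mathcal{U}}$, only the trivial coset meets the support of $f_1$.

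Second, $\delta^{1/2}(\bar y)=p^{+l(y)/2}$, not $p^{-l(y)/2}$. So $(T_y\circ f_1)(1)=f_1(\bar y)=p^{l(y)/2}\bar y$ comes from a single coset, not from $p^{l(y)}$ cosets each contributing $p^{-l(y)/2}\bar y$.

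Third, the vanishing at $x=w\neq 1$, which is the real content, is left unproved. A clean way to get both parts at once is to compute on the $V_{\mathcal{J}}$ side:
$\theta_y\circ f_1=\mathfrak{P}(\theta_y\circ 1_{\mathcal{J}})=p^{-l(y)/2}\,\mathfrak{P}(1_{\mathcal{J}\bar y^{-1}\mathcal{J}})$.

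1. Since $\mathcal{J}=(\mathcal{J}\cap\mathcal{B})(\mathcal{J}\cap\bar{\mathcal{U}})$ and $\bar y(\mathcal{J}\cap\bar{\mathcal{U}})\bar y^{-1}\subseteq\mathcal{J}$, one gets $\mathcal{J}\bar y^{-1}\mathcal{J}=(\mathcal{J}\cap\mathcal{B})\bar y^{-1}\mathcal{J}\subseteq\mathcal{B}\mathcal{J}$.
2. As $\mathrm{supp}\,\mathfrak{P}(\phi)\subseteq\mathcal{B}\cdot\mathrm{supp}\,\phi$, the right-$\mathcal{J}$-invariant function $\mathfrak{P}(1_{\mathcal{J}\bar y^{-1}\mathcal{J}})$ is supported on $\mathcal{B}\mathcal{J}$. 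Hence it is $f_1$ times its value at $1$.
3. That value is the integral of $\chi_{\mathrm{univ}}^{-1}\delta^{1/2}$ over $\mathcal{B}\cap\mathcal{J}\bar y^{-1}\mathcal{J}=(\mathcal{J}\cap\mathcal{B})\bar y^{-1}$. This set has volume $[\bar y(\mathcal{J}\cap\mathcal{B})\bar y^{-1}:\mathcal{J}\cap\mathcal{B}]=p^{l(y)}$, and the integrand equals $p^{-l(y)/2}\bar y$ on it.

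This gives $\theta_y\circ f_1=f_1\cdot\bar y$ and confirms that the sign convention in \eqref{RIso} is correct. With this replacement, or with the citation as in the paper, your argument is complete.
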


Given an unramified character $\chi:\mathcal{T}/\mathcal{T}^0\to \mathbb{C}^*$, we consider the normalized parabolic induction $i_{\mathcal{B}}^{\mathcal{G}}(\chi)$. The following is obvious. \begin{proposition}\label{HIsoPro2}
	The map \begin{equation}
	    \begin{aligned}
		\Phi: i_{\mathcal{B}}^{\mathcal{G}}(\chi_{\mathrm{univ}})^{\mathcal{J}}\otimes_R\mathbb{C}_\chi \stackrel{\simeq}{\longrightarrow} i_{\mathcal{B}}^{\mathcal{G}}(\chi)^{\mathcal{J}}
        ,\quad \Phi(f\otimes z)(g)=z\chi(f(g))
	\end{aligned}
	\end{equation}
	is an isomorphism of left $\mathcal{H}$-modules.
\end{proposition}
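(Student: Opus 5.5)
The statement to prove is Proposition \ref{HIsoPro2}: $\Phi$ is a well-defined isomorphism of left $\mathcal{H}$-modules. The plan is to treat $\Phi$ as the specialization of the universal principal series along $\chi:R\to\mathbb{C}$. The argument has three checks: well-definedness and the $\mathcal{H}$-action, surjectivity, and injectivity.

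\textbf{Well-definedness and $\mathcal{H}$-linearity.} For $f\in i_{\mathcal{B}}^{\mathcal{G}}(\chi_{\mathrm{univ}})^{\mathcal{J}}$, the function $g\mapsto\chi(f(g))$ satisfies
\begin{equation*}
\chi(f(tug))=\chi\big(\delta^{1/2}(t)\,t\cdot f(g)\big)=\delta^{1/2}(t)\chi(t)\chi(f(g)),
\end{equation*}
since $\chi$ extends to an algebra homomorphism $R\to\mathbb{C}$. It is right $\mathcal{J}$-invariant, so it lies in $i_{\mathcal{B}}^{\mathcal{G}}(\chi)^{\mathcal{J}}$. The pairing $(f,z)\mapsto z\chi(f(\cdot))$ is $R$-balanced: for $r\in R$, $(f\cdot r)(g)=f(g)r$ and $\chi(f(g)r)=\chi(f(g))\chi(r)$, which equals the value on $f\otimes\chi(r)z$. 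Hence $\Phi$ is well defined on the tensor product. The $\mathcal{H}$-action on both sides is $\psi\circ f=\int_{\mathcal{G}}\psi(x)f(\cdot\,x)\,dx$, and $\chi$ is $\mathbb{C}$-linear and commutes with this integral, which is a finite sum since $\psi$ is $\mathcal{J}$-bi-invariant with compact support. So $\Phi$ is $\mathcal{H}$-linear.

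\textbf{Surjectivity.} Use the decomposition $\mathcal{G}=\sqcup_{w\in W}\mathcal{B}w\mathcal{J}$. A function in $i_{\mathcal{B}}^{\mathcal{G}}(\chi)^{\mathcal{J}}$ is determined by its values at the points $w$. Its value at $w$ is arbitrary, because $\chi\delta^{1/2}$ is trivial on $\mathcal{B}\cap w\mathcal{J}w^{-1}$: $\chi$ is unramified and that intersection lies in $\mathcal{T}^0\,\mathcal{U}$. Thus $\dim i_{\mathcal{B}}^{\mathcal{G}}(\chi)^{\mathcal{J}}=|W|$, with basis $\{f_{w,\chi}\}$ supported on $\mathcal{B}w\mathcal{J}$ and normalized by $f_{w,\chi}(w)=1$. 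The elements $f_{(1,w)}\in i_{\mathcal{B}}^{\mathcal{G}}(\chi_{\mathrm{univ}})^{\mathcal{J}}$ defined above satisfy $\Phi(f_{(1,w)}\otimes1)=f_{w,\chi}$, so $\Phi$ is onto.

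\textbf{Injectivity.} The same support argument over $R$ shows that $i_{\mathcal{B}}^{\mathcal{G}}(\chi_{\mathrm{univ}})^{\mathcal{J}}$ is a free right $R$-module with basis $\{f_{(1,w)}\}_{w\in W}$. Concretely, $f\mapsto(f(w))_w$ identifies it with $R^{W}$; the condition $f(bw)=\chi_{\mathrm{univ}}\delta^{1/2}(b)f(w)$ imposes no constraint for the same reason as before. This is also consistent with Proposition \ref{HIsoPro1} and the Bernstein basis $\mathcal{H}=\bigoplus_w T_w\,\mathbb{C}[Y]$. Therefore the tensor product $i_{\mathcal{B}}^{\mathcal{G}}(\chi_{\mathrm{univ}})^{\mathcal{J}}\otimes_R\mathbb{C}_\chi$ has dimension $|W|$. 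A surjection between spaces of equal finite dimension is a bijection.

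The main obstacle is the freeness claim over $R$. One must check that $R$-valued functions supported on $\mathcal{B}w\mathcal{J}$ with prescribed value at $w$ really exist. This means showing that $\chi_{\mathrm{univ}}\delta^{1/2}$ is trivial on $\mathcal{B}\cap w\mathcal{J}w^{-1}$, which follows because the torus part of that intersection is $\mathcal{T}^0$.
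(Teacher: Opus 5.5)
Your proof is correct. It checks $R$-balancedness and $\mathcal{H}$-linearity (the latter because $\chi:R\to\mathbb{C}$ commutes with the finite-sum Hecke action), then gets bijectivity from $\mathcal{G}=\sqcup_{w\in W}\mathcal{B}w\mathcal{J}$ and the triviality of $\chi_{\mathrm{univ}}\delta^{1/2}$ on $\mathcal{B}\cap w\mathcal{J}w^{-1}\subset\mathcal{T}^0(\mathcal{U}\cap\GL_n(\mathscr{O}))$; this makes both sides free of rank $|W|$ (over $R$, resp.\ $\mathbb{C}$), with $\Phi$ sending the basis $f_{(1,w)}\otimes 1$ to the basis $f_{w,\chi}$. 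The paper gives no proof (it calls the statement obvious), and your argument is exactly the standard verification it leaves implicit.
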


Combine Proposition \ref{HIsoPro1} and Proposition \ref{HIsoPro2}, we have a left $\mathcal{H}$-module isomorphism
\begin{equation*}
	\begin{tikzcd}
		\mathcal{H}\otimes_R\mathbb{C}_\chi\arrow[d,"\Psi\otimes 1"]\arrow[dr, dashrightarrow]& \\
		i_{\mathcal{B}}^{\mathcal{G}}(\chi_{\mathrm{univ}})^{\mathcal{J}}\otimes_R\mathbb{C}_\chi\arrow[r,"\Phi"]&i_{\mathcal{B}}^{\mathcal{G}}(\chi)^{\mathcal{J}}
	\end{tikzcd}
\end{equation*}
\begin{remark}
	Using \eqref{commutediagram2}, we can also get the same isomorphism $\mathcal{H}\otimes_R\mathbb{C}_\chi \stackrel{\simeq}{\longrightarrow}i_{\mathcal{B}}^{\mathcal{G}}(\chi)^{\mathcal{J}}$.
\end{remark}

Recall the multiplication in $\mathcal{H}$:\begin{equation*}
	\theta_xT_{s_\alpha}-T_{s_\alpha}\theta_{s_\alpha(x)}=(p-1)\frac{\theta_x-\theta_{s_\alpha(x)}}{1-\theta_{-\alpha}}
\end{equation*}
where $\alpha\in \Pi$ is a simple root and $s_\alpha$ is the corresponding simple reflection.

We define an intertwining element \begin{equation}
	 \begin{aligned}
		\iota_{s_\alpha}&=T_{s_\alpha}(1-\theta_\alpha)+(p-1)\theta_\alpha\\
		&= (1-\theta_{-\alpha})T_{s_\alpha}+(1-p)\nonumber
	\end{aligned}
\end{equation}
where the last equality follows from the special case:\begin{equation*}
	T_{s_\alpha}\theta_\alpha=\theta_{-\alpha}T_{s_\alpha}+(p-1)(1+\theta_\alpha)
\end{equation*}

Using \eqref{HIso2} and the same method as in \eqref{rightmul}, for every $\psi'\in \mathcal{H}$ we can define an action on $i_{\mathcal{B}}^{\mathcal{G}}(\chi_{\mathrm{univ}})^{\mathcal{J}}$ by
\begin{equation}\label{rightmul2}
	\psi': \psi\circ f_1\mapsto (\psi\star\psi')\circ f_1
\end{equation}
which is a left $\mathcal{H}$-module isomorphism.

Taking $\psi'=\iota_{s_\alpha}$, we define the intertwining operator $I_{s_\alpha}$ on $i_{\mathcal{B}}^{\mathcal{G}}(\chi_{\mathrm{univ}})^{\mathcal{J}}$ via \eqref{rightmul2}. 

Using the equivalence of categories between $\mathrm{Rep}(\mathcal{G})^{(\mathcal{J},1_{\triv})}$ and $\mathcal{H}-\mathrm{Mod}$,  we can transfer $I_{s_\alpha}$ to an intertwining operator on $i_{\mathcal{B}}^{\mathcal{G}}(\chi_{\mathrm{univ}})$.

Let $\varphi$ be a principal character on $\mathcal{U}$. Since $R$ is a commutative $\mathbb{C}$-algebra, the inclusion  $\mathbb{C} \hookrightarrow R$ allows us to view $\varphi$ as a character with values in $R^\times$.

A Whittaker functional on $i_{\mathcal{B}}^{\mathcal{G}}(\chi_{\mathrm{univ}})$ is a right $R$-module map \begin{equation*}
	\mathcal{W}: i_{\mathcal{B}}^{\mathcal{G}}(\chi_{\mathrm{univ}})\to R
\end{equation*}
such that $\mathcal{W}(\pi(u)f)=\varphi(u)\mathcal{W}(f)$ for all $u\in\mathcal{U}$ and all $f\in i_{\mathcal{B}}^{\mathcal{G}}(\chi_{\mathrm{univ}})$.

\begin{proposition}[\cite{haines2003iwahori}]\label{WhiRank1}
	Let $\varphi$ be a principal character of $\mathcal{U}$. The $R$-module of all Whittaker functionals is free of rank 1.
\end{proposition}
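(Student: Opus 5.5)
The plan is to reduce to the case of an unramified principal series with a universal character, where everything can be computed through the Iwahori–Hecke algebra via Proposition \ref{HIsoPro1}. First I will show that Whittaker functionals are determined by their restriction to $\mathcal{J}$-fixed vectors. Since $V_{\mathcal{J}}\simeq i_{\mathcal{B}}^{\mathcal{G}}(\chi_{\mathrm{univ}})$ is generated by $f_1$ over $\mathcal{G}$, a Whittaker functional is determined by its values on $i_{\mathcal{B}}^{\mathcal{G}}(\chi_{\mathrm{univ}})^{\mathcal{J}}\simeq\mathcal{H}$. It is also determined by its restriction to functions supported on the big cell $\mathcal{B}w_0\mathcal{B}$.

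For the existence of a free generator, I would use the Jacquet integral. For $f\in i_{\mathcal{B}}^{\mathcal{G}}(\chi_{\mathrm{univ}})$ define
\[
\mathcal{W}_0(f)=\int_{\mathcal{U}} f(\bar{w}_0u)\varphi(u)^{-1}\,du .
\]
For $\chi_{\mathrm{univ}}$ this integral need not converge, so I would first evaluate it on the basis $f_{(\mu,w)}$, or rather on $\psi\circ f_1$ for $\psi$ running over the Bernstein basis $\theta_yT_w$. I expect that $\varphi$ being unramified and principal truncates $\mathcal{U}$ to a compact region. As a result, $\mathcal{W}_0(f_{(1,w)})$ is a finite combination in $R$: nonzero for $w=w_0$, and computable for the other $w$ by the rank-one Iwahori decomposition. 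Using the right $R$-linearity of $\Psi$, this defines an $R$-linear functional on all of $i_{\mathcal{B}}^{\mathcal{G}}(\chi_{\mathrm{univ}})$ that is $\varphi$-equivariant.

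Next, I will prove that every Whittaker functional $\mathcal{W}$ is an $R$-multiple of $\mathcal{W}_0$. Bernstein–Zelevinsky geometric lemma considerations, or a direct Bruhat filtration argument, show that the $\varphi$-twisted Jacquet module of $i_{\mathcal{B}}^{\mathcal{G}}(\chi_{\mathrm{univ}})$ comes only from the open cell. Only the open cell contributes because $\varphi$ is principal: on smaller cells $\varphi$ is nontrivial on some root subgroup that is conjugated into $\mathcal{B}$. The open-cell contribution is $C_c^\infty(\mathcal{U})\otimes R$, whose $(\mathcal{U},\varphi)$-coinvariants are free of rank one over $R$. Hence $\mathrm{Hom}_{\mathcal{U}}(i_{\mathcal{B}}^{\mathcal{G}}(\chi_{\mathrm{univ}}),R_\varphi)$ is a quotient-dual of a free rank-one module. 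To conclude it is free of rank one rather than merely torsion-free, I will check that the map $\mathcal{W}\mapsto\mathcal{W}(f_{(1,w_0)}\text{-type element on the big cell})$ is an isomorphism onto $R$, with $\mathcal{W}_0$ sent to a unit.

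The main obstacle is the vanishing on the closed cells with $R$-coefficients rather than over $\mathbb{C}$. The specialization argument of Rodier only gives dimension one fibrewise. I would handle this step by working $\mathcal{J}$-invariantly: write $\mathcal{W}(\theta_yT_w\circ f_1)$ and use the Hecke relations together with $\varphi$-equivariance under $\mathcal{U}\cap\mathcal{J}$ and the unramifiedness of $\varphi$. This should show that all values are determined by the single value $\mathcal{W}(T_{w_0}\circ f_1)\in R$, following the computation in \cite{haines2003iwahori} translated to left actions.
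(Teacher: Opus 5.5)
The paper gives no proof here beyond citing \cite{haines2003iwahori}, so your proposal has to stand on its own. As written it has a genuine gap: it conflates the whole module $\mathbf{I}:=i_{\mathcal{B}}^{\mathcal{G}}(\chi_{\mathrm{univ}})$ with its Iwahori-fixed vectors $\mathbf{I}^{\mathcal{J}}\simeq\mathcal{H}$. A Whittaker functional is only $\mathcal{U}$-equivariant, and $\mathcal{U}$ does not preserve $\mathbf{I}^{\mathcal{J}}$. This breaks three steps.
\begin{itemize}
\item You argue that $\mathcal{W}$ is determined by its values on $\mathbf{I}^{\mathcal{J}}$ because $f_1$ generates $\mathbf{I}$ over $\mathcal{G}$. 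This does not follow: generation over $\mathcal{G}$ says nothing about a map that is merely $\mathcal{U}$-equivariant.
\item In the existence step, evaluating $\mathcal{W}_0$ on the elements $\psi\circ f_1$ and extending $R$-linearly through $\Psi$ only defines a functional on $\mathbf{I}^{\mathcal{J}}$. There ``$\varphi$-equivariant'' has no meaning, and nothing you say extends it to a Whittaker functional on $\mathbf{I}$. The truncation of the Jacquet integral to a compact region is also only ``expected''.
\item For the same reason, the fallback in your last paragraph cannot give uniqueness on $\mathbf{I}$. Showing that all values $\mathcal{W}(\theta_yT_w\circ f_1)$ are determined by $\mathcal{W}(T_{w_0}\circ f_1)$ only controls $\mathcal{W}$ on $\mathbf{I}^{\mathcal{J}}$.
\end{itemize}
Your strategy ``coinvariants are a quotient of $R$, plus a functional $\mathcal{W}_0$ with unit value'' would be fine. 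But it needs $\mathcal{W}_0$ to exist on all of $\mathbf{I}$, and that is exactly what is not established.

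The obstacle you single out is not real, and your Bruhat-filtration paragraph already contains the whole proof once one standard fact is added. The graded piece attached to $\mathcal{B}w\mathcal{B}$ is, as a $\mathcal{U}$-module, $C_c^\infty\bigl((\mathcal{U}\cap w^{-1}\mathcal{U}w)\backslash\mathcal{U}\bigr)\otimes_{\mathbb{C}}R$, with $\mathcal{U}$ acting trivially on $R$. For $w\neq w_0$, pick a simple $\alpha$ with $w\alpha>0$. Then $\mathcal{U}_\alpha\subset\mathcal{U}\cap w^{-1}\mathcal{U}w$, and since $\varphi$ is principal the $(\mathcal{U},\varphi)$-coinvariants vanish. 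This works directly over $R$, with no specialization to $\mathbb{C}$ and no appeal to Rodier.

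The fact you are missing is that $V\mapsto V_{\mathcal{U},\varphi}$ is \emph{exact}, because $\mathcal{U}$ is a union of compact open subgroups. Right exactness alone only makes $\mathbf{I}_{\mathcal{U},\varphi}$ a quotient of the open-cell coinvariants. Exactness makes the inclusion of the open-cell piece $C_c^\infty(\mathcal{U})\otimes_{\mathbb{C}}R$ (functions supported on $\mathcal{B}w_0\mathcal{B}$) induce an isomorphism on coinvariants. Hence $\mathbf{I}_{\mathcal{U},\varphi}\cong R$, via $f\mapsto\int_{\mathcal{U}}f(w_0u)\varphi^{-1}(u)\,du$ on that piece. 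The Whittaker functionals are then $\mathrm{Hom}_R(\mathbf{I}_{\mathcal{U},\varphi},R)\cong R$. This gives existence and uniqueness at once, with the generator described on the big cell exactly as in \eqref{WhFun}. No Hecke-algebra computation and no convergence of the Jacquet integral on all of $\mathbf{I}$ is needed.
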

Denote by $w_0$ the longest element of $W$. We realize the Whittaker functional $\mathcal{W}$ as the unique functional whose restriction to functions $f\in i_{\mathcal{B}}^{\mathcal{G}}(\chi_{\mathrm{univ}})$ supported on $\mathcal{B}w_0\mathcal{B}$ is given by  \begin{equation}\label{WhFun}
	\mathcal{W}(f)=\int_{\mathcal{U}}f(w_0u)\varphi^{-1}(u)du
\end{equation}
where $du$ denotes the left Haar measure on $\mathcal{U}$ that gives measure 1 to $\mathcal{N}\cap\mathcal{J}$.

From now on, we assume  that  $\varphi$ is principal and unramified.

\begin{proposition}\label{IntWhi}
	Let $\mathcal{W}$ be the Whittaker functional defined above \eqref{WhFun}. Then \begin{equation*}
		\mathcal{W}(I_{s_\alpha}\circ f)=\mathcal{W}(f)^{s_\alpha}(p-\theta_{-\alpha})
	\end{equation*}
	where $\mathcal{W}(f)^{s_\alpha}$ denotes the usual action of $s_\alpha$ on $R=\mathbb{C}[\mathcal{T}/\mathcal{T}^0]$, and $\theta_{-\alpha}$ is viewed as an element of $R$ via \eqref{RIso}.
\end{proposition}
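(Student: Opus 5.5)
The plan is to reduce to a computation of $\mathcal{W}$ on the basis $\psi\circ f_1$ of $i_{\mathcal{B}}^{\mathcal{G}}(\chi_{\mathrm{univ}})^{\mathcal{J}}$, and then extend to all of $i_{\mathcal{B}}^{\mathcal{G}}(\chi_{\mathrm{univ}})$ by $\mathcal{G}$-equivariance. Since $I_{s_\alpha}$ is $\mathcal{G}$-equivariant, $\mathcal{W}\circ I_{s_\alpha}$ is again a Whittaker functional. It is not $R$-linear but $s_\alpha$-semilinear, because $I_{s_\alpha}$ twists the right $R$-structure by $s_\alpha$, which is the point of the intertwining element. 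So $f\mapsto \bigl(\mathcal{W}(I_{s_\alpha}f)\bigr)^{s_\alpha}$ is an honest $R$-linear Whittaker functional. By Proposition \ref{WhiRank1} it equals $c_\alpha\,\mathcal{W}$ for a unique $c_\alpha\in R$, and unwinding gives $\mathcal{W}(I_{s_\alpha}f)=\mathcal{W}(f)^{s_\alpha}c_\alpha^{s_\alpha}$. It therefore remains to show $c_\alpha^{s_\alpha}=p-\theta_{-\alpha}$, and for this it suffices to evaluate both sides on one test vector $f$ with $\mathcal{W}(f)$ a unit (or at least a nonzero divisor) in $R$.

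\textbf{Step 1: semilinearity.} I will first check from Proposition \ref{HIsoPro1} that the right $R$-action on $i_{\mathcal{B}}^{\mathcal{G}}(\chi_{\mathrm{univ}})^{\mathcal{J}}\simeq\mathcal{H}$ is right multiplication by $\theta_y$. The Bernstein relation then gives $\iota_{s_\alpha}\theta_y=\theta_{s_\alpha(y)}\iota_{s_\alpha}$; this is a short computation from the displayed form $\iota_{s_\alpha}=T_{s_\alpha}(1-\theta_\alpha)+(p-1)\theta_\alpha$ together with the relation \eqref{mul-hecke}. Since $I_{s_\alpha}$ is right multiplication by $\iota_{s_\alpha}$ via \eqref{rightmul2}, we get $I_{s_\alpha}(f\cdot r)=I_{s_\alpha}(f)\cdot r^{s_\alpha}$ on Iwahori-fixed vectors. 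This extends to the whole induced module because $V_{\mathcal{J}}$ is generated by its $\mathcal{J}$-fixed vectors.

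\textbf{Step 2: test vector.} I will take $f=f_{w_0}=T_{w_0}\circ f_1$, the function supported on $\mathcal{B}w_0\mathcal{J}$, or rather work directly with $\iota_{s_\alpha}$ applied through $f_1$. By \eqref{WhFun}, $\mathcal{W}(f_{w_0})$ is computed by an integral over $\mathcal{U}$. Because $\varphi$ is unramified and the measure is normalized on $\mathcal{N}\cap\mathcal{J}$, this integral is $1$ (up to a power of $p$). Then $I_{s_\alpha}f_{w_0}=(T_{w_0}\iota_{s_\alpha})\circ f_1$. I will rewrite $T_{w_0}\iota_{s_\alpha}=T_{w_0}T_{s_\alpha}(1-\theta_\alpha)+(p-1)T_{w_0}\theta_\alpha$ and use $T_{w_0}T_{s_\alpha}=T_{w_0s_\alpha}(p-1)+pT_{w_0s_\alpha}^{\,}\cdots$, i.e. the quadratic relation with $w_0=(w_0s_\alpha)s_\alpha$ length-additive. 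This expresses the result as combinations of $f_{w_0}$ and $f_{w_0s_\alpha}$ with right $R$-coefficients.

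\textbf{Step 3: main obstacle.} The key local input, and the place I expect the real work, is the value of $\mathcal{W}(f_{w_0s_\alpha})$. Its support $\mathcal{B}w_0s_\alpha\mathcal{J}$ is not in the big cell, so it must be computed via a rank-one reduction: factor $\mathcal{U}=\mathcal{U}_\alpha'\mathcal{U}^{\alpha}$ and reduce to an integral over $\mathcal{U}_{\alpha}\simeq F$ in $\mathrm{SL}_2$. Decomposing $F=\mathscr{O}\sqcup\bigsqcup_{m\ge1}\varpi^{-m}\mathscr{O}^\times$ and using $\varphi_\alpha|_{\mathscr{O}}=1$ with $\varphi_\alpha$ nontrivial on $\mathscr{P}^{-1}$, only the $m\le1$ shells survive, and the shell $m=1$ contributes $-1$. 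This yields a value like $-\theta_{-\alpha}$ or $p^{-1}$-twisted versions. I will carry out this $\mathrm{GL}_2$ computation carefully, following \cite{haines2003iwahori} but converted from their right action to our left one. Plugging into Step 2 should make the terms combine to $\mathcal{W}(f_{w_0})^{s_\alpha}(p-\theta_{-\alpha})$. Sign and normalization conventions (the $\varpi^{-1}$ in $Y\simeq\mathcal{T}/\mathcal{T}^0$, and $\delta^{1/2}$) are where errors are most likely, so I would sanity-check against $n=2$ with an explicit unramified character.
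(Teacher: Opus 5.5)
Your strategy is the same as the paper's: twist $\mathcal{W}\circ I_{s_\alpha}$ by $s_\alpha$ to get an $R$-linear Whittaker functional, apply Proposition \ref{WhiRank1}, and evaluate on one Iwahori-fixed test vector. The routes differ only in the test vector.

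\emph{The paper's choice.} It takes $F=f_{s_\alpha w_0}+f_{w_0}=\bigl(T_{w_0s_\alpha}(1+T_{s_\alpha})\bigr)\circ f_1$. This is an eigenvector of $I_{s_\alpha}$, because $(1+T_{s_\alpha})\iota_{s_\alpha}=(1+T_{s_\alpha})(p-\theta_\alpha)$. It then quotes Casselman for $\mathcal{W}(F)=1-p^{-1}\theta_{-\alpha}$. The claim becomes the symmetric identity $\mathcal{W}(F)(p-\theta_\alpha)=\mathcal{W}(F)^{s_\alpha}(p-\theta_{-\alpha})$.

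\emph{Your choice.} You take $f_{w_0}$, and $\mathcal{W}(f_{w_0})=1$ exactly, since the integrand is supported on $\mathcal{U}\cap\mathcal{J}$, where $\varphi$ is trivial. The correct expansion $T_{w_0}\iota_{s_\alpha}=(p-1)T_{w_0}+pT_{w_0s_\alpha}(1-\theta_\alpha)$ shows you need exactly $\mathcal{W}(T_{w_0s_\alpha}\circ f_1)=-p^{-1}\theta_{-\alpha}$. That is the same datum the paper cites.

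\emph{What each buys.} Your route reads off the constant with no division or symmetry check. It also makes explicit the semilinearity that the paper's final appeal to rank-one freeness uses silently. The cost is that you must do the rank-one integral yourself. There you must justify evaluating $\mathcal{W}$ by the shell-wise stabilized integral on a function not supported in the big cell. One way: check that the stabilized integral is itself an $R$-linear Whittaker functional, and that it agrees with $\mathcal{W}$ on $f_{w_0}$.

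Two slips need fixing.

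\emph{The Hecke relation.} Your displayed relation is garbled; it should read $T_{w_0}T_{s_\alpha}=(p-1)T_{w_0}+pT_{w_0s_\alpha}$.

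\emph{The support, which is more serious.} The function $(T_{w^{-1}}\circ f_1)(x)=\int_{\mathcal{J}w^{-1}\mathcal{J}}f_1(xg)\,dg$ is supported on $\mathcal{B}\mathcal{J}w\mathcal{J}=\mathcal{B}w\mathcal{J}$, so $T_{w^{-1}}\circ f_1=f_w$. Hence $T_{w_0s_\alpha}\circ f_1=f_{s_\alpha w_0}$. Its support is $\mathcal{B}s_\alpha w_0\mathcal{J}=\mathcal{B}w_0s_{\alpha'}\mathcal{J}$ with $\alpha'=-w_0\alpha$, not $\mathcal{B}w_0s_\alpha\mathcal{J}$.

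On the cell you named, the rank-one computation gives $-p^{-1}\theta_{-\alpha'}$. The identity then fails for $n\ge 3$ whenever $\alpha'\neq\alpha$. Your proposed $n=2$ sanity check cannot catch this, since there $\alpha'=\alpha$ and $s_\alpha w_0=w_0s_\alpha=1$.

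The reduction should run along $\mathcal{U}_{\alpha'}$. One has $w_0u_{\alpha'}(x)=u_{-\alpha}(\pm x)w_0$, which for $|x|\ge p$ lies in $\mathcal{B}s_\alpha w_0\mathcal{J}$. On the shell $|x|=p$ its torus part is congruent to $\alpha(\varpi)$ modulo $\mathcal{T}^0$, i.e.\ $\theta_{-\alpha}$ under \eqref{RIso}. This is what produces $-p^{-1}\theta_{-\alpha}$.
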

\begin{proof}

	Denote $f_{w_0}=f_{(1,w_0)}$, $f_{s_\alpha w_0}=f_{(1,{s_\alpha w_0})}$, with $w_0$ the longest element in $W$. By \cite{CassUnP},
	$\mathcal{W}(f_{s_\alpha w_0}+f_{w_0})=1-p^{-1}\alpha(\varpi)$.
	From \cite[Lemma 5.10]{ChirssPadic}, \begin{align*}
		T_{w_0^{-1}}\circ f_1&=f_{w_0}\\
		T_{w_0^{-1}s_\alpha^{-1}}\circ f_1&=f_{s_\alpha w_0}
	\end{align*}
	Then  \begin{align*}
		I_{s_\alpha}\circ(f_{s_\alpha w_0}+f_{w_0})&=((T_{w_0^{-1}s_\alpha^{-1}}+T_{w_0^{-1}})\star \iota_{s_\alpha})\circ f_1\\
		&=((T_{w_0^{-1}s_\alpha^{-1}}+T_{w_0^{-1}})\cdot(T_{s_\alpha}(1-\theta_\alpha)+(p-1)\theta_\alpha))\circ f_1\\
		&=((T_{w_0^{-1}s_\alpha^{-1}}+T_{w_0^{-1}})\cdot(p-\theta_\alpha))\circ f_1\\
		&=(f_{s_\alpha w_0}+f_{w_0})\cdot (p-\alpha(\varpi^{-1}))
	\end{align*}
    Hence
	\begin{equation*}
		\mathcal{W}(I_{s_\alpha}\circ (f_{s_\alpha w_0}+f_{w_0}))=\mathcal{W}((f_{s_\alpha w_0}+f_{w_0}))^{s_\alpha}\cdot(p-\alpha(\varpi))
	\end{equation*}
	By Proposition \ref{WhiRank1}, we conclude \begin{equation*}
		\mathcal{W}(I_{s_\alpha}\circ f)=\mathcal{W}(f)^{s_\alpha}(p-\theta_{-\alpha})
	\end{equation*}
\end{proof}

In Section 1, we define an action $\gamma$ on $\mathcal{G}$ which is given by $\gamma(g)=J(^tg^{-1})J^{-1}$. This induces an action $\gamma$ on $\mathcal{H} = C_c^\infty(\mathcal{J}\backslash\mathcal{G}/\mathcal{J})$ via $\gamma(f)(g) = f(\gamma(g))$. 
 We can also define an action on $i_{\mathcal{B}}^{\mathcal{G}}(\chi_{\mathrm{univ}})^{\mathcal{J}}$ by
\begin{equation*}
	\gamma: \psi\circ f_1\mapsto \gamma(\psi)\circ f_1
\end{equation*}
and an action $\gamma$  on $R=\mathbb{C}[\mathcal{T}/\mathcal{T}^0]$ induced by the $\gamma$ action on $\mathcal{T}$.

\begin{proposition}\label{WThP22}
	Let $\varphi$ be a character of $\mathcal{U}$ invariant under $\gamma$. Then
	\begin{equation*}
		\mathcal{W}(\gamma(f))=\mathcal{W}(f)^\gamma
	\end{equation*}
\end{proposition}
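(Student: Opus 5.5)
The identity $\mathcal{W}(\gamma(f))=\mathcal{W}(f)^\gamma$ is an equality of right $R$-linear-ish maps. I would use Proposition \ref{WhiRank1} to reduce it to a single test function. First I would check that $f\mapsto \mathcal{W}(\gamma(f))^{\gamma}$, with $\gamma$ applied on $R$ afterwards, is again a Whittaker functional. Since $\gamma$ has order $2$, this is equivalent to the statement.

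Concretely, I would realize $\gamma(f)$ on all of $i_{\mathcal{B}}^{\mathcal{G}}(\chi_{\mathrm{univ}})$ as $g\mapsto \gamma_R(f(\gamma(g)))$, where $\gamma_R$ is the $\gamma$-action on $R$. Before using this I would check that it agrees with the definition $\psi\circ f_1\mapsto\gamma(\psi)\circ f_1$ on $\mathcal{J}$-invariants. This needs two inputs. First, $f_1$ is $\gamma$-fixed, because $\gamma$ preserves $\mathcal{B}$, $\mathcal{J}$ and $\delta$. Second, $\gamma$ commutes with the convolution action (\ref{HAct1}), because $\gamma$ preserves Haar measure.

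The equivariance steps would then be direct. The function $\gamma(f)$ lies in the induced space because $\gamma(tu)=\gamma(t)\gamma(u)$ with $\gamma(t)\in\mathcal{T}$ and $\gamma(u)\in\mathcal{U}$, and because $\delta\circ\gamma=\delta$. The map is twisted-$R$-semilinear, $\gamma(f r)=\gamma(f)\gamma(r)$. It also satisfies $\pi(u)\gamma(f)=\gamma(\pi(\gamma(u))f)$. Combining this with $\varphi\circ\gamma=\varphi$ shows that $\mathcal{W}^\gamma:=\gamma_R\circ\mathcal{W}\circ\gamma$ is a right $R$-module Whittaker functional. By Proposition \ref{WhiRank1} we then get $\mathcal{W}^\gamma=c\,\mathcal{W}$ for some $c\in R$, and in fact $c\in R^\times$ by applying the same argument twice.

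It remains to show $c=1$. I would evaluate both sides on a function supported in the big cell $\mathcal{B}w_0\mathcal{B}$, using formula (\ref{WhFun}). The relevant facts are that $J$ is a representative of $w_0$ and that $\gamma(w_0)$ and $w_0$ differ by an element of $\mathcal{T}\cap\mathcal{J}$. For $f$ supported on $\mathcal{B}w_0\mathcal{U}$, the change of variables $u\mapsto\gamma(u)$ preserves $du$, since $\gamma$ stabilizes $\mathcal{U}\cap\mathcal{J}$. This gives
\[
\mathcal{W}(\gamma f)=\int_{\mathcal{U}}\gamma_R\bigl(f(\gamma(w_0)\gamma(u))\bigr)\varphi^{-1}(u)\,du=\gamma_R\bigl(\chi_{\mathrm{univ}}\delta^{1/2}(t_0)\bigr)\,\gamma_R\bigl(\mathcal{W}(f)\bigr),
\]
where $t_0=\gamma(w_0)w_0^{-1}$. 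So $c=\chi_{\mathrm{univ}}\delta^{1/2}(t_0)$ after applying $\gamma_R$. Since $t_0\in\mathbb{T}(\mathscr{O})$, we have $\chi_{\mathrm{univ}}(t_0)=1$ and $\delta(t_0)=1$, hence $c=1$. Alternatively, I would test on $f_{w_0}$, using $T_{w_0}$ together with $\gamma(T_{w_0})=T_{\gamma(w_0)}=T_{w_0}$.

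The main obstacle is this representative bookkeeping. One has to verify with the explicit $J$ and the pinning $\bar s_{\alpha_i}$ that $\gamma(\bar w_0)\bar w_0^{-1}$ really lies in $\mathcal{T}\cap\mathcal{J}$, with no $\varpi$-power. It would also be cleanest to confirm directly that $f_{w_0}$ is $\gamma$-fixed, so that the normalization constant is visibly $1$.
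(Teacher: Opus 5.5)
Your proposal is correct and follows the same skeleton as the paper's proof. First show that $f\mapsto\mathcal{W}(\gamma f)$, corrected by $\gamma$ on $R$, is again a Whittaker functional. Then apply Proposition \ref{WhiRank1} to get a constant $c\in R$, and pin down $c$ on a single test function. The two arguments differ only in details, and in both places your version is the more careful one.

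First, the paper's proof writes $\gamma(f)(x)=f(\gamma(x))$. That function lies in the induction of $\chi_{\mathrm{univ}}\circ\gamma$, not of $\chi_{\mathrm{univ}}$. Your operator $f\mapsto\gamma_R\circ f\circ\gamma$ is the correct one, and you check it against $\psi\circ f_1\mapsto\gamma(\psi)\circ f_1$ using $\gamma(f_1)=f_1$. It also explains why the statement has the twist $\mathcal{W}(f)^\gamma$.

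Second, the paper normalizes by evaluating at $f_1$. This function is supported on $\mathcal{B}\mathcal{J}$, not inside $\mathcal{B}w_0\mathcal{B}$, so \eqref{WhFun} does not compute it. The asserted value $p^{-l(w_0)}$ is never derived. For $\GL_2$, the paper's own identity $\mathcal{W}(f_{s_\alpha w_0}+f_{w_0})=1-p^{-1}\alpha(\varpi)$, combined with $\mathcal{W}(f_{w_0})=1$, gives $\mathcal{W}(f_1)=-p^{-1}\alpha(\varpi)$ instead. All the argument actually uses is that $\mathcal{W}(f_1)$ is nonzero and $\gamma$-fixed.

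Your big-cell test function avoids this issue, and $f_{w_0}$ is the cleanest choice. Since $\mathcal{B}w_0\mathcal{J}=\mathcal{B}w_0(\mathcal{U}\cap\mathcal{J})$, formula \eqref{WhFun} gives $\mathcal{W}(f_{w_0})=1$ immediately. For a general big-cell function, one nonzero value suffices because $R$ is a domain.

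Finally, the representative issue you identify as the main obstacle needs no explicit matrix computation. Since $J\in\GL_n(\mathscr{O})$, $\gamma$ preserves both $\GL_n(\mathscr{O})$ and $N_{\mathcal{G}}(\mathcal{T})$. It also fixes the longest element of $W$, because it acts on $W$ by a length-preserving diagram automorphism. Hence for any representative $\bar w_0\in N_{\mathcal{G}}(\mathcal{T})\cap\GL_n(\mathscr{O})$ we get $\gamma(\bar w_0)\bar w_0^{-1}\in\mathcal{T}\cap\GL_n(\mathscr{O})=\mathbb{T}(\mathscr{O})$, on which $\chi_{\mathrm{univ}}$ and $\delta$ are trivial.
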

\begin{proof}
We have
	\begin{align*}
		\gamma(\pi(u)f)(x)=(\pi(u)f)(\gamma(x))=f(\gamma(x)u)=(\pi(\gamma(u))\gamma(f))(x)
	\end{align*}      
	Hence \begin{align*}
		\mathcal{W} (\gamma(\pi(u)f))&=\mathcal{W}(\pi(\gamma(u))\gamma(f))=\varphi(\gamma(u))\mathcal{W}(\gamma(f))\\
		&=\varphi(u)\mathcal{W}(\gamma(f))
	\end{align*}
	By Proposition \ref{WhiRank1}, $\mathcal{W}(\gamma(f))=c\cdot\mathcal{W}(f)^\gamma$ for some $c\in R$. Take $f_1$, then $\mathcal{W}(\gamma(f_1))=\mathcal{W}(f_1)=\mathcal{W}(f_1)^\gamma=p^{-l(w_0)}$, where $w_0$ is the longest element in $W$. Hence $c=1$, and the proposition follows.
\end{proof}

Recall that the center of $\mathcal{H}$ is $R^W$. Let $\mathcal{F}$ denote the quotient field of the center. Define \begin{equation*}
	_{\mathcal{F}}\mathcal{H}:=\mathcal{F}\otimes_{R^W}\mathcal{H}
\end{equation*}

For a simple reflection $s_\alpha$, define a normalized intertwining element in $_{\mathcal{F}}\mathcal{H}$\cite{SolUn}
\begin{align*}
	\iota_{s_\alpha}^0&=\iota_{s_\alpha}(p-\theta_{-\alpha})^{-1}
\end{align*}
Let $w=s_1\cdots s_m$ be a reduced expression for $w\in W$, where each $s_i$ is a simple reflection. Set \begin{equation*}
	\iota_w=\iota_{s_1}\cdots\iota_{s_m}
\end{equation*} 
\begin{equation*}
	\iota^0_w=\iota^0_{s_1}\cdots\iota^0_{s_m}
\end{equation*}

\begin{lemma}\label{IntPa}
	\begin{equation*}
		\iota_w^0=\iota_w\cdot(\prod_{\beta\in R_w}(p-\theta_{-\beta}))^{-1}
	\end{equation*}
	where $R_w$ is the set of positive roots $\beta$ such that $w^{-1}\beta$ is negative.
\end{lemma}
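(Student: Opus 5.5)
We argue by induction on the length $\ell(w)$, using the reduced expression $w=s_1\cdots s_m$ and the definition $\iota^0_w=\iota^0_{s_1}\cdots\iota^0_{s_m}$, with $\iota^0_{s}=\iota_s(p-\theta_{-\alpha_s})^{-1}$. For $m=1$ the claim is the definition, since $R_{s_\alpha}=\{\alpha\}$.

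For the inductive step, write $w=w's_m$ with $w'=s_1\cdots s_{m-1}$ reduced, and let $\alpha=\alpha_m$ be the simple root of $s_m$. By induction,
\[
\iota^0_w=\iota_{w'}\Bigl(\prod_{\beta\in R_{w'}}(p-\theta_{-\beta})\Bigr)^{-1}\iota_{s_\alpha}(p-\theta_{-\alpha})^{-1}.
\]
The key input is the intertwining property of $\iota_{s_\alpha}$ in ${}_{\mathcal{F}}\mathcal{H}$:
\[
\theta_x\,\iota_{s_\alpha}=\iota_{s_\alpha}\,\theta_{s_\alpha(x)}\qquad (x\in Y).
\]
This extends to rational functions $f\in\mathcal{F}\otimes_{R^W}\mathbb{C}[Y]$ as $f\,\iota_{s_\alpha}=\iota_{s_\alpha}\,f^{s_\alpha}$. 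I would verify it directly from the Bernstein relation~\eqref{mul-hecke} using the form $\iota_{s_\alpha}=T_{s_\alpha}(1-\theta_\alpha)+(p-1)\theta_\alpha$. Substituting the relation for $\theta_xT_{s_\alpha}$ and collecting terms, the fraction $(\theta_x-\theta_{s_\alpha x})/(1-\theta_{-\alpha})$ multiplied by $(1-\theta_\alpha)$ combines with the $(p-1)\theta_x\theta_\alpha$ term and cancels. This is the standard rank-one check and is the main computational step; one has to be careful with signs and with the direction in which $s_\alpha$ acts. If the identity instead comes out as $\iota_{s_\alpha}\theta_x=\theta_{s_\alpha x}\iota_{s_\alpha}$, the same argument works after relabelling.

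Moving the product past $\iota_{s_\alpha}$ then gives
\[
\iota^0_w=\iota_{w'}\iota_{s_\alpha}\Bigl(\prod_{\beta\in R_{w'}}(p-\theta_{-s_\alpha\beta})\Bigr)^{-1}(p-\theta_{-\alpha})^{-1}.
\]
Since $\iota_{w'}\iota_{s_\alpha}=\iota_w$ by definition (the expression is reduced), it remains to match the index sets.

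The combinatorial identity needed is
\[
R_w=s_\alpha\cdot\bigl(\text{appropriate set}\bigr)\sqcup\{\ldots\},
\]
which one gets from the usual inversion-set recursion for $\ell(w's_\alpha)=\ell(w')+1$. In the convention $R_w=\{\beta>0: w^{-1}\beta<0\}$ the recursion is $R_{w's}=R_{w'}\sqcup\{w'\alpha\}$, whereas the substitution above produces $s_\alpha R_{w'}\cup\{\alpha\}$, which is $R_{(w')^{-1}}$-type data. So I expect the main obstacle to be bookkeeping of conventions. Either the commutation should be applied from the left, peeling off $s_1$ instead of $s_m$, which gives $R_w=\{\alpha_1\}\sqcup s_1R_{s_1w}$, exactly matching $\{\beta>0:w^{-1}\beta<0\}$; or the product should be taken in the other order.

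I would therefore redo the induction as $w=s_1w''$. Start from $\iota^0_w=\iota_{s_1}(p-\theta_{-\alpha_1})^{-1}\iota^0_{w''}$ and commute the scalar $(p-\theta_{-\alpha_1})^{-1}$ rightward through $\iota_{w''}$. The intertwining property turns it into $(p-\theta_{-w''^{-1}\alpha_1})^{-1}$, placed to the right of $\iota_w$. The factors then multiply out to $\prod(p-\theta_{-\beta})^{-1}$ over the correspondingly transformed inversion set, which is the claimed product. Commutativity of $\mathcal{F}\otimes\mathbb{C}[Y]$ lets all the scalar factors be gathered on the right.
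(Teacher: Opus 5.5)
\textbf{The first attempt and your diagnosis.} Your first attempt is the paper's proof. It inducts on $\ell(w)$, peels off a simple reflection on the right, and moves $\prod_{\beta\in R_{w'}}(p-\theta_{-\beta})^{-1}$ past $\iota_{s_\alpha}$ using the intertwining relation. You state and verify that relation correctly. Note that $\theta_x\iota_{s_\alpha}=\iota_{s_\alpha}\theta_{s_\alpha(x)}$ and $\iota_{s_\alpha}\theta_x=\theta_{s_\alpha(x)}\iota_{s_\alpha}$ are the same identity (replace $x$ by $s_\alpha x$), so there is no relabelling freedom there. The argument then invokes $R_{w's_\alpha}=\{\alpha\}\sqcup s_\alpha(R_{w'})$.

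Your diagnosis of that step is correct. This recursion characterizes $\{\beta>0 : w\beta<0\}$. For the stated $R_w=\{\beta>0: w^{-1}\beta<0\}$ one has instead $R_{w's_\alpha}=R_{w'}\sqcup\{w'\alpha\}$. The paper uses the recursion without comment, so its argument actually proves the lemma for $\{\beta>0:w\beta<0\}$.

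\textbf{The gap is your proposed fix.} Peeling off $s_1$ on the left, your own computation gives the index set $R_{w''}\sqcup\{w''^{-1}\alpha_1\}$. This is again the recursion for $\{\beta>0:w\beta<0\}$, not $\{\alpha_1\}\sqcup s_1R_{w''}$, and your last sentence silently swaps one for the other.

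No choice of induction can help, because the product is forced. Iterating the intertwining relation along any reduced word $w=s_1\cdots s_m$ gives
\[
\iota^0_w=\iota_w\prod_{j=1}^{m}\bigl(p-\theta_{-s_m\cdots s_{j+1}\alpha_j}\bigr)^{-1},\qquad \{s_m\cdots s_{j+1}\alpha_j\}_{j}=\{\beta>0: w\beta<0\}.
\]
For instance, take $w=s_1s_2$ in $\GL_3$ with $s_i=s_{\alpha_i}$. The formula gives $\iota^0_w=\iota_w\bigl((p-\theta_{-\alpha_2})(p-\theta_{-(\alpha_1+\alpha_2)})\bigr)^{-1}$. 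However, $\{\beta>0:w^{-1}\beta<0\}=\{\alpha_1,\alpha_1+\alpha_2\}$. Moreover $\iota_w$ is invertible in ${}_{\mathcal{F}}\mathcal{H}$, since $\iota_{s_\alpha}^2=(p-\theta_\alpha)(p-\theta_{-\alpha})$. Hence the identity is false with the literal definition of $R_w$.

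The repair belongs in the convention, not in the induction. With $R_w:=\{\beta>0: w\beta<0\}$, your first argument is complete. Equivalently, keeping the stated $R_w$, the true identity is $\iota^0_w=\bigl(\prod_{\beta\in R_w}(p-\theta_{\beta})\bigr)^{-1}\iota_w$.
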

\begin{proof}
	Using the multiplication in $\mathcal{H}$, we have \begin{equation}\label{ProCom51}
		\iota_{s_\alpha}\cdot\theta_y=\theta_{s_\alpha (y)}\cdot\iota_{s_\alpha}
	\end{equation}
    The case $l(w)=1$ is clear. Now, we prove it by induction on $l(w)$. If $l(ws_\alpha)>l(w)$, we have $R_{ws_\alpha}=\{\alpha\}\sqcup\{s_\alpha(\beta)|\beta\in R_w\}$. Denote $n(\beta)=p-\theta_{-\beta}$. We have $\prod_{\beta\in R_{ws_\alpha}}n(\beta)=\prod_{\beta\in R_{w}}n(s_\alpha(\beta))\cdot n(\alpha)$. By the induction hypothesis, we have $\iota^0_w=\iota_w\cdot(\prod_{\beta\in R_{w}}n(\beta))^{-1}$. Hence  combining \eqref{ProCom51}, we get \begin{align*}
		\iota^0_{ws\alpha}&=\iota^0_{w}\iota^0_{s\alpha}=\iota_w\cdot(\prod_{\beta\in R_{w}}n(\beta))^{-1}\cdot\iota_{s_\alpha}\cdot n(\alpha)^{-1}\\&=\iota_{w}\iota_{s_\alpha}(\prod_{\beta\in R_{w}}n(s_\alpha(\beta))\cdot n(\alpha))^{-1}\\
		&=\iota_{ws_\alpha}(\prod_{\beta\in R_{ws_\alpha}}n(\beta))^{-1}
	\end{align*}
\end{proof}

Recall from Proposition \ref{HIsoPro1} the isomorphism $\Psi: \mathcal{H} \to i_{\mathcal{B}}^{\mathcal{G}}(\chi_{\mathrm{univ}})^{\mathcal{J}}$, and from \eqref{rightmul2} the action on $i_{\mathcal{B}}^{\mathcal{G}}(\chi_{\mathrm{univ}})^{\mathcal{J}}$ by right multiplication on $\mathcal{H}$. Taking $\psi' = \iota_w$, we define the intertwining operator $I_w$ on $i_{\mathcal{B}}^{\mathcal{G}}(\chi_{\mathrm{univ}})^{\mathcal{J}}$ by \eqref{rightmul2}:

\begin{equation*}
	I_w: h\circ f_1\mapsto (h\cdot \iota_{w})\circ f_1
\end{equation*}
where $h\in \mathcal{H}$.

Define \begin{equation*}
	n_w:=\prod_{\beta\in R_w}(p-\theta_{-\beta})
\end{equation*}

Combining Proposition \ref{IntWhi} and Lemma \ref{IntPa}, we have 
\begin{proposition}\label{WThP21}
   Let $\mathcal{W}$ be the Whittaker functional defined above \eqref{WhFun}. Then 
    \begin{equation*}
	\mathcal{W}(I_{w}\circ f)=\mathcal{W}(f)^{w}n_w
\end{equation*}
\end{proposition}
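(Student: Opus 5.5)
The argument is an induction on the length $l(w)$, combining Proposition \ref{IntWhi} with the factorization $\iota_w=\iota_{s_1}\cdots\iota_{s_m}$. The only real content is bookkeeping: how the right-multiplication operators compose, and how the $R$-twist propagates.

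First I note how the operators compose. By \eqref{rightmul2}, $I_w(h\circ f_1)=(h\cdot\iota_w)\circ f_1$. So if $w=w's_\alpha$ with $l(w)=l(w')+1$, then $\iota_w=\iota_{w'}\iota_{s_\alpha}$ and
\begin{equation*}
I_w(h\circ f_1)=((h\iota_{w'})\iota_{s_\alpha})\circ f_1=I_{s_\alpha}\bigl(I_{w'}(h\circ f_1)\bigr),
\end{equation*}
that is, $I_w=I_{s_\alpha}\circ I_{w'}$. Since every $f\in i_{\mathcal{B}}^{\mathcal{G}}(\chi_{\mathrm{univ}})^{\mathcal{J}}$ has the form $h\circ f_1$ by Proposition \ref{HIsoPro1}, it suffices to prove the formula on such $f$. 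The base case $l(w)=1$ is exactly Proposition \ref{IntWhi}.

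For the inductive step, take $w=w's_\alpha$ with $l(w)=l(w')+1$. Apply Proposition \ref{IntWhi} to $I_{w'}f$, and then the induction hypothesis:
\begin{equation*}
\mathcal{W}(I_w f)=\mathcal{W}(I_{w'}f)^{s_\alpha}(p-\theta_{-\alpha})=\bigl(\mathcal{W}(f)^{w'}n_{w'}\bigr)^{s_\alpha}(p-\theta_{-\alpha})=\mathcal{W}(f)^{w's_\alpha}\,s_\alpha(n_{w'})\,(p-\theta_{-\alpha}).
\end{equation*}
Here I use that the $W$-action on $R$ is by ring automorphisms, with the convention $(r^{w'})^{s_\alpha}=r^{w's_\alpha}$. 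It remains to check $s_\alpha(n_{w'})(p-\theta_{-\alpha})=n_w$. We have $s_\alpha(n_{w'})=\prod_{\beta\in R_{w'}}(p-\theta_{-s_\alpha\beta})$. By the root decomposition used in the proof of Lemma \ref{IntPa}, $R_{w's_\alpha}=\{\alpha\}\sqcup s_\alpha(R_{w'})$, so the product is exactly $n_w$.

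The main obstacle is consistency of conventions: whether $\mathcal{W}(f)^{s}$ is a left or right action, and whether $I_w$ composes as $I_{s_\alpha}\circ I_{w'}$ or in the reverse order. This must match the exponent convention $R_w=\{\beta>0: w^{-1}\beta<0\}$ used in Lemma \ref{IntPa}. I would fix conventions by checking the case $l(w)=2$ explicitly against Lemma \ref{IntPa}. If the order comes out reversed, I would instead decompose $w=s_\alpha w'$, for which the analogous decomposition $R_{s_\alpha w'}=\{\alpha\}\sqcup s_\alpha R_{w'}$ holds under the other convention, and run the same induction. Lemma \ref{IntPa} itself already shows that $\iota_w$ is independent of the reduced expression, since it equals $\iota_w^0 n_w$ and $\iota^0_w$ satisfies the braid relations. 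Hence $I_w$ is well defined, and the induction can use any reduced expression.
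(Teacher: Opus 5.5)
Your argument is the paper's own: its proof is just ``combine Proposition~\ref{IntWhi} with Lemma~\ref{IntPa}''. Your main induction through $w=w's_\alpha$ is correct, but the conventions are forced rather than a matter of choice.

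Write $x(\theta_y)=\theta_{x(y)}$ for the action of $W$ on $R$, and recall that $I_w=I_{s_m}\circ\cdots\circ I_{s_1}$. Iterating Proposition~\ref{IntWhi} along a reduced word $w=s_1\cdots s_m$ then gives $\mathcal{W}(I_wf)=w^{-1}(\mathcal{W}(f))\prod_{j=1}^m(p-\theta_{-s_m\cdots s_{j+1}\alpha_j})$. So the statement holds precisely when $\mathcal{W}(f)^w$ denotes the right action $w^{-1}(\mathcal{W}(f))$, which is your convention, and $R_w$ is the inversion set $\{\beta>0: w\beta<0\}$. That is exactly the set for which $R_{ws_\alpha}=\{\alpha\}\sqcup s_\alpha R_w$ holds, the identity used both by you and by the paper's proof of Lemma~\ref{IntPa}. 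For the literally stated set $\{\beta>0:w^{-1}\beta<0\}$ one has instead $R_{ws_\alpha}=R_w\sqcup\{w\alpha\}$.

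Consequently your fallback should be discarded. Peeling off $w=s_\alpha w'$ gives $\mathcal{W}(I_wf)=\mathcal{W}(f)^{w}\,(p-\theta_{-(w')^{-1}\alpha})\,n_{w'}$. The twist lands on the new factor rather than on $n_{w'}$, so the identity actually needed is $R_{s_\alpha w'}=R_{w'}\sqcup\{(w')^{-1}\alpha\}$, and this again singles out the inversion set of $w$. For example, for $w=s_2s_1$ in type $A_2$ both routes produce the factors indexed by $\{\alpha_1,\alpha_1+\alpha_2\}$, while $\{\beta>0:w^{-1}\beta<0\}=\{\alpha_2,\alpha_1+\alpha_2\}$.
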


\subsubsection{Induction}

Recall we have \begin{equation*}
	\pi\hookrightarrow\sigma= (\times_\lambda\rho|\cdot|_F^{\lambda})\times(\times_{\lambda'}\rho'|\cdot|_F^{\lambda'})
\end{equation*}
where $\rho=|\cdot|_F^z$ with $p^z=1$,  $\rho'=|\cdot|_F^{z'}$ with $p^{z'}=-1$,  and $\lambda$, $\lambda'$ are  half integers taken in decreasing order.

Denote $t=(\times_\lambda p^{\lambda})\times(\times_{\lambda'}-p^{\lambda'})$, hence $t_c=(1,\cdots,1,-1,\cdots,-1)$.

Recall that $G=\GL_n(\mathbb{C})$ is the complex dual group of $\mathcal{G}=\GL_n(\Qp)$, $T$ is a maximal torus in $G$, and $W=N_G(T)/T$ is the Weyl group. Then $M= Z_G(t_c)=\GL_k(\mathbb{C})\times \GL_l(\mathbb{C})$. Let $X$ be the character lattice of $T$. 

Recall  $G^+:=G\rtimes \hat{\Gamma}$, where $\hat{\Gamma} = \langle\hat{\gamma}\rangle$.
 Denote $M^+=Z_{G^+}(t_c)$ and let $w_M\in W$  be a permutation that swaps the first block of size $l$ with the last block of size $k$:
\begin{align*}
	w_M=(s_ks_{k+1}\cdots s_{k+l-1})(s_{k-1}s_k\cdots s_{k+l-2})\cdots(s_1s_2\cdots s_l)\in W
\end{align*}

Then $M^+$ is generated by $M$ and $(\overline{w}_{M},\hat{\gamma})$, where $\overline{w}_{M}$ is a representative of $w_M\in W$ in $G$.

Recall that  $\hat{\Gamma}$ induces an action on $W$ and $T$. We define an action $\hat{\gamma}_M$ on $W$ and $T$ by\begin{align}
    \hat{\gamma}_M(w)&=w_M\cdot \hat{\gamma}(w)\cdot w_M^{-1}\quad w\in W\\
\hat{\gamma}_M(s)&=\overline{w}_{M}\cdot \hat{\gamma}(s)\cdot \overline{w}_{M}^{-1}\quad s\in T
\end{align}
Since $\mathbb{C}[X]\otimes\mathbb{C}[v,v^{-1}]\simeq \mathcal{O}(T\times\mathbb{C}^*)$ (the ring of regular functions on $T\times \mathbb{C}^*$) and 
$\mathcal{H}(M,v)$ is generated by $\mathbb{C}[W]$ and  $\mathbb{C}[X]\otimes\mathbb{C}[v,v^{-1}]$,
this $\hat{\gamma}_M$ induces an action on $\mathcal{H}(M,v)$ and we obtain $\mathcal{H}(M,v)\rtimes \langle\hat{\gamma}_M\rangle$. Define \begin{equation*}
    \mathcal{H}(M^+,v):= \mathcal{H}(M,v)\rtimes \langle\hat{\gamma}_M\rangle
\end{equation*}

The unramified principal series satisfies
\begin{align*}
	\sigma^{\mathcal{J}}&=\mathcal{H}(G,p^{1/2})\otimes_{\mathbb{C}[X]}\mathbb{C}_t\\
	&=\mathcal{H}(G,v)\otimes_{\mathcal{H}(T,v)}\mathbb{C}_{(t,p^{1/2})}\\
	&=\mathcal{H}(G,v)\otimes_{\mathcal{H}(M,v)}\mathcal{H}(M,v)\otimes_{\mathcal{H}(T,v)}\mathbb{C}_{(t,p^{1/2})}\\
	&=\mathcal{H}(G,v)\otimes_{\mathcal{H}(M,v)}\sigma_{\mathcal{M}}^{\mathcal{J}_{\mathcal{M}}}
\end{align*}
where $\mathbb{C}_t$ is the one-dimensional module of $\mathbb{C}[X]\simeq \mathcal{O}(T)$ given by evaluation at $t$, and $\mathbb{C}_{(t,p^{1/2})}$ is the one-dimensional module of $\mathcal{H}(T,v)\simeq\mathbb{C}[X]\otimes\mathbb{C}[v,v^{-1}]\simeq \mathcal{O}(T\times\mathbb{C}^*)$ given by evaluation at $(t,p^{1/2})$.

For convenience, we denote $V=\sigma^{\mathcal{J}}$ and \begin{align*}
V_M=\sigma_{\mathcal{M}}^{\mathcal{J}_{\mathcal{M}}}=\mathcal{H}(M,v)\otimes_{\mathcal{H}(T,v)}\mathbb{C}_{(t,p^{1/2})}
\end{align*} 
then $V=\mathcal{H}(G,v)\otimes_{\mathcal{H}(M,v)}V_M$.

Recall $t=(\times_\lambda p^{\lambda})\times(\times_{\lambda'}-p^{\lambda'})\in T$ satisfies $p^{\lambda_1}\geq\cdots\geq p^{\lambda_k}$ and $p^{\lambda'_1}\geq \cdots \geq p^{\lambda_l}$.

Since $\pi\simeq \gamma^* \pi$, we have 
\begin{equation}
    \begin{aligned}
        \lambda_{k+1-i}&=-\lambda_i\\
        \lambda'_{l+1-i}&=-\lambda'_i
    \end{aligned}
\end{equation}

 Hence $\hat{\gamma}_M({t})={t}$.

We  define a $\hat{\gamma}_M$ action on $V_M$   by \begin{equation*}
	\hat{\gamma}_M(h\otimes a)=\hat{\gamma}_M(h)\otimes a
\end{equation*}

where $h\in \mathcal{H}(M,v)$ and $a\in \mathbb{C}_{({t},p^{1/2})}$.

The action is well defined because \begin{align*}
   \hat{\gamma}_M (h_1h_2\otimes a)&=\hat{\gamma}_M(h_1h_2)\otimes a=\hat{\gamma}_M(h_1)\hat{\gamma}_M(h_2)\otimes a\\
&=\hat{\gamma}_M(h_1)\otimes\hat{\gamma}_M(h_2)\cdot a=\hat{\gamma}_M(h_1)\otimes(\hat{\gamma}_M(h_2))({t},p^{1/2})a\\
&=\hat{\gamma}_M(h_1)\otimes h_2(\hat{\gamma}_M({t}),p^{1/2})a=\hat{\gamma}_M(h_1)\otimes h_2({t},p^{1/2})a\\
&=\hat{\gamma}_M(h_1\otimes h_2\cdot a)
\end{align*}
where $h_1\in  \mathcal{H}(M,v)$, $h_2\in \mathcal{H}(T,v)$, and $a\in \mathbb{C}_{({t},p^{1/2})}$.

Denote by ${\hat{\gamma}_M}^*(V_M)$ the module of $\mathcal{H}(M,v)$ with the same underlying vector space but with the $\mathcal{H}(M,v)$ action twisted by ${\hat{\gamma}_M}$.

 Then $\hat{\gamma}_M$ is an isomorphism from $V_M$ to ${\hat{\gamma}_M^*}V_M$:\begin{align*}
	\hat{\gamma}_M(h'\cdot (h\otimes a ))&=\hat{\gamma}_M(h'h\otimes a)=\hat{\gamma}_M(h'h)\otimes a\\
	&=\hat{\gamma}_M(h')\hat{\gamma}_M(h)\otimes a\\
	&=\hat{\gamma}_M(h')\cdot \hat{\gamma}_M(h\otimes a)
\end{align*}
Thus we can view $V_M$ as an $\mathcal{H}(M^+,v)$ module by:
\begin{align*}
    (h,1)\cdot v&= h\cdot v\\
    (h,\hat{\gamma}_M)\cdot v&= h\cdot \hat{\gamma}_M(v)
\end{align*}

Recall that $\mathscr{A}^{W^+}$ is the center of  $\mathcal{H}(G^+,v)\simeq \mathcal{H}(G,v)\rtimes\langle\hat{\gamma}\rangle$, and $\mathscr{I}_{({t},p^{1/2})}^+$ is the maximal ideal of $\mathscr{A}^{W^+}$ associated with the orbit $(W^+\cdot {t},p^{1/2})$.

Denote by $\widehat{\mathscr{A}^{W^+}_{({t},p^{1/2})}}$ the $\mathscr{I}_{({t},p^{1/2})}^+$-adic completion of $\mathscr{A}^{W^+}$. Now we consider the structure of the completion\begin{align}
	\widehat{\mathcal{H}}(G^+,v):=\widehat{\mathscr{A}^{W^+}_{({t},p^{1/2})}}\otimes_{\mathscr{A}^{W^+}}\mathcal{H}(G^+,v)
\end{align}

Let $W^+_M=N_{M^+}(T)/T$. The center of $\mathcal{H}(M^+,v)$ is $\mathscr{A}^{W^+_M}$.  Denote by $\widehat{\mathscr{A}^{W^+_M}_{({t},p^{1/2})}}$ the  completion of $\mathscr{A}^{W^+_M}$ at the maximal ideal associated with the orbit $(W^+_M\cdot {t},p^{1/2})$.

Put $\varpi=W_M^+ \cdot{t}$, the equivalence class of ${t}$.
We associate idempotents $e_{w\varpi}\in \widehat{\mathcal{H}}(G^+,v)$  with the equivalence classes $w\varpi\in W^+\cdot{t}$. We have the decomposition\begin{align*}
	\widehat{\mathcal{H}}(G^+,v) &=\bigoplus_{u,v\in W/W_M} \iota_u^0 e_\varpi \widehat{\mathcal{H}}(M^+,v)\iota_{v^{-1}}^0\\
	&=\bigoplus_{u,v\in W/W_M} e_{u\varpi} \widehat{\mathcal{H}}(G^+,v)e_{v\varpi}
\end{align*}

 \begin{proposition}[\cite{SolUn}]\label{ComCatEq1}
	(1) There is a natural algebraic  isomorphism 
	\begin{equation*}
		\widehat{\mathcal{H}}(M^+,v)\simeq e_{\varpi} \widehat{\mathcal{H}}(G^+,v)e_{\varpi}
	\end{equation*}
	
	(2) There is a natural algebraic  isomorphism 
	\begin{equation*}
		\widehat{\mathcal{H}}(G^+,v)\simeq  (\widehat{\mathcal{H}}(M^+,v))_N
	\end{equation*}
	the algebra of $N\times N$ matrices with entries in $\widehat{\mathcal{H}}(M^+,v)$, where $N=[W:W_M]$.
\end{proposition}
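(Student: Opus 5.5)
The plan is to reduce both statements to the connected result of Lusztig/Solleveld by completing at the central character and using the intertwining elements $\iota^0_w$.

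For (1), I start from the decomposition of $\widehat{\mathcal{H}}(G,v)$ (connected case, \cite{SolUn}). Completing at the $W^+$-orbit of $(t,p^{1/2})$ splits $\widehat{\mathscr{A}^+}$ as $\bigoplus_{s\in W^+\cdot t}\widehat{\mathscr{A}}_{\mathscr{Z}_{(s,v_0)}}$, exactly as in the proof of Lemma \ref{morita}. I set $e_{w\varpi}$ to be the unit of the summand indexed by the points of $w\varpi$. These idempotents are central in $\widehat{\mathscr{A}^+}$ and sum to $1$. Since $\hat{\gamma}_M(t)=t$ and $M^+$ is generated by $M$ and $(\overline{w}_M,\hat{\gamma})$, the element $T_\gamma\iota^0_{w_M}$ (with $\iota^0_{w_M}$ well defined after completion, because $p-\theta_{-\beta}$ is invertible near $t$ for $\beta\in R_{w_M}$; this uses $t_c$ and the ordering of $\lambda,\lambda'$) normalizes $e_\varpi$. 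Conjugation by it on $\mathscr{A}$ realizes $\hat{\gamma}_M$, by \eqref{ProCom51} and the rule $T_\gamma\theta_xT_\gamma=\theta_{\gamma(x)}$.

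I then define $\widehat{\mathcal{H}}(M^+,v)\to e_\varpi\widehat{\mathcal{H}}(G^+,v)e_\varpi$ by letting $\widehat{\mathcal{H}}(M,v)\to e_\varpi\widehat{\mathcal{H}}(G,v)e_\varpi$ be the connected isomorphism of \cite{SolUn}, and sending $\hat{\gamma}_M$ to $e_\varpi T_\gamma\iota^0_{w_M}e_\varpi$. Three things must be checked. First, this element squares to $e_\varpi$. This follows from $\gamma(w_M)=w_M^{-1}$ and the cocycle relation $\iota^0_u\iota^0_v=\iota^0_{uv}$ for the normalized elements. Second, conjugation by it restricts to $\hat{\gamma}_M$ on the generators $T_{s_\alpha}$ ($\alpha$ simple in $M$) and $\theta_x$. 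Third, bijectivity: both sides are free over the completed center of the same rank $|W_M|\cdot 2$, so bijectivity follows from the connected case together with the $\Gamma$-grading.

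For (2), I use the elements $\iota^0_u e_\varpi$ for $u$ running over minimal coset representatives of $W/W_M$. These are invertible partial isometries between $e_\varpi$ and $e_{u\varpi}$, since $\iota^0_u\iota^0_{u^{-1}}=1$ in the completion. Hence $\widehat{\mathcal{H}}(G^+,v)=\bigoplus_{u,v}\iota^0_ue_\varpi\widehat{\mathcal{H}}(G^+,v)e_\varpi\iota^0_{v^{-1}}$. I need the $e_{u\varpi}$ with $u\in W/W_M$ to exhaust $W^+\cdot t$. This holds because $\hat{\gamma}$ maps $\varpi$ into $W\varpi$, as $w_M\hat{\gamma}(t)=t$. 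Combined with (1), this yields a matrix unit system and the isomorphism with $N\times N$ matrices over $\widehat{\mathcal{H}}(M^+,v)$.

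The main obstacle is the first check in (1): showing that $T_\gamma\iota^0_{w_M}$ squares to $e_\varpi$ rather than to a nontrivial central unit. If the normalization misbehaves, I would first try rescaling by an element of $\widehat{\mathscr{A}^{W_M}}$, then verify the relation by evaluating on the principal series $V_M$.
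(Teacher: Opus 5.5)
Your strategy is the one the paper relies on: the paper only cites \cite{SolUn} and records that $\hat{\gamma}_M$ goes to $e_\varpi(\iota^0_{w_M},\hat{\gamma})$, i.e.\ it transports the connected first-reduction isomorphism and adjoins a twisted normalized intertwiner. However, the generator you adjoin is in the wrong order, and for $k\neq l$ this breaks part (1).

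Since $T_\gamma h T_\gamma=\hat{\gamma}(h)$ and $\hat{\gamma}(w_M)=w_M^{-1}$, you have $T_\gamma\iota^0_{w_M}=\iota^0_{w_M^{-1}}T_\gamma$. So conjugation by your element acts on $\mathscr{A}$ through $\hat{\gamma}w_M=w_M^{-1}\hat{\gamma}\in W^+$, whereas $\hat{\gamma}_M$ is given by $w_M\hat{\gamma}$. The two differ by $w_M^2$, the cyclic shift by $2k$, which lies in $W_M$ only when $k=l$. For instance, with $n=3$ and $t_c=(1,1,-1)$ one gets $\hat{\gamma}w_M t_c=(1,-1,1)\neq t_c$. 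Hence $T_\gamma\iota^0_{w_M}$ carries $e_\varpi$ to the idempotent of another piece of $W^+\cdot t$, and formally $e_\varpi T_\gamma\iota^0_{w_M}e_\varpi=0$. Your map therefore kills $\hat{\gamma}_M$, and neither your first check nor your second can succeed.

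The correct generator is $\iota^0_{w_M}T_\gamma=T_\gamma\iota^0_{w_M^{-1}}$, and with it your checks go through. First, $(\iota^0_{w_M}T_\gamma)^2=\iota^0_{w_M}\iota^0_{w_M^{-1}}=1$, because $\iota_{s_\alpha}^2=(p-\theta_\alpha)(p-\theta_{-\alpha})$ gives $(\iota^0_{s_\alpha})^2=1$. So the ``central unit'' you worry about never appears. Second, conjugation sends $T_{s_\alpha}$ to $T_{s_{w_M\hat{\gamma}(\alpha)}}$ for $\alpha\in\Pi_M$, since $w_M\hat{\gamma}(\Pi_M)=\Pi_M$.

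Your well-definedness argument also has to be redone. Invertibility of $p-\theta_{-\beta}$ near $t$ is not enough: $e_\varpi$ is the unit of the whole summand over $\varpi=W_Mt$, so the normalizing factor must be a unit at every point of the piece the intertwiner is applied to. Moreover, the set that actually occurs in Lemma \ref{IntPa} is $\{\beta>0:\ w\beta<0\}$; the inductive step $R_{ws_\alpha}=\{\alpha\}\sqcup s_\alpha(R_w)$ holds only for that set.

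Now $\iota^0_{w_M}T_\gamma e_\varpi=\iota^0_{w_M}e_{\hat{\gamma}\varpi}T_\gamma$, and $\hat{\gamma}\varpi=w_M^{-1}\varpi$ has compact part $\hat{\gamma}(t_c)$. So what you need is that $w_M$ sends the positive roots of $Z_G(\hat{\gamma}(t_c))$ to positive roots. This holds because $w_M$ is the order-preserving block rotation, i.e.\ $w_M\hat{\gamma}(R_M^+)=R_M^+$; the ordering of $\lambda,\lambda'$ plays no role. Your element fails this test as well: for $t=(p^{1/2},p^{-1/2},-1)$, the factor $p-\theta_{-(e_1-e_2)}$ of $n_{w_M}$ vanishes at $(p^{-1/2},p^{1/2},-1)\in\varpi$.

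The same criterion, $u(R_M^+)\subset R^+$, is exactly what makes your minimal coset representatives work in part (2), so that part is fine.

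For bijectivity in (1), avoid a rank count over the completed center; freeness there is unclear for the non-reflection group $W_M^+$. Instead use that $\iota^0_{w_M}e_{w_M^{-1}\varpi}$ is an invertible partial isometry, so that $e_\varpi\widehat{\mathcal{H}}(G,v)T_\gamma e_\varpi=e_\varpi\widehat{\mathcal{H}}(G,v)e_\varpi\cdot\iota^0_{w_M}T_\gamma e_\varpi$. Surjectivity follows from this, and injectivity from the $\hat{\Gamma}$-grading.
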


The embedding of algebras
\begin{align*}
	\widehat{\mathcal{H}}(M^+,v)\simeq {e_{\varpi}\widehat{\mathcal{H}}(M^+,v)}\hookrightarrow \widehat{\mathcal{H}}(G^+,v)
\end{align*}
sends \begin{align*}
	\hat{\gamma}_M\mapsto e_\varpi(\iota_{w_M}^0,\hat{\gamma}) 
\end{align*}
where $\iota_{w_M}^0$ is the intertwining element for $w_M$.

Note that if we delete the "$+$" in the structure of completion, we obtain the same decomposition and proposition \cite{Lusztig1989AffineHA}.

Hence we have the isomorphism\begin{align*}
	\widehat{\mathcal{H}}(G,v)\otimes_{e_\varpi\widehat{\mathcal{H}}(M,v)}V_M
	&\simeq  \widehat{\mathcal{H}}(G^+,v)\otimes_{e_\varpi\widehat{\mathcal{H}}(M^+,v)}V_M \\
	g\otimes v &\mapsto g\otimes v
\end{align*}

where ${e_{\varpi}\widehat{\mathcal{H}}(M,v)}$ (resp. ${e_{\varpi}\widehat{\mathcal{H}}(M^+,v)}$ ) acts on $V_M$ via  ${e_{\varpi}\widehat{\mathcal{H}}(M,v)}\simeq  \widehat{\mathcal{H}}(M,v)$ (resp. ${e_{\varpi}\widehat{\mathcal{H}}(M^+,v)}\simeq  \widehat{\mathcal{H}}(M^+,v)$).

Recall we defined \begin{equation*}
	n_w:=\prod_{\beta\in R_w}(p-\theta_{-\beta})
\end{equation*}
Denote\begin{equation*}
	n_w(t):=\prod_{\beta\in R_w}(p-\theta_{-\beta}(t))
\end{equation*}

Hence $ n_{w_M}(t)\neq 0$ and  $ n_{w_M^{-1}}(t)\neq 0$, which implies
 $\iota_{w_M}^0\in  \widehat{\mathcal{H}}(G,v)$ and $\iota_{w_M^{-1}}^0\in  \widehat{\mathcal{H}}(G,v)$.

Using this isomorphism, we obtain a $\hat{\gamma}$ action on $\widehat{\mathcal{H}}(G,v)\otimes_{e_\varpi\widehat{\mathcal{H}}(M,v)}V_M$:\begin{align*}
	\hat{\gamma}\cdot (g\otimes v) &= ((1,\hat{\gamma})\cdot (g,1)) \otimes v= \hat{\gamma}(g)\cdot (1,\hat{\gamma}) \otimes v\\
	&=\hat{\gamma}(g)\iota_{w_M^{-1}}^0 (\iota_{w_M}^0,\hat{\gamma}) \otimes v= \hat{\gamma}(g)\iota_{w_M^{-1}}^0\otimes e_\varpi (\iota_{w_M}^0,\hat{\gamma}) \cdot v \\
	&=\hat{\gamma}(g)\iota_{w_M^{-1}}^0\otimes \hat{\gamma}_M (v)
\end{align*}

We have:
\begin{align}
	V&=\mathcal{H}(G,v)\otimes_{\mathcal{H}(T,v)}\mathbb{C}_{(t,p^{1/2})}\nonumber\\
	&=\mathcal{H}(G,v)\otimes_{\mathcal{H}(M,v)}V_M\nonumber\\
	&\simeq \widehat{\mathcal{H}}(G,v)\otimes_{e_\varpi\widehat{\mathcal{H}}(M,v)}V_M\nonumber\\
	&\simeq  \widehat{\mathcal{H}}(G^+,v)\otimes_{e_\varpi\widehat{\mathcal{H}}(M^+,v)}V_M \label{GaAct1}
\end{align}

Thus we have a $(1,\hat{\gamma})$ action on $V$ via \eqref{GaAct1}

\begin{equation*}
	\begin{tikzcd}
		\mathcal{H}(G,v)\otimes_{\mathcal{H}(T,v)}\mathbb{C}_{(t,p^{1/2})} \ar[r,] \ar[d,"{(1,\hat{\gamma})}"]& \widehat{\mathcal{H}}(G,v)\otimes_{e_\varpi\widehat{\mathcal{H}}(M,v)}V_M\ar[d,"{(1,\hat{\gamma})}"] \ar[r] & \widehat{\mathcal{H}}(G^+,v)\otimes_{e_\varpi\widehat{\mathcal{H}}(M^+,v)}V_M \ar[d,"{(1,\hat{\gamma})}"]\\
		\mathcal{H}(G,v)\otimes_{\mathcal{H}(T,v)}\mathbb{C}_{(t,p^{1/2})} \ar[r]& \widehat{\mathcal{H}}(G,v)\otimes_{e_\varpi\widehat{\mathcal{H}}(M,v)}V_M \ar[r] & \widehat{\mathcal{H}}(G^+,v)\otimes_{e_\varpi\widehat{\mathcal{H}}(M^+,v)}V_M 
	\end{tikzcd}
\end{equation*}
which is given explicitly by
\begin{equation*}
	\begin{tikzcd}
		g\otimes a\ar[r,mapsto] \ar[d,"{(1,\hat{\gamma})}"]& g\otimes a  \ar[r,mapsto] \ar[d,"{(1,\hat{\gamma})}"]& g\otimes a \ar[d,"{(1,\hat{\gamma})}"]\\
		\hat{\gamma}(g)\iota_{w_M^{-1}}\otimes (n_{w_M^{-1}}(t))^{-1} a \ar[r]& \hat{\gamma}(g)\iota_{w_M^{-1}}^0\otimes a \ar[r] &  \hat{\gamma}(g)\iota_{w_M^{-1}}^0\otimes a
	\end{tikzcd}
\end{equation*}
where $a\in \mathbb{C}_{(t,p^{1/2})}$, $g\in \mathcal{H}(G,v)$, because $\hat{\gamma}_M(a)=a$ and $\iota_{w_M^{-1}}^0=\iota_{w_M^{-1}}^0\cdot (n_{w_M^{-1}})^{-1}$.

Hence we get a $\hat{\gamma}=(1,\hat{\gamma})$ action on $V$ given by \begin{equation*}
	\hat{\gamma}(g\otimes a)=\hat{\gamma}(g)\iota_{w_M^{-1}}\otimes (n_{w_M^{-1}}(t))^{-1} a
\end{equation*}
Denote by ${\hat{\gamma}}^*V$ the $\mathcal{H}(G,v)$ module with the same underlying vector space but with the action twisted by ${\hat{\gamma}}$. Then $\hat{\gamma}$ is an isomorphism from $V$ to ${\hat{\gamma}}^*V$.

Recall 
$\sigma= (\times_\lambda\rho|\cdot|_F^{\lambda})\times(\times_{\lambda'}\rho'|\cdot|_F^{\lambda'})$ and $V=\sigma^{\mathcal{J}}$. We have a ${\gamma}$ action on $\sigma$, which is an isomorphism of $\mathcal{G}$ representation from $\sigma$ to ${{\gamma}}^*\sigma$ by the equivalence of categories \eqref{TwiE}.

Since \begin{equation*}
	i_{\mathcal{B}}^{\mathcal{G}}(\chi_{\mathrm{univ}})\otimes_R\mathbb{C}_{(\lambda,\lambda')} \simeq i_{\mathcal{B}}^{\mathcal{G}}(\mathbb{C}_{(\lambda,\lambda')})=\sigma
\end{equation*} the formula \begin{equation}\label{WThP1}
	\mathcal{W}(f\otimes a)=\mathcal{W}(f)\otimes a
\end{equation}  defines a Whittaker functional on  $\sigma$.
\begin{theorem}
	For the Whittaker functional $\mathcal{W}$ on $\sigma$, the ${\gamma}$ action on $\sigma$ satisfies
	\begin{equation*}
		\mathcal{W}({\gamma}(x))=\mathcal{W}(x)
	\end{equation*}
\end{theorem}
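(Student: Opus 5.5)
Throughout, I work on Iwahori-fixed vectors: $\sigma^{\mathcal{J}}=V=\mathcal{H}(G,v)\otimes_{\mathcal{H}(T,v)}\mathbb{C}_{(t,p^{1/2})}$, and $\mathcal{W}$ on $\sigma$ is obtained from the universal Whittaker functional by \eqref{WThP1}. The $\gamma$-action on $V$ is the one computed above,
\begin{equation*}
\hat{\gamma}(g\otimes a)=\hat{\gamma}(g)\iota_{w_M^{-1}}\otimes (n_{w_M^{-1}}(t))^{-1}a .
\end{equation*}
Both $\mathcal{W}\circ\gamma$ and $\mathcal{W}$ are $\varphi$-Whittaker functionals on $\sigma$, because $\varphi$ is $\gamma$-invariant. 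An unramified principal series has a one-dimensional space of Whittaker functionals (Rodier, or Proposition \ref{WhiRank1} specialized at $\chi$). Hence $\mathcal{W}\circ\gamma=c\,\mathcal{W}$ for some scalar $c$, and it suffices to check $c=1$ on a single vector $x$ with $\mathcal{W}(x)\neq 0$. Since $\sigma$ is generated by $\sigma^{\mathcal{J}}$ and $\mathcal{W}$ is nonzero, such an $x$ can be chosen among $\mathcal{J}$-fixed vectors.

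I will compute at the universal level and specialize only at the end. In $i_{\mathcal{B}}^{\mathcal{G}}(\chi_{\mathrm{univ}})^{\mathcal{J}}\cong\mathcal{H}$ (Proposition \ref{HIsoPro1}), the operator $g\circ f_1\mapsto \hat{\gamma}(g)\iota_{w_M^{-1}}\circ f_1$ is the composite of the $\gamma$-action $\psi\circ f_1\mapsto\gamma(\psi)\circ f_1$ with the intertwining operator $I_{w_M^{-1}}$ given by right multiplication by $\iota_{w_M^{-1}}$. Propositions \ref{WThP21} and \ref{WThP22} then give
\begin{equation*}
\mathcal{W}\bigl(I_{w_M^{-1}}\circ\gamma(f)\bigr)=\bigl(\mathcal{W}(f)^{\gamma}\bigr)^{w_M^{-1}}\,n_{w_M^{-1}} .
\end{equation*}
Specializing along $R\to\mathbb{C}_\chi$ via Proposition \ref{HIsoPro2} and multiplying by $n_{w_M^{-1}}(t)^{-1}$, the factor $n_{w_M^{-1}}$ evaluated at $t$ cancels the normalization. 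What remains is $\mathcal{W}(f)$ twisted by the element $w_M^{-1}\hat{\gamma}$, evaluated at $t$. This equals $\mathcal{W}(f)$ evaluated at $(w_M^{-1}\hat{\gamma})(t)$. By the definition of $\hat{\gamma}_M$ and the symmetry $\lambda_{k+1-i}=-\lambda_i$, $\lambda'_{l+1-i}=-\lambda'_i$, we have $\hat{\gamma}_M(t)=t$, so $w_M^{-1}\hat{\gamma}$ fixes $t$. Therefore $\mathcal{W}(\gamma x)=\mathcal{W}(x)$ for $x=f\otimes a$. In particular the argument need not pass through $f_1$: the identity holds for every $\mathcal{J}$-fixed vector, and by equivariance and uniqueness it then holds on all of $\sigma$.

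The main obstacle is bookkeeping. The action is defined on $\hat{\gamma}(g)\otimes\cdots$, so the order of composition matters: I must confirm that right multiplication by $\hat{\gamma}(g)$ corresponds to applying $\gamma$ first. I must also check that the twist on $R$ from Proposition \ref{IntWhi} is by $w_M^{-1}$ and not by $w_M$, and that the orderings of $s_\alpha$ versus $\gamma$ are compatible when the product over a reduced word is iterated. If these conventions misalign, the factors $n_{w}(t)$ and $n_{w^{-1}}(t)$ will not cancel exactly. In that case I would fall back on the direct normalization, checking $c=1$ on $f_{w_0}$ by means of the explicit integral \eqref{WhFun} and the formula of \cite{CassUnP}. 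Separately, I must confirm that $n_{w_M^{-1}}(t)\neq0$, as noted above, so that the specialization is legitimate.
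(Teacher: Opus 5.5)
Your route is the paper's: compute on $\sigma^{\mathcal{J}}$ at the universal level using Propositions \ref{WThP21} and \ref{WThP22}, cancel $n_{w_M^{-1}}(t)$, and conclude from $\hat{\gamma}_M(t)=t$. The preliminary reduction to a scalar $c$ is harmless but not needed. The gap is the decisive last step, where you identify the element that moves the evaluation point.

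By definition $\hat{\gamma}_M(s)=\overline{w}_M\hat{\gamma}(s)\overline{w}_M^{-1}$, that is, $\hat{\gamma}_M=w_M\hat{\gamma}$ in $W\rtimes\hat{\Gamma}$. So $\hat{\gamma}_M(t)=t$ means $\hat{\gamma}(t)=w_M^{-1}t$, and therefore $(w_M^{-1}\hat{\gamma})(t)=w_M^{-2}t$. This is not $t$ once $k\neq l$. For $\pi=\pi_2\times\pi'_1$ you have:
\begin{itemize}
\item $k=2$, $l=1$, $t=(p^{1/2},p^{-1/2},-1)$ and $w_M=s_2s_1$;
\item $\hat{\gamma}(t)=(-1,p^{1/2},p^{-1/2})$;
\item $w_M^{-1}\hat{\gamma}(t)=(p^{-1/2},-1,p^{1/2})\neq t$.
\end{itemize}
So the inference ``$\hat{\gamma}_M(t)=t$, so $w_M^{-1}\hat{\gamma}$ fixes $t$'' is false. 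This is exactly the convention issue you flagged but did not resolve. Your fallback via $f_{w_0}$ does not avoid it, because computing $\gamma(f_{w_0}\otimes a)$ needs the same intertwiner.

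The repair is to track the evaluation point instead of composing the twists abstractly.
\begin{itemize}
\item Write $w_M^{-1}=s_1\cdots s_m$ reduced. Since $\iota_{w_M^{-1}}=\iota_{s_1}\cdots\iota_{s_m}$ acts by right multiplication, $I_{w_M^{-1}}=I_{s_m}\circ\cdots\circ I_{s_1}$.
\item At a point $s\in T$, Proposition \ref{IntWhi} reads $\mathcal{W}(I_{s_\alpha}F)(s)=\mathcal{W}(F)(s_\alpha s)\,(p-\theta_{-\alpha}(s))$.
\item At a point $s\in T$, Proposition \ref{WThP22} reads $\mathcal{W}(\gamma F)(s)=\mathcal{W}(F)(\hat{\gamma}s)$.
\end{itemize}
Iterating gives $\mathcal{W}(I_{w_M^{-1}}\gamma f)(t)=\mathcal{W}(f)(\hat{\gamma}w_M^{-1}t)\prod_{j}(p-\theta_{-\alpha_j}(s_{j+1}\cdots s_m t))$.

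The product equals $\prod_j(p-\theta_{-s_m\cdots s_{j+1}\alpha_j}(t))$. This is the normalizing factor of $\iota^0_{w_M^{-1}}$ produced by the recursion in Lemma \ref{IntPa}, evaluated at $t$, so it cancels $n_{w_M^{-1}}(t)^{-1}$. The evaluation point is $\hat{\gamma}w_M^{-1}t=\hat{\gamma}_M^{-1}t=t$.

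Equivalently, your displayed formula $\bigl(\mathcal{W}(f)^{\gamma}\bigr)^{w_M^{-1}}n_{w_M^{-1}}$ is correct. Proposition \ref{WThP21} follows from iterating Proposition \ref{IntWhi} only if the superscript is a right action, with $(r^a)^b=r^{ab}$ and $r^g(s)=r(gs)$. Under that reading the composite twist is by $\hat{\gamma}w_M^{-1}=\hat{\gamma}_M^{-1}$, not by $w_M^{-1}\hat{\gamma}$. With this correction your argument becomes the paper's, which writes the composite twist directly as $\hat{\gamma}_M$ in \eqref{WThE3}.
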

\begin{proof}
	We have the commutative diagram
	\begin{equation*}
		\begin{tikzcd}
			\mathcal{H}(G,v)\otimes_{\mathcal{H}(T,v)}\mathbb{C}_{(t,p^{1/2})}\arrow[d,"\simeq"]\arrow[dr,"\simeq"]& \\
			i_{\mathcal{B}}^{\mathcal{G}}(\chi_{\mathrm{univ}})^{\mathcal{J}}\otimes_R\mathbb{C}_{(\lambda,\lambda')} \arrow[r,"\simeq"]&i_{\mathcal{B}}^{\mathcal{G}}(\mathbb{C}_{(\lambda,\lambda')})^{\mathcal{J}}
		\end{tikzcd}
	\end{equation*}
	Taking $g\in \mathcal{H}(G,v)\simeq i_{\mathcal{B}}^{\mathcal{G}}(\chi_{\mathrm{univ}})^{\mathcal{J}}$, we have\begin{align}
		\mathcal{W}(\hat{\gamma}(g\otimes a)) &= \mathcal{W}(\hat{\gamma}(g)\iota_{w_M^{-1}}\otimes (n_{w_M^{-1}}(t))^{-1} a))\nonumber\\   
		&=\mathcal{W}(\hat{\gamma}(g)\iota_{w_M^{-1}})\otimes (n_{w_M^{-1}}(t))^{-1} a)\label{WThE1}\\
		&= (\mathcal{W}(g)^{w_M^{-1}})^{\hat{\gamma}}n_{w_M^{-1}}\otimes (n_{w_M^{-1}}(t))^{-1} a\label{WThE2}\\
		&= \mathcal{W}(g)^{\hat{\gamma}_M}\otimes a \label{WThE3} \\
		&=  \mathcal{W}(g)\otimes a \label{WThE4}
	\end{align}
	Here \eqref{WThE1} comes from \eqref{WThP1}, \eqref{WThE2} comes from \eqref{WThP21} and Proposition \ref{WThP22}, 
	\eqref{WThE3} comes from the definition of $\hat{\gamma}_M$, and
	\eqref{WThE4} comes from $\hat{\gamma}_M(t)=t$.
	
\end{proof}

Recall that we have a tempered representation $\pi$ as the unique subrepresentation of $\sigma$, then ${\gamma}$ restricted to $\pi$ is “Whittaker-normalized". 

Recall $\pi=i_{\mathcal{M}}^{\mathcal{G}}(\pi_1\otimes\pi_{-1})$. Denote $\bar{V}_M=(\pi_{1}\otimes\pi'_{-1})^{\mathcal{J}_{\mathcal{M}}}$. Then  $\bar{V}_M$ is a submodule of $V_M$ and $\pi^{\mathcal{J}}=\mathcal{H}(G,v)\otimes_{\mathcal{H}(M,v)}\bar{V}_M$ is a submodule of $\sigma^{\mathcal{J}}=V=\mathcal{H}(G,v)\otimes_{\mathcal{H}(M,v)}{V}_M$. Recall that we have an action $\hat{\gamma}_M$ on $V_M$ which is an isomorphism from $V_M$ to ${\hat{\gamma}_M^*}V_M$. Since  $\bar{V}_M$ is the unique submodule of $V_M$, the $\hat{\gamma}_M$ action preserves $\bar{V}_M$ and we get the  $\hat{\gamma}_M$ action on $\bar{V}_M$ by restriction.

Because the $\hat{\gamma}$ action on $V=\mathcal{H}(G,v)\otimes_{\mathcal{H}(M,v)}{V}\simeq \widehat{\mathcal{H}}(G,v)\otimes_{e_\varpi\widehat{\mathcal{H}}(M,v)}V_M$ is given by $\hat{\gamma}\cdot (g\otimes v)=\hat{\gamma}(g)\iota_{w_M^{-1}}^0\otimes \hat{\gamma}_M (v)$, the restriction of $\hat{\gamma}$ on $\mathcal{H}(G,v)\otimes_{\mathcal{H}(M,v)}\bar{V}_M\simeq \widehat{\mathcal{H}}(G,v)\otimes_{e_\varpi\widehat{\mathcal{H}}(M,v)}\bar{V}_M$ is also given by $\hat{\gamma}\cdot (g\otimes v)=\hat{\gamma}(g)\iota_{w_M^{-1}}^0\otimes \hat{\gamma}_M (v)$ where $v\in \bar{V}_M$ and $\hat{\gamma}_M$ is the restriction.

We summarize what has been proved in the above subsections and get the following theorem:\begin{theorem}
	For a tempered representation $\pi$ which can  be viewed as a unique subrepresentation of the unramified principal series:  $\pi\hookrightarrow\sigma$, denote $\sigma^{\mathcal{J}}\simeq \mathcal{H}(G,v)\otimes_{\mathcal{H}(M,v)} V_M=\mathcal{H}(G,v)\otimes_{\mathcal{H}(T,v)}\mathbb{C}_{(t,p^{1/2})}$ and  $\pi^{\mathcal{J}}\simeq \mathcal{H}(G,v)\otimes_{\mathcal{H}(M,v)}\bar{V}_M$.  If we give an action $\hat{\gamma}_M$ on $V_M =\mathcal{H}(M,v)\otimes_{\mathcal{H}(T,v)}\mathbb{C}_{({t},p^{1/2})}$ induced by the $\hat{\gamma}_M$ action on $\mathcal{H}(M,v)$ and the trivial action on  $\mathbb{C}_{(t,p^{1/2})}$, we have an action $\hat{\gamma}_M$ on $\bar{V}_M$ by restriction. 

    The action on $\pi^{\mathcal{J}}=\mathcal{H}(G,v)\otimes_{\mathcal{H}(M,v)}\bar{V}_M\simeq \widehat{\mathcal{H}}(G,v)\otimes_{e_\varpi\widehat{\mathcal{H}}(M,v)}\bar{V}_M$ given by $\hat{\gamma}\cdot (g\otimes v)=\hat{\gamma}(g)\iota_{w_M^{-1}}^0\otimes \hat{\gamma}_M (v)$ induces a Whittaker-normalized action from $\pi$ to $\gamma^*\pi$.
\end{theorem}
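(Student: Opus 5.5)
The plan is to deduce the statement from the preceding theorem, which says that the operator $\gamma$ on $\sigma$ satisfies $\mathcal{W}\circ\gamma=\mathcal{W}$. To get there I first match the Hecke-module description of the action with the group-side operator. The operator $\hat{\gamma}$ on $V=\sigma^{\mathcal{J}}$, given by $\hat{\gamma}(g\otimes a)=\hat{\gamma}(g)\iota_{w_M^{-1}}\otimes (n_{w_M^{-1}}(t))^{-1}a$, is an $\mathcal{H}(G,v)$-isomorphism $V\to\hat{\gamma}^*V$. Through $\Lambda_{\mathcal{J}}$ and \eqref{TwiE} it corresponds to a unique $\mathcal{G}$-isomorphism $\sigma\to\gamma^*\sigma$, which is the operator $\gamma$ in that theorem.

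Next I restrict to $\pi$. Since $\pi\simeq\gamma^*\pi$ and $\pi$ is the unique irreducible subrepresentation of $\sigma$, $\gamma^*\pi$ is the unique irreducible subrepresentation of $\gamma^*\sigma$. Hence the isomorphism $\sigma\to\gamma^*\sigma$ maps $\pi$ onto $\gamma^*\pi$, giving an intertwiner $I:\pi\to\gamma^*\pi$. On Iwahori-fixed vectors, $I$ is the restriction of $\hat{\gamma}$ to $\pi^{\mathcal{J}}=\mathcal{H}(G,v)\otimes_{\mathcal{H}(M,v)}\bar{V}_M\subset V$. Through the completed isomorphism \eqref{GaAct1} this restriction takes the form $\hat{\gamma}\cdot(g\otimes v)=\hat{\gamma}(g)\iota^0_{w_M^{-1}}\otimes\hat{\gamma}_M(v)$ with $v\in\bar{V}_M$. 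This uses that $\hat{\gamma}_M$ preserves the unique submodule $\bar{V}_M\subset V_M$, as shown just before the statement, together with the fact that $\iota^0_{w_M^{-1}}$ lies in $\widehat{\mathcal{H}}(G,v)$ because $n_{w_M^{-1}}(t)\neq 0$. So the action in the statement is exactly $I$.

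It remains to show that $I$ is Whittaker-normalized, i.e.\ $\mathcal{W}_\pi\circ I=\mathcal{W}_\pi$ for a nonzero Whittaker functional $\mathcal{W}_\pi$ on $\pi$. I take $\mathcal{W}_\pi:=\mathcal{W}|_\pi$, where $\mathcal{W}$ is the functional \eqref{WThP1} on $\sigma$. The invariance $\mathcal{W}_\pi\circ I=\mathcal{W}_\pi$ is then immediate from $\mathcal{W}\circ\gamma=\mathcal{W}$ on $\sigma$, so the whole argument reduces to showing that $\mathcal{W}|_\pi$ is nonzero.

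The nonvanishing of $\mathcal{W}|_\pi$ is the main obstacle, and I would argue it as follows.
\begin{itemize}
\item The twisted Jacquet functor $V\mapsto V_{\mathcal{U},\varphi}$ is exact, and $\dim\sigma_{\mathcal{U},\varphi}=1$ by Proposition \ref{WhiRank1} specialized at $\chi$ (or by \cite{rodier1973whittaker}). Therefore exactly one irreducible constituent of $\sigma$ is generic.
\item The irreducible tempered representation $\pi=\pi_1\times\pi_{-1}$ is generic, since it is a product of Steinberg representations and hence generic by Zelevinsky's classification.
\item By exactness, $\pi_{\mathcal{U},\varphi}\to\sigma_{\mathcal{U},\varphi}$ is injective with nonzero source, hence an isomorphism. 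So $\mathcal{W}$, which spans the dual of $\sigma_{\mathcal{U},\varphi}$, restricts to a nonzero functional on $\pi$.
\end{itemize}
Finally, any $c\in\mathbb{C}^*$ with $\mathcal{W}_\pi\circ I=c\,\mathcal{W}_\pi$ is forced to equal $1$, so $I=I_W$, which proves the statement.
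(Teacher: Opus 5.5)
Your proposal is correct and follows the paper's own route. You transport the $\hat{\gamma}$-action on $\sigma^{\mathcal{J}}$ to the operator $\gamma$ on $\sigma$, invoke the preceding theorem $\mathcal{W}\circ\gamma=\mathcal{W}$, and restrict to the unique subrepresentation $\pi$, where the formula with $\iota^0_{w_M^{-1}}$ and $\hat{\gamma}_M|_{\bar{V}_M}$ is just the restriction of the formula on $\widehat{\mathcal{H}}(G,v)\otimes_{e_\varpi\widehat{\mathcal{H}}(M,v)}V_M$; your extra check that $\mathcal{W}|_\pi\neq 0$ (via exactness of the twisted Jacquet functor, $\dim\sigma_{\mathcal{U},\varphi}=1$, and genericity of the tempered $\pi$) fills in a step the paper leaves implicit when it asserts that $\gamma$ restricted to $\pi$ is Whittaker-normalized.
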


The $\hat{\gamma}_M$ action on $\bar{V}_M$ is an isomorphism from $\bar{V}_M$ to $\hat{\gamma}_M^*\bar{V}_M$. Thus we can view $\bar{V}_M$ as an $\mathcal{H}(M^+,v)$ module. 

From Lusztig's first reduction theorem for the twisted version (Proposition \ref{ComCatEq1}), we get an $\widehat{\mathcal{H}}(G^+,v)$ module by
$\widehat{\mathcal{H}}(G^+,v)\otimes_{e_\varpi\widehat{\mathcal{H}}(M^+,v)}\bar{V}_M$.

Using the same method, we also have \begin{align*}
	\pi^{\mathcal{J}}&\simeq \mathcal{H}(G,v)\otimes_{\mathcal{H}(M,v)}\bar{V}_M\\ 
	&\simeq \widehat{\mathcal{H}}(G,v)\otimes_{e_\varpi\widehat{\mathcal{H}}(M,v)}\bar{V}_M\\
	&\simeq  \widehat{\mathcal{H}}(G^+,v)\otimes_{e_\varpi\widehat{\mathcal{H}}(M^+,v)}\bar{V}_M 
\end{align*}
We have \begin{align*}
	\widehat{\mathcal{H}}(G^+,v)\otimes_{e_\varpi\widehat{\mathcal{H}}(M^+,v)}\bar{V}_M\hookrightarrow\widehat{\mathcal{H}}(G^+,v)\otimes_{e_\varpi\widehat{\mathcal{H}}(M^+,v)}V_M 
\end{align*}
so the $\hat{\gamma}$ action induced by $\widehat{\mathcal{H}}(G^+,v)\otimes_{e_\varpi\widehat{\mathcal{H}}(M^+,v)}\bar{V}_M$ is the same as the $\hat{\gamma}$ action restricted from $\sigma$.
 Then we have another description of the theorem

\begin{theorem}\label{MainWhTem}
	For a tempered representation $\pi$ which can  be viewed as a unique subrepresentation of the unramified principal series:  $\pi\hookrightarrow\sigma$, denote $\sigma^{\mathcal{J}}\simeq \mathcal{H}(G,v)\otimes_{\mathcal{H}(M,v)} V_M=\mathcal{H}(G,v)\otimes_{\mathcal{H}(T,v)}\mathbb{C}_{(t,p^{1/2})}$ and  $\pi^{\mathcal{J}}\simeq \mathcal{H}(G,v)\otimes_{\mathcal{H}(M,v)}\bar{V}_M$. If we give an action $\hat{\gamma}_M$ on $V_M =\mathcal{H}(M,v)\otimes_{\mathcal{H}(T,v)}\mathbb{C}_{({t},p^{1/2})}$ induced by the $\hat{\gamma}_M$ action on $\mathcal{H}(M,v)$ and the trivial action on  $\mathbb{C}_{(t,p^{1/2})}$, we have an action $\hat{\gamma}_M$ on $\bar{V}_M$ by restriction.  Thus we can view $\bar{V}_M$ as an $\mathcal{H}(M^+,v)$ module.
	
	Then, by Lusztig's first reduction theorem for the twisted version (Proposition \ref{ComCatEq1}), we obtain a $\mathcal{G}^+$ representation $\pi$ with a Whittaker-normalized $\gamma$-action.
\end{theorem}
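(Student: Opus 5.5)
The plan is to deduce the statement from the preceding theorem (the one asserting $\mathcal{W}(\gamma(x))=\mathcal{W}(x)$ on $\sigma$, together with its restriction to $\pi$), by checking that the $\hat{\gamma}$-action produced by the twisted first reduction on $\widehat{\mathcal{H}}(G^+,v)\otimes_{e_\varpi\widehat{\mathcal{H}}(M^+,v)}\bar{V}_M$ coincides with the explicit action $\hat{\gamma}\cdot(g\otimes v)=\hat{\gamma}(g)\iota^0_{w_M^{-1}}\otimes\hat{\gamma}_M(v)$ already shown to be Whittaker-normalized.

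\emph{Step 1.} Show that $\bar{V}_M$ is an $\mathcal{H}(M^+,v)$-submodule of $V_M$. Since $\hat{\gamma}_M(t)=t$, the map $\hat{\gamma}_M$ is a well-defined isomorphism $V_M\to\hat{\gamma}_M^*V_M$. The submodule $\bar{V}_M=(\pi_1\otimes\pi_{-1})^{\mathcal{J}_{\mathcal{M}}}$ is the unique irreducible submodule of $V_M$, and $\hat{\gamma}_M^*\bar{V}_M$ is again an irreducible submodule of $\hat{\gamma}_M^*V_M$. Hence $\hat{\gamma}_M(\bar{V}_M)=\bar{V}_M$, and the restriction makes $\bar{V}_M$ an $\mathcal{H}(M^+,v)$-module.

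\emph{Step 2.} Apply Proposition \ref{ComCatEq1}. Both $V_M$ and $\bar{V}_M$ are annihilated by a power of the maximal ideal of $(W_M^+\cdot t,p^{1/2})$, so they are modules over $e_\varpi\widehat{\mathcal{H}}(M^+,v)$, and the inclusion $\bar{V}_M\hookrightarrow V_M$ induces
\[
\widehat{\mathcal{H}}(G^+,v)\otimes_{e_\varpi\widehat{\mathcal{H}}(M^+,v)}\bar{V}_M\hookrightarrow\widehat{\mathcal{H}}(G^+,v)\otimes_{e_\varpi\widehat{\mathcal{H}}(M^+,v)}V_M .
\]
This map is injective because the matrix-algebra (Morita) description makes induction exact. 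Its underlying $\mathcal{H}(G,v)$-module is $\mathcal{H}(G,v)\otimes_{\mathcal{H}(M,v)}\bar{V}_M\simeq\pi^{\mathcal{J}}$, through the same chain of isomorphisms used in \eqref{GaAct1}.

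\emph{Step 3.} Compute the action of $(1,\hat{\gamma})$ on the induced module. Writing $(1,\hat{\gamma})=\iota^0_{w_M^{-1}}(\iota^0_{w_M},\hat{\gamma})$ and using that $e_\varpi(\iota^0_{w_M},\hat{\gamma})$ is the image of $\hat{\gamma}_M$, one gets $\hat{\gamma}(g\otimes v)=\hat{\gamma}(g)\iota^0_{w_M^{-1}}\otimes\hat{\gamma}_M(v)$. By Step 2 this is the restriction of the $\hat{\gamma}$-action on $\sigma^{\mathcal{J}}$. The preceding theorem shows that this action fixes the Whittaker functional on $\sigma$. Since $\pi\hookrightarrow\sigma$ is the generic constituent, the Whittaker functional of $\sigma$ restricts to a nonzero Whittaker functional on $\pi$, so the induced operator on $\pi$ is exactly $I_W$.

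\emph{Step 4.} Transport to the $p$-adic side via \eqref{TwiE} and Proposition \ref{Type2}, which gives the $\mathcal{G}^+$-representation on $\pi$ with the Whittaker-normalized $\gamma$-action.

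The main obstacle I expect is Step 3: the image of $\hat{\gamma}_M$ under the twisted first reduction really is $e_\varpi(\iota^0_{w_M},\hat{\gamma})$. This requires checking that $\iota^0_{w_M}$ is regular at $t$ (from $n_{w_M}(t)\neq0$), and that $(\iota^0_{w_M},\hat{\gamma})$ normalizes $e_\varpi$ and induces $\hat{\gamma}_M$ on $e_\varpi\widehat{\mathcal{H}}(M,v)e_\varpi$. I would verify this on the generators $\theta_x$ and $T_{s_\alpha}$ ($s_\alpha\in W_M$), using \eqref{ProCom51} and the defining relation $T_\gamma\theta_xT_wT_\gamma=\theta_{\gamma(x)}T_{\gamma(w)}$.
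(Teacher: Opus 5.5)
Your proposal is correct and follows the paper's own argument. The paper also restricts $\hat{\gamma}_M$ to the unique submodule $\bar V_M$. It then uses the embedding $\widehat{\mathcal{H}}(G^+,v)\otimes_{e_\varpi\widehat{\mathcal{H}}(M^+,v)}\bar V_M\hookrightarrow\widehat{\mathcal{H}}(G^+,v)\otimes_{e_\varpi\widehat{\mathcal{H}}(M^+,v)}V_M$ to identify the induced action $\hat{\gamma}(g)\iota^0_{w_M^{-1}}\otimes\hat{\gamma}_M(v)$ with the restriction of the action on $\sigma^{\mathcal{J}}$, and concludes from the preceding theorem $\mathcal{W}(\gamma(x))=\mathcal{W}(x)$. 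Your two extra justifications fill points the paper only asserts: injectivity from exactness of induction in the matrix-algebra description, and nonvanishing on the generic subrepresentation $\pi$ of the restricted Whittaker functional (via exactness of the twisted Jacquet functor and multiplicity one for $\sigma$).
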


\subsection{Geometry and the main theorem}
We use the notation of Section 5.1.

Recall $\mathcal{Q}=\GL_{n_1}(\mathbb{Q}_p)\times\cdots\times \mathcal{Q}^0\times\cdots\times \GL_{n_k}(\mathbb{Q}_p)$.

Since $n_{k+1-i}=n_{i}$, for convenience, we write \begin{align*}
	\mathcal{Q}=\GL_{n_1}(\mathbb{Q}_p)\times\cdots\times\GL_{n_{\bar{k}}}(\mathbb{Q}_p) \times\mathcal{Q}^0\times\GL_{n_{\bar{k}}}(\mathbb{Q}_p) \times\cdots\times \GL_{n_1}(\mathbb{Q}_p)
\end{align*}
and its complex dual group:
\begin{align*}
	{{Q}}:=\GL_{n_1}(\mathbb{C})\times\cdots\times\GL_{n_{\bar{k}}}(\mathbb{C}) \times{Q^0}\times\GL_{n_{\bar{k}}}(\mathbb{C}) \times\cdots\times \GL_{n_1}(\mathbb{C})
\end{align*} 
Define $\mathcal{Q}^+:=\mathcal{Q}\rtimes{\Gamma}$,
$Q^+:=Q\rtimes\hat{\Gamma}$.

We write the representation $\pi_{\mathcal{Q}}:=\St (z_1,n_1) \otimes \cdots \otimes\St (z_{\bar{k}},n_{\bar{k}})\otimes\pi^0\otimes\St (-z_{\bar{k}},n_{\bar{k}})\otimes\cdots\otimes \St( -z_1,n_1)$  of $\mathcal{Q}$, where $\pi^0$ is an irreducible tempered representation  of $\mathcal{Q}^0$.

Using \begin{align*}
	\Lambda_{\mathcal{J}_{\mathcal{Q}}}: \mathrm{Rep}(\mathcal{Q})^{(\mathcal{J}_{\mathcal{Q}},1_{\triv})}  \xrightarrow[]{\simeq} \mathcal{H}(\mathcal{J}_{\mathcal{Q}}\backslash\mathcal{Q}/\mathcal{J}_{\mathcal{Q}})-\Mod
\end{align*}
\begin{align*}
	\mathcal{H}(\mathcal{J}_{\mathcal{Q}}\backslash\mathcal{Q}/\mathcal{J}_{\mathcal{Q}})\simeq \mathcal{H}(Q,p^{1/2})
\end{align*}

and
\begin{equation*}
	\Theta_Q:	\mathbb{H}(Z_{Q}(t_c))-\mathrm{Mod}_{(\mathrm{log}t_h, \mathrm{log}p^{1/2})}\xrightarrow[]{\simeq}\mathcal{H}(Q,v)-\mathrm{Mod}_{(t,p^{1/2})} 
\end{equation*} 
we have $\pi_{\mathcal{Q}}=(\Lambda_{\mathcal{J_Q}})^{-1}\circ\Theta_Q \circ\IM^*M_{y,\bar{t},r_0}$.

In fact, from \cite{aubert2017affine}, the functor $\Theta_Q$ is the composition of
\begin{equation*}
	\Exp:	\mathbb{H}(Z_{Q}(t_c))-\mathrm{Mod}_{(\mathrm{log}t_h, \mathrm{log}p^{1/2})}\xrightarrow[]{\simeq}\mathcal{H}(Z_{Q}(t_c),v)-\mathrm{Mod}_{(t,p^{1/2})} 
\end{equation*} 
and \begin{equation*}
	\Ind:\mathcal{H}(Z_{Q}(t_c),v)-\mathrm{Mod}_{(t,p^{1/2})} \xrightarrow[]{\simeq}\mathcal{H}(Q,v)-\mathrm{Mod}_{(t,p^{1/2})} 
\end{equation*} 
Hence $\Lambda_{\mathcal{J_Q}}(\pi_{\mathcal{Q}})=\pi_{\mathcal{Q}}^{\mathcal{J_Q}}=\Ind_{\mathcal{H}(Z_{Q}(t_c),v)}^{\mathcal{H}(Q,v)}\Exp \IM^*M_{y,\bar{t},r_0}$. 

For convenience, We write \begin{align*}
	\bar{{Q}}:=Z_Q(t)=\GL_{n_1}(\mathbb{C})\times\cdots\times\GL_{n_{\bar{k}}}(\mathbb{C})\times\bar{Q}^0\times\GL_{n_{\bar{k}}}(\mathbb{C})\times\cdots\times\GL_{n_1}(\mathbb{C})
\end{align*} 
where $\bar{Q}^0=\GL_{m_1}(\mathbb{C})\times\GL_{m_2}(\mathbb{C})$

Since each Steinberg representation $\St(z_i,n_i)$ of $\GL_{n_i}(\mathbb{Q}_p)$ corresponds to a one dimensional module $\chi_i$ of $\mathcal{H}(\GL_{n_i}(\mathbb{C}),p^{1/2})$ given by: 
\begin{align*}
	\chi_i(T_{s_\alpha})&=-1  &\alpha\in  \Pi\\
	\chi_i(\theta_y)&=y(s)  &s=(p^{z_i}p^{{\frac{1-n_i}{2}}},p^{z_i}p^{{\frac{3-n_i}{2}}},\cdots,p^{z_i}p^{{\frac{n_i-1}{2}}})
\end{align*}

we can write \begin{align*}
	\pi_{\mathcal{Q}}^{\mathcal{J_Q}}&=\Ind_{\mathcal{H}(\bar{Q},v)}^{\mathcal{H}(Q,v)}\Exp \IM^*M_{y,\bar{t},r_0}\\
	&={\mathcal{H}({Q},v)}\otimes_{\mathcal{H}(\bar{Q},v)}({\chi}_1 \otimes \cdots \otimes {\chi}_{\bar{k}} \otimes 
	\pi_1 \otimes \pi_2 \otimes 
	{\chi}_{\bar{k}}^\vee \otimes \cdots \otimes {\chi}^\vee_1)
\end{align*}

Recall also that  the irreducible representation $M^+(\phi_{\mathcal{Q}},1)$ of $\mathcal{Q}$ is given by  \begin{align*}
	\Lambda^+_{\mathcal{J}_{\mathcal{Q}}}: \mathrm{Rep}(\mathcal{Q}^+)^{(\mathcal{J}_{\mathcal{Q}},1_{\triv})}  \xrightarrow[]{\simeq} \mathcal{H}(\mathcal{J}_{\mathcal{Q}}\backslash\mathcal{Q}^+/\mathcal{J}_{\mathcal{Q}})-\Mod
\end{align*}

\begin{align*}
	\mathcal{H}(\mathcal{J}_{\mathcal{Q}}\backslash\mathcal{Q}^+/\mathcal{J}_{\mathcal{Q}})\simeq \mathcal{H}(Q^+,p^{1/2})\simeq \mathcal{H}(Q,p^{1/2})\rtimes \Gamma
\end{align*}

and
\begin{equation*}
	\Theta^+_Q:	\mathbb{H}(Z_{Q^+}(t_c))-\mathrm{Mod}_{(\mathrm{log}t_h, \mathrm{log}p^{1/2})}\simeq \mathcal{H}(Q^+,v)-\mathrm{Mod}_{(t,p^{1/2})}
\end{equation*} 
which means \begin{align*}
	M^+(\phi_{\mathcal{Q}},1)=(\Lambda_{\mathcal{J_Q}}^+)^{-1}\circ\Theta^+_Q\circ\IM^*M_{y,\bar{t},r_0,1}^{+} 
\end{align*}

Note that $\mathcal{H}(Q^+,v)=\mathcal{H}(Q,v)\rtimes\hat{\Gamma}$ and 
$\mathbb{H}(Z_{Q^+}(t_c))=\mathbb{H}(Z_Q(t_c))\rtimes\mathfrak{R}$. Let ${w}_0\in W $ be a permutation that swaps the first block of size ${m_1}$ with the last block of size ${m_2}$ and let $\bar{w}_0$ be a representative of $w_0$ in $Q$.

The action $\tau\in \mathfrak{R}$ on the Weyl group $W$ of $Z_Q(t_c)$ and on the torus $T$ of $Z_Q(t_c)$ is given by 
\begin{equation}\label{RActM1}
	\begin{aligned}
		\tau(w)&={w}_0\hat{\gamma}(w){w}^{-1}_0\quad w\in W\\
		\tau(s)&=\bar{w}_0\hat{\gamma}(s)\bar{w}^{-1}_0 \quad s\in T
	\end{aligned}
\end{equation}

Hence we have the $\mathfrak{R}$ action on $\mathbb{H}(Z_Q(t_c))$ induced by \eqref{RActM1}.

From \cite{aubert2017affine}, the functor $\Theta_Q^+$ is the composition of \begin{equation*}
	\Exp:	\mathbb{H}(Z_{Q^+}(t_c))-\mathrm{Mod}_{(\mathrm{log}t_h, \mathrm{log}p^{1/2})}\xrightarrow[]{\simeq}\mathcal{H}(Z_{Q^+}(t_c),v)-\mathrm{Mod}_{(t,p^{1/2})} 
\end{equation*} 
and \begin{equation*}
	\Ind:\mathcal{H}(Z_{Q^+}(t_c),v)-\mathrm{Mod}_{(t,p^{1/2})} \xrightarrow[]{\simeq}\mathcal{H}(Q^+,v)-\mathrm{Mod}_{(t,p^{1/2})} 
\end{equation*}

We also have $\mathcal{H}(Z_{Q^+}(t_c),v)=\mathcal{H}(Z_Q(t_c),v)\rtimes\mathfrak{R}$, where the $\mathfrak{R}$ action is induced by \eqref{RActM1}.

In the connected case, $\mathcal{H}(Z_{Q}(t_c),v)$ is a subalgebra of $\mathcal{H}({Q},v)$, so the $\operatorname{Ind}$ functor is given by tensor product. However,  $\mathcal{H}(Z_{Q^+}(t_c),v)=\mathcal{H}(Z_Q(t_c),v)\rtimes\mathfrak{R}$ is not a subalgebra of $\mathcal{H}(Q^+,v)=\mathcal{H}(Q,v)\rtimes\hat{\Gamma}$. Fortunately, we still have an algebra embedding when we consider their completion (Proposition \ref{ComCatEq1}):
\begin{align*}
	\widehat{\mathcal{H}}(Z_{Q^+}(t_c),v)\simeq {e_{\varpi}\widehat{\mathcal{H}}(Z_{Q^+}(t_c),v)}\hookrightarrow \widehat{\mathcal{H}}(Q^+,v)
\end{align*}

where \begin{align*}
	\tau\mapsto e_{\varpi}(\iota_{{w_0}}^0,\hat{\gamma})
\end{align*}
and $\iota_{{w}_0}^0$ is the intertwining element of ${w}_0$.

Thus for every $(\pi,V)\in \mathcal{H}(Z_{Q^+}(t_c),v)-\mathrm{Mod}_{(t,p^{1/2})}$, we can write the induction as:\begin{align*}
	\Ind(\pi)=\widehat{\mathcal{H}}(Q^+,v)\otimes_{e_{\varpi}\widehat{\mathcal{H}}(Z_{Q^+}(t_c),v)}V
\end{align*}

where ${e_{\varpi}\widehat{\mathcal{H}}(Z_{Q^+}(t_c),v)}$ acts on $V$ via  ${e_{\varpi}\widehat{\mathcal{H}}(Z_{Q^+}(t_c),v)}\simeq  \widehat{\mathcal{H}}(Z_{Q^+}(t_c),v)$.

Consequently, we can write \begin{align*}
	\Lambda_{\mathcal{J_Q}}^+ (M^+(\phi_{\mathcal{Q}},1))&=M^+(\phi_{\mathcal{Q}},1)^{\mathcal{J_Q}}\\
	&=\Ind(\Exp\IM^*M_{y,\bar{t},r_0,1}^{+})
\end{align*}

 From Proposition \ref{GeoActM}, we get $M_{y,\bar{t},r_0,1}^{+}\simeq M_{y,\bar{t},r_0}\otimes\mathbb{C}_\triv$ with  $J_\tau$. Now we consider the geometric intertwining $J_\tau$ and translate it to the $p$-adic side.

Recall $t=\lambda_\phi(\mathrm{Fr})$, $\bar{Q}=Z_Q(t_c)=\GL_{n_1}(\mathbb{C})\times\cdots\times\GL_{n_{\bar{k}}}(\mathbb{C})\times\GL_{m_1}(\mathbb{C})\times\GL_{m_2}(\mathbb{C})\times\GL_{n_{\bar{k}}}(\mathbb{C})\times\cdots\times\GL_{n_1}$ and  $t'=\phi(\mathrm{Fr})$.

Denote $\bar{t}=-\mathrm{log}|t'|+d_\nu\begin{pmatrix}
	r_0 &0\\
	0& -r_0
\end{pmatrix}=(\bar{t}_1,\cdots,\bar{t}_{\bar{k}},\bar{t}'_1,\bar{t}'_2,{\bar{t}}_{\bar{k}}^\vee,\cdots,{\bar{t}}_{1}^\vee)$,
$y=d_\phi\begin{pmatrix}
	0 &1\\
	0& 0
\end{pmatrix}=(y_1,\cdots,y_{\bar{k}},y'_1,y'_2,y_{\bar{k}},\cdots,y_1)$, where
$y_i=d_{\nu_{n_i}}\begin{pmatrix}
	0 &1\\
	0& 0
\end{pmatrix}$, each $y_i$ is regular unipotent element of $\GL_{n_i}(\mathbb{C})$, $\GL_{m_1}(\mathbb{C})$ and $\GL_{m_2}(\mathbb{C})$.

Define \begin{align*}
	y^*=(y_1,\cdots,y_{\bar{k}},0,0,y_{\bar{k}},\cdots,y_1)
\end{align*}

\begin{lemma}
	The irreducible representation $M_{y,\bar{t},r_0,1}^+$ is the unique submodule of $E_{y^*,\bar{t},r_0,1}^+$.
\end{lemma}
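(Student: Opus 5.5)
Since the $\mathfrak{R}$-part of both modules is just a sign/trivial twist via $J^\tau$, I plan to reduce the statement to the connected group $Z_Q(t_c)$ and then carry the $\tau$-action along. Proposition \ref{GeoActM} gives $E_{y^*,\bar t,r_0,1}^+\simeq E_{y^*,\bar t,r_0,1}\otimes\mathbb{C}_{\triv}$ with $\tau$ acting through the geometric normalization $J^\tau$. Likewise Proposition \ref{StandMod2} gives $M_{y,\bar t,r_0,1}^+\simeq M_{y,\bar t,r_0,1}\otimes\mathbb{C}_{\triv}$, again with $\tau$ through $J^\tau$. The argument therefore has two steps: show that $M_{y,\bar t,r_0,1}$ is the unique irreducible submodule of $E_{y^*,\bar t,r_0,1}$ as $\mathbb{H}(Z_Q(t_c))$-modules, and show that the two $J^\tau$ normalizations are compatible along this embedding.

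For the connected statement, $Z_Q(t_c)$ is a product of general linear groups and $y^*$ differs from $y$ only in the $\bar Q^0=\GL_{m_1}\times\GL_{m_2}$ block, where it is $0$. The standard module therefore factors as a tensor product over the blocks. On each $\GL_{n_i}$ block we have $y_i$ regular nilpotent, so the standard module is one-dimensional and equal to the irreducible one. On the $\bar Q^0$ block, $y^*$ is $0$, so $E_{0,\bar t',r_0}$ is the module $\mathbb{H}\otimes_{S(\mathfrak t^*)}\mathbb{C}_{\bar t'}$, a principal series. Here I would use the description in Theorem \ref{StandIso} of the standard module as $\mathbb{C}_a\otimes H_*^{M^0_{y}}(\mathcal{P}_y)$, together with the closure ordering on $\mathfrak g^a$: $y'$ lies in the open $Z$-orbit and $0$ in the closed one. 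The Kazhdan--Lusztig multiplicity in Proposition \ref{GeoMul} shows that the module attached to the open orbit, $\IC$ being constant there, occurs with multiplicity one in $E_{0}$. After applying $\IM^*$ this corresponds to the tempered representation $\pi^0$ sitting as the unique subrepresentation of the unramified principal series, exactly as in the tempered subsection. The unique-submodule (socle) statement is the dual of the unique-quotient statement of Proposition \ref{StandMod2}: since the standard module for the zero orbit has the module for $\bar t$ dominant as its cosocle, for antidominant $\bar t$ the open-orbit module lies in the socle. I would deduce this by applying $\IM^*$ and the contragredient duality on $\mathbb{H}$-modules, which exchanges standard modules attached to $\bar t$ and $-\bar t$.

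For the $\tau$-compatibility, the embedding $M_{y,\bar t,r_0,1}\hookrightarrow E_{y^*,\bar t,r_0,1}$ is unique up to scalar, because the socle is simple and has multiplicity one. The operators $J^\tau$ on the two modules both intertwine with their $\tau^*$-twists, so transporting one through the embedding gives the other up to a scalar $c$ with $c^2=1$. The main obstacle is proving $c=1$, that is, that the subobject is $M^+_{\ldots,1}$ and not the $\rho=-1$ version. I would try to do this geometrically. Both $J^\tau$ come from the canonical identity isomorphism $\dot{\mathcal L}^+\simeq\tau^*\dot{\mathcal L}^+$ on the constant sheaf (the remark after Theorem \ref{IsoHecke}). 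Both modules are realized inside the $\tau$-invariant part of $\mathrm{Hom}^*(\widetilde K^+,\widetilde K^+)$-modules: the costalk at $y^*$ for the standard module, and the open-orbit $\IC$ for the irreducible one. The map from $\IC_{(\mathbb O_y,1)^+}$ into the costalk at $0$ is induced by the canonical morphism $i_!i^!\to\mathrm{id}$ of \eqref{QuoIrr}, which commutes with $\tau^*$. Hence it respects the trivial isotypic components of $\pi_0(Z^+_y)\simeq\mathfrak{R}$, and it sends the $\rho=1$ piece into the $\rho=1$ part of $E_{y^*}$. This gives $c=1$ and the lemma.
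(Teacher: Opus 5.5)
The gap is in your proof that $c=1$. That step is the only content of the lemma beyond the connected case, and the mechanism you propose for it does not exist.

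The morphism \eqref{QuoIrr} compares the costalk and the stalk of $\widetilde K^+$ along one and the same orbit $\mathbb O$. Its image $L_{y,\rho}$ exhibits the simple module as a \emph{quotient} of the standard module attached to that orbit (Proposition \ref{QuoStand}). It gives no map from $\IC_{(\mathbb O_y,1)^+}$, or from $\mathbf{P}_{(\mathbb O_y,1)^+}$, into the costalk $H^*(i^!_{y^*}\widetilde K^+)$ at the different point $y^*$. In particular it does not give the socle embedding $M_{y,\bar t,r_0,1}\hookrightarrow E_{y^*,\bar t,r_0,1}$; applied at $y^*$ it would only produce the cosocle of $E_{y^*,\bar t,r_0,1}$.

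Inside $H^*(i^!_{y^*}\widetilde K^+)$, the summand $L_{(\mathbb O_y,1)^+}\otimes H^*(i^!_{y^*}\IC_{(\mathbb O_y,1)^+})$ is only a graded piece of the filtration used in Proposition \ref{GeoMul}. It is not a submodule, because positive-degree elements of $\mathrm{Hom}^*(\widetilde K^+,\widetilde K^+)^\tau$ mix the summands.

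Your phrase ``the $\rho=1$ piece goes into the $\rho=1$ part'' also conflates two different labels. For $\IC_{(\mathbb O_y,1)^+}$ the label refers to $\pi_0(Z^+_y)$ acting on stalks along $\mathbb O_y$. For $E^+_{y^*,\bar t,r_0,1}$ it refers to the isotypic part for $\pi_0(Z^+_{y^*})$ acting on costalks at $y^*$. Relating these two is exactly the computation your argument leaves out.

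What closes the gap is the paper's argument, which removes any need to transport $J^\tau$ through an embedding. By Proposition \ref{GeoMul},
$m(M^+_{y,\bar t,r_0,1},E^+_{y^*,\bar t,r_0,1})=\sum_k\dim H^k(i^!_{y^*}\IC_{(\mathbb O_y,1)^+})^{\triv}$.
Since $\overline{\mathbb O_y}$ is smooth, $\IC_{(\mathbb O_y,1)^+}$ is a shifted constant sheaf with its canonical equivariant structure. Its costalk at $y^*$ is therefore one-dimensional with trivial $\pi_0(Z^+_{y^*})$-action, because a complex-linear automorphism of the tangent space preserves the orientation class. So $M^+_{y,\bar t,r_0,1}$ occurs exactly once.

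The same computation in the connected case shows $M_{y,\bar t,r_0,1}$ occurs only once in $E_{y^*,\bar t,r_0,1}$. Hence $M^+_{y,\bar t,r_0,-1}$ does not occur in $E^+_{y^*,\bar t,r_0,1}$. The restriction of the $\mathbb{H}(Z_{Q^+}(t_c))$-socle of $E^+_{y^*,\bar t,r_0,1}$ is semisimple, so it lies in the $\mathbb{H}(Z_Q(t_c))$-socle $M_{y,\bar t,r_0,1}$. The socle is therefore $M^+_{y,\bar t,r_0,\pm1}$, and by the multiplicity count it is $M^+_{y,\bar t,r_0,1}$.

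For the connected socle statement you may simply cite it, as the paper does. Your dominance/duality sketch does not prove it as written, for two reasons. The geometric standard module at $y^*$ does not depend on the ordering of the $\bar Q^0$-entries of $\bar t$. And the cosocle supplied by Proposition \ref{StandMod2} is the module attached to the orbit of $y^*$ itself, not to the open orbit.
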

\begin{proof}
	By \eqref{GeoMul}, we have $m(M_{y,\bar{t},r_0,1}^+,E_{y^*,\bar{t},r_0,1}^+)=\sum_k\mathrm{dim}H^k(i_{y^*}^!IC_{\phi^+}))^1$. Since the closure of the orbit of $y$ is a nonsingular variety, we get $m(M_{y,\bar{t},r_0,1}^+,E_{y^*,\bar{t},r_0,1}^+)=1$. Because the irreducible representation $M_{y,\bar{t},r_0,1}$ is the unique submodule of $E_{y^*,\bar{t},r_0,1}$, the lemma follows.
\end{proof}

Recall $\bar{t}=(\bar{t}_1,\cdots,\bar{t}_{\bar{k}},\bar{t}'_1,\bar{t}'_2,{\bar{t}}_{\bar{k}}^\vee,\cdots,{\bar{t}}_{1}^\vee)$. Since $\bar{t}'_1,\bar{t}'_2$ are diagonal matrices,
let us reorder their diagonal entries so that
 \begin{align*}
	\bar{t}’^{*}_1&=(\bar{t}’^{*}_1(1),\cdots,\bar{t}’^{*}_1({m_1}))\\
	\bar{t}’^{*}_2&=(\bar{t}’^{*}_2(1),\cdots \bar{t}’^{*}_2({m_2}))
\end{align*}
satisfy $\bar{t}’^{*}_1(i)\leq \bar{t}’^{*}_1(j)\quad(i\leq j)$ and $\bar{t}’^{*}_2(i)\leq\bar{t}’^{*}_2(j)\quad(i\leq j)$ 

Set \begin{align*}
	\bar{t}^*=(\bar{t}_1,\cdots,\bar{t}_{\bar{k}},\bar{t}'^{*}_1,\bar{t}'^{*}_2,{\bar{t}}_{\bar{k}}^\vee,\cdots,{\bar{t}}_{1}^\vee)
\end{align*}

Then $\tau(\bar{t}^*)=\bar{t}^*$ ($\tau\in\mathfrak{R}$) and  there is an   $\mathbb{H}(Z_{Q^+}(t_c))=\mathbb{H}(Z_Q(t_c))\rtimes\mathfrak{R}$-module isomorphism $E_{y^*,\bar{t},r_0,\rho}^+\simeq E_{y^*,\bar{t}^*,r_0,\rho}^+$.

Recall $\bar{{Q}}=Z_Q(t_c)=\GL_{n_1}(\mathbb{C})\times\cdots\times\GL_{n_{\bar{k}}}(\mathbb{C})\times\GL_{m_1}(\mathbb{C})\times\GL_{m_2}(\mathbb{C})\times\GL_{n_{\bar{k}}}(\mathbb{C})\times\cdots\times\GL_{n_1}$, then $Z_{Q^+}(t_c)$ is generated by $\bar{{Q}}=Z_Q(t_c)$ and $(\bar{w}_0,\hat{\gamma})$.
Let \begin{align*}
	\bar{{K}}=\GL_{n_1}(\mathbb{C})\times\cdots\times\GL_{n_{\bar{k}}}(\mathbb{C})\times_{m_1}\GL_{1}(\mathbb{C})\times_{m_2}\GL_{1}(\mathbb{C})\times\GL_{n_{\bar{k}}}(\mathbb{C})\times\cdots\times\GL_{n_1}
\end{align*}
and $\bar{{K}}^+$ be the group generated by $\bar{{K}}$ and $(\bar{w}_0,\hat{\gamma})$. We have $\mathbb{H}(\bar{{K}}^+)=\mathbb{H}(\bar{{K}})\rtimes\mathfrak{R}$.

From Theorem \ref{Ind31}, we have  $E_{y^*,\bar{t}^*,r_0,\rho}^+\simeq \mathbb{H}(\bar{Q})\rtimes\mathfrak{R}\otimes_{\mathbb{H}(\bar{K})\rtimes\mathfrak{R}}E_{y^*,\bar{t}^*,r_0,\rho}^{\bar{K}^+,+}$.

Note that $E_{y^*,\bar{t}^*,r_0}^{\bar{K}}=\mathbb{C}_a\otimes_{H_{M_{y^*}^0}}H^{M_y^0}_*(\mathcal{P}_{y^*}^{\bar{K}},{\dot{\mathcal{L}}})\simeq H_*^{\{1\}}(\mathcal{P}_{y^*}^{\bar{K},a},{\dot{\mathcal{L}}})$
where ${\dot{\mathcal{L}}}$ is the constant sheaf, and $\mathcal{P}_{y^*}^{\bar{K},a}$ is a point. 

Hence
\begin{align*}
	E_{y^*,\bar{t}^*,r_0}^{\bar{K}} = 
	\bar{\chi}_1 \otimes \cdots \otimes \bar{\chi}_{\bar{k}} \otimes 
	\bar{\chi}'_1 \otimes \bar{\chi}'_2 \otimes 
	\bar{\chi}_{\bar{k}}^\vee \otimes \cdots \otimes \bar{\chi}^\vee_1
\end{align*}

From \cite[Lemma 3.14]{Aubert2016GradedHA},  the $\tau$ action on 
$E_{y,\bar{t},r_0}^{\bar{K}}$ is given by \begin{align}\label{KAct1}
	I_\tau(a_1,\cdots ,a_{\bar{k}},b_1,b_2,a_{\bar{k}}^\vee,\cdots,a^\vee_1)=(a^\vee_1,\cdots,a_{\bar{k}}^\vee,b_1,b_2,a_{\bar{k}},\cdots,a_1)
\end{align}
and we  have the $\mathcal{H}(\bar{K}^+,v)=\mathcal{H}(\bar{K},v)\rtimes\mathfrak{R}$-module:\begin{align*}
	\Exp\IM^* E_{y^*,\bar{t}^*,r_0}^{\bar{K}}={\chi}_1 \otimes \cdots \otimes {\chi}_{\bar{k}} \otimes 
	{\chi}'_1 \otimes \chi'_2 \otimes 
	{\chi}_{\bar{k}}^\vee \otimes \cdots \otimes {\chi}^\vee_1
\end{align*}

\begin{proposition}\label{ProThWh1}
	We have $\bar{{Q}}=\GL_{n_1}(\mathbb{C})\times\cdots\times\GL_{n_{\bar{k}}}(\mathbb{C})\times\bar{Q}^0\times\GL_{n_{\bar{k}}}(\mathbb{C})\times\cdots\times\GL_{n_1}$, $\bar{{K}}=\GL_{n_1}(\mathbb{C})\times\cdots\times\GL_{n_{\bar{k}}}(\mathbb{C})\times_{m_1}\GL_{1}(\mathbb{C})\times_{m_2}\GL_{1}(\mathbb{C})\times\GL_{n_{\bar{k}}}(\mathbb{C})\times\cdots\times\GL_{n_1}$.

	We have a $\mathcal{H}(\bar{Q},v)$-module: $\Exp \IM^*M_{y,\bar{t},r_0}={\chi}_1 \otimes \cdots \otimes {\chi}_{\bar{k}} \otimes 
	\pi_1 \otimes \pi_2 \otimes 
	{\chi}_{\bar{k}}^\vee \otimes \cdots \otimes {\chi}^\vee_1$

	Denote $\bar{T}=\times_{m_1}\GL_{1}(\mathbb{C})\times_{m_2}\GL_{1}(\mathbb{C})$. Then $\mathcal{H}(\bar{Q}^0,v)$ and  $\mathcal{H}(\bar{T},v)$  are stable under the action $\tau$.
	
	Define an $I_\tau^0$ action on $\mathcal{H}(\bar{Q}^0,v)\otimes_{\mathcal{H}(\bar{T},v)}({{\chi}'_1} \otimes {\chi'_2})$ by $I_\tau^0(h,a)=\tau(h)\otimes a$. The irreducible module  $\pi_1\otimes\pi_2$  is the unique submodule of $\mathcal{H}(\bar{Q}^0,v)\otimes_{\mathcal{H}(\bar{T},v)}({{\chi}'_1} \otimes {\chi'_2})$. Then we give an action $I^0_\tau$  on $\pi_1\otimes\pi_2$ by restriction from $\mathcal{H}(\bar{Q}^0,v)\otimes_{\mathcal{H}(\bar{T},v)}({{\chi}'_1} \otimes {\chi'_2})$.
	
	We give an $I_\tau$ action on  $\Exp \IM^*M_{y,\bar{t},r_0}={\chi}_1 \otimes \cdots \otimes {\chi}_{\bar{k}} \otimes 
	\pi_1 \otimes \pi_2 \otimes 
	{\chi}_{\bar{k}}^\vee \otimes \cdots \otimes {\chi}^\vee_1$ by:
	\begin{align*}
		I_\tau(a_1,\cdots ,a_{\bar{k}},b_1,b_2,a_{\bar{k}}^\vee,\cdots,a^\vee_1)=(a^\vee_1,\cdots,a_{\bar{k}}^\vee,I^0_\tau(b_1,b_2),a_{\bar{k}},\cdots,a_1)
	\end{align*}
	
	Then using this $I_\tau$ action we have  $\Exp \IM^*M^+_{y,\bar{t},r_0,1}\simeq \Exp \IM^*M_{y,\bar{t},r_0}\otimes \mathbb{C}_\triv$ as $\mathcal{H}(\bar{Q},v)\rtimes\mathfrak{R}$-modules.
\end{proposition}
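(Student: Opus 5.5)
The plan is to realize both modules as submodules of one standard-type module with an explicit $\tau$-action, and then compare the two actions there. By the preceding lemma, $M^+_{y,\bar t,r_0,1}$ is the unique submodule of $E^+_{y^*,\bar t,r_0,1}\simeq E^+_{y^*,\bar t^*,r_0,1}$. By Theorem \ref{Ind31} the latter equals
\[
\mathbb{H}(\bar Q)\rtimes\mathfrak{R}\otimes_{\mathbb{H}(\bar K)\rtimes\mathfrak{R}}E^{\bar K^+,+}_{y^*,\bar t^*,r_0,1},
\]
where, by \cite[Lemma 3.14]{Aubert2016GradedHA}, the $\tau$-action on the one-dimensional-per-block module $E^{\bar K}_{y^*,\bar t^*,r_0}$ is the block swap \eqref{KAct1}. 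For the trivial character $\rho=1$ this swap is the entire geometric normalization $J^\tau$.

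\textbf{Step 1: make the geometric action explicit.} I apply $\Exp\circ\IM^*$, which is compatible with the $\mathfrak{R}$-actions and with induction, the latter by Theorem \ref{Ind22}. This turns $E^+_{y^*,\bar t^*,r_0,1}$ into the induced module
\[
\mathcal{H}(\bar Q,v)\rtimes\mathfrak{R}\otimes_{\mathcal{H}(\bar K,v)\rtimes\mathfrak{R}}\bigl(\chi_1\otimes\cdots\otimes\chi'_1\otimes\chi'_2\otimes\cdots\otimes\chi_1^\vee\bigr),
\]
with $\tau$ acting by $h\otimes a\mapsto \tau(h)\otimes I_\tau(a)$. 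Here $I_\tau$ swaps $\chi_i\leftrightarrow\chi_i^\vee$ and fixes $\chi'_1\otimes\chi'_2$.

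\textbf{Step 2: factor the induction.} Since $\bar Q=\prod\GL_{n_i}\times\bar Q^0\times\prod\GL_{n_i}$ and $\bar K$ differs from $\bar Q$ only in the middle factor, this induced module factors as a tensor product:
\[
\chi_1\otimes\cdots\otimes\Bigl(\mathcal{H}(\bar Q^0,v)\otimes_{\mathcal{H}(\bar T,v)}(\chi'_1\otimes\chi'_2)\Bigr)\otimes\cdots\otimes\chi_1^\vee .
\]
Under this factorization the $\tau$-action becomes the outer block swap tensored with $I^0_\tau(h\otimes a)=\tau(h)\otimes a$ on the middle factor. The only thing to check is that $\tau$ preserves $\mathcal{H}(\bar Q^0,v)$ and $\mathcal{H}(\bar T,v)$, which is stated in the proposition and follows from \eqref{RActM1}, since $w_0$ and $\hat\gamma$ preserve the middle block and its torus.

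\textbf{Step 3: restrict to the unique submodule.} The unique submodule on the $\mathcal{H}(\bar Q,v)$ side is $\chi_1\otimes\cdots\otimes\pi_1\otimes\pi_2\otimes\cdots\otimes\chi_1^\vee$, because $\pi_1\otimes\pi_2$ is the unique submodule of the middle factor. Any $\tau$-semilinear automorphism preserves this unique submodule, so restricting the action from Step 2 gives exactly the $I_\tau$ described in the proposition. Since $M^+_{y,\bar t,r_0,1}$ is the unique submodule of $E^+_{y^*,\bar t,r_0,1}$, the two $\mathcal{H}(\bar Q,v)\rtimes\mathfrak{R}$-modules coincide. It remains to identify the sign, i.e.\ to show we obtain $\rho=\triv$ and not the sign character. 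The geometric action on $E^+_{y^*,\cdot,\cdot,1}$ is $J^\tau\otimes\rho(\tau)$ with $\rho$ trivial, so no extra sign appears and the restriction is $I_\tau\otimes\mathbb{C}_\triv$.

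\textbf{Main obstacle.} I expect the hardest step to be the compatibility in Step 1. One must verify that the identification of $E^{\bar K}_{y^*,\bar t^*,r_0}$ with a tensor product of characters matches Lusztig's $\tau$-action exactly, including normalizations, and that $\Exp$ intertwines the $\mathfrak{R}$-actions on the graded and affine sides, with $\tau$ corresponding to the twisted $\hat\gamma$ conjugated by $w_0$ as in \eqref{RActM1}. The isomorphism $E_{y^*,\bar t}\simeq E_{y^*,\bar t^*}$, which reorders $\bar t'_1$ and $\bar t'_2$, must also be shown to commute with $\tau$. I would first check this last compatibility on the point $\mathcal{P}^{\bar K,a}_{y^*}$, where everything is one-dimensional.
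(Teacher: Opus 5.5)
Your proposal is correct and follows the paper's proof essentially step for step. You induce the block-swap action \eqref{KAct1} from $\bar K^+$ to $\bar Q^+$ via Theorem \ref{Ind31}, factor the induced module as in \eqref{SSInd} so that $\tau$ acts by the outer swap and by $h\otimes a\mapsto\tau(h)\otimes a$ in the middle, and restrict to the unique submodule supplied by the preceding lemma. The compatibility you flag as the main obstacle is handled in the paper only briefly: it notes that $\tau(\bar t^*)=\bar t^*$, so the completion isomorphism $\widehat{\mathcal{H}}(\bar K,v)\rtimes\mathfrak{R}\simeq\widehat{\mathbb{H}}(\bar K)\rtimes\mathfrak{R}$ is available and respects the $\tau$-actions, and it asserts earlier that $E^+_{y^*,\bar t,r_0,\rho}\simeq E^+_{y^*,\bar t^*,r_0,\rho}$ as $\mathbb{H}(Z_Q(t_c))\rtimes\mathfrak{R}$-modules.
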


\begin{proof}
	We have \begin{align*}
		\Exp \IM^*E^+_{y^*,\bar{t}^*,r_0,1}&\simeq\Exp \IM^*E^+_{y^*,\bar{t}^*,r_0,1}\\
		&\simeq\Exp \IM^*(\mathbb{H}(\bar{Q})\rtimes\mathfrak{R}\otimes_{\mathbb{H}(\bar{K})\rtimes\mathfrak{R}}E_{y^*,\bar{t}^*,r_0,\rho}^{\bar{K}^+,+})\\
		&\simeq \mathcal{H}(\bar{Q},v)\rtimes\mathfrak{R}\otimes_{\mathcal{H}(\bar{K},v)\rtimes\mathfrak{R}}(\Exp\IM^*E_{y^*,\bar{t}^*,r_0,\rho}^{\bar{K}^+,+})
	\end{align*}
	Use the same argument, we also have \begin{align}
		\Exp \IM^*E_{y^*,\bar{t}^*,r_0}&\simeq\mathcal{H}(\bar{Q},v)\otimes_{\mathcal{H}(\bar{K},v)}(\Exp\IM^*E_{y^*,\bar{t}^*,r_0}^{\bar{K}})\\
		&\simeq {\chi}_1 \otimes \cdots \otimes {\chi}_{\bar{k}} \otimes 
		\mathcal{H}(\bar{Q}^0,v)\otimes_{\mathcal{H}(\bar{T},v)}({{\chi}'_1} \otimes {\chi'_2}) \otimes 
		{\chi}_{\bar{k}}^\vee \otimes \cdots \otimes {\chi}^\vee_1\label{SSInd}
	\end{align} 
	
	Since $\tau(\bar{t}^*)=\bar{t}^*$,  the completion of $\mathcal{H}(\bar{K},v)\rtimes\mathfrak{R}$ with the corresponding central character is  $\widehat{\mathcal{H}}(\bar{K},v)\rtimes\mathfrak{R}$ and the completion of $\mathbb{H}(\bar{K})\rtimes\mathfrak{R}$ with the corresponding central character is  $\widehat{\mathbb{H}}(\bar{K})\rtimes\mathfrak{R}$.
	
	Using  $\widehat{\mathcal{H}}(\bar{K},v)\rtimes\mathfrak{R}\simeq\widehat{\mathbb{H}}(\bar{K})\rtimes\mathfrak{R}$ and \eqref{KAct1}, we have \begin{align*}
		\Exp\IM^*E_{y^*,\bar{t}^*,r_0,1}^{\bar{K}^+,+}\simeq \Exp\IM^*E_{y^*,\bar{t}^*,r_0,1}^{\bar{K}}\otimes\mathbb{C}_\triv
	\end{align*}
	
	where  $I_\tau^{\bar{K}}$ is given by \begin{align*}
		I_\tau^{\bar{K}}(a_1,\cdots ,a_{\bar{k}},b_1,b_2,a_{\bar{k}}^\vee,\cdots,a^\vee_1)=(a^\vee_1,\cdots,a_{\bar{k}}^\vee,b_1,b_2,a_{\bar{k}},\cdots,a_1)
	\end{align*}
	on  $\Exp\IM^*E_{y^*,\bar{t}^*,r_0,1}^{\bar{K}}= 
	{\chi}_1 \otimes \cdots \otimes {\chi}_{\bar{k}} \otimes 
	{\chi}'_1 \otimes \chi'_2 \otimes 
	{\chi}_{\bar{k}}^\vee \otimes \cdots \otimes {\chi}^\vee_1$
	
	Using  the isomorphism of $\mathcal{H}(\bar{Q},v)$-modules \begin{align*}
		\mathcal{H}(\bar{Q},v)\rtimes\mathfrak{R}\otimes_{\mathcal{H}(\bar{K},v)\rtimes\mathfrak{R}}(\Exp\IM^*E_{y^*,\bar{t}^*,r_0,1}^{\bar{K}}\otimes\mathbb{C}_\triv)\simeq \mathcal{H}(\bar{Q},v)\otimes_{\mathcal{H}(\bar{K},v)}(\Exp\IM^*E_{y^*,\bar{t}^*,r_0}^{\bar{K}})
	\end{align*}
	we can get the $I_\tau$ action on $\mathcal{H}(\bar{Q},v)\otimes_{\mathcal{H}(\bar{K},v)}(\Exp\IM^*E_{y^*,\bar{t}^*,r_0}^{\bar{K}})$ given by $I_\tau(h\otimes v):=\tau(h)\otimes I_\tau^{\bar{K}}(v)$.
	
	Using the expression \eqref{SSInd}, we get the $I_\tau$ action on ${\chi}_1 \otimes \cdots \otimes {\chi}_{\bar{k}} \otimes 
	\mathcal{H}(\bar{Q}^0,v)\otimes_{\mathcal{H}(\bar{T},v)}({{\chi}'_1} \otimes {\chi'_2}) \otimes 
	{\chi}_{\bar{k}}^\vee \otimes \cdots \otimes {\chi}^\vee_1$ as:\begin{align*}
		I_\tau(a_1,\cdots ,a_{\bar{k}},h\otimes (b_1,b_2),a_{\bar{k}}^\vee,\cdots,a^\vee_1)=(a^\vee_1,\cdots,a_{\bar{k}}^\vee,\tau(h)\otimes (b_1,b_2),a_{\bar{k}},\cdots,a_1)
	\end{align*}
	The irreducible module $\Exp \IM^*M^+_{y,\bar{t},r_0,1}$  is the unique submodule of $\Exp \IM^*E^+_{y^*,\bar{t},r_0,1}\simeq \Exp \IM^*E^+_{y^*,\bar{t}^*,r_0,1}$. Hence if we give the $I_\tau$ action on  $\Exp \IM^*M_{y,\bar{t},r_0}={\chi}_1 \otimes \cdots \otimes {\chi}_{\bar{k}} \otimes 
	\pi_1 \otimes \pi_2 \otimes 
	{\chi}_{\bar{k}}^\vee \otimes \cdots \otimes {\chi}^\vee_1$ by restricted from the former, we obtain the proposition.
\end{proof}

\begin{theorem}\label{MainTh}
	There is an isomorphism of $\mathcal{G}^+$ representations \begin{equation*}
		(M(\phi,1))^+_W\simeq M^+(\phi,1)
	\end{equation*}    
\end{theorem}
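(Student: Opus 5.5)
The proof follows the reduction already set up at the end of Section 6.1. Both $M^+(\phi,1)$ and $(M(\phi,1))^+_W$ are unique irreducible quotients of $E^+(\phi,1)=i_{\mathcal{P}^+}M^+(\phi_{\mathcal{Q}},1)$ and $(E(\phi,1))^+_W\simeq i_{\mathcal{P}^+}\pi_{\mathcal{Q}}^+$ (Proposition \ref{WhiInd}), respectively. It therefore suffices to produce an isomorphism of $\mathcal{Q}^+$-representations $\pi_{\mathcal{Q}}^+\simeq M^+(\phi_{\mathcal{Q}},1)$. Since both restrict to the same irreducible $\pi_{\mathcal{Q}}$ on $\mathcal{Q}$, the two $\gamma$-actions differ by a scalar $\pm1$. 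The task is to show that this sign is $+1$, i.e.\ that the operators agree on $\pi_{\mathcal{Q}}^{\mathcal{J}_{\mathcal{Q}}}$ after transport through $\Lambda^+_{\mathcal{J}_{\mathcal{Q}}}$, $\Theta^+_Q=\Ind\circ\Exp$ and $\IM^*$.

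First I would compute the geometric side. By Proposition \ref{ProThWh1}, the module $\Exp\IM^*M^+_{y,\bar t,r_0,1}$ is $\chi_1\otimes\cdots\otimes\pi_1\otimes\pi_2\otimes\cdots\otimes\chi_1^\vee$. Its $\tau$-action $I_\tau$ swaps the Steinberg factors $\chi_i\leftrightarrow\chi_i^\vee$ and acts on the tempered middle factor by $I^0_\tau$, which is the restriction of $h\otimes a\mapsto\tau(h)\otimes a$ on $\mathcal{H}(\bar Q^0,v)\otimes_{\mathcal{H}(\bar T,v)}(\chi_1'\otimes\chi_2')$. Inducing via the completed embedding $\tau\mapsto e_\varpi(\iota^0_{w_0},\hat\gamma)$ of Proposition \ref{ComCatEq1} gives the $\hat\gamma$-action on $M^+(\phi_{\mathcal{Q}},1)^{\mathcal{J}_{\mathcal{Q}}}$:
\[
g\otimes v\mapsto \hat\gamma(g)\iota^0_{w_0^{-1}}\otimes I_\tau(v).
\]

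Next I would compare this with the Whittaker side factor by factor. On the outer factors, the Whittaker operator in \eqref{LeviAct} is $(I_1^{-1}\otimes\cdots\otimes I_1)\circ\iota$. Because $\chi_i$ is one-dimensional and the pair $(I_i,I_i^{-1})$ cancels, this is just the swap, which matches $I_\tau$ on those factors. On the middle factor, I would apply Theorem \ref{MainWhTem} to $\pi^0\hookrightarrow\sigma$. There the Whittaker-normalized action is $g\otimes v\mapsto\hat\gamma(g)\iota^0_{w_M^{-1}}\otimes\hat\gamma_M(v)$, with $\hat\gamma_M$ trivial on $\mathbb{C}_{(t,p^{1/2})}$. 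The matrix $w_M$ of Section 6.2 plays the role of $w_0$ here, and $\hat\gamma_M$ coincides with $\tau$ from \eqref{RActM1}. Hence this is exactly the same recipe as $I^0_\tau$ followed by the induction formula above, and the two $\hat\gamma$-actions on $\pi^0$ agree.

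The step I expect to be the main obstacle is making sure the geometric normalization $J^\tau$ of Proposition \ref{GeoActM} really corresponds, after $\Exp$ and $\IM^*$, to the action that is trivial on the $\bar K$-level character $\chi'_1\otimes\chi'_2$, rather than to its negative. The reason it should be trivial is that $\mathcal{P}^{\bar K,a}_{y^*}$ is a point with the constant sheaf, and the remark after Theorem \ref{IsoHecke} fixes the canonical identification $\dot{\mathcal{L}}^+\simeq\tau^*\dot{\mathcal{L}}^+$ as the identity. Using \eqref{KAct1}, I would check that $\IM$ fixes $N_\tau$, so that no sign is introduced. Once this is settled, the two $\gamma$-actions agree on $\mathcal{J}_{\mathcal{Q}}$-fixed vectors. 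By \eqref{MCatE} we then get $\pi_{\mathcal{Q}}^+\simeq M^+(\phi_{\mathcal{Q}},1)$, and applying $i_{\mathcal{P}^+}$ and passing to unique quotients proves the theorem.
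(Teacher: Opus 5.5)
Your proposal is correct and follows essentially the same route as the paper: you reduce, via Proposition~\ref{WhiInd} and uniqueness of irreducible quotients, to $\pi_{\mathcal{Q}}^+\simeq M^+(\phi_{\mathcal{Q}},1)$; you transport the $\tau$-action of Proposition~\ref{ProThWh1} through the completed embedding $\tau\mapsto e_\varpi(\iota^0_{w_0},\hat\gamma)$; you note that the outer Steinberg factors are simply swapped; and you identify the middle action with the Whittaker-normalized one via Theorem~\ref{MainWhTem}. Framing the comparison as a single sign $\pm1$, fixed by the normalization \eqref{KAct1} and $\IM(N_\tau)=N_\tau$, only makes explicit what the paper's argument leaves implicit.
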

\begin{proof}
We have
	\begin{align}\label{ExpThe1}
		&{\mathcal{H}}(Q,v)\otimes_{{\mathcal{H}}(Z_{Q}(t),v)}\Exp \IM^*M_{y,\bar{t},r_0}\\
		&={\chi}_1 \otimes \cdots \otimes {\chi}_{\bar{k}} \otimes 
		{\mathcal{H}}(Q^0,v)\otimes_{{\mathcal{H}}(\bar{Q}^0,v)}(\pi_1 \otimes \pi_2)\otimes 
		{\chi}_{\bar{k}}^\vee \otimes \cdots \otimes {\chi}^\vee_1
	\end{align}
	Using 
	\begin{align*}
		\widehat{\mathcal{H}}(Q^+,v)\otimes_{e_{\varpi}\widehat{\mathcal{H}}(Z_{Q^+}(t),v)}\Exp \IM^*M_{y,\bar{t},r_0,1}^+&\simeq  \widehat{\mathcal{H}}(Q,v)\otimes_{e_{\varpi}\widehat{\mathcal{H}}(Z_{Q}(t),v)}\Exp \IM^*M_{y,\bar{t},r_0,1}^+\\
		&\simeq{\mathcal{H}}(Q,v)\otimes_{{\mathcal{H}}(Z_{Q}(t),v)}\Exp \IM^*M_{y,\bar{t},r_0,1}^+
	\end{align*}
	we have \begin{align*}
		\hat{\gamma}\circ (g\otimes v)&= \hat{\gamma}(g)\iota_{w^{-1}}^0\otimes I_\tau(v)
	\end{align*}
	where the $I_\tau$ action  comes from Proposition \ref{ProThWh1}.
	Since $\iota_{w^{-1}}^0\in_{\mathcal{F}}{\mathcal{H}}(\bar{Q}^0,v)$, using the expression \eqref{ExpThe1}, we get the $I_{\hat{\gamma}}$ action on $ {\mathcal{H}}(Q,v)\otimes_{{\mathcal{H}}(Z_{Q}(t),v)}\Exp \IM^*M_{y,\bar{t},r_0}$:\begin{align*}
		I_{\hat{\gamma}}(a_1,\cdots ,a_{\bar{k}},h\otimes (b_1,b_2),a_{\bar{k}}^\vee,\cdots,a^\vee_1)=(a^\vee_1,\cdots,a_{\bar{k}}^\vee,\hat{\gamma}(h)\iota_{w^{-1}}^0\otimes I_\tau^0(b_1,b_2),a_{\bar{k}},\cdots,a_1)
	\end{align*}
	
	By Theorem \ref{MainWhTem}, the middle action $h\otimes (b_1,b_2)\mapsto\hat{\gamma}(h)\iota_{w^{-1}}^0\otimes I_\tau^0(b_1,b_2)$ corresponds to a Whittaker normalized  action $I_{W,\pi^0}$ on $\pi^0$. Hence $I_{\hat{\gamma}}$ corresponds to the action \eqref{LeviAct} on $\pi_{\mathcal{Q}}$. Finally we get \begin{align*}
		\pi_{\mathcal{Q}}^+\simeq \widehat{\mathcal{H}}(Q^+,v)\otimes_{e_{\varpi}\widehat{\mathcal{H}}(Z_{Q^+}(t),v)}\Exp \IM^*M_{y,\bar{t},r_0,1}^+=M^+(\phi_{\mathcal{Q}},1) 
	\end{align*}
	
	From Proposition \ref{WhiInd}, we have $ (E(\phi,1))^+_W\simeq i_{\mathcal{P}^+}\pi_{\mathcal{Q}}^+\simeq E^+(\phi,1)= i_{\mathcal{P}^+} M^+(\phi_{\mathcal{Q}},1)$. Using the definition, we conclude $(M(\phi,1))^+_W\simeq M^+(\phi,1)$.
\end{proof}

\bibliographystyle{alpha}
	\bibliography{ref1}
\email{Email address: chaiy20@mails.tsinghua.edu.cn}

\end{document}